\title{Heavy-traffic analysis through uniform acceleration \\of queues with diminishing populations}
\date{\today}
\author{Gianmarco Bet, Remco van der Hofstad, and Johan S.H. van Leeuwaarden} 
\numberwithin{equation}{section}
\newtheorem{lemma}{Lemma}
\newtheorem{theorem}{Theorem}
\newtheorem*{theorem*}{Theorem}
\newtheorem{corollary}{Corollary}
\newtheorem{definition}{Definition}
\newcommand{\Exp}{\mathrm{Exp}}
\newcommand{\sss}{\scriptscriptstyle}
\newcommand{\nin}{\notin}
\newcommand{\Var}{\textrm{Var}}
\newcommand{\Lou}{\cite{louchard1994large}}
\newcommand{\Ward}{\cite{honnappa2012delta}}
\newcommand{\nnl}{\notag\\}
\newcommand{\sr}{\stackrel}
\newcommand{\E}{\mathbb E}
\newcommand{\fT}{f_{\sss T}}
\newcommand{\FT}{F_{\sss T}}
\newcommand{\OP}{O_{\mathbb P}}
\newcommand{\gr}{}%\color{ForestGreen}}
\newcommand{\bl}{}%{\color{blue}}%
\newcommand{\red}{}%
\newcommand{\eqn}[1]{\begin{equation}#1\end{equation}}
\newcommand{\e}{\mathrm{e}}
\newcommand{\prob}{{\mathbb P}}
\newcommand{\expec}{{\mathbb E}}
\def\1{{\mathchoice {1\mskip-4mu\mathrm l}      % Blackboard bold 1
{1\mskip-4mu\mathrm l}
{1\mskip-4.5mu\mathrm l} {1\mskip-5mu\mathrm l}}}
\newcommand{\indic}[1]{\1_{\{#1\}}}
\begin{document}
\maketitle

\begin{abstract}
We consider a single server queue  that serves a finite population of $n$ customers that will enter the queue (require service) only once{\gr, also known as the $\Delta_{(i)}/G/1$ queue}.  This paper presents a method for analyzing heavy-traffic behavior by using uniform acceleration, which simultaneously lets $n$ and the service rate grow large, while the initial resource utilization approaches one. 
A key feature of the model is that, as time progresses, more customers have joined the queue, and fewer customers can potentially join. This {\it diminishing population} gives rise to a class of reflected stochastic processes that vanish over time, and hence do not have a stationary distribution.  We establish that, {\gr when the arrival times are exponentially distributed,} by suitably rescaling space and time, the queue length process converges to a Brownian motion with parabolic drift, a stochastic-process limit that captures the effect of a diminishing population by a negative quadratic drift. {\gr When the arrival times are generally distributed, our techniques provide information on the typical queue length and the first busy period.} 
%The stochastic-process limit provides insight into the macroscopic behavior (for $n$ large) of the transitory queueing process, and the different phenomena occurring at different space-time scales. 
%It also gives insight into the orders of the average queue lengths, the probability of large queue lengths occurring, and the time scales of busy periods.

%%%% OLD %%%%
%We consider a single-server queue with a finite pool of potential customers. The queue length is sampled at the end of each service and customers join the queue when their (independent) clock rings. We prove that, in the finite-variance case, by suitably rescaling space and time, the queue length process converges to a Brownian motion on a parabola. The proof is first carried out for exponential clocks (or arrival times), and then adapted to allow for general (light-tailed) distributions.
\end{abstract}

%%%%%%%%%%%%%%%%%%%%%%%%%%%%%%%%%%%%%%%%%%%%%%%%%%%%%%%%%%%%%%%%%%%%%%%%%%%%%%%%
%%%%%%%%%%%%%%%%%%%%%%%%%%%%%%%%%%%%%%%%%%%%%%%%%%%%%%%%%%%%%%%%%%%%%%%%%%%%%%%%

\section{Introduction} \label{sec:Introduction}
The queueing literature is to a large extent built on the assumption that arrivals are governed by some renewal process, a mathematically convenient assumption as it allows the full use of probabilistic tools based on regenerative processes and ergodic theory. {\bl  This paper, however, considers a \emph{transitory} queueing model, {\gr known in the literature as the $\Delta_{(i)}/G/1$ queue, }which operates only a finite amount of time and cannot be viewed as a standard regenerative process.} {\gr The $\Delta_{(i)}/G/1$ queue} assumes a finite population of customers entering the queue only once. As time progresses, more customers have joined the queue, and fewer customers can potentially join. This modelling assumption of  a {\it diminishing population of customers} gives rise to a class of reflected stochastic processes {\bl that lack a stationary distribution, and instead display relevant behavior only during a \emph{finite time window}. Therefore, only the time-dependent behavior is of interest.}

When we assume the arrival times of the customers to be sampled independently from an identical distribution, the arrival times
are the order statistics of the sample, and the interarrival times are differences of order statistics. Further assuming a single server, and generally distributed independent service times, this model was coined the $\Delta_{(i)}/G/1$ queue in \cite{honnappa2012delta}, in which fluid and
diffusion limits for the queue length process were established. In \cite{honnappa2014transitory} a wider class of transitory queues was introduced, with the $\Delta_{(i)}/G/1$ queue still as the prime example, and stochastic-process limits were established for large population sizes. %Although our model will be intimately related with the $\Delta_{(i)}/G/1$ queue, we will slightly modify the system dynamics (see Section \ref{sec:exponential_arrivals}).

We will introduce a heavy-traffic regime for {\gr the $\Delta_{(i)}/G/1$ queue}, leading to stochastic-process limits and heavy-traffic approximations.  Considering queueing processes in their critical regimes typically leads to a reduction in complexity, since the complicated processes can often be shown to  converge to much simpler limiting stochastic processes. Stochastic-process limits have been studied for single-server queues that have a time-varying arrival rate. Newell \cite{newell1968queues,newell1982applications} pioneered this direction by deriving diffusion approximations, see also \cite{keller1982time,louchard1994large}. Rigorous results in terms of stochastic-process limits were obtained in \cite{mandelbaum1995strong} (building on \cite{massey1982non,massey1985asymptotic}). Here, stochastic-process limits were established as refinements to deterministic ODE limits for the time-dependent $M/M/1$ queue, known as the $M_t/M_t/1$ queue. {\bl  See also \cite{yang1997asymptotic} for a systematic treatment of the $M_t/G/1$ queue}. The technique used in \cite{mandelbaum1995strong} to develop
Functional Law of Large Numbers (FLLN) and Functional Central Limit Theorem (FCLT)
results uses strong approximation and what is known as the {\it uniform accelaration} (UA) technique. UA relies on the assumption that the relevant time scale for changes in the queue length process is of the order {\bl $O(1/\varepsilon)$} for some $\varepsilon>0$. Accelerating the process in a uniform manner by scaling the arrival and service rates by $\varepsilon$ then reveals the dominant model behavior as $\varepsilon\to 0$. While in \cite{honnappa2012delta, honnappa2014transitory} the arrival and service rates are scaled in a similar manner, {\bl the time scale considered is of the order $O(1)$. In particular, the time of the process is not scaled}. 
%The UA technique typically requires time scaling, which explains why the process limits for $\Delta_{(i)}/GI/1$ queue in \cite{honnappa2012delta} are for all time, and the process limits for the $M_t/M_t/1$ queue in \cite{mandelbaum1995strong} are for regimes in which both space and time are scaled. 
%Due to additional ergodicity assumptions required, 
The UA technique has been extensively applied to non-stationary queueing systems with non-homogeneous Poisson input, but it remained unclear whether it is also useful for transitory {\bl  or time vanishing} queueing models as considered in \cite{honnappa2014transitory} {\bl and in the present paper}. {\bl  We will show how the key idea behind the UA technique can be applied to these models}. We shall now explain our approach in terms of the easiest setting, in which the identical distribution that generates the arrival times is exponential.

Assume a finite population of $n$ customers, with $n$ {\bl  very} large, and where each customer has an independent exponential clock with mean $1/\lambda$. Customers join the queue when their clocks ring. The initial arrival process ({\bl  close to time zero}) is then  roughly Poisson  with rate $n\lambda$. However, as time progresses, the arrival intensity decreases, due to those customers that have left the system. {\bl  Thus, the arrival process is a Poisson process that is thinned according to some time-dependent rule.} Denote the mean service time by $1/\mu$. In order to create heavy-traffic conditions we let the population size $n$ grow to infinity, while at the same time making sure that the (initial) traffic intensity {\bl  $\rho_n=n\lambda/\mu$} is close to one. The system can initially be underloaded (when $n\lambda<\mu$), overloaded (when $n\lambda>\mu$), or critically-loaded (when $n\lambda=\mu$). In case $n\lambda>\mu$, the queue {\bl  initially shows a roughly linear increase} and therefore the correct scaling of the queue length to obtain meaningful limits is $n$ (FLLN) for a first order approximation and $n^{1/2}$ (FCLT) for second order approximations. {\bl  In particular, no time scaling is needed to obtain these approximations; these are the most relevant approximations obtained in \cite{honnappa2012delta}.}

We focus on the critically-loaded regime, and we combine this with uniform acceleration through the population size $n$. {\bl  In the spirit of UA, we let the arrival and service rates scale with $n$ while also rescaling time so as to observe the queue length process at a time scale of order $ O(1/n^{\gamma})$ for some $\gamma>0$.}
%\note{Should we define the criticality condition in full generality? I think we should.}
Denote the density of the arrival distribution as $f_{\sss T}(t)$ (with $f_{\sss T}(t)=\lambda \e^{-\lambda t}$, $t\geq 0$ as an important special case), and denote the
i.i.d.~service requirements of consecutive customers by $S_1, S_2,\ldots$ with generic random variable $S$. Assuming a service rate of $n$, the service times of consecutive customers are then $D_1=S_1/n, D_2= S_2/n,\ldots$  with generic random variable $D$. {\gr For the sake of clarity, we now first give a simplified version of the more general heavy-traffic condition \eqref{crit2}.} The heavy-traffic regime we consider is given by the condition
\begin{equation}\label{crit}%
\rho_n:= n f_{\sss T}(0)\mathbb{E}[D]=f_{\sss T}(0)\mathbb{E}[S]=1, \quad {\rm for }\ n \ {\rm large}.
\end{equation}%
{\bl For} the exponential case $f_{\sss T}(0)=\lambda$, {\bl  and the condition reads $\rho_n=\lambda\E[S]=1$ can be interpreted as, for times close to zero,} the expected number of newly arriving customer during one service time is roughly one. For general service times, the condition can be understood by {\bl  interpreting $\fT(t)$ as the \emph{instantaneous arrival rate} in $t$. Since we consider time scales of the order $O(1/n^{\gamma})$, 
%the fact that when $n\to \infty$ and time is rescaled, 
only the mass in zero $f_{\sss T}(0)$ matters for describing the new arrivals. }

% {\bl  Note that, with our definition, the system is critically loaded only in $t=0$, so that in order to study the effects of criticality the queue length should be observed `close to zero'. More precisely, we scale the queue length process both in space \emph{and time} in order to observe it at times of the order $t^* = o(1)$.}
In what follows we actually consider a slightly {\bl  more general} definition of the random variables $(D_i)_{i=1}^{\infty}$ and this leads to {\bl the more precise critical scaling }
\begin{equation}\label{crit2}%
\rho_n=1 + \beta n^{-1/3}.
\end{equation}%
The additional term $\beta n^{-1/3}$ arises from detailed calculations, but can be interpreted as the factor that describes the onset of the heavy-traffic period: when $\beta>0$ (resp. $<0$) the queue is initially slightly overloaded (resp. underloaded).

It is clear that {\gr the $\Delta_{(i)}/G/1$ queue }is strongly influenced by the service-time distribution.
In particular, the heavy-traffic behavior is crucially different depending on whether the second moment of the service time distribution is finite or not. In this paper we assume throughout that  $\mathbb{E}[S^2]<\infty$, in which case the queueing process is in the domain of attraction of Brownian motion. Indeed, {\bl  we will take our process and} scale space and time so that the stochastic-process limit turns out to be a (reflected) Brownian motion {\bl with parabolic drift}. The latter process is defined as $(W_{t})_{t\geq 0}=(at+bt^2/2+cB_{t})_{t\geq 0}$ with $(B_{t})_{t\geq 0}$ a standard Brownian motion, and $a,b,c$ constants. The constant $b$ is negative, so that eventually the free process $(W_{t})_{t\geq 0}$ drifts to minus infinity according to $bt^2/2$, causing the reflected process to be essentially stuck at zero. This effect is due to the {\it diminishing population} effect. One could interpret the quadratic term in the limit as the (cumulative) effect of the customers already served not being able to join the queue again.
The stochastic-process limit provides insight into the macroscopic behavior (for $n$ large) of the transitory queueing process, and the different phenomena occurring at different space-time scales. It also gives insight into the orders of the average queue lengths, the probability of large queue lengths occurring, and the time scales of busy periods.

\subsection{Comparison with known results}\label{sec:comparison_known_results}
To create the right circumstances for non-degenerate limiting behavior in heavy traffic, we work not only under the critical-load assumption \eqref{crit}, but also under the additional assumption that the maximum of the density $f_{\sss T}(t)$, $t\geq 0$ is assumed in $t=0$. {\bl  Indeed, $f_{\sss T}(t)$ should be interpreted as an \emph{instantaneous arrival rate}, and then $\max_t f_{\sss T}(t)\E[S]= f_{\sss T}(t_{\mathrm{max}})\E[S]=1$ means that at the peak hour $t_{\mathrm{max}}$, the queue is critical.} {\gr While this assumption is trivially satisfied in the representative case of exponential arrivals, it turns out to be of crucial importance when dealing with general arrival times.}
%
%The reason is that {\bl  in order to display the depletion of points effect, the queue length must be observed at times of the order $t^* = o(1)$,}
%our asymptotic regime lets the population grow large, leading to many %possible events in an extremely short time interval (acceleration), 
%hence making the behavior of the arrival distribution around zero the dominant factor. If the density $f_{\sss T}(t)$ would be maximal at some point $t_{\rm max}>0$, then we should have changed our critical-load assumption accordingly, and hence {\bl rescale} time around the point $t_{\rm max}$ instead of time zero. 
%
Let us explain this subtle point in more detail below, and at the same time draw a comparison with existing lines of related research.

{\bl  In \cite{honnappa2012delta,louchard1994large}, the $\Delta_{(i)}/G/1$ queue is considered, for $t_{\max}>0$ and a wide range} of possible system behaviors related to the maximum  $f_{\sss T}(t_{\rm max})$. {\bl It seems that all the different operating regimes studied in the literature so far can be classified in terms of the parameter $f_{\sss T}(t_{\rm max})\E[S]$ and whether this quantity is smaller (underloaded), equal (heavy traffic), or larger (overloaded) than $1$. In fact, even when $f_{\sss T}(t_{\rm max})\mathbb{E} [S]\approx 1$, additional regimes can be considered by changing the rate of convergence (in our case this has to be $1+\beta/n^{-1/3}$).}
%
%System behavior differs depending on whether $f_{\sss T}(t_{\rm max})\mathbb{E}[ S]$ is smaller, equal, or larger than $1$, and 

{\bl In \Ward~the following result was obtained for the overloaded $\Delta_{(i)}/G/1$ queue:
\begin{theorem*}[Diffusion limit for overloaded $\Delta_{(i)}/G/1$ queue \Ward]\label{th:diff_limit_overloaded_Delta_queue}
Let $\lim_{n\rightarrow\infty}\bar{Q}^{\Delta}_n(t)/n=\bar{Q}^{\Delta}(t)$ be the fluid limit of the queue length process $Q^{\Delta}_n(t)$. Let 
\begin{equation}
\hat Q_n^{\Delta} (t) = (Q_n^{\Delta}(t) - n\bar{Q}_n^{\Delta}(t))/\sqrt{n}
\end{equation}
be the diffusion-scaled queue length process. Then, as $n\rightarrow\infty$,
\begin{equation}\label{eq:ward_main_result}%
\hat Q_n^{\Delta} (\cdot) \sr{\mathrm{d}}{\rightarrow} \hat Q^{\Delta}(\cdot),
\end{equation}%
with $\hat Q^{\Delta}(\cdot)$ some stochastic process.
\end{theorem*}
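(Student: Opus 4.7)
The plan is to represent $Q_n^{\Delta}(t)$ as a continuous functional of its primitive inputs, establish joint FCLTs for these inputs, and then invoke the continuous mapping theorem on $D[0,\infty)$ with the Skorokhod $J_1$ topology. First I would write $Q_n^{\Delta}(t)= \Phi(A_n - D_n)(t)$, where $A_n(t)=\sum_{i=1}^n\indic{T_i\le t}$ counts arrivals, $D_n(t)$ counts departures by time $t$, and $\Phi$ is the one-sided reflection regulator that accounts for server idling. In the overloaded regime the fluid queue $\bar Q^{\Delta}(\cdot)$ is strictly positive on an interval $[0,t^{*})$ and zero thereafter, so on $[0,t^{*})$ the server is busy with probability tending to one and $D_n(t)$ can be replaced by the pure renewal counting process $S_n(t)$ of completed services; on $[t^{*},\infty)$ the reflection becomes active.

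Next I would establish FCLTs for the two primitive processes separately. The arrival count equals $nF_n(t)$, where $F_n$ is the empirical cdf of the $n$ i.i.d.\ samples from $F_T$, so Donsker's theorem yields
\begin{equation}
\sqrt{n}\bigl(\tfrac{1}{n}A_n(t) - F_T(t)\bigr) \sr{\mathrm{d}}{\longrightarrow} B^{\circ}(F_T(t)),
\end{equation}
with $B^{\circ}$ a standard Brownian bridge. For the service process the renewal FCLT yields a Brownian-motion limit whose variance is controlled by $\Var(S)$, so the assumption $\E[S^2]<\infty$ is essential here. Independence of $(T_i)$ and $(S_j)$ gives the joint FCLT for the centered pair.

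Finally I would combine these through the continuous mapping theorem. On $[0,t^{*})$ the reflection regulator is inactive and the diffusion-scaled queue length equals, up to $\oP(1)$, the difference of the two Gaussian limits: a time-changed Brownian bridge minus an independent Brownian motion, centered at the fluid net input. At and beyond the transition point $t^{*}$ I would invoke Lipschitz continuity of $\Phi$ in the $J_1$ topology to push the limit through. The main obstacle is precisely at $t^{*}$: the deterministic fluid limit $\bar Q^{\Delta}(\cdot)$ grazes zero and small Gaussian fluctuations can trigger the reflection intermittently, so one must verify that centering by $n\bar Q_n^{\Delta}(t)$ absorbs this boundary behavior without producing additional deterministic drift in the limit. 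Tightness in $D[0,\infty)$ is then standard via modulus-of-continuity bounds on $A_n$ and $S_n$, and the resulting process $\hat Q^{\Delta}(\cdot)$ inherits the piecewise structure dictated by the active/inactive phases of $\Phi$ along $\bar Q^{\Delta}(\cdot)$.
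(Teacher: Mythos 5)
This theorem is not proved in the paper at all: it is stated as a citation of a known result from Honnappa, Jain and Ward \cite{honnappa2012delta}, appearing in the comparison section purely to contrast the overloaded regime with the critical regime studied here. There is therefore no ``paper's own proof'' to compare against. That said, the paper does briefly sketch in Section \ref{sec:OtherRegimesOfCriticality} how the cited result is obtained, namely by writing $Q_n^{\Delta} = \phi(X_n)$ with
\begin{equation}
X_n(t) = \left(A_n(t) - nF_T(t)\right) - \left(S^n(B_n(t)) - \mu n B_n(t)\right) + n\left(F_T(t) - \mu t\right),
\end{equation}
and applying the continuous-mapping approach, and your proposal captures the essence of this: Donsker for the empirical arrival process (Brownian bridge), renewal FCLT for service completions, joint convergence by independence, and CMT through the reflection map. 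The one substantive point you elide is that one should not simply replace departures $D_n$ by a free renewal process $S_n(t)$ on the pre-critical interval; the correct object is the composition $S^n(B_n(t))$ with the busy-time process $B_n$, and pushing the limit through this composition requires a random-time-change lemma in addition to Lipschitz continuity of $\phi$. You also correctly flag the delicate behavior near $t^*$ where the fluid limit grazes zero, which is indeed where the ``regime switching'' in the limit process comes from. So the proposal is a reasonable reconstruction of the argument in \cite{honnappa2012delta}, but it is a proof of a cited theorem, not of one established in this paper.
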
%
The process $ \hat Q_n^{\Delta}(\cdot)$ is a diffusion that switches between three regimes: a free Brownian motion, a reflected Brownian motion and the zero process. 
%The following lemma relates the load parameter $f(t_{\max})\E[S]$ to Theorem \ref{th:diff_limit_overloaded_Delta_queue}.
From the definition of $\hat Q_n^{\Delta}$ in \Ward~it follows easily that, if $\fT(t)\not\equiv\fT(t_{\max})$, the constant $\max_{t\geq0} \bar Q^{\Delta}(t)$ and the random variable $\max_{t\geq0}\hat Q_n^{\Delta}(t)$ are different from zero if and only if 
\begin{equation}%
\fT(t_{\mathrm{max}})\E[S] > 1.
\end{equation}%
Therefore, the queue length grows linearly around time $t_{\max}$ (i.e. $\max_{t\geq0} \bar Q^{\Delta}(t)>0$) if and only if $\fT(t_\mathrm{max})\E[S] >1$ (overloaded) and the condition $\fT(t_\mathrm{max})\E[S] = 1$ is the threshold between the overloaded and underloaded regimes. Hence, $\fT(t_\mathrm{max})\E[S] = 1$ defines the critical regime in which the queue grows like a non-trivial power of $n$, so  not linearly.
}
%The authors in \Ward~also identify a load parameter $\rho^*$  different from \eqref{crit}, which is useful in classifying the different regimes \emph{inside the overloaded regime}. In particular, different values of $\rho^*$ correspond to different regimes of the process $\hat X(\cdot) + \tilde{Y}(\cdot)$. }

%%%%%%%%%%%%%%%%% %%%%%%% %%%%%%%%%%%%%%%%%%%%%
{\bl  Louchard \Lou~considers an approximation of the $\Delta_{(i)}/G/1$ queue, defined as follows. Let $S_n(t)$ be a renewal process associated with the service time random variables $(S_i)_{i=1}^n$. Further, let $A_n(t) = \sum_{i=1}^n \mathds 1_{\{T_i\leq t\}}$ be the usual arrival process. Then the queue length process in \Lou~is defined as 
\begin{equation}%
Q'_n(t) = \phi(J_n(t)),
\end{equation}%
where $J_n(t) := A_n(t) - S_n(t)$ and $\phi(\cdot)$ is the reflection mapping. This is the so-called Borovkov modified system, an approximation in which the server never idles and when a customer arrives and finds an empty queue, his service time is defined to be the remaining time before the next jump time of $S_n(\cdot)$. Louchard \Lou~then invokes a result by Iglehart and Whitt \cite{iglehart1970multiple} to argue that this approximation converges to the $\Delta_{(i)}/G/1$ queue length process in \emph{heavy traffic}.}
For this system, Louchard \Lou~studies a wide range of system behaviors, by dividing time into intervals that are associated with specific assumptions on $f_{\sss T}(t)$ and $\mathbb{E}[S]$, and then establishing convergence results within each interval. In that way Louchard \Lou  ~identifies using mostly heuristic arguments  the possible behaviors of the model. Our setting would then correspond to Cases $2$ and $3$ in \Lou~with $\alpha = 1/3$. 
Hence, while Louchard \Lou  ~identifies many possible stochastic-process limits, we exclusively focus on the critical system behavior, and it is that one specific heavy-traffic regime for which we formally derive the stochastic-process limits.
% {\bl  Our technique has a two-fold advantage: on one hand it allows to obtain the tightest possible assumptions under which our results hold (e.g. $\E[S^2]<\infty$). On the other hand, it is more robust than the techniques in \Lou~and can thus be exploited to treat, for example, models with correlated arrival times.}
Moreover, we also identify further conditions on the density $f_{\sss T}$ that influence limiting behavior. We show, for instance, that different stochastic-process limits arise depending on how many derivatives of the density $f_{\sss T}(t)$ in $t_{\rm max}$ are zero. {\gr Although in the first part of the paper we focus on the  exponential case, for which all derivatives are different from zero},  our methodology can be used to obtain process limits for all such scenarios, as discussed in Section \ref{sec:general_arrivals_overview}. 

Hence, to force non-degenerate limiting behavior in the critical $\Delta_{(i)}/GI/1$ queue, uniform acceleration is required, in combination with additional assumptions on the arrival times density in its maximum. This approach  also has connections with the work of Mandelbaum and Massey \cite{mandelbaum1995strong} for the $M_t/M_t/1$ queue, who derive a fluid approximation through a FLLN and use this approximation to classify various operating regimes. In this setting, our result corresponds to the `Onset of Critical Loading' regime  \cite[Theorem $3.4$]{mandelbaum1995strong} and the results of \cite{honnappa2012delta} correspond to the FLLN and the FCLT \cite[Theorems $2.1$ and $2.2$]{mandelbaum1995strong}, as can be seen in Table \ref{tab:comparison_models}. 

{\renewcommand{\arraystretch}{2.5}
\begin{table}[!htbp]
\centering
\begin{tabular}{ c | c c | c c}
model & $\Delta_{(i)}/G/1$ & $\Delta_{(i)}/G/1$ \cite{honnappa2012delta, louchard1994large} & $M_t/M_t/1$ \cite{mandelbaum1995strong} & $M_t/M_t/1$ \cite{mandelbaum1995strong} \\
\hline
regime & \pbox{20cm}{crit. loaded\\in $t_{\mathrm max}$} & overloaded &  \pbox{20cm}{crit. loaded\\in $t_{\mathrm max}$}  & overloaded \\
time scaling & $n^{-1/3}$ & --- &$\varepsilon^{1/3}$ & --- \\
spatial scaling & $n^{-1/3}$ & \pbox{20cm}{$n^{-1}$ (FLLN) \\ $n^{-1/2}$(FCLT)}  & $\varepsilon^{1/3}$ & \pbox{20cm}{$\varepsilon$ (FLLN) \\ $\varepsilon^{1/2}$(FCLT)} \\
fluid limit & --- & $\phi(\int_0^t(\fT(s)-\mu)\mathrm{d}s)$ & --- & $\phi(\int_0^t(\lambda(s)-\mu(s))\mathrm{d}s)$\\
diffusion limit & $\phi(W)(t)$ & \pbox{20cm}{regime\\ switching}~$\left\{\pbox{20cm}{$B(t)$ \\ $\phi(B)(t)$ \\ 0}\right.$& $\phi(W)(t)$ & \pbox{20cm}{regime\\ switching}~$\left\{\pbox{20cm}{$B(t)$ \\ $\phi(B)(t)$ \\ 0}\right.$\\
\end{tabular}
\caption{A comparison between the results for our model, the $\Delta_{(i)}/G/1$ queue and the $M_t/M_t/1$ queue. For simplicity, we have omitted the variances of the Brownian motions.}\label{tab:comparison_models}
\end{table}
}
{\bl  In Table \ref{tab:comparison_models}, $W(t) = B(t) + \frac{\fT'(t_{\rm max})}{2}t^2$, $\fT(t)$ denotes the probability density function of the arrival random variables $T$ and $\phi(\cdot)$ is the reflection mapping. Moreover, for the $M_t/M_t/1$ queue it is assumed that the arrival and service processes are inhomogeneous Poisson processes with time-dependent rate $\lambda(s)$ and $\mu(s)$, respectively. The results for the critically loaded $\Delta_{(i)}/G/1$ queue appear in \Lou~(heuristically) as well as in this paper.}
It can also be seen from Table \ref{tab:comparison_models} that the $M_t/M_t/1$ queue and the  $\Delta_{(i)}/G/1$ queue are intimately related. To see this, consider an inhomogeneous Poisson process $N(\int_0^t\lambda(s)\mathrm{d}s)$ (with $N(\cdot)$ a Poisson process of rate one). Then, conditioned on  $\{N(\int_0^t\lambda(s)\mathrm{d}s) = k\}$, the $k$ points themselves are a family of i.i.d.~random variables with density given by ${\lambda(s)}/{\int_0^t\lambda(z)\mathrm{d}z}$.
In particular, if $\int_0^{\infty}\lambda(z)\mathrm{d}z < \infty$, conditioned on the total number of Poisson points in $[0,\infty)$ (say, $n$), the points are a family of $n$ i.i.d. random variables. In \cite{honnappa2014transitory}, the authors explore this connection in relation to the $\Delta_{(i)}/G/1$ queue in great detail.

\section{Main results}
{\gr Let us now summarize our results. We begin by focusing on the case of exponentially distributed arrival times. 
Given a process $X(\cdot)$ with trajectories in $\mathcal D([0,\infty),\mathbb R)$, the space of c\`adl\`ag functions (endowed with the Skorohod $J_1$ topology), we denote by $\phi (X)(\cdot)$ its reflected version, obtained through the map
\begin{equation}\label{OneDimRefl}%
\phi(f)(x):=f(x)-\inf_{y\leq x}(f \wedge 0)(y).
\end{equation}%
The critical behavior of the $\Delta_{(i)}/G/1$ queue is then determined by the following theorem:}
\begin{theorem}[The critically loaded $\Delta_{(i)}/G/1$ queue with exponential arrivals]\label{MainTheorem_delta_G_1}
Let $Q^{\Delta}_n(\cdot)$ be the queue length process of $\Delta_{(i)}/G/1$ queue with rate $\lambda$ exponential arrivals and service times $(S_i)_{i\geq1}$ such that $\E[S^2]<\infty$. Assume that the heavy-traffic condition \eqref{crit2} holds. Then
\begin{equation}\label{eq:main_theorem_delta_G_1_conclusion}%
n^{-1/3}Q_n^{\Delta}( \cdot n^{-1/3})\stackrel{\mathrm{d}}{\rightarrow} \phi(W)( \cdot ),
\end{equation}%
where $W(\cdot)$ is the diffusion process
\begin{equation}\label{eq:main_theorem_delta_G_1_diffusion_definition}%
W(t)= \beta\lambda t   -\frac{\lambda^2}{2}t^2 + \sigma B(t),
\end{equation}%
with $\sigma^2 = \lambda^3 \mathbb E[S^2]$ and $B(\cdot)$ a standard Brownian motion.
\end{theorem}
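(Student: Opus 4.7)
The strategy is to express the queue length as the reflection of a (free) netput process, then establish a functional CLT for the netput on the time scale $n^{-1/3}$, and finally invoke continuity of the one-dimensional reflection map $\phi$. Let $A_n(t):=\sum_{i=1}^n \indic{T_i\leq t}$ denote the arrival process, which is $\mathrm{Bin}(n,1-\e^{-\lambda t})$-distributed, and let $N_S(t):=\max\{k\leq n:\sum_{i=1}^k D_i\leq t\}$ denote the potential-service counting process where $D_i=S_i/n$. The queue length then satisfies $Q_n^{\Delta}(t)=A_n(t)-N_S(t-I_n(t))$, where $I_n(t):=\int_0^t\indic{Q_n^{\Delta}(s)=0}\,\mathrm{d}s$ is the cumulative idle time. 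I would first show that, up to terms that are $\oP(n^{1/3})$ uniformly on compacts of the rescaled time, we have $Q_n^{\Delta}(t)=\phi(A_n-N_S)(t)$, so that the problem reduces to identifying the limit of the free process $X_n(t):=A_n(t)-N_S(t)$ under the scaling $(s,q)\mapsto (sn^{-1/3},qn^{-1/3})$.

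Next I would perform a mean/fluctuation decomposition of $X_n$. On the scale $t=sn^{-1/3}$, a Taylor expansion gives
\begin{equation*}
\E[A_n(sn^{-1/3})]=n(1-\e^{-\lambda sn^{-1/3}})=\lambda s\,n^{2/3}-\tfrac{\lambda^2 s^2}{2}n^{1/3}+O(1),
\end{equation*}
while the elementary renewal theorem, combined with the heavy-traffic assumption \eqref{crit2} (equivalently $\E[S]=\lambda^{-1}(1+\beta n^{-1/3})$), yields
\begin{equation*}
\E[N_S(sn^{-1/3})]=\lambda s\,n^{2/3}-\lambda\beta s\,n^{1/3}+o(n^{1/3}).
\end{equation*}
Subtracting and dividing by $n^{1/3}$ produces the deterministic drift $\beta\lambda s-\tfrac{\lambda^2}{2}s^2$ that appears in \eqref{eq:main_theorem_delta_G_1_diffusion_definition}. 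Observe that the $O(n^{2/3})$ leading terms cancel precisely because of the critical tuning $\rho_n=1+\beta n^{-1/3}$, so that the quadratic drift arising from the diminishing population becomes visible exactly at this scale.

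For the centered fluctuations I would apply the Donsker-type FCLT separately to $A_n-\E[A_n]$ (a triangular-array sum of indicators, or equivalently via the Doob--Meyer martingale decomposition available thanks to the memorylessness of exponential arrivals) and to $N_S-\E[N_S]$ (renewal FCLT). On the scale $t=sn^{-1/3}$ their rescaled versions converge jointly (independence of $(T_i)_i$ and $(S_i)_i$ lifts to joint convergence of the centered processes) to independent Brownian motions with infinitesimal variances $\lambda$ and $\Var(S)/\E[S]^3\to\lambda^3\Var(S)$ respectively. Summing these contributions and using $\lambda\E[S]\to 1$ gives total variance $\lambda+\lambda^3\Var(S)=\lambda^3\E[S^2]=\sigma^2$. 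Hence $n^{-1/3}X_n(\cdot n^{-1/3})\sr{\mathrm d}{\to} W(\cdot)$ in $\mathcal{D}([0,\infty),\mathbb R)$. Continuity of $\phi$ in the $J_1$ topology and the continuous mapping theorem then deliver \eqref{eq:main_theorem_delta_G_1_conclusion}.

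The main obstacle is justifying the identity $Q_n^{\Delta}=\phi(A_n-N_S)+\oP(n^{1/3})$, i.e.\ controlling the discrepancy caused by replacing $N_S(t-I_n(t))$ with $N_S(t)$. A direct route is to bound $N_S(t)-N_S(t-I_n(t))$ by a local modulus of continuity of the renewal process and show it remains $\oP(n^{1/3})$ after rescaling; equivalently, one can work with the workload $V_n$, for which the reflection identity $V_n=\phi(Z_n)$ with $Z_n(t)=\sum_{i=1}^{A_n(t)} D_i-t$ holds exactly, and then translate via the heavy-traffic snapshot principle $Q_n^{\Delta}(t)\approx V_n(t)/\E[D]$. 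The latter conversion absorbs the factor $n/\E[S]\to n\lambda$, which is precisely what turns the workload variance $\lambda\E[S^2]$ into $\lambda^3\E[S^2]$ and the workload drift $\beta s-\tfrac{\lambda}{2}s^2$ into $\beta\lambda s-\tfrac{\lambda^2}{2}s^2$, matching \eqref{eq:main_theorem_delta_G_1_diffusion_definition}.
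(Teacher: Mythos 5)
Your overall plan is coherent and the calculations that underpin it are correct: the binomial expansion of $\E[A_n(sn^{-1/3})]=n(1-\e^{-\lambda sn^{-1/3}})$ produces the $-\tfrac{\lambda^2}{2}s^2$ term, the elementary renewal estimate combined with $\rho_n=1+\beta n^{-1/3}$ yields the $\beta\lambda s$ term, and the variance bookkeeping $\lambda+\lambda^3\Var(S)=\lambda^3\E[S^2]$ matches $\sigma^2$. However, your route is genuinely different from the paper's, and the one step you label as ``the main obstacle'' is precisely the place where the paper invests essentially all of its proof effort, so it cannot be waved away.

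The paper does not work with the continuous-time netput $A_n-N_S$ at all. It builds an auxiliary process $Q_n(k)$, embedded at service completions, which \emph{never idles}: when the queue empties, the next customer is pulled into service immediately. For this process the reflection identity $Q_n=\phi(N_n)$ is \emph{exact}, and a Doob decomposition of $N_n$ plus a martingale FCLT yields Theorem~\ref{MainTheorem_exponential}. The passage to $Q_n^{\Delta}$ then uses (a) a coupling that works only because the exponential clocks can be redrawn after each busy period (Lemma~\ref{lem:embedded_queue_coincides_with_delta}), and (b) a random time change $\varphi_n$ that converts service-completion index into physical time and absorbs the idle periods, with Lemmas~\ref{lem:claim_1} and~\ref{lem:claim_2} showing the idle time is $O_{\mathbb P}(n^{-2/3})$ and the within-service arrival count is $o_{\mathbb P}(n^{1/3})$. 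Your approach is closer in spirit to Iglehart--Whitt and Louchard: work directly in physical time with $X_n=A_n-N_S$ and argue $Q_n^{\Delta}=\phi(X_n)+o_{\mathbb P}(n^{1/3})$. Each route has real advantages: yours is conceptually cleaner and exposes the drift and variance transparently, while the paper's embedding makes the reflection exact and, more importantly, is what carries over to the general-arrival and $\ell$-th-order-contact extensions (Theorems~\ref{MainTheorem} and~\ref{th:MainTheoremKthOrderContact}) where the continuous-time Markov structure of $A_n$ is less clean.

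Where your argument is not yet a proof: you need the discrepancy $N_S(t)-N_S(t-I_n(t))-\bigl(-\inf_{s\leq t}(X_n(s)\wedge0)\bigr)$ to be uniformly $o_{\mathbb P}(n^{1/3})$, but both terms are individually $O_{\mathbb P}(n^{1/3})$, so a crude modulus-of-continuity bound on $N_S$ over an interval of random length $I_n(t)=O_{\mathbb P}(n^{-2/3})$ gives $O_{\mathbb P}(n^{1/3})$ and is therefore inconclusive. What works is the finer decomposition $N_S(t)-N_S(t-I_n(t))=I_n(t)/\E[D]+O_{\mathbb P}\bigl(\sqrt{I_n(t)/\E[D]}\bigr)$ followed by a stability argument for the Skorokhod map: $(Q_n^{\Delta},I_n/\E[D])$ is an $o_{\mathbb P}(n^{1/3})$-approximate solution of the Skorokhod problem for $X_n$, and Lipschitz continuity of $\phi$ then gives $\|Q_n^{\Delta}-\phi(X_n)\|=o_{\mathbb P}(n^{1/3})$. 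You would also need an a priori bound $I_n(Tn^{-1/3})=O_{\mathbb P}(n^{-2/3})$ — this is not free, and the paper obtains it (Lemma~\ref{cor:NumbBusyPeriodsConv_first} and Corollary~\ref{cor:cumul_idle_time_is_negligible}) via the continuity of the running-infimum map applied to the convergence of the free process, so you would need to prove the free-process FCLT \emph{before} you can close the reflection step, making the logic circular unless you are careful about the order of arguments. Finally, your fallback via the workload and the snapshot principle is not a shortcut: making ``$Q_n^{\Delta}(t)\approx V_n(t)/\E[D]$'' rigorous at the $o_{\mathbb P}(n^{1/3})$ level requires essentially the same control over the idle time and the renewal fluctuations.
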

{\gr As an immediate consequence of Theorem \ref{MainTheorem_delta_G_1} we get an asymptotic result for the $\text{BP}_n$, the length of the first busy period in the $\Delta_{(i)}/G/1$ queue (so $\text{BP}_n$ describes the number of customers that are served without the queue becoming empty). In order to obtain a sizeable first busy period, we assume that the queue length at time zero is deterministic and grows with $n$ as
\begin{equation}\label{eq:busy_period_starting_queue_length}%
 \lim_{n\rightarrow\infty}\frac{Q_n^{\Delta}(0)}{n^{1/3}} = q>0.
\end{equation}%
As the next theorem shows, the size of the first busy period depends crucially both on $\beta$ and $q$:
\begin{theorem}[Busy period of the critical $\Delta_{(i)}/G/1$ queue with exponential arrivals]\label{cor:busy_period_critical_delta_queue}%
Let $Q_n^{\Delta}(\cdot)$ be the queue length process of the $\Delta_{(i)}/G/1$ queue. Assume that the heavy-traffic condition \eqref{crit2} holds. Assume further that \eqref{eq:busy_period_starting_queue_length} holds.
Let $\emph{BP}_n$ denote the first busy period of $Q_n^{\Delta}(\cdot)$. Then 
\begin{equation}\label{eq:busy_period_critical_delta_queue_statement}%
n^{1/3}\emph{BP}_n \sr{\mathrm d}{\rightarrow} T_{\sss W_q}^{\beta\lambda}(0),
\end{equation}%
where $T^{\beta\lambda}_{\sss W_q}(0)$ is the time until the process $W_q(\cdot)$ crosses level $0$, with
\begin{equation}%
W_q(t)=q+\beta \lambda t  -\frac{\lambda^2}{2}t^2 + \sigma B(t).
\end{equation}%
As above, $\sigma^2 = \lambda^3 \mathbb E[S^2]$ and $B(\cdot)$ is a standard Brownian motion.
\end{theorem}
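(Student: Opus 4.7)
The plan is to derive Theorem~\ref{cor:busy_period_critical_delta_queue} from Theorem~\ref{MainTheorem_delta_G_1} via a continuous mapping argument applied to a first-passage-time functional. First I would observe that Theorem~\ref{MainTheorem_delta_G_1} extends essentially unchanged to the setting of a deterministic nonzero initial queue length satisfying $Q_n^{\Delta}(0)/n^{1/3}\to q$. The proof of Theorem~\ref{MainTheorem_delta_G_1} carries through verbatim once $Q_n^{\Delta}(0)$ is added as a shift in the martingale representation, because the spatial scale $n^{1/3}$ of the diffusion limit coincides with the scale on which the initial condition is assumed to deviate from zero; the parabolic drift and variance are unaffected. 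Thus
\[
n^{-1/3}Q_n^{\Delta}(\,\cdot\, n^{-1/3})\ \sr{\mathrm{d}}{\rightarrow}\ \phi(W_q)(\,\cdot\,)
\]
in $\mathcal{D}([0,\infty),\mathbb{R})$ equipped with the Skorohod $J_1$ topology.

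Next I would rewrite the busy period in terms of a first-passage time of the rescaled process. Setting $\tilde Q_n(t):=n^{-1/3}Q_n^{\Delta}(tn^{-1/3})$, the time change gives
\[
n^{1/3}\,\text{BP}_n\ =\ \inf\{t>0:\tilde Q_n(t)=0\}.
\]
Let $\tau_0(f):=\inf\{t>0:f(t)\le 0\}$ be the first-passage-to-zero functional on $\mathcal{D}([0,\infty),\mathbb{R})$. Since $\phi(W_q)(0)=q>0$ and the reflection map leaves a path unchanged until its first zero, one has $\tau_0(\phi(W_q))=\tau_0(W_q)=T^{\beta\lambda}_{\sss W_q}(0)$ in the notation of the statement.

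The final step is the continuous mapping argument. The functional $\tau_0$ is well known to fail continuity only at paths that touch zero tangentially before crossing; since $W_q$ is a Brownian motion perturbed by a smooth parabolic drift, the classical local fluctuation theory of Brownian motion ensures that almost surely $W_q$ achieves a strict downcrossing at its first zero. Moreover, the drift term $-\lambda^2 t^2/2$ drives $W_q(t)\to-\infty$ almost surely, so $\tau_0(W_q)<\infty$ a.s. Consequently the discontinuity set of $\tau_0$ has probability zero under the law of $\phi(W_q)$, and the continuous mapping theorem yields $n^{1/3}\,\text{BP}_n\sr{\mathrm{d}}{\rightarrow}T^{\beta\lambda}_{\sss W_q}(0)$.

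The main obstacle I anticipate is making the continuity of $\tau_0$ at $\phi(W_q)$ fully rigorous, because the discrete process $\tilde Q_n$ takes values in $n^{-1/3}\mathbb{Z}_{\ge 0}$ and its first hit of zero need not line up exactly with the first zero of the continuous limit. A clean way to handle this is through the Skorohod representation theorem: transfer the convergence $\tilde Q_n\to\phi(W_q)$ to almost sure uniform-on-compacts convergence, and then use the strict crossing of $W_q$ at $\tau_0(W_q)$ to sandwich $\inf\{t>0:\tilde Q_n(t)=0\}$ between the first times that $W_q$ passes below $\varepsilon$ and below $-\varepsilon$, letting $\varepsilon\downarrow 0$ at the end.
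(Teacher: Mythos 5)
Your proposal is correct and follows essentially the same route as the paper's own proof: convergence of the rescaled queue length process (Theorem~\ref{MainTheorem_delta_G_1} adapted to the shifted initial condition), rewriting $n^{1/3}\text{BP}_n$ as the first-passage time of the rescaled process, and concluding by the continuous mapping theorem applied to the first-passage functional. The only cosmetic difference is that the paper cites \cite[Chapter VI, Proposition~2.11]{jacod2003limit} for a.s.\ continuity of the first-passage functional at $W_q$, whereas you supply the underlying reason directly (strict downcrossing of Brownian motion with smooth drift, and identification $\tau_0(\phi(W_q))=\tau_0(W_q)$ up to the first zero) together with a Skorohod representation argument — a slightly more self-contained treatment of the same step.
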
%

\begin{proof}%
For a function $f(\cdot):\mathbb R^+\mapsto\mathbb R$ such that $f(0) >0$, let $T_f(0):= \inf\{t>0: f(t)\leq0\}$ denote the time until $f(\cdot)$ crosses level $0$.
%By construction, the $\Delta_{(i)}/G/1$ queue and our model coincide up to the end of the first busy period, that is $Q_n^{\Delta}(\cdot\wedge T_{\sss Q_n^{\Delta}}(0)) \sr{\mathrm{a.s.}}{=} Q_n(\cdot\wedge T_{\sss Q_n}(0))$. 
%Moreover, $n^{-1/3}Q_n(k) = n^{-1/3}\phi(N_n)(k)$, where $n^{-1/3}N_n(k) = q + n^{-1/3}\tilde N_n(k)$, and $\tilde N_n(k)$ is the pre-reflection queue length process starting in zero, as defined in \eqref{QueueLength}. By Theorem \ref{MainTheorem} and \eqref{eq:busy_period_starting_queue_length},
%%
%\begin{equation}%
%n^{-1/3}N_n(\cdot n^{2/3})\sr{\mathrm d}{\rightarrow} q+ W(\cdot).
%\end{equation}%
%%
By Theorem \ref{MainTheorem_delta_G_1} and \eqref{eq:busy_period_starting_queue_length},
\begin{equation}%
n^{-1/3}Q_n^{\Delta}(\cdot n^{-1/3})\sr{\mathrm d}{\rightarrow} W_q(t).
\end{equation}%
The functional $f\mapsto T_f(0)$ is a.s.~continuous in $W_q(\cdot)$ by \cite[Chapter VI, Proposition 2.11]{jacod2003limit}. Moreover,
\begin{equation}%
n^{1/3}\text{BP}_n = n^{1/3} \inf\{t>0: Q_n^{\Delta}(t)\leq0\} = \inf\{t>0: n^{-1/3}Q_n^{\Delta}(tn^{-1/3})\leq 0\}.
\end{equation}%
The conclusion then immediately follows from these observations and the Continuous Mapping Theorem.
\end{proof}%
}
\begin{figure}[!hbtp]

	\centering
	\includegraphics{}
	\caption{A sample path of $W_1(t)$ with $\beta =1$ (solid) and the drift $1+t-\frac{1}{2}t^2$ (dashed).}
	\label{fig:brownian_motion_parabolic_drift_headstart}
	
\end{figure}%
\subsection{Numerical examples}
{\bl  Theorem \ref{cor:busy_period_critical_delta_queue} can be used to obtain numerical approximations of quantities related to the first busy period of the critically loaded $\Delta_{(i)}/G/1$ queue. In \cite{martin1998final} an explicit expression for the density of $T_{\sss W}^{\beta}(0)$ is given, and numerical simulations are carried out to display how the shape of the density crucially depends on the two parameters $q$ and $\beta$.  Let Ai$(x)$ and Bi$(x)$ denote the classical Airy functions (see \cite{abramowitz1964handbook}). 
\begin{theorem}[First passage time density \cite{martin1998final}]\label{th:first_passage_time_density}%
The first crossing time of zero of $W_q(t) = q+\beta t -1/2t^2 + \sigma B(t)$ has probability density
\begin{equation}\label{eq:first_passage_time_density}%
f_q(t;\beta,\sigma) = \e^{-((t-\beta)^3+\beta^3)/6\sigma^2-\beta x}\int_{-\infty}^{+\infty}\e^{tu}\frac{B(u)A(u-x)-A(u)B(u-x)}{\pi(A^2(u)+B^2(u))}\mathrm{d}u,
\end{equation}%
where $c = (2\sigma^2)^{1/3}$, $x=q/\sigma^2>0$, $A(u)=\emph{Ai}(cu)$ and $B(u) = \emph{Bi}(cu)$.
\end{theorem}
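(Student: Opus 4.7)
The plan is to prove the formula by a Girsanov change of measure that removes the parabolic drift, followed by a Feynman--Kac representation whose spatial part is the Airy operator.

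First I would introduce a new probability measure $\widetilde{\mathbb P}$ via
\begin{equation}
\frac{d\widetilde{\mathbb P}}{d\mathbb P}\bigg|_{\mathcal F_t} = \exp\left(-\int_0^t \frac{\beta-s}{\sigma}\,dB(s) - \frac{1}{2}\int_0^t \frac{(\beta-s)^2}{\sigma^2}\,ds\right),
\end{equation}
so that under $\widetilde{\mathbb P}$ the process $W_q(t)$ becomes $q + \sigma \widetilde B(t)$ with $\widetilde B$ a standard Brownian motion. Inverting this relation expresses the first-passage density of $W_q$ to $0$ as
\begin{equation}
f_q(t;\beta,\sigma) = \widetilde{\expec}\bigg[\exp\Big(\int_0^T \tfrac{\beta-s}{\sigma}\,d\widetilde B(s) - \tfrac{1}{2}\int_0^T \tfrac{(\beta-s)^2}{\sigma^2}\,ds\Big)\,\bigg|\,T = t\bigg]\cdot g_{q,\sigma}(t),
\end{equation}
where $T$ is the first passage of $q+\sigma \widetilde B$ to zero and $g_{q,\sigma}$ is its classical density. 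The deterministic integral evaluates exactly to $((t-\beta)^3+\beta^3)/(6\sigma^2)$, producing the first exponential factor in \eqref{eq:first_passage_time_density}.

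Next, Itô's formula applied to $s\mapsto (\beta-s)\widetilde B(s)$ converts the stochastic integral into the boundary term $(\beta-T)\widetilde B(T)/\sigma$ plus the pathwise integral $\sigma^{-1}\int_0^T \widetilde B(s)\,ds$. Since $\widetilde B(T)=-q/\sigma$ on $\{T<\infty\}$, the boundary term contributes $-\beta q/\sigma^2 + Tq/\sigma^2 = -\beta x + Tx$ with $x=q/\sigma^2$. This accounts for the $\e^{-\beta x}$ prefactor in the statement; the factor $\e^{Tx}$ is absorbed into a shift of the integration variable $u\mapsto u-x$ in the Bromwich inversion below. It remains to compute the joint transform $\widetilde{\expec}_q[\e^{\sigma^{-1}\int_0^T \widetilde B(s)\,ds}\,\1_{T\in dt}]$, which by Feynman--Kac coincides with the fundamental solution at $(t,q)$ of the parabolic equation
\begin{equation}
\partial_t u = \tfrac{\sigma^2}{2}\,\partial_{yy}u + \tfrac{y}{\sigma}\,u \quad \text{on } (0,\infty), \qquad u(t,0)=0.
\end{equation}

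Finally, after the affine rescaling $y = \sigma^2 z$, separation of variables reduces the spatial part to the Airy eigenvalue problem $\tfrac{1}{2}\phi''(z) - z\,\phi(z) = -\mu\,\phi(z)$ on $(0,\infty)$ with Dirichlet condition at $0$, whose two linearly independent solutions are $\mathrm{Ai}(cz-\mu)$ and $\mathrm{Bi}(cz-\mu)$ with $c=(2\sigma^2)^{1/3}$. The Green's function for the associated Sturm--Liouville operator on the half line, with Wronskian normalisation $\pi^{-1}$, is precisely the kernel
\begin{equation}
K(u,x) = \frac{B(u)A(u-x) - A(u)B(u-x)}{\pi(A(u)^2 + B(u)^2)},
\end{equation}
and inverting the Laplace transform back in $t$ along the Bromwich contour $\int \e^{tu}\,du$ produces the stated integral. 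The main obstacle is this last step: one must justify the spectral representation of the Airy operator on $(0,\infty)$ with Dirichlet boundary, verify decay/growth conditions on $\mathrm{Ai}$ and $\mathrm{Bi}$ to deform the contour to the real line, and check that the resulting non-negative expression integrates to one in $t$. These technical points are where the argument of \cite{martin1998final} relies on detailed Airy-function asymptotics and Sturm--Liouville theory; accepting them, the factorisation of the prefactors above matches the claimed density exactly.
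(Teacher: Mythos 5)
The paper imports this formula from \cite{martin1998final} without proof, so there is no internal argument to compare against; I assess your sketch on its own. The architecture is the right one---Girsanov to strip the parabolic drift, It\^{o} by parts to isolate the endpoint contribution, Feynman--Kac to an Airy ODE, then spectral inversion---and the cubic Girsanov prefactor $\tfrac12\int_0^t\tfrac{(\beta-s)^2}{\sigma^2}\,\mathrm{d}s=\tfrac{(t-\beta)^3+\beta^3}{6\sigma^2}$ is correct.

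Two points to tighten. First, the $\mathrm{e}^{Tx}$ you extract from the boundary term $(\beta-T)\widetilde B(T)/\sigma$ is not ``absorbed into a shift $u\mapsto u-x$''; it cancels exactly. Writing $Y=q+\sigma\widetilde B$, you have $\sigma^{-1}\int_0^T\widetilde B\,\mathrm{d}s=\sigma^{-2}\int_0^T Y\,\mathrm{d}s-xT$, so the two $xT$ terms cancel and the Feynman--Kac functional is $\mathrm{e}^{\sigma^{-2}\int_0^T Y\,\mathrm{d}s}$, i.e.\ potential $y/\sigma^2$ (not $y/\sigma$) against generator $\tfrac{\sigma^2}{2}\partial_{yy}$. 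Its Laplace transform solves $\tfrac{\sigma^2}{2}\hat h''+(y/\sigma^2-\lambda)\hat h=0$, whose Airy solutions have argument $c(\lambda-y/\sigma^2)$ with $c=(2\sigma^2)^{1/3}$; evaluating at $y=q$ produces the $u-x$ in the stated kernel, so the shift is spatial, not a change of contour variable. Second, and more substantively, the step you flag as ``the main obstacle'' is in fact the bulk of the theorem: constructing the Green's function with the $\pi^{-1}$ Wronskian normalisation, rotating the Bromwich contour onto $\mathbb R$ (which needs the $\mathrm{Bi}^2$ in the denominator to dominate $\mathrm{e}^{tu}$ via Airy asymptotics as $u\to+\infty$), and verifying the result integrates to one. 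Those pieces are asserted, not established; your sketch reproduces the scaffolding of \cite{martin1998final} but does not close it.
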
%
{\bl  Figure \ref{fig:distribution_function_convergence} shows the convergence of the empirical density function of the first excursion length of a $\Delta_{(i)}/G/1$ queue to the analytic expression provided by \cite{martin1998final} and also illustrates the influence of the parameters $\beta$ and $q$. 
}
\begin{figure}[!htb]
	\centering
	\includegraphics{}
	\caption{Density plot (dashed line) and Gaussian kernel density estimate (thick line) obtained by running $100000$ simulations of a $\Delta_{(i)}/G/1$ queue with $n=10000$ customers.}			
	\label{fig:distribution_function_convergence}
\end{figure}

{\bl Figure \ref{fig:distribution_function_convergence} suggests that to obtain a considerable first busy period, both parameters $\beta$ and $q$ must be chosen appropriately in order to avoid a concentration of the probability mass close to zero. Indeed, in \cite{martin1998final} it is conjectured that there exists a $\bar q$ such that for all $q>\bar q$ and every choice of $\beta$ the distribution function is unimodal, while for $q<\bar q$ there exists a $\bar \beta = \bar \beta (q)$ such that for $\beta <\bar \beta$ the distribution is unimodal, and bimodal otherwise.
}

In \cite{van2010critical} asymptotic expressions are determined for the tail probabilities $\mathbb P(T_{\sss W_q}^{\beta}(0)>x)$ for $x$ large. The most relevant result is the following:
\begin{theorem}[Tail busy period length for large $x$ \cite{van2010critical}]%
For bounded $q/\sigma^2>0$,
\begin{equation}\label{eq:tail_busy_period}
\lim_{n\rightarrow\infty}\mathbb P(\emph{BP}_n\geq x n^{2/3}) = \frac{3\sigma\sqrt{x}}{\sqrt{2\pi}}\frac{\exp(-\mathcal F_3(x)/6\sigma^2)}{\mathcal F_3'(x)}(1+O(1/x)), \qquad x\rightarrow\infty,
\end{equation}
with $\mathcal F_3(x) = (x - \beta)^3 - \frac{1}{4}x^3 - 3qx + \beta^3 + 6\beta q$.
\end{theorem}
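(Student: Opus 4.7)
The strategy divides into two stages. First, Theorem \ref{cor:busy_period_critical_delta_queue} gives the weak convergence $n^{1/3}\mathrm{BP}_n\sr{\mathrm d}{\rightarrow} T^{\beta\lambda}_{W_q}(0)$, which reduces the prelimit tail probability to the tail of the first-passage time $T^{\beta\lambda}_{W_q}(0)$ of Brownian motion with parabolic drift in the limit $n\to\infty$. To turn this weak convergence into a quantitative statement with an $O(1/x)$ correction, I would establish a uniform-in-$n$ tail bound of the form $\prob(n^{1/3}\mathrm{BP}_n\geq y)\leq C\e^{-cy^3}$, matching the cubic exponential decay predicted by $\mathcal F_3(y)\sim \tfrac{3}{4}y^3$ as $y\to\infty$. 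Such a bound should follow from standard exponential concentration applied to the random walk $A_n(\cdot)-S_n(\cdot)$ underlying the queue, once the critical drift has been subtracted; the diminishing-population effect then contributes the genuinely cubic decay.

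Second, I would extract the $x\to\infty$ asymptotics of $\prob(T^{\beta\lambda}_{W_q}(0)\geq x)$ from the explicit density formula \eqref{eq:first_passage_time_density}. Writing the tail as $\int_x^\infty f_q(t;\beta\lambda,\sigma)\,\mathrm dt$, I would apply a Laplace/steepest-descent analysis twice: first in the inner $u$-integral, locating the dominant saddle $u^\ast(t)$ from the large-argument asymptotics $\mathrm{Ai}(z)\sim\tfrac{1}{2\sqrt{\pi}}z^{-1/4}\e^{-\tfrac{2}{3}z^{3/2}}$ and $\mathrm{Bi}(z)\sim\tfrac{1}{\sqrt{\pi}}z^{-1/4}\e^{\tfrac{2}{3}z^{3/2}}$, yielding $f_q(t;\beta\lambda,\sigma)\sim C(t)\exp(-\Phi(t)/6\sigma^2)$ for an explicit rate function $\Phi$; and then in $t$, where the outer integral is dominated by the lower endpoint $t=x$, so integration by parts produces the factor $1/\Phi'(x)$. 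Matching with \eqref{eq:tail_busy_period} identifies $\Phi$ with $\mathcal F_3$ and fixes the prefactor $3\sigma\sqrt{x}/\sqrt{2\pi}$.

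An arguably more transparent alternative is a Freidlin-Wentzell variational argument: $\prob(T^{\beta\lambda}_{W_q}(0)\geq x)$ is the probability that $\sigma B$ stays above the curve $-(q+\beta\lambda s-\tfrac{\lambda^2}{2}s^2)$ on $[0,x]$. The Euler-Lagrange equation for minimising $\tfrac{1}{2}\int_0^x\dot\psi(s)^2\,\mathrm ds$ over absolutely continuous paths satisfying this constraint with equality at $s=x$ is elementary and has a piecewise-polynomial solution; evaluating the optimal cost recovers $\mathcal F_3(x)/6\sigma^2$ after some algebra, while the polynomial prefactor comes from a Gaussian correction about the minimiser that can be computed via a standard determinantal formula for an Airy-type operator.

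The main obstacle I expect is pinning down the exact prefactor. The exponential rate $\exp(-\mathcal F_3(x)/6\sigma^2)$ is natural from either viewpoint, but the precise constant $3\sigma\sqrt{x}/(\sqrt{2\pi}\,\mathcal F_3'(x))$ requires a careful combination of Gaussian corrections at the saddle in $u$, at the Laplace endpoint in $t$, and at the reflecting boundary at zero. In the Airy representation, the four terms $\mathrm{Ai}(cu)\mathrm{Bi}(c(u-x))$ and their companions each carry exponentially large factors that must cancel pairwise, leaving only the desired prefactor; identifying the correct deformed contour and verifying this cancellation is the heart of the analytic work, and is ultimately what justifies simply citing \cite{van2010critical, martin1998final} for the fine asymptotics rather than redoing the steepest-descent computation in this paper.
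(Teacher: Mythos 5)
You appear to be proving a theorem that the paper itself does not prove: the authors import \eqref{eq:tail_busy_period} wholesale from \cite{van2010critical}, together with the density formula of Theorem~\ref{th:first_passage_time_density} from \cite{martin1998final}. The only ingredient contributed by the present paper is the weak convergence $n^{1/3}\mathrm{BP}_n\sr{\mathrm d}{\rightarrow}T^{\beta\lambda}_{W_q}(0)$ of Theorem~\ref{cor:busy_period_critical_delta_queue}, which identifies the limiting first-passage problem. So there is no in-paper proof to compare against; your sketch is an outline of the argument that would go into \cite{van2010critical}, which the paper deliberately does not redo.

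Two remarks on the sketch itself. First, the ``uniform-in-$n$ cubic-exponential tail bound'' you propose is not needed for the statement as written: the limit $n\to\infty$ is taken \emph{first}, at fixed $x$, and $T^{\beta\lambda}_{W_q}(0)$ has a continuous law, so the weak convergence already gives $\lim_n\prob\bigl(n^{1/3}\mathrm{BP}_n\geq x\bigr)=\prob\bigl(T^{\beta\lambda}_{W_q}(0)\geq x\bigr)$ with no quantitative prelimit control; the $O(1/x)$ refinement lives entirely in the limiting quantity, extracted afterwards as $x\to\infty$. A uniform-in-$n$ bound would only become necessary if one wanted $x=x_n\to\infty$ jointly with $n$, which the theorem does not assert. (Separately, the event $\{\mathrm{BP}_n\geq x n^{2/3}\}$ in the display looks like a typo: since $n^{1/3}\mathrm{BP}_n=O_{\mathbb P}(1)$ by Theorem~\ref{cor:busy_period_critical_delta_queue}, that event has vanishing probability for every fixed $x>0$; the intended event must be $\{n^{1/3}\mathrm{BP}_n\geq x\}$.) Second, your two routes to the $x\to\infty$ asymptotics --- steepest descent on the Airy representation \eqref{eq:first_passage_time_density}, or a Freidlin--Wentzell variational computation for the exponential rate followed by a Gaussian fluctuation analysis for the prefactor --- both correctly locate the origin of $\mathcal F_3(x)/6\sigma^2$ and of $3\sigma\sqrt{x}/(\sqrt{2\pi}\,\mathcal F_3'(x))$, and you are right that the delicate cancellations among the exponentially large Airy pairs are the analytic heart of the matter; that is precisely why the paper cites \cite{van2010critical} instead of reproving it.
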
%
{\gr The expression in \eqref{eq:tail_busy_period} can be used, for example, to study the dependence of the probability of a very large first busy period on the parameters $q$ and $\beta$.}

We conclude by showing in Table \ref{tab:convergence_expectation_busy_period} numerical values for the mean busy period for exponential clock times with mean $1$ and different values of $q$ and $\beta$. Observe that the approximation $\E[\text{BP}_n]\approx n^{-1/3} \E[T_{\sss W}^{\beta}(0)]$ is accurate, also for moderate values of $n$.
%We now show how the already available machinery can be exploited to obtain approximations for the pre-limit $\Delta_{(i)}/G/1$ queue. Unless specified otherwise, arrival times are assumed to be exponentially distributed with rate $1$, and $q=\beta=1$. First we consider the average length of the first busy period $BP_n^{\sss (1)}$. In Table \ref{tab:convergence_expectation_busy_period} we numerically show that $\E[BP_n^{\sss (1)}]$ converges to its limiting value, so that the latter can be used as a good approximation for  $\E[BP_n^{\sss (1)}]$ even for relatively small $n$. The goodness of the approximation is verified by computing the relative error as $\frac{\vert\E[BP^{\sss (1)}_n]/n^{2/3}-\E[BP^{\sss (1)}_{\infty}]\vert}{\E[BP^{\sss (1)}_{\infty}]}$. 
%
\begin{table}[!htbp]
\centering
\begin{tabular}{ c  c c c c}
& \multicolumn{2}{ c }{$q=1$, $\beta=1$} & \multicolumn{2}{ c }{$q=2$, $\beta=1$}\\
\multicolumn{1}{c|}{$n$}  & $n^{1/3}\E[\mathrm{BP}_n]$ & rel. error & $n^{1/3}\E[\mathrm{BP}_n]$ & rel. error\\
\hline
\multicolumn{1}{c|}{$10$} &		3.0201	& 	0.5072 & 4.0407	&	0.4079	\\
\multicolumn{1}{c|}{$100$} &	2.2170	& 	0.1062 & 3.2611	&	0.1362	\\
\multicolumn{1}{c|}{$1000$} &	2.0341	& 	0.0151 & 2.9813	&	0.0387	\\
\multicolumn{1}{c|}{$10000$} &	2.0306	& 	0.0133 & 2.9351	&	0.0226	\\
\multicolumn{1}{c|}{$100000$} &	2.0295	& 	0.0128 & 2.9145	&	0.0155		\\
\multicolumn{1}{c|}{$\infty$} &	2.0038	& 	---    & 2.8701 &	---		\\
\end{tabular}
\caption{Mean busy period for the pre-limit queue with different population sizes and the exact expression for $n=\infty$ computed using \eqref{eq:first_passage_time_density}. Each value for the pre-limit queue is the average of $10000$ simulations. }\label{tab:convergence_expectation_busy_period}
\end{table}
}
\subsection{Outline}%
{\gr
The remainder of the paper is devoted to proving Theorems \ref{MainTheorem_delta_G_1} and \ref{cor:busy_period_critical_delta_queue}. The proof of Theorem \ref{MainTheorem_delta_G_1} consists of several steps. In Section \ref{sec:overview_proof} we give a detailed overview of the proof. In Section \ref{sec:Preparation} we settle some preliminaries and the required notation. Briefly, the proof then proceeds as follows.  First we approximate the $\Delta_{(i)}/G/1$ queue via a certain process $Q_n(\cdot)$ and this is shown to converge when appropriately rescaled (Section \ref{sec:main_theorem_exponential}). The approximation is then shown to be asymptotically equivalent to the $\Delta_{(i)}/G/1$ queue (Section \ref{sec:proof_main_theorem_delta}). As it turns out, the process $Q_n(\cdot)$ converges under much more general assumptions on the density in zero of the arrival clocks (Section \ref{sec:GeneralArrivals} and Section \ref{sec:ell_order_contact_proof}). While it does not seem to be possible to use this result to prove process convergence of the $\Delta_{(i)}/G/1$ queue process for more general arrivals, it can be exploited to provide insight into the behavior of its first busy period, and on the typical queue length (Section \ref{sec:general_arrivals_overview}).
In Section \ref{sec:ExtendedDiscussion} we discuss various connections with the literature in queueing theory, random graphs and statistics. There, we also discuss several future research directions.
}
\section{Overview of the proof}\label{sec:overview_proof}
{\gr The proof of Theorem \ref{MainTheorem_delta_G_1} proceeds in several steps. First, we construct a queueing model that \emph{approximates} the $\Delta_{(i)}/G/1$ queue. Second, we show that it \emph{coincides} with the $\Delta_{(i)}/G/1$ queue when embedded after service completions. Here we crucially use the properties of the exponential distribution. Third, we prove a version of Theorem \ref{MainTheorem_delta_G_1} for the (embedded) approximating model. Lastly, we show via a time change argument that the $\Delta_{(i)}/G/1$ queue and its embedded counterpart converge to the same limit.

Let us now present some more details about the approximating model we consider.}{\bl ~Like in the $\Delta_{(i)}/G/1$ queue, we consider} $n$ customers in a population {\bl  that all possess independent clocks that ring after i.i.d.~clock times with {\gr exponential} random variable $T$}. Whenever a clock rings, that customer joins the queue. Customers are served in order of arrival.
The service requirements of consecutive customers are given by the i.i.d.~random variables $(S_i)_{i= 1}^n$, independent from $n$. We assume that $ \mathbb E[S^2]<\infty$. We further assume that the service capacity per time unit scales as $\frac{1+\beta n^{-1/3}}{n}$, so that the service times are given by 
\begin{align}\label{eq:rescaled_service_definition}%
D_i=\frac{S_i}{n}\Big(1+\beta n^{-1/3}\Big),\qquad i=1,\ldots,n.
\end{align}%
%
%The service required by the $i$-th customer being served is given by a random variable $S_i$ such that $(S_i)_{i=1}^{\infty}$ are i.i.d. 
After service completion, the customer is removed from the system.
As explained below \eqref{crit} we shall work under the heavy-traffic condition
\begin{equation}\label{eq:CriticalityHypBeta_exponential}%
\rho_n = n\lambda \mathbb E[D]=1 + \beta n^{-1/3}.
\end{equation}%
 {\red If, after a service completion, the system is empty, the customer with the smallest arrival time is drawn from the population and is immediately put into service.}
 {\red As will become clear, considering the queue length process embedded at service completions makes the process more amenable to mathematical analysis (e.g.~allowing access to discrete-time martingale techniques).} Let $Q_n(k)$ denote the number of customers in the queue just after the service completion of the $k$-th customer. The queue length process $(Q_n(k))_{k\geq0}$, embedded at service completions, is then given by $Q_n(0)=0$ and
%It is defined as a discrete-time process, as the queue length computed at every departure, and then interpolated through piecewise-constant interpolation to obtain the continuous-time version. In more detail,
%
\begin{align}\label{TrueQueueLengthIntro}%
Q_n(k)=(Q_n(k-1)+A_n(k)-1)^+,\qquad k=1,2,\ldots
\end{align}%
with $x^+=\max\{0,x\}$ and $A_n(k)$ the number of arrivals during the service time of the $k$-th customer.
Here $A_n(k)$ is given by 
\begin{equation}\label{eq:ArrivalsDefinitionGeneralIntro}%
A_n(k)={\red \sum_{i\nin\nu_{k}}}\mathds{1}_{\{\sum^{k-1}_{j=1}D_j\leq {\red T_{i}}\leq \sum^{k}_{j=1}  D _j\}}
\end{equation}%
where {\red  $\nu_k$ is the set of customers no longer in the population at the beginning of the service of the $k$-th customer}. The system defined in \eqref{TrueQueueLengthIntro} and \eqref{eq:ArrivalsDefinitionGeneralIntro} neglects idle times, which is a simplification of the $\Delta_{(i)}/G/1$ model. This in turn will greatly simplify the analysis since it allows for the representation of the process as \eqref{TrueQueueLengthIntro} and \eqref{eq:ArrivalsDefinitionGeneralIntro}. 
%We return to this issue and the scaling limit of the virtual waiting time in Section \ref{SectionPreliminaries}. 

It is possible to give an equivalent definition of the process $Q_n(k)$ in \eqref{TrueQueueLengthIntro} through the reflection map. Define the process $(N_n(k))_{k\geq0}$ by $N_n(0)=0$ and
\begin{align}%
N_n(k)&=N_n(k-1)+A_n(k)-1.\label{QueueLength}
\end{align}%
Then it is easy to see that 
\[(Q_n(k))_{k\geq0}=(\phi(N_n)(k))_{k\geq0} \qquad \textrm{w.p.}~1.
 \]
To avoid unnecessary notation, for all discrete-time processes $(X(k))_{k\geq0}$, we write $X(t)$, with $t\in\mathbb R^+$  instead of $X(\lfloor t \rfloor)$. Note that the process defined in \eqref{QueueLength} may take negative values. We first prove a limit theorem for $N_n(\cdot)$ and then apply the reflection map to obtain a limit for the queue length process $Q_n(\cdot)$ defined in \eqref{TrueQueueLengthIntro}.
The critical behavior of this model is determined in the following theorem:
\begin{theorem}[{\gr Convergence of the approximating process}]\label{MainTheorem_exponential}
Let $Q_n(\cdot)$ be the process defined in \eqref{TrueQueueLengthIntro} and \eqref{eq:ArrivalsDefinitionGeneralIntro}, associated with rate $\lambda$ exponential arrival random variables $(T_i)_{i\geq1}$ and service times $(S_i)_{i\geq1}$ such that $\E[S^2]<\infty$. Then
\begin{equation}%
n^{-1/3}N_n(\cdot n^{2/3}) \sr{\mathrm{d}}{\rightarrow} W(\cdot),
\end{equation}%
where $W(\cdot)$ is the diffusion process
\begin{equation}%\label{eq:main_theorem_diffusion_definition}%
W(t)= \beta t  -\frac{1}{2}t^2 + \sigma B(t),
\end{equation}%
with $\sigma^2 = \lambda^2 \mathbb E[S^2]$ and $B(\cdot)$ a standard Brownian motion. Moreover,
\begin{equation}%\label{eq:main_theorem_conclusion}%
n^{-1/3}Q_n( \cdot n^{2/3})\stackrel{\mathrm{d}}{\rightarrow} \phi(W)( \cdot ).
\end{equation}%
\end{theorem}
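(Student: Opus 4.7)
The plan is to prove the convergence of $n^{-1/3}N_n(\cdot n^{2/3})$ to $W(\cdot)$ via a functional martingale central limit theorem applied to the Doob decomposition $N_n(k) = M_n(k) + V_n(k)$, where $V_n$ is the predictable compensator of $N_n$ with respect to the natural filtration $(\mathcal F_j)_{j\geq 0}$ and $M_n$ is the associated martingale. The exponential hypothesis is used crucially at this stage: by the memoryless property, conditional on $\mathcal F_{j-1}$ and on the (independent) service time $S_j$, the arrival count $A_n(j)$ is binomial with parameters $(F_j, 1 - \e^{-\lambda D_j})$, where $F_j$ denotes the number of customers whose clocks have not yet rung by the start of the $j$-th service. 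A short book-keeping argument gives $F_j = n - j - Q_n(j-1) + O(1)$, so that all subsequent estimates reduce to binomial moment computations averaged over the independent $S_j$.

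For the predictable compensator, expanding $1 - \e^{-\lambda D_j} = \lambda D_j - O(D_j^2)$ and averaging over $S_j$ yields
\begin{equation*}
\E[A_n(j)\mid \mathcal F_{j-1}] - 1 = \beta n^{-1/3} - \frac{j + Q_n(j-1)}{n}\bigl(1+\beta n^{-1/3}\bigr) + O(1/n),
\end{equation*}
where I use the critical identity $\lambda \E[S] = 1$ built into \eqref{eq:CriticalityHypBeta_exponential}. Summing to $K = \lfloor t n^{2/3}\rfloor$ and dividing by $n^{1/3}$, the first term contributes $\beta t$, the sum $\sum_{j\leq K} j/n$ contributes $-t^2/2$, and the $Q_n$-dependent term is negligible provided one controls $\sup_{k\leq K}Q_n(k)=\OP(n^{1/3})$. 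Securing this a priori bound is the main obstacle: I expect to obtain it by a self-consistency or bootstrap argument, for instance by dominating $N_n$ stochastically by an auxiliary random walk whose drift ignores the $Q_n$ feedback and can be analyzed with standard random-walk fluctuation estimates, and then iterating.

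For the martingale part, the conditional variance decomposes via the law of total variance into a within-binomial contribution $\E[F_j (1-\e^{-\lambda D_j})\e^{-\lambda D_j}\mid \mathcal F_{j-1}] \to \lambda \E[S] = 1$ and a between-service contribution $\Var(F_j(1-\e^{-\lambda D_j})\mid \mathcal F_{j-1}) \to \lambda^2 \Var(S)$, summing to $\sigma^2 = \lambda^2 \E[S^2]$. The accumulated quadratic variation over $K = \lfloor tn^{2/3}\rfloor$ steps is therefore $\sigma^2 t\, n^{2/3}(1+\oP(1))$, and the time-space rescaling converts this to $\sigma^2 t$ in the limit. A Lindeberg check is immediate since the jumps of $n^{-1/3}(A_n(j)-1)$ are uniformly $\oP(1)$ thanks to $\E[A_n(j)^2] = O(1)$. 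The martingale FCLT then yields $n^{-1/3}M_n(\cdot n^{2/3}) \Rightarrow \sigma B(\cdot)$, and combining with the drift convergence gives the claimed limit $n^{-1/3}N_n(\cdot n^{2/3}) \Rightarrow W(\cdot)$. Finally, since the one-dimensional reflection map $\phi$ is continuous on the Skorohod space at paths with continuous sample paths and $W$ is continuous almost surely, the Continuous Mapping Theorem transfers the limit to $n^{-1/3}Q_n(\cdot n^{2/3}) \Rightarrow \phi(W)(\cdot)$.
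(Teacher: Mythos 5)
Your proposal follows the same skeleton as the paper's proof (Doob decomposition $N_n = M_n + C_n$, martingale FCLT for the fluctuations, reflection map via continuity), and your drift and variance computations are consistent with the paper's; in particular the law-of-total-variance split recovering $\sigma^2 = \lambda^2\E[S^2]$ is a clean equivalent of the paper's direct expansion of $\E[A_n^2\mid\mathcal F] - \E[A_n\mid\mathcal F]^2$. However, there are two genuine gaps.

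First, the a priori bound on $N_n$ is left entirely as an ``I expect,'' and the bound you target, $\sup_{k\leq K}Q_n(k)=\OP(n^{1/3})$, is stronger than what is actually needed and essentially \emph{is} the conclusion of the theorem, so a bootstrap that targets it directly risks circularity. The paper only needs the much weaker $\sup_{j\leq an^{2/3}}|N_n(j)|=\oP(n^{2/3})$, which suffices because the dangerous term enters the rescaled drift as $n^{-4/3}\sum_{i\leq tn^{2/3}}N_n(i-1)$. The paper obtains this in two steps: Lemma \ref{oQueueLemma} dominates $N_n(k)$ by a sum of i.i.d.\ copies of $A_n'$ (which ignores all depletion, not just the $Q_n$ feedback) and uses a weak LLN, then Lemma \ref{oQueueLemmaUnif} upgrades this to a uniform bound via Doob's inequality for $M_n$ plus a monotone sandwich for $C_n$. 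Your suggested auxiliary walk that keeps the $-j/n$ drift would require the very FCLT analysis you are in the middle of proving; dropping the $-j/n$ term (as the paper does) gives only the weaker $\oP(n^{2/3})$ bound, which is why the paper settles for that.

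Second, the Lindeberg/jump condition is dismissed too quickly. The MFCLT requires $n^{-2/3}\E\bigl[\sup_{k\leq\bar t n^{2/3}}|A_n(k)-\E[A_n(k)\mid\mathcal F_{k-1}]|^2\bigr]\to 0$, which is a statement about the \emph{maximum} of $\Theta(n^{2/3})$ jumps, not each jump individually. Under only $\E[S^2]<\infty$, the bound $\E[A_n(j)^2]=O(1)$ does not give this: the expectation of the maximum over $n^{2/3}$ terms can be of order $n^{2/3}$ without uniform integrability. The paper needs Lemma \ref{LemmaSecondMomentAPrime} (stochastic domination of $A_n'$ by a rate-one Poisson process evaluated at $O(S_k)$, plus a truncation argument to show $\E[A_n'^2\mathds 1_{\{A_n'^2>\varepsilon n^{2/3}\}}]\to 0$), and this uniform integrability is genuinely the crux; it is also exactly where the assumption $\E[S^2]<\infty$ is used in a nontrivial way. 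Without it, your ``immediate'' Lindeberg check does not go through.
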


Recall that  $\phi(W)(\cdot)$ denotes the reflected version of $W(\cdot)$.
%\subsection{The scaling exponents}\label{sec:scaling_exponents}
%
{\bl The time-scaling exponent $2/3$ that appears in  Theorem \ref{MainTheorem} is intimately related to the exponent that appears in Table \ref{tab:comparison_models}. By \eqref{eq:ArrivalsDefinitionGeneralIntro}, the clocks $T_i$ of the customers joining the queue are of the order $T_i=O_{\mathbb P}(1/n)$. Thus, after scaling time by $n^{2/3}$ we effectively observe the queue at times of the order $O(n^{-1/3})$.}

We now provide a heuristic argument that explains the scaling exponents in Theorem \ref{MainTheorem}.
With  $\Sigma_i := \sum_{l=1}^iD_l/n$,
\begin{align}\label{eq:first_order_approx_queue_length}%
Q_n(tn^{\alpha}) &\approx \sum_{i=1}^{tn^{\alpha}}\big ( {\sum_{j\nin\nu_{i}}} \mathds 1_{\{\sum^{i-1}_{l=1}D_l\leq {T_{j}}\leq \sum_{l=1}^i D_l\}} -1\big)\nnl
&\approx \sum_{i=1}^{tn^{\alpha}} \big(n(F_{\sss T}(\Sigma_i) - F_{\sss T}(\Sigma_{i-1}))-1\big)\nnl
&\approx \sum_{i=1}^{tn^{\alpha}}  (f_{\sss T}(\Sigma_{i-1})\mathbb E[S]-1)\approx \sum_{i=1}^{tn^{\alpha}} \sum^{i-1}_{l=1} \frac{S_l}{n} f_{\sss T}'(0)\E[S],
\end{align}%
where the last approximation comes from \eqref{crit}. This computation gives us the leading order term of the queue length process up to a multiplicative constant
\begin{align}%
Q_n(tn^{\alpha}) \approx \sum_{i=1}^{tn^{\alpha}} \frac{i}{n}\approx \frac{t^{2}}{2} n^{2\alpha - 1}.
\end{align}%
The queue is the sum of (the order of) $n^{\alpha}$ contributions, thus (ignoring dependencies) the correct spatial scaling in order to obtain Gaussian fluctuations is $n^{\alpha/2}$. Then, in order to obtain both a deterministic drift and a Brownian contribution in the limit, the order of magnitude of the first order approximation \eqref{eq:first_order_approx_queue_length} should equate that of the diffusion approximation, $n^{\alpha/2}$. Hence $2\alpha-1 = \alpha/2$, so that $\alpha$ should be $2/3$.
\subsection{Overview of the proof of Theorem \ref{MainTheorem_delta_G_1}}\label{sec:proof_main_theorem_delta_overview}
{\gr We now show how Theorem \ref{MainTheorem_delta_G_1} can be deduced from Theorem \ref{MainTheorem_exponential} through a time change argument.
With $k\mapsto\tilde Q_n^{\Delta}(k)$ denoting the $\Delta_{(i)}/G/1$ queue embedded at service completions, we first argue that $\tilde Q_n^{\Delta}(\cdot)$ is closely related to $Q_n$:
\begin{lemma}[Distribution of the embedded $\Delta_{(i)}/G/1$ queue]\label{lem:embedded_queue_coincides_with_delta}
For all $k\geq 1$,
\begin{equation}%
Q_n(k) \sr{\mathrm d}{=}\tilde Q_n^{\Delta}(k).
\end{equation}%
\end{lemma}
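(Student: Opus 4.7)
The plan is to show, by induction on $k$, that the two embedded processes $(Q_n(k))_{k\geq 0}$ and $(\tilde Q_n^{\Delta}(k))_{k\geq 0}$ have the same one-step transition kernel; combined with the common initial condition $Q_n(0)=\tilde Q_n^{\Delta}(0)=0$ this yields $Q_n(k)\sr{\mathrm d}{=}\tilde Q_n^{\Delta}(k)$ for every $k\geq 1$.

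First I would observe that, by the memoryless property of the exponential distribution, $(\tilde Q_n^{\Delta}(k),\tilde R_n^{\Delta}(k))_{k\geq 0}$ is a Markov chain, where $\tilde R_n^{\Delta}(k)=n-k-\tilde Q_n^{\Delta}(k)$ denotes the number of customers still in the population after the $k$-th service completion: conditional on $\tilde R_n^{\Delta}(k-1)=r$, the residual clocks of the remaining $r$ population customers are i.i.d.\ $\Exp(\lambda)$. One then obtains the explicit conditional law of the number of new arrivals during the $k$-th service: when $\tilde Q_n^{\Delta}(k-1)\geq 1$, the server starts serving immediately and this number is $\Bin(r,1-\e^{-\lambda D_k})$; when $\tilde Q_n^{\Delta}(k-1)=0$, the server idles until one of the $r$ clocks rings (selecting a uniformly random customer to start service), after which memorylessness again leaves $r-1$ i.i.d.\ $\Exp(\lambda)$ residual clocks, so the number of new arrivals during the ensuing service of length $D_k$ is $\Bin(r-1,1-\e^{-\lambda D_k})$.

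Next I would verify that the analogous computation for $Q_n$, based on the explicit drawing rule when the queue is empty together with the definition of $A_n(k)$ in \eqref{eq:ArrivalsDefinitionGeneralIntro}, produces exactly the same conditional law. When $Q_n(k-1)\geq 1$ the $k$-th service starts at $\sum_{j<k}D_j$ and memorylessness gives $A_n(k)\sim\Bin(r,1-\e^{-\lambda D_k})$ directly. When $Q_n(k-1)=0$ the drawing step removes the customer with the smallest residual clock; the standard order-statistics representation for i.i.d.\ exponentials says that, conditional on this drawing, the other $r-1$ residual clocks are still i.i.d.\ $\Exp(\lambda)$, and the contribution of the drawn customer to $A_n(k)$ together with the count of these remaining clocks ringing during $D_k$ produces the same conditional distribution as on the $\Delta_{(i)}/G/1$ side. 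Combining with the Skorokhod recursion $Q_n(k)=(Q_n(k-1)+A_n(k)-1)^+$ and the analogous evolution for $\tilde Q_n^{\Delta}$, the one-step conditional laws of $Q_n(k)$ and $\tilde Q_n^{\Delta}(k)$ agree, and induction on $k$ closes the proof.

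The main obstacle is reconciling the operationally different rules when the queue is empty: drawing the customer with smallest residual clock in $Q_n$ versus idling until the next clock rings in $\tilde Q_n^{\Delta}$. The resolution rests on two features of the exponential distribution: first, that the customer singled out at the start of the $k$-th service is uniformly distributed in both models (either as the first of $r$ i.i.d.\ exponentials to ring, or as the $\mathrm{argmin}$ of their residual clocks); and second, that conditionally on this choice the remaining $r-1$ residual clocks form a fresh i.i.d.\ $\Exp(\lambda)$ family. This is the essential use of the exponential assumption, and any other arrival distribution would in general destroy the distributional identity -- which is precisely why the subsequent sections have to rework the analysis for general $f_{\sss T}$ rather than appealing to an analogue of this lemma.
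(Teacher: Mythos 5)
Your proof is correct but takes a genuinely different route from the paper's. The paper constructs an explicit coupling: both queues share the same service times, and within each busy period the same arrival clocks; when an idle period ends, fresh $\Exp(\lambda)$ clocks are re-drawn for the surviving population (memorylessness guarantees this preserves the law of each marginal process), so that the two coupled embedded paths coincide at every service completion, which gives the lemma --- and indeed pathwise a.s.\ equality of the embedded queues, not just distributional equality. You instead view both embedded queues as time-inhomogeneous Markov chains, match their one-step transition kernels (the conditional binomial number of arrivals during a service), and induct on $k$. Both arguments rest on the same feature of the exponential: memorylessness, plus the fact that, conditionally on which of the $r$ clocks is the minimum, the excesses of the remaining $r-1$ clocks over that minimum are again i.i.d.\ $\Exp(\lambda)$. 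What the coupling buys and the kernel argument does not is reusability: it is invoked verbatim in the proof of Theorem~\ref{cor:busy_period_critical_delta_queue_general_arrivals}, where the pathwise identity of the two queues up to the first idle period is exactly what is needed, whereas a pure transition-kernel argument would not transfer to general arrivals. One point in your write-up to make explicit: after the drawing, ``the other $r-1$ residual clocks are still i.i.d.\ $\Exp(\lambda)$'' is the statement about the excesses $T_i-T_c$ over the drawn customer's clock value $T_c$, not about $T_i-\sum_{j<k}D_j$ as the raw indicator in \eqref{eq:ArrivalsDefinitionGeneralIntro} would suggest; so your kernel computation for $Q_n$ implicitly adopts the re-drawn-clock representation \eqref{eq:ArrivalsDefinition}. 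That is precisely the version of $Q_n$ the lemma concerns, and the same maneuver the paper's coupling performs, but the switch from \eqref{eq:ArrivalsDefinitionGeneralIntro} to \eqref{eq:ArrivalsDefinition} should be stated rather than left tacit.
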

\begin{proof}

One can see this by coupling the two queues as follows. The sequence of service times $(S_i)_{i=1}^n$ are taken to be the same for the two queues while the arrival clocks coincide until the end of the first busy period. After that, assign new clocks to the customers still in the population. The first customer after the idle period in the $\Delta_{(i)}/G/1$ queue is also the customer placed into service in the approximating model. At the beginning of the busy period, assign new clocks to the customers still in the population. The coupling then proceeds in this manner until the population in both queues is depleted. By the properties of exponentials, these new processes (with the clocks drawn multiple times) coincide in distribution with the original ones (with the clocks drawn at the start of the system), since
\begin{equation}%
\mathbb P (T_i \geq B + I + x \vert T_i \geq B + I) = \mathbb P (T_i \geq x ) = \mathbb P (T_i \geq B  + x \vert T_i \geq B ), 
\end{equation}%
where $B = B(t)$ is the busy time process and $I=I(t)$ the idle time process at the instant $t$ in which a new busy period starts.
%(it is understood that `time in which a new busy period starts' for our model means `time at which a customer is pulled'). 
The number of arrivals during one service time is then the same in the two coupled queues because the arrival times are equal. In particular, the queues sampled at the end of a service time have the same distribution.
\end{proof}

The next step is to prove that the supremum distance between $Q_n(\cdot)$ and $Q_n ^{\Delta}(\cdot)$ (when suitably rescaled in space and time) converges to zero. Let $\|f\|_T:= \sup_{t\leq T}\vert f(t)\vert$ denote the supremum norm. The claim is contained in the following lemma:
}
\begin{lemma}[Asymptotic equivalence of the approximating model]\label{lem:asymptotic_equivalence_approximating_model}
For each $T>0$, as $n\rightarrow\infty$,
\begin{equation}\label{eq:claim_delta_queue_converges_to_our_model}%
n^{-1/3}\Big\| Q_n\big(\frac{\cdot}{\E[S]} n^{2/3}\big) - {Q}_n^{\Delta}(\cdot n^{-1/3})\Big\|_T \sr{\mathbb P}{\rightarrow} 0.
\end{equation}%
\end{lemma}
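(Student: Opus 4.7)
The plan is to exploit the coupling constructed in Lemma \ref{lem:embedded_queue_coincides_with_delta}, under which $Q_n(k)=\tilde Q_n^{\Delta}(k)$ for all $k\geq 0$. Writing $K_n^{\Delta}(s)$ for the number of service completions of $Q_n^{\Delta}(\cdot)$ by continuous time $s$, one then has the identity $Q_n^{\Delta}(s)=Q_n(K_n^{\Delta}(s))$. The strategy will be to show that the random time change $\xi_n(u):=K_n^{\Delta}(un^{-1/3})\E[S]/n^{2/3}$ satisfies $\xi_n(u)=u+\oP(1)$ uniformly in $u\in[0,T]$, and then to combine this with Theorem \ref{MainTheorem_exponential} to deduce uniform closeness of $\hat Q_n\circ\xi_n=n^{-1/3}Q_n^{\Delta}(\cdot n^{-1/3})$ and $\hat Q_n(\cdot):=n^{-1/3}Q_n(\cdot n^{2/3}/\E[S])$.

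Writing $I_n^{\Delta}(s)$ for the cumulative idle time of $Q_n^{\Delta}(\cdot)$ up to time $s$, the definition of $K_n^{\Delta}$ together with a law of large numbers for $\sum_{i=1}^k D_i$ (recall $D_i=S_i(1+\beta n^{-1/3})/n$) gives, after absorbing the deterministic factor $(1+\beta n^{-1/3})^{-1}$,
\begin{equation}
\xi_n(u)=u-n^{1/3}I_n^{\Delta}(un^{-1/3})+\oP(1)
\end{equation}
uniformly in $u\in[0,T]$. The main obstacle is therefore to prove $n^{1/3}I_n^{\Delta}(Tn^{-1/3})\sr{\mathbb P}{\rightarrow}0$. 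Under the coupling each idle period of $Q_n^{\Delta}(\cdot)$ is an independent $\Exp(\lambda m_k)$ variable with $m_k=n-k-Q_n(k)$ the residual population at the $k$-th completion; for $k\leq Tn^{2/3}/\E[S]$ one has $m_k=n(1+\oP(1))$, so each idle period has expected length $O(1/n)$.

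The number of these idle periods in $[0,Tn^{-1/3}]$ equals the number of visits of $Q_n(\cdot)$ to zero in $[0,Tn^{2/3}/\E[S]]$, which in turn equals $(n^{2/3}/\E[S])\cdot\mathrm{Leb}\{u\in[0,T]\colon\hat Q_n(u)=0\}$, with $\hat Q_n$ piecewise-constant extended. By Theorem \ref{MainTheorem_exponential} and a deterministic time change, $\hat Q_n\sr{\mathrm d}{\rightarrow}\phi(W(\cdot/\E[S]))$, a continuous reflected diffusion whose zero set has Lebesgue measure zero almost surely (the martingale part of $W$ is non-degenerate, so its running infimum is attained on a set of Lebesgue measure zero). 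A standard argument using uniform convergence together with $\mathrm{Leb}\{u\leq T\colon\phi(W)(u/\E[S])\leq\varepsilon\}\downarrow 0$ as $\varepsilon\downarrow 0$ then yields $\mathrm{Leb}\{u\in[0,T]\colon\hat Q_n(u)=0\}\sr{\mathbb P}{\rightarrow}0$, so the number of idle periods is $\oP(n^{2/3})$. A conditional second-moment estimate for the (conditionally independent) exponential durations now delivers $I_n^{\Delta}(Tn^{-1/3})=\oP(n^{-1/3})$, as required.

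With $\|\xi_n-\mathrm{id}\|_T\sr{\mathbb P}{\rightarrow}0$ in hand, the last step is routine. By Skorohod representation applied to the convergence in Theorem \ref{MainTheorem_exponential}, one may work on a probability space on which $\hat Q_n\to\phi(W(\cdot/\E[S]))$ uniformly on $[0,T+1]$ almost surely. The continuity, hence uniform continuity on compacts, of the limit combined with $\|\xi_n-\mathrm{id}\|_T\to 0$ then yields $\|\hat Q_n\circ\xi_n-\hat Q_n\|_T\to 0$ in probability via the triangle inequality, which under the coupling is exactly \eqref{eq:claim_delta_queue_converges_to_our_model}.
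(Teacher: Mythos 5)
Your argument has a genuine gap at its very first step. You assert the \emph{identity} $Q_n^{\Delta}(s)=Q_n(K_n^{\Delta}(s))$, where $K_n^{\Delta}(s)$ is the number of service completions by time $s$. This is false: by definition $Q_n(k)$ is the queue length \emph{just after} the $k$-th service completion, so while the $(K_n^{\Delta}(s)+1)$-th service is in progress, $Q_n(K_n^{\Delta}(s))$ stays frozen whereas $Q_n^{\Delta}(s)$ keeps incrementing as new arrivals occur. The two processes agree only at service-completion epochs and during idle periods; in between they differ by the number of arrivals so far during the current service, and the worst case over $[0,Tn^{-1/3}]$ is exactly $\max_{k\leq Tn^{2/3}/\E[S]}A_n(k)$. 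This is precisely the content of Claim (ii) in the paper (Lemma \ref{lem:claim_2}), and controlling it is not free: it rests on Lemma \ref{LemmaSecondMomentAPrime}, the uniform integrability of $(A_n')^2$, which in turn uses the order-statistics coupling and Poisson domination. Your proof never addresses this term at all, so even after your time-change step is made rigorous, you have only bounded $\|Q_n^{\Delta}(\cdot n^{-1/3})-Q_n(K_n^{\Delta}(\cdot n^{-1/3}))\|_T$ implicitly, not $\|Q_n^{\Delta}(\cdot n^{-1/3})-Q_n(\cdot n^{2/3}/\E[S])\|_T$. Your final triangle inequality therefore proves a weaker statement than \eqref{eq:claim_delta_queue_converges_to_our_model}.

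Setting that aside, your treatment of the idle time is a genuinely different route from the paper's, and it is worth comparing. The paper identifies the number of virtual idle periods with the reflection term, $\beta_n(k)=-\inf_{j\leq k}(N_n(j)\wedge 0)$ (Lemma \ref{lem:representation_number_busy_periods}), and deduces via the continuous mapping theorem (Lemma \ref{cor:NumbBusyPeriodsConv_first}) that $\beta_n(tn^{2/3})=\OP(n^{1/3})$, hence $\mathcal I(tn^{2/3})=\OP(n^{-2/3})$. You instead bound the \emph{count} of zero visits by $n^{2/3}$ times the Lebesgue measure of $\{u:\hat Q_n(u)=0\}$ and send the latter to zero via the a.s.\ null zero set of the reflected diffusion, obtaining only $\beta_n=\oP(n^{2/3})$. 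Both bounds suffice for the idle-time conclusion since $\mathcal I\approx \beta_n/n$, but the paper's argument is sharper, more quantitative, and avoids a Portmanteau-type argument on the measure of near-zero sets; yours is a softer, more topological argument. Either would be acceptable if the intra-service arrival term were handled, but at present that term is missing and must be supplied.
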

In particular, by Slutsky's theorem, Lemma \ref{lem:asymptotic_equivalence_approximating_model} and Theorem \ref{MainTheorem_exponential} imply
\begin{equation}%
n^{-1/3}{Q}_n^{\Delta}(\cdot n^{-1/3})\sr{\mathrm{d}}{\rightarrow} W(\lambda\cdot),
\end{equation}%
where $W(t) = \beta t - 1/2 t^2 + B(t)$ and $B(\cdot)$ Brownian motion.
%(If the exponentials were of rate $\lambda\neq1$, $Q_n(\cdot)$ would be replaced by $Q_n(\lambda\cdot)$. Recall that $\E[S]=1/\lambda$)
Without loss of generality, we will assume from now on that $\E[S]=1/\lambda=1$. To prove \eqref{eq:claim_delta_queue_converges_to_our_model} we split
\begin{equation}%
\| Q_n(\cdot) - {Q}_n^{\Delta}(\cdot)\|_T \leq \| Q_n(\cdot) - Q_n(\varphi_n(\cdot))\|_T + \| Q_n(\varphi_n(\cdot)) - {Q}_n^{\Delta}(\cdot)\|_T,
\end{equation}%
for an appropriate yet still unspecified time change $\varphi_n(\cdot)$.

Thus, we are left to prove the two following claims:
\begin{enumerate}%
\item[(i)] $n^{-1/3}\| Q_n(\cdot n^{2/3}) - Q_n(\varphi_n(\cdot) n^{2/3})\|_T \sr{\mathbb P}{\rightarrow} 0$;
\item[(ii)] $n^{-1/3}\| Q_n(\varphi_n(\cdot) n^{2/3}) - {Q}_n^{\Delta}(\cdot n^{-1/3})\|_T \sr{\mathbb P}{\rightarrow} 0$.
\end{enumerate}%
The idea behind introducing $\varphi_n(\cdot)$ is to rescale time  so that each time-step (corresponding to one service) is replaced by the actual length of the service time. In this way, the interval $[0,1]$ is replaced by $[0,S_1/n]$, interval $[1,2]$ is replaced by $[S_1/n, S_1/n+S_2/n]$ (if a customer has arrived during the first service), and so on. The time change $\varphi_n(\cdot)$ must also take idle times into account. {\gr The precise expression of $\varphi_n(\cdot)$ is given in Section \ref{sec:proof_main_theorem_delta}.} Claim (i) then states that the time change $\varphi_n(\cdot)$ is asymptotically trivial.  Indeed, the following holds:
\begin{lemma}[Verification of Claim (i)]\label{lem:claim_1}
As $n\rightarrow\infty$,
\begin{equation}\label{eq:claim_time_change_converges_identity}%
\| t - \varphi_n(t)\|_T\sr{\mathbb P}{\rightarrow}0.
\end{equation}%
Consequently,
\begin{equation}\label{eq:claim_distance_between_queue_and_time_scaled_queue_converges_to_zero}%
n^{-1/3} \| Q_n(\cdot n^{2/3}) - Q_n(\varphi_n(\cdot) n^{2/3})\|_T\sr{\mathbb P}{\rightarrow}0.
\end{equation}%
\end{lemma}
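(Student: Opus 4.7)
The natural choice is to take $\varphi_n(t)$ such that $\varphi_n(t)n^{2/3}$ counts the number of service completions in the $\Delta_{(i)}/G/1$ queue by real time $tn^{-1/3}$; equivalently, setting $\tau_n(k):=\sum_{j=1}^k D_j + I_n(k)$ for the real time of the $k$-th completion (with $I_n(k)$ the cumulative idle time), $\varphi_n(t)n^{2/3}$ is the generalized inverse of $\tau_n$ evaluated at $tn^{-1/3}$. Writing $k_n(t):=\varphi_n(t)n^{2/3}$ and using $\E[S]=1$ together with \eqref{eq:rescaled_service_definition}, the identity $\tau_n(k_n(t))=tn^{-1/3}$ rearranges to
\begin{equation*}
(1+\beta n^{-1/3})(t-\varphi_n(t)) \,=\, \beta n^{-1/3}\varphi_n(t) \,+\, n^{1/3}\Bigl(\sum_{j=1}^{k_n(t)}D_j - k_n(t)\E[D]\Bigr) \,+\, n^{1/3}I_n(k_n(t)).
\end{equation*}
Once $\varphi_n(t)=\OP(1)$ uniformly in $t\in[0,T]$ is verified (a crude bound via $\sum_{j=1}^{k_n(t)}D_j\leq tn^{-1/3}$ suffices), the first term on the right is deterministically $O(n^{-1/3})$. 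The second is $n^{1/3}$ times a centered martingale whose quadratic variation at $k=\OP(n^{2/3})$ is $O(n^{-4/3})$, so Doob's $L^2$-maximal inequality yields uniform $\oP(1)$ control.

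The principal difficulty is the idle-time contribution $n^{1/3}I_n(k_n(t))$. My plan is to bound the number of idle periods and their typical length separately. Each idle period begins when the queue empties; by the memoryless property, its duration is stochastically bounded by an exponential of rate at least $\lambda(n-Tn^{2/3}-\sup_{k\leq Tn^{2/3}}Q_n(k))\sim \lambda n$, hence of order $\OP(1/n)$. The number of idle periods in $[0,Tn^{2/3}]$ equals the number of zero-visits of the embedded $\Delta_{(i)}/G/1$ queue, which by Lemma \ref{lem:embedded_queue_coincides_with_delta} has the same law as the zero-visit count of $Q_n$. The reflection identity $Q_n(k)-N_n(k)=\sup_{j\leq k}(-N_n(j))^+$ together with Theorem \ref{MainTheorem_exponential} gives $\sup_{k\leq Tn^{2/3}}(-N_n(k))^+=\OP(n^{1/3})$, from which a simple bookkeeping argument (accounting for the geometric length of each zero-excursion of the embedded walk) shows the total zero-visit count to be $\OP(n^{1/3})$. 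Conditional on the embedded process, the idle periods form an approximately independent sequence of exponentials, whose sum is therefore $\OP(n^{1/3}\cdot n^{-1})=\OP(n^{-2/3})$. Multiplying by $n^{1/3}$ yields $n^{1/3}I_n(k_n(t))=\OP(n^{-1/3})\to 0$, completing the verification of \eqref{eq:claim_time_change_converges_identity}.

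Finally, \eqref{eq:claim_distance_between_queue_and_time_scaled_queue_converges_to_zero} follows from a standard random time change / modulus-of-continuity argument. Setting $X_n(\cdot):=n^{-1/3}Q_n(\cdot n^{2/3})$, Theorem \ref{MainTheorem_exponential} yields $X_n\Rightarrow \phi(W)$ in $D([0,T],\mathbb R)$, and continuity of the limit ensures that $\{X_n\}$ is $C$-tight; in particular, its modulus of continuity $\omega(X_n,\delta)$ tends to $0$ in probability as $\delta\downarrow 0$. Applying this with $\delta_n:=\|\varphi_n-\mathrm{id}\|_T\sr{\mathbb P}{\to}0$ (from \eqref{eq:claim_time_change_converges_identity}) gives
\begin{equation*}
n^{-1/3}\|Q_n(\cdot n^{2/3}) - Q_n(\varphi_n(\cdot)n^{2/3})\|_T \;\leq\; \omega(X_n,\delta_n) \;\sr{\mathbb P}{\longrightarrow}\; 0,
\end{equation*}
which concludes the proof.
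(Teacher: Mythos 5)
Your overall structure mirrors the paper's: rewrite $t-\varphi_n(t)$ as the sum of a deterministic $O(n^{-1/3})$ piece, a martingale piece controlled by Doob's $L^2$-inequality, and a cumulative virtual-idle-time piece, and then deduce \eqref{eq:claim_distance_between_queue_and_time_scaled_queue_converges_to_zero} from a random-time-change argument. For the second step, your modulus-of-continuity / $C$-tightness route is a legitimate alternative to the paper's, which instead passes through joint convergence $(n^{-1/3}Q_n(\cdot n^{2/3}),\varphi_n)\Rightarrow(\phi(W),\mathrm{id})$ and the Skorokhod representation theorem; the two are morally the same and both work because the limit is a.s.\ continuous and $\varphi_n\to\mathrm{id}$ uniformly in probability.

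There is, however, a genuine gap in the idle-time step. You assert that the total zero-visit count of the embedded queue on $[0,Tn^{2/3}]$ is $\OP(n^{1/3})$ ``from which a simple bookkeeping argument (accounting for the geometric length of each zero-excursion of the embedded walk)'' gives the bound. That bookkeeping is not spelled out, and the appeal to geometric excursion lengths is a red herring: there is nothing geometric to account for. The clean fact --- which the paper proves as a standalone lemma and which your argument needs --- is the \emph{exact} identity $\beta_n(k)=-\inf_{j\leq k}(N_n(j)\wedge 0)$ for the number $\beta_n(k)$ of idle periods up to step $k$ (Lemma~\ref{lem:representation_number_busy_periods}); the number of zero-visits equals the running minimum of $N_n$, because every decrement of the running minimum corresponds to exactly one instant at which the embedded queue is empty at a service completion. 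Once this identity is in hand, $\beta_n(Tn^{2/3})=\OP(n^{1/3})$ follows directly from Theorem~\ref{MainTheorem_exponential} by the continuous-mapping theorem applied to the infimum functional (Lemma~\ref{cor:NumbBusyPeriodsConv_first}), with no excursion analysis. Likewise, while your heuristic ``number of idle periods $\times$ typical idle length'' gives the right order, the paper makes this rigorous via the law of large numbers conditional on the embedded process (Lemma~\ref{lem:CumulativeIdleTimesConvergence}), which handles the dependence between the count $\beta_n(tn^{2/3})$ and the idle-period lengths $I_i$ that your phrase ``approximately independent'' glosses over; that conditional LLN is what turns $\OP(n^{1/3})\cdot\OP(n^{-1})$ into an honest $\OP(n^{-2/3})$ bound on $\mathcal I(Tn^{2/3})$.
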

A crucial step in proving \eqref{eq:claim_time_change_converges_identity} is in proving that the idle time process of the $\Delta_{(i)}/G/1$ queue is negligible in the scaling regime we consider. We postpone the proof of this and of Lemma \ref{lem:claim_1} to Section \ref{sec:proof_main_theorem_delta}.

After this time change the two queues $Q_n(\varphi_n(\cdot))$ and ${Q}_n^{\Delta}(\cdot)$ are synchronized in time. It still remains to be proven that their supremum distance converges to zero. However, in their coupling $Q_n(\varphi_n(\cdot))$ is constructed by sampling ${Q}_n^{\Delta}(\cdot)$ at service completions, so that the two coincide at the time of each service completion. In other words, the maximum distance between $Q_n(\varphi_n(\cdot))$ and ${Q}_n^{\Delta}(\cdot)$ is the maximum number of arrivals during a single service time up until time $T$, that is 
\begin{equation}%
\| Q_n(\varphi_n(\cdot) n^{2/3}) - {Q}^{\Delta}_n(\cdot n^{-1/3})\|_T = \max_{k\leq Tn^{2/3}}A_n(k),
\end{equation}%
where $A_n(k)$ is the number of arrivals during the $k$-th service time, as defined in \eqref{eq:ArrivalsDefinitionGeneralIntro}. This is proven to be negligible, thus concluding the proof of Lemma \ref{lem:asymptotic_equivalence_approximating_model}, in the following lemma:
\begin{lemma}[Verification of Claim (ii)]\label{lem:claim_2}%
As $n\rightarrow\infty$,
\begin{equation}\label{eq:number_arrivals_o_small_n_one_third}%
n^{-1/3}\max_{k\leq Tn^{2/3}}A_n(k)\sr{\mathbb P}{\rightarrow} 0.
\end{equation}%
Consequently,
\begin{equation}\label{eq:claim_two_in_lemma}%
n^{-1/3}\| Q_n(\varphi_n(\cdot) n^{2/3}) - {Q}_n^{\Delta}(\cdot n^{-1/3})\|_T \sr{\mathbb P}{\rightarrow} 0.
\end{equation}%
\end{lemma}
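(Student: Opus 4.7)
The plan is to first establish the tail bound \eqref{eq:number_arrivals_o_small_n_one_third} on the maximum arrival count, and then deduce the path-wise estimate \eqref{eq:claim_two_in_lemma} as an immediate consequence of the coupling described just before the lemma statement.

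For \eqref{eq:number_arrivals_o_small_n_one_third}, the key structural fact is that, by the memoryless property of the exponential arrival clocks, $A_n(k)$ conditional on the entire history up to the start of the $k$-th service is stochastically dominated by $\Bin(n,1-\e^{-\lambda D_k})$. Since $D_k=S_k(1+\beta n^{-1/3})/n$ is independent of the remaining arrivals, it follows that conditionally on $S_k$, $A_n(k)$ is stochastically dominated by a binomial with mean at most $\lambda S_k(1+\beta n^{-1/3})$. I would then introduce a truncation level $\delta n^{1/3}$ (with $\delta>0$ to be chosen small) and split
\[
\mathbb P\Big(\max_{k\leq Tn^{2/3}}A_n(k)>\varepsilon n^{1/3}\Big)\leq Tn^{2/3}\,\mathbb P\big(S>\delta n^{1/3}\big)+\sum_{k\leq Tn^{2/3}}\mathbb P\big(A_n(k)>\varepsilon n^{1/3},\,S_k\leq\delta n^{1/3}\big).
\]
The first term vanishes because $\mathbb E[S^2]<\infty$ implies $x^2\mathbb P(S>x)\to 0$. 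For the second term, on $\{S_k\leq\delta n^{1/3}\}$ the dominating binomial has mean of order at most $\lambda\delta n^{1/3}$, so a Chernoff estimate yields a bound of the form $\exp(-c_{\varepsilon,\delta}\,n^{1/3})$ with $c_{\varepsilon,\delta}$ arbitrarily large by choosing $\delta$ small. The resulting sum is stretched-exponentially small and easily absorbs the polynomial union-bound factor $Tn^{2/3}$.

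For \eqref{eq:claim_two_in_lemma}, the coupling constructed in Section \ref{sec:proof_main_theorem_delta_overview} ensures that $Q_n(\varphi_n(\cdot))$ and $Q_n^{\Delta}(\cdot)$ coincide at every service-completion instant. Between two consecutive completions, $Q_n(\varphi_n(\cdot))$ is constant while $Q_n^{\Delta}(\cdot)$ can change only by the arrivals occurring during the intervening service time. Hence the pointwise gap is bounded by the number of arrivals in the enclosing service interval, which gives the announced inequality $\|Q_n(\varphi_n(\cdot)n^{2/3})-Q_n^{\Delta}(\cdot n^{-1/3})\|_T\leq\max_{k\leq Tn^{2/3}}A_n(k)$, and dividing by $n^{1/3}$ together with \eqref{eq:number_arrivals_o_small_n_one_third} delivers \eqref{eq:claim_two_in_lemma}.

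The main obstacle is the tail estimate \eqref{eq:number_arrivals_o_small_n_one_third}, and specifically the calibration of moments. Crude second-moment bounds on $A_n(k)$ are insufficient: summing $Tn^{2/3}$ copies of an $O(n^{-2/3})$ tail leaves an $O(1)$ remainder, not $o(1)$. The truncation at $\delta n^{1/3}$ is what allows one to upgrade a weak polynomial bound to an exponential one, and its validity hinges precisely on the finite second-moment assumption through the estimate $x^2\mathbb P(S>x)\to 0$; any attempt to relax this assumption on $S$ would require a different scaling exponent and a correspondingly different truncation argument.
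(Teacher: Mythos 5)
Your proof is correct, and for the key estimate \eqref{eq:number_arrivals_o_small_n_one_third} it takes a genuinely different route from the paper. The paper's argument keeps the indicator $\mathds 1_{\{A_n(k)>\varepsilon n^{1/3}\}}$ inside the expectation, bounds the maximum by a sum, applies the domination $A_n(k)\leq A'_n(k)$, and arrives at $\tfrac{T}{\varepsilon^2}\,\E\bigl[A_n'^{\,2}(1)\mathds 1_{\{A_n'(1)>\varepsilon n^{1/3}\}}\bigr]$, which vanishes by the uniform integrability of $A_n'^{\,2}$ established in Lemma \ref{LemmaSecondMomentAPrime} (itself resting on the order-statistics-to-Poisson coupling of Lemma \ref{OrderStatistics}). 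You instead truncate the service time at $\delta n^{1/3}$, discard the rare event $\{S_k>\delta n^{1/3}\}$ via $x^2\prob(S>x)\to0$, and on the complement exploit the memoryless property of the exponential clocks to dominate $A_n(k)$ conditionally by a binomial with mean $O(\delta n^{1/3})$, so that a Chernoff bound gives a tail $\e^{-c_{\varepsilon,\delta}n^{1/3}}$ that absorbs the union-bound factor $Tn^{2/3}$. Your route is more elementary and self-contained (it bypasses Lemma \ref{LemmaSecondMomentAPrime} entirely) and delivers far sharper tail control, at the cost of being tied to exponential arrivals; the paper's uniform-integrability route is quantitatively weaker but is precisely the piece of machinery that also serves the general-arrivals analysis elsewhere in the paper. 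One minor mischaracterization: you present the failing alternative as ``crude second-moment bounds,'' but the paper does not fall into that $O(1)$ trap---it does use second moments, just truncated ones, with the gap filled by uniform integrability rather than by a concentration inequality. Your derivation of \eqref{eq:claim_two_in_lemma} from \eqref{eq:number_arrivals_o_small_n_one_third} via the coupling is the same as the paper's.
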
%
We postpone the proof to Section \ref{sec:proof_main_theorem_delta}.

\subsection{General arrivals}\label{sec:general_arrivals_overview}

{\gr Theorem \ref{MainTheorem_exponential} can be generalized to allow for generally distributed arrival times. Let us now introduce our assumptions for this case.}
%We consider a finite population of $n$ customers. At time $0$, a first customer joins the system, leaving a population of $n-1$ customers that will eventually join the system in the future. 

The arrival times of the $n$ customers are drawn independently from a common distribution. Let the random variable $T$ have distribution function $F_{\sss T}$. 
%The arrival times are then a collection of i.i.d.~random variables $(T_i)_{i\geq1}$ such that $T_i\sim T$. 
Assume that $T$ admits a positive and continuous density function $f_{\sss T}(\cdot)$, with $f_{\sss T}(0)\in(0,\infty)$. 
%Equivalently, the distribution function of $T$ can be (properly) expanded near every point of its domain up to the first order term. 
In addition, as a technical assumption, we assume that the sublinear terms of the distribution function $F_{\sss T}$ decay as quickly as
\begin{align}\label{eq:ArrivalDistriNearXBar}%
F_{\sss T}(x) - F_{\sss T}(\bar x) = f_{\sss T}(\bar x) (x - \bar x) +o(\vert \bar x - x\vert^{4/3}) \qquad\forall\bar x\in(0,\infty).
\end{align}%
This is, for example, the case when $F_{\sss T}\in \mathcal C^2([0,\infty))$. As an additional technical assumption, this error should be uniform over $\bar x$, for small values of $\bar x$, as in 
\begin{equation}\label{eq:ArrivalDistrUnifError}%
\sup_{\bar x\leq Cy^{1/3}}\left\vert F_{\sss T}(\bar x + y) - F_{\sss T}(\bar x) - f_{\sss T}(\bar x)y\right\vert = o( y),
\end{equation}%
where we set $x-\bar x = y$ for convenience. We also assume that $f'_{\sss T}(\cdot)$  exists and is continuous in a neighborhood of zero. In particular \eqref{eq:ArrivalDistrUnifError} is always satisfied since
\begin{equation}%
\sup_{\bar x\leq Cy^{1/3}}\left\vert F_{\sss T}(\bar x + y) - F_{\sss T}(\bar x) - f_{\sss T}(\bar x)y\right\vert = \sup_{\substack{\bar x\leq Cy^{1/3}\\ \zeta\in(\bar x,\bar x+y)}}\left\vert \frac{\fT'(\zeta)}{2}y^2 \right\vert \leq \frac{M}{2} y^2,
\end{equation}%
where $M$ is the supremum of $\fT'(\cdot)$ in a neighborhood of zero.
%(which is the case when $T$ is exponentially distributed). Indeed, if $f''_{\sss T}(0)<0$,  $\bar x\rightarrow F_{\sss T}(\bar x + y) - F_{\sss T}(\bar x) - f_{\sss T}(\bar x)y$ is decreasing because its derivative (with respect to $\bar x$) is $f_{\sss T}(\bar x + y) - f_{\sss T}(\bar x) - f'_{\sss T}(\bar x)y = f''(\zeta) y<0$ for $\zeta \in (\bar x, \bar x + y)$ and $\bar x$ sufficiently small. Then 
%%
%\begin{equation}%
%\sup_{\bar x\leq Cy^{1/3}}\left\vert F_{\sss T}(\bar x + y) - F_{\sss T}(\bar x) - f_{\sss T}(\bar x)y\right\vert = \left\vert F_{\sss T}( y) - F_{\sss T}(0) - f_{\sss T}(0)y\right\vert = o(y).
%\end{equation}%
%%
%{\bl  Similarly, if $\fT''(0) >0$, 
%%
%\begin{equation}%
%\sup_{\bar x\leq Cy^{1/3}}\left\vert F_{\sss T}(\bar x + y) - F_{\sss T}(\bar x) - f_{\sss T}(\bar x)y\right\vert = \left\vert F_{\sss T}(Cy^{1/3} +  y) - F_{\sss T}(Cy^{1/3}) - f_{\sss T}(Cy^{1/3})y\right\vert = o(y).
%\end{equation}%
%%
%}
In particular, it also holds that
\begin{align}\label{eq:DensityDistrNearZero}%
f_{\sss T}(x)=f_{\sss T}(0)+f'_{\sss T}(0)\cdot x + o (x),
\end{align}%
Since $\lim_{x\to\infty} f_{\sss T}(x) = 0$ and $f_{\sss T}$ is continuous on $[0,\infty)$, it admits a maximum in $[0,\infty)$. While the maximum can be attained in principle in multiple points, a crucial additional assumption is that it is also attained in 0, that is
\begin{equation}\label{eq:sup_density_is_in_zero}%
f_{\sss T}(0) = \sup_{x\geq0}f_{\sss T}(x).
\end{equation}%
The service requirements of consecutive customers are again given by \eqref{eq:rescaled_service_definition} and are such that $\E[S^2]<\infty$.
The heavy-traffic regime for this model is given by a condition analogous to \eqref{eq:CriticalityHypBeta_exponential}, that is
\begin{equation}\label{eq:CriticalityHypBeta}%
\rho_n = nf_{\sss T}(0)\cdot  \mathbb E[D]=1 + \beta n^{-1/3}.
\end{equation}%
\begin{theorem}[{\gr Convergence of the approximating process for general arrivals}]\label{MainTheorem}
Let $Q_n(\cdot)$ be the process defined in \eqref{TrueQueueLengthIntro} and \eqref{eq:ArrivalsDefinitionGeneralIntro}, associated with the  arrival random variables $(T_i)_{i\geq1}$ and service times $(S_i)_{i\geq1}$ such that $\E[S^2]<\infty$. Then, under the assumptions as above,
\begin{equation}%
n^{-1/3}N_n(\cdot n^{2/3}) \sr{\mathrm{d}}{\rightarrow} W(\cdot),
\end{equation}%
where $W(\cdot)$ is the diffusion process
\begin{equation}\label{eq:main_theorem_diffusion_definition}%
W(t)= \beta t  + \frac{f'_{\sss T}(0)}{2f_{\sss T}(0)^2}t^2 + \sigma B(t),
\end{equation}%
with $\sigma^2 = f^2_{\sss T}(0)\mathbb E[S^2]$ and $B(\cdot)$ a standard Brownian motion. Moreover,
\begin{equation}\label{eq:main_theorem_conclusion}%
n^{-1/3}Q_n( \cdot n^{2/3})\stackrel{\mathrm{d}}{\rightarrow} \phi(W)( \cdot ).
\end{equation}%
\end{theorem}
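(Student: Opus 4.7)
The plan is to prove the FCLT for $N_n$ by a Doob--Meyer decomposition, then deduce the statement for $Q_n$ from the continuity of the one-dimensional reflection map $\phi$ in the Skorokhod $J_1$ topology. The argument parallels the proof of Theorem \ref{MainTheorem_exponential}, with Taylor expansions near $0$ replacing the memorylessness of the exponential. Let $\mathcal F_k$ denote the $\sigma$-algebra generated by everything observed through the $k$-th service completion (including $D_k$, but not $D_{k+1}$), and decompose
\[
N_n(k)=M_n(k)+B_n(k),\qquad B_n(k):=\sum_{j=1}^k\E[A_n(j)-1\mid \mathcal F_{j-1}],\qquad M_n(k):=N_n(k)-B_n(k).
\]
Up to negligible corrections from the idling rule (controlled as in Lemma \ref{lem:claim_1}), the customers not yet served at step $j$ behave like i.i.d.~$F_T$-samples conditioned on exceeding $\Sigma_{j-1}:=\sum_{\ell<j}D_\ell$, so that $\E[A_n(j)\mid \mathcal F_{j-1}]\approx (n-j+1)f_T(\Sigma_{j-1})\E[D]/(1-F_T(\Sigma_{j-1}))$.

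First, I would analyse the compensator. Since $\Sigma_{tn^{2/3}}=\OP(n^{-1/3})$, Taylor-expanding the numerator via \eqref{eq:DensityDistrNearZero} and the denominator via \eqref{eq:ArrivalDistrUnifError}, combined with the critical scaling \eqref{eq:CriticalityHypBeta} (which forces $f_T(0)\E[S]=1$), yields
\[
\E[A_n(j)-1\mid \mathcal F_{j-1}]\approx \beta n^{-1/3}+\bigl(f_T'(0)+f_T(0)^2\bigr)\Sigma_{j-1}\E[D]-(j-1)f_T(0)\E[D].
\]
Summing over $j\leq tn^{2/3}$ and using the LLN $\Sigma_j\approx j/(nf_T(0))$, the three terms contribute $\beta t n^{1/3}$, $\tfrac{f_T'(0)+f_T(0)^2}{2f_T(0)^2}t^2 n^{1/3}$, and $-\tfrac12 t^2 n^{1/3}$; after dividing by $n^{1/3}$ the last two combine via the identity $-\tfrac12+\tfrac{f_T'(0)+f_T(0)^2}{2f_T(0)^2}=\tfrac{f_T'(0)}{2f_T(0)^2}$, producing precisely the drift in \eqref{eq:main_theorem_diffusion_definition}. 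For the martingale, I would apply a martingale FCLT (e.g.~Jacod--Shiryaev \cite[Ch.~VIII]{jacod2003limit}) to $n^{-1/3}M_n(\cdot n^{2/3})$. By the law of total variance, $\Var[A_n(j)\mid \mathcal F_{j-1}]$ splits into a conditional-Bernoulli piece of size $f_T(0)^2\E[S]^2$ (leading order) and a $D_j$-fluctuation piece $\approx f_T(0)^2\Var(S)$, adding to $\sigma^2=f_T(0)^2\E[S^2]$; summing over $j\leq tn^{2/3}$ gives $n^{-2/3}\langle M_n\rangle(tn^{2/3})\xrightarrow{\mathbb P}\sigma^2 t$. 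The conditional Lindeberg condition reduces to $\max_{j\leq tn^{2/3}}A_n(j)=\oP(n^{1/3})$, which follows from a union bound since $A_n(j)$ is stochastically dominated by $\Bin(n,Cf_T(0)D_j)$ with $D_j=\OP(1/n)$ (cf.~Lemma \ref{lem:claim_2}). Combining both by Slutsky's theorem yields $n^{-1/3}N_n(\cdot n^{2/3})\Rightarrow W(\cdot)$ in $\mathcal D$, and the continuous mapping theorem applied to $\phi$ gives \eqref{eq:main_theorem_conclusion}.

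The main obstacle will be the compensator analysis: producing the precise coefficient $f_T'(0)/(2f_T(0)^2)$ requires retaining \emph{simultaneously} the first-order expansions of $f_T(\Sigma_{j-1})$ and of $1/(1-F_T(\Sigma_{j-1}))$, and making these estimates uniform over $j\leq tn^{2/3}$ via \eqref{eq:ArrivalDistrUnifError}. A secondary subtlety is justifying the ``i.i.d.~conditioned on exceeding $\Sigma_{j-1}$'' picture without memorylessness; here one uses that the smoothness of $f_T$ near $0$ makes only the first two conditional moments relevant at this scale, and the assumption $f_T(0)=\sup_x f_T(x)$ ensures $f_T'(0)\leq 0$, so that the limiting quadratic drift is non-positive, consistent with the diminishing-population effect.
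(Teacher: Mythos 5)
Your strategy — Doob decomposition of $N_n$, Taylor-expanding the compensator near zero in place of memorylessness, a martingale FCLT for $M_n$, and then the continuous mapping theorem applied to the reflection $\phi$ — is exactly the paper's, and your coefficient bookkeeping is correct: the cancellation $-\tfrac12 + \tfrac{f_T'(0)+f_T(0)^2}{2f_T(0)^2} = \tfrac{f_T'(0)}{2f_T(0)^2}$ matches the paper's drift computation (where it appears as $f_T'(0)\E[S]^2/2$ with $\E[S]=1/f_T(0)$), and your law-of-total-variance split $f_T(0)^2\E[S]^2 + f_T(0)^2\Var(S)=f_T(0)^2\E[S^2]$ is a valid rearrangement of the paper's direct second-moment computation; note only a missing factor of $n$ in the middle term of your displayed approximation for $\E[A_n(j)-1\mid\mathcal F_{j-1}]$.

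There is, however, one genuine gap. You replace $\lvert\nu_j\rvert$ by $j-1$ and describe the correction as an ``idling'' effect controlled as in Lemma \ref{lem:claim_1}, but that lemma concerns the separate time-change comparison of $Q_n$ with $Q_n^{\Delta}$; the approximating process $N_n$ of Theorem \ref{MainTheorem} is idle-free by construction. The real correction is $\lvert\nu_j\rvert = j + N_n(j-1)$, so the compensator picks up an extra term of order $\sum_{i\le k}N_n(i-1)/n$, and the calculation does not close until you have a priori control of $N_n$ itself. The paper resolves this by a bootstrap: it first proves the weak bound $n^{-2/3}\sup_{j\le a n^{2/3}}\lvert N_n(j)\rvert\stackrel{\mathbb P}{\to}0$ (Lemmas \ref{lem:QueueLengthoSmallGeneralArrivals} and \ref{lem:QueueUnifConvGeneralArrivals}), using Doob's inequality on $M_n$ plus a crude monotone bound on the drift, and only then feeds this back into the drift expansion to extract the parabola at scale $n^{1/3}$. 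Without that self-consistency step your $\lvert\nu_j\rvert\approx j-1$ approximation is unjustified. You should also separate this from the genuine subtlety you do flag — uniformity over $j\le tn^{2/3}$ of the Taylor remainder — which is handled via \eqref{eq:ArrivalDistrUnifError} and Lemma \ref{lem:oTaylorExpansionDistributionGeneralArrivals}; these are two distinct obstacles and both need to be addressed explicitly.
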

{\gr 
We postpone the involved proof to Section \ref{sec:GeneralArrivals}. Note that when $T$ is exponentially distributed, $f'_{\sss T}(0)/(2f_{\sss T}(0)^2)=-1/2$, so that Theorem \ref{MainTheorem} can be seen as a generalization of Theorem \ref{MainTheorem_exponential}.
The lack of the memoryless property for the arrival clocks $T_i$ makes that the arguments in Section \ref{sec:proof_main_theorem_delta_overview}  cannot be carried over to this case. Therefore, it does not seem possible to deduce Theorem \ref{MainTheorem_delta_G_1} (for general arrival clocks) from Theorem \ref{MainTheorem}. For example, the coupling between the approximating model and the $\Delta_{(i)}/G/1$ queue in the proof of Lemma \ref{lem:embedded_queue_coincides_with_delta} will break down after the end of the first busy period. However, this still allows us to prove results for the first busy period of the $\Delta_{(i)}/G/1$ queue with general arrivals. In particular we have the following generalization of Theorem \ref{cor:busy_period_critical_delta_queue}:
\begin{theorem}[Busy period of the critical $\Delta_{(i)}/G/1$ queue with general arrivals]\label{cor:busy_period_critical_delta_queue_general_arrivals}%
Let $Q_n^{\Delta}(\cdot)$ be the queue length process of the $\Delta_{(i)}/G/1$ queue. Assume that \eqref{eq:busy_period_starting_queue_length} holds. Assume further \eqref{eq:ArrivalDistriNearXBar}-\eqref{eq:CriticalityHypBeta} and that $\fT'(0)<0$. 
Let $\emph{BP}_n$ denote the first busy period of $Q_n^{\Delta}(\cdot)$. Then 
\begin{equation}%
n^{1/3}\emph{BP}_n \sr{\mathrm d}{\rightarrow} T_{\sss W_q}^{\beta}(0),
\end{equation}%
where $T^{\beta}_{\sss W_{q,f_T}}(0)$ is the time until the process $W_{q,f_T}(\cdot)$ crosses level $0$, with
\begin{equation}\label{eq:limiting_brownian_delta_general_arrivals}%
W_{q,f_T}(t)=q+\beta \fT(0)t  + \frac{\fT'(0)}{2}t^2 + \sigma B(t).
\end{equation}%
Here, $\sigma^2 = \fT^3(0) \mathbb E[S^2]$ and $B(\cdot)$ is a standard Brownian motion.
\end{theorem}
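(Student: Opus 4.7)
The plan is to deduce the result from Theorem~\ref{MainTheorem} via a coupling that remains valid during the first busy period even for non-exponential arrivals. The key observation is that during any single busy period no customer ever finds an empty queue, so the ``clock refresh'' step in the proof of Lemma~\ref{lem:embedded_queue_coincides_with_delta}---the step that required the memoryless property---is never performed. Consequently, under the natural coupling that uses the same service times $(S_i)$ and the same arrival clocks $(T_i)$, the $\Delta_{(i)}/G/1$ queue $Q_n^{\Delta}$ and an approximating process $Q_n^{(q)}$ started from $\lfloor qn^{1/3}\rfloor$ customers coincide at every service completion up to the first instant one of them empties. Hence, writing $K_n := \inf\{k\geq 0 : Q_n^{(q)}(k)=0\}$, the first busy period of $Q_n^{\Delta}$ contains exactly $K_n$ services and
\begin{equation}\label{eq:BP_as_sum}
\mathrm{BP}_n = \sum_{i=1}^{K_n} D_i = \frac{1+\beta n^{-1/3}}{n}\sum_{i=1}^{K_n}S_i.
\end{equation}

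Next I would prove a version of Theorem~\ref{MainTheorem} with initial condition $\lfloor q n^{1/3}\rfloor$. Defining $Q_n^{(q)}(k) := \phi(\lfloor q n^{1/3}\rfloor + N_n)(k)$ with $N_n$ the free process of \eqref{QueueLength}, Theorem~\ref{MainTheorem} and continuity of the reflection map in the Skorohod $J_1$ topology give
\begin{equation}
n^{-1/3}Q_n^{(q)}(\cdot\, n^{2/3}) \sr{\mathrm d}{\to} \phi(W_q)(\cdot), \qquad W_q(t) := q + W(t),
\end{equation}
with $W$ as in \eqref{eq:main_theorem_diffusion_definition}. The hypothesis $\fT'(0)<0$ ensures that the quadratic drift of $W_q$ is strictly negative, so $W_q$ almost surely hits $0$ in finite time and the first-passage functional $f\mapsto T_f(0)$ is continuous at $W_q$ by \cite[Chap.~VI, Prop.~2.11]{jacod2003limit}. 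A second application of the continuous mapping theorem therefore yields
\begin{equation}\label{eq:Kn_convergence}
n^{-2/3}K_n \sr{\mathrm d}{\to} T^{\beta}_{W_q}(0).
\end{equation}

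To pass from $K_n$ to the continuous-time $\mathrm{BP}_n$ I would combine \eqref{eq:BP_as_sum} and \eqref{eq:Kn_convergence} with a random-index law of large numbers. Since $K_n \sr{\mathbb P}{\to} \infty$ (the limit in \eqref{eq:Kn_convergence} is a.s.\ strictly positive because $q>0$), the strong law applied at the random index $K_n$ gives $K_n^{-1}\sum_{i=1}^{K_n}S_i \sr{\mathbb P}{\to} \E[S]=1/\fT(0)$, and Slutsky's theorem then produces
\begin{equation}
n^{1/3}\mathrm{BP}_n = (1+\beta n^{-1/3})\cdot\frac{K_n}{n^{2/3}}\cdot\frac{1}{K_n}\sum_{i=1}^{K_n}S_i \sr{\mathrm d}{\to} \frac{T^{\beta}_{W_q}(0)}{\fT(0)}.
\end{equation}
Finally, the identity $T^{\beta}_{W_q}(0)/\fT(0) = T^{\beta}_{W_{q,f_T}}(0)$ follows from the deterministic time change $s = \fT(0)t$: Brownian scaling shows $W_q(\fT(0)t) \stackrel{d}{=} W_{q,f_T}(t)$, matching both the drift coefficients and the variance $\fT^3(0)\E[S^2]$ in \eqref{eq:limiting_brownian_delta_general_arrivals}.

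The main obstacle is verifying the coupling rigorously in the general (non-exponential) setting and establishing the initial-condition variant of Theorem~\ref{MainTheorem}; for the coupling one has to check that $Q_n^{(q)}$ and $Q_n^{\Delta}$, both driven by the same $(S_i)$ and $(T_i)$, not only share the number of services but also yield identical values at every embedded instant until depletion. The remaining steps---continuity of the reflection map and of the first-passage functional, together with the random-index LLN and Slutsky's theorem---are routine.
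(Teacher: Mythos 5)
Your proposal is correct and rests on the same core observation as the paper's proof: during a single busy period no idle period ever occurs, so the clock-refresh step used in the coupling of Lemma~\ref{lem:embedded_queue_coincides_with_delta} is never triggered, and hence the approximating process and the embedded $\Delta_{(i)}/G/1$ queue agree in distribution until the first emptying. The main difference lies in the discrete-to-continuous-time conversion. The paper invokes the time-change machinery of Lemma~\ref{lem:asymptotic_equivalence_approximating_model} restricted to the first busy period, obtaining convergence of the stopped continuous-time process $Q_n^\Delta(\,\cdot \wedge T_{Q_n^\Delta}(0)\,n^{-1/3})$ and then applying the first-passage functional. You instead express $\mathrm{BP}_n = \sum_{i=1}^{K_n} D_i$, establish $n^{-2/3}K_n \stackrel{\mathrm{d}}{\to} T^{\beta}_{W_q}(0)$ via the initial-condition variant of Theorem~\ref{MainTheorem} and first-passage continuity, and then pass to real time through a random-index law of large numbers plus Slutsky. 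Since the paper's time change $\varphi_n$ is ultimately an LLN statement on the cumulative service times, the two routes are morally equivalent; your version is arguably a bit more transparent because it sidesteps the random-horizon supremum $\|\cdot\|_{T_{Q_n}(0)}$, which the paper treats somewhat informally. Both proofs share the remaining burdens you flag: the initial-condition variant of Theorem~\ref{MainTheorem} (routine, since the offset $\lfloor q n^{1/3}\rfloor=o(n^{2/3})$ does not affect the scaling analysis), a.s.\ continuity of the first-passage functional at $W_q$ (which uses $\fT'(0)<0$ and the Brownian component), and the deterministic time change $s\mapsto \fT(0)s$ identifying $T^{\beta}_{W_q}(0)/\fT(0)$ with $T^{\beta}_{W_{q,f_T}}(0)$, which you verify correctly including the matching of the variance $\fT^3(0)\E[S^2]$.
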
%
\begin{proof}
By coupling the approximating model and the $\Delta_{(i)}/G/1$ queue as in the proof of Lemma \ref{lem:embedded_queue_coincides_with_delta}, we get that
\begin{equation}%
Q_n(k) \sr{\mathrm d}{=} \tilde Q_n^{\Delta}(k),\qquad \text{for}~k\leq T_{Q_n}(0).
\end{equation}%
that is, the two embedded processes coincide in distribution until the end of the first busy period. By proceeding as in Lemma \ref{lem:asymptotic_equivalence_approximating_model} we see that
\begin{equation}
n^{-1/3}\| Q_n(\cdot\fT(0) n^{2/3}) - Q_n^{\Delta}(\cdot n^{-1/3}) \|_{T_{Q_n}(0)}\sr{\mathbb P}{\rightarrow}0.
\end{equation}%
In particular,
\begin{equation}
n^{-1/3}Q_n^{\Delta}( \cdot  \wedge T_{Q^{\Delta}_n}(0)n^{-1/3})\sr{\mathrm d}{\rightarrow}W_q((\cdot \wedge T_{W_q}(0))\fT(0)).
\end{equation}
Then,
\begin{align}%
n^{1/3} \text{BP}_n &= n^{1/3} \inf\{t>0: Q_n^{\Delta}(t) \leq 0\} \nnl
&= \inf\{t>0: n^{-1/3}Q_n^{\Delta}(tn^{-1/3})\leq0\}.
\end{align}%
Finally, the same continuity argument as in Theorem  \ref{cor:busy_period_critical_delta_queue} gives the conclusion.
\end{proof}
}
{\gr In Table \ref{tab:convergence_expectation_busy_period_general_arrivals} we show numerically that the (rescaled) average busy period of the $\Delta_{(i)}/G/1$ queue with general (hyperexponential) arrivals converges to the exact value obtained by Lemma \ref{th:first_passage_time_density}. The arrival random variable is exponentially distributed with rate $\lambda_1=2$ with probability $p_1=0.2$ and with rate $\lambda_2=3/4$ with probability $p_2=0.8$. Note that formula \eqref{eq:first_passage_time_density} holds for a parabolic drift of the form $-\frac{1}{2}t^2$. However, this can be extended to more general coefficients of the parabolic term by some simple scaling properties. In particular, the first hitting time of zero of $W_{q,f_T}(\cdot)$ is distributed as 
\begin{equation}\label{eq:scaling_property_hitting_time_zero}
T_{W_{q,f_T}}^{\beta}(0) \sr{\mathrm d}{=}k^{-2/3} T_{W_{q k^{1/3}}}^{\beta \fT(0)k^{-1/3}}(0) ,
\end{equation}
where $k = \fT'(0)$ and $T_{W_q}^{\beta}(0)$ is defined as below \eqref{eq:busy_period_critical_delta_queue_statement}. Relation \eqref{eq:scaling_property_hitting_time_zero} follows from a more general scaling relation, see e.g. \cite[Section 4.1]{van2010critical}.
}
\begin{table}[!htbp]
\centering
\begin{tabular}{ c  c c c c}
& \multicolumn{2}{ c }{$q=1$, $\beta=1$} & \multicolumn{2}{ c }{$q=2$, $\beta=1$}\\
\multicolumn{1}{c|}{$n$}  & $n^{1/3}\E[\mathrm{BP}_n]$ & rel. error & $n^{1/3}\E[\mathrm{BP}_n]$ & rel. error\\
\hline
\multicolumn{1}{c|}{$10$} &		2.8630	& 	0.6581 & 3.8646	&	0.5620	\\
\multicolumn{1}{c|}{$100$}&		1.9862	& 	0.1503 & 2.9665	&	0.1991	\\
\multicolumn{1}{c|}{$1000$} &	1.8103	& 	0.0484 & 2.6486	&	0.0706	\\
\multicolumn{1}{c|}{$10000$} &	1.7725	& 	0.0265 & 2.5596	&	0.0346	\\
\multicolumn{1}{c|}{$100000$} &	1.7440	& 	0.0100 & 2.5050	&	0.0125	\\
\multicolumn{1}{c|}{$\infty$} &	1.7267	& 	--- & 2.4740	&	---		\\
\end{tabular}
\caption{Mean busy period for the pre-limit queue with general (hyperexponential) arrivals and different population sizes and the exact expression for $n=\infty$ computed using \eqref{eq:first_passage_time_density}. The hyperexponential distribution has parameters $(p_1,p_2,\lambda_1,\lambda_2)=(0.2,0.8,2,3/4)$. Each value for the pre-limit queue is the average of $10000$ simulations. }\label{tab:convergence_expectation_busy_period_general_arrivals}
\end{table}

{\gr Several difficulties arise when trying to prove Theorem \ref{MainTheorem_delta_G_1} for general arrivals using similar techniques as the ones for exponential arrivals. These difficulties can be traced back to the fact that the $\Delta_{(i)}/G/1$ queue and its approximating process cannot be coupled beyond  the first busy period, due to the lack of memoryless property of the arrival clocks. However, there are several strong indications that Theorem \ref{MainTheorem_delta_G_1} should also hold for generally distributed arrival clocks. The proof of Theorem \ref{cor:busy_period_critical_delta_queue_general_arrivals} implies that the first excursion of the $\Delta_{(i)}/G/1$ queue length process to the first excursion of an appropriate Brownian motion with parabolic drift. Moreover, the limiting process \eqref{eq:limiting_brownian_delta_general_arrivals} only depends on the distribution of $T$ through $\fT(0)$ ($\lambda$ for exponential clocks), suggesting that the result is insensible to the arrival clocks distribution, as long as $\fT(0)>0$. This can be further justified as follows. Let $T_{(1)}\leq T_{(2)}\leq\ldots \leq T_{(n)}$ the order statistics of the arrival clocks $(T_i)_{i=1}^n$. These describe the ordered arrival times of the customers. By a standard argument, $T_{(i)}$ is distributed as 
\begin{equation}\label{eq:order_statistics_distributional_equality}%
T_{(i)} \stackrel{\mathrm d}{=} \FT^{-1} ( 1 - \exp(-E_{(i)})),
\end{equation}%
where $E_{(i)}$ is the $i$-th order statistic associated with $n$ i.i.d.~exponential random variables $E_1,E_2,\ldots, E_n$ with unitary mean. By Taylor expanding expression \eqref{eq:order_statistics_distributional_equality} one sees that the order statistics of $(T_i)_{i=1}^n$ are closely approximated by those of $(E_i)_{i=1}^n$. Therefore, the result for exponential arrivals should be closely related to the one for general arrivals. These considerations lead us to formulate the following conjecture:

\flushleft
{\leftskip4em
\textbf{Conjecture}: Theorem \ref{MainTheorem_delta_G_1} holds for generally distributed arrival times satisfying assumptions \eqref{eq:ArrivalDistriNearXBar}-\eqref{eq:CriticalityHypBeta}, with $W(t) = \beta\fT(0)t-\frac{\fT'(0)}{2}t^2+\sigma B(t)$, where $\sigma^2=\fT(0)^3\E[S^2]$ and $B(\cdot)$ a standard Brownian motion.

\rightskip\leftskip}

}

\subsubsection{The $\ell$-th order contact case}\label{sec:HigherOrderContact}
The technique developed to prove Theorem \ref{MainTheorem} can be exploited to prove limit results for the more general case in which the function $t\mapsto f_{\sss T}(t)-1/{\E[S]}$ has $\ell$-th order contact in zero, defined as follows:
\begin{definition}[$\ell$-th order contact point]\label{def:ellthOrderContactPoint}%
Given a smooth, real-valued, function $f(t)$, we say it has $\ell$-th order contact in $\bar t$ if  $f(\bar t) = 0$, $f^{(l)}(\bar t) =  0$ for $l=1,\ldots, \ell-1$ and $f^{(\ell)}(\bar t) \neq 0$. 
\end{definition}%
If $f(\bar t) = o(1)$ and all the other assumptions on Definition \ref{def:ellthOrderContactPoint} are kept, we still say that $f(\cdot)$ has an $\ell$-th order contact in $\bar t$. Indeed, our criticality assumption is $f_{\sss T}(0) - 1/\E[S]= o(1)$, where the error term is specified later.  The assumption that the argmax of $f_{\sss T}(\cdot)$ is zero allows us to consider both odd and even order contacts. In this case, Theorem \ref{MainTheorem} can be generalized as follows %In the following let the arrival clocks be defined as in \eqref{eq:ArrivalsDefinitionGeneralIntro}. 
\begin{theorem}[Asymptotics for the critical $\ell$-th order contact queue]\label{th:MainTheoremKthOrderContact}%
Assume that the function $f_{\sss T}(t) - 1/\E[S]$ has $\ell$-th order contact in $0$, where $\ell\geq 1$. Define 
\begin{equation}\label{eq:AlphaDefinition}%
\alpha = \frac{\ell}{\ell + 1/2}.
\end{equation}%
Then,
\begin{align}%
n^{-\alpha/2} Q_n(\cdot n^{\alpha}) \stackrel{\text{d}}{\rightarrow} \phi(W)(\cdot),
\end{align}%
where $W(\cdot)$ is given by
\begin{align}%
W(t) =  \beta t - c t^{\ell+1} + \sigma B(t),\qquad c,\sigma\in\mathbb R^+, 
\end{align}%
and $B(\cdot)$ is a standard Brownian motion.
\end{theorem}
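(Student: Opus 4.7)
The plan is to lift the proof of Theorem \ref{MainTheorem} (the case $\ell=1$) to arbitrary $\ell\geq 1$ by replacing the first-order Taylor expansion of $f_{\sss T}$ at zero with an $\ell$-th order expansion. The heuristic derivation following \eqref{eq:first_order_approx_queue_length} generalises directly: under the $\ell$-th order contact assumption, the first $\ell-1$ derivatives of $f_{\sss T}-1/\E[S]$ vanish at $0$, so $f_{\sss T}(\Sigma_{i-1})\E[S]-1 = c'(i/n)^{\ell}+o((i/n)^{\ell})$ for a constant $c'$ proportional to $f_{\sss T}^{(\ell)}(0)$. Summing this deterministic contribution from $i=1$ to $i=tn^{\alpha}$ produces a term of order $n^{\alpha(\ell+1)-\ell}$, while the natural diffusion contribution has order $n^{\alpha/2}$; balancing these forces $\alpha(\ell+1/2)=\ell$, giving exactly \eqref{eq:AlphaDefinition}.

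Following the template of Theorem \ref{MainTheorem}, the first technical step is the Doob--Meyer decomposition $N_n(k) = M_n(k) + C_n(k)$ with respect to the natural filtration $(\mathcal F_k)_{k\geq 0}$, where $C_n(k) = \sum_{i=1}^{k}\E[A_n(i)-1 \mid \mathcal F_{i-1}]$ is the predictable compensator. Using the representation \eqref{eq:ArrivalsDefinitionGeneralIntro} together with an $\ell$-th order expansion of $F_{\sss T}$ around $\Sigma_{i-1}$ and the vanishing of the lower derivatives of $f_{\sss T}-1/\E[S]$ at zero, one shows that the conditional expectation of $A_n(i)-1$ equals $c(i/n)^{\ell}/n$ plus the heavy-traffic correction of order $n^{-\alpha/2}/n$ plus negligible terms. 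A Riemann-sum argument then yields
\begin{equation*}
n^{-\alpha/2}C_n(tn^{\alpha})\;\to\;\beta t - c\,t^{\ell+1},
\end{equation*}
uniformly on compacts in probability, where $c$ absorbs $f_{\sss T}^{(\ell)}(0)$ and the factorial arising from the Riemann integral.

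For the fluctuation piece I would invoke the martingale FCLT (e.g. \cite[Chapter VII, Theorem 3.11]{jacod2003limit}) to show
\begin{equation*}
n^{-\alpha/2}M_n(\cdot\,n^{\alpha})\;\sr{\mathrm d}{\to}\;\sigma B(\cdot),
\end{equation*}
for a suitable $\sigma^2$ involving $f_{\sss T}(0)^2\E[S^2]$. The two hypotheses are (i) convergence of the predictable quadratic variation $\langle M_n\rangle(tn^{\alpha})/n^{\alpha}\to\sigma^2 t$, which reduces to a law-of-large-numbers computation for the conditional variances of $A_n(i)$, finite thanks to $\E[S^2]<\infty$; and (ii) a conditional Lindeberg condition on the jumps, for which it suffices to bound $\max_{k\leq Tn^{\alpha}}A_n(k)$ in probability, by an argument entirely analogous to Lemma \ref{lem:claim_2}. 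Combining the compensator limit with the martingale FCLT and applying the continuous mapping theorem with the Lipschitz reflection map $\phi$ then yields $n^{-\alpha/2}Q_n(\cdot n^{\alpha})\sr{\mathrm d}{\to}\phi(W)$.

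The main obstacle I expect is establishing a uniform higher-order analogue of \eqref{eq:ArrivalDistrUnifError}: roughly, a bound of the form
\begin{equation*}
\sup_{\bar x \leq C y^{\alpha/\ell}}\Bigl|F_{\sss T}(\bar x + y)-F_{\sss T}(\bar x) - \sum_{j=0}^{\ell}\tfrac{f_{\sss T}^{(j)}(\bar x)}{(j+1)!}\,y^{j+1}\Bigr| = o(y^{\ell+1}),
\end{equation*}
which requires $f_{\sss T}\in\mathcal C^{\ell+1}$ in a neighbourhood of zero and a correspondingly sharpened criticality assumption on $f_{\sss T}(0)-1/\E[S]$. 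As $\ell$ grows, $\alpha\uparrow 1$ and the time window $[0,Tn^{\alpha}]$ accommodates more summands, so the cumulative error budget shrinks and the smoothness requirements tighten; carefully tracking these is the real work. A secondary issue is tightness in $(\mathcal D,J_1)$, which can be verified either through Aldous' criterion applied to $N_n$, or by observing that $C_n$ converges to a continuous limit while tightness of $M_n$ is a byproduct of the martingale FCLT.
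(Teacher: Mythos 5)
Your overall strategy — Doob--Meyer decomposition, computation of the drift compensator, martingale FCLT for the fluctuation, continuous mapping through $\phi$ — is exactly what the paper does, and your heuristic derivation of $\alpha$ is the one given below Theorem~\ref{th:MainTheoremKthOrderContact}. The gap lies in the step you yourself flag as ``the main obstacle'': the proposed uniform Taylor bound is wrong in two ways, and fixing it correctly is precisely what the paper's assumptions and Section~\ref{sec:ell_order_contact_proof} provide.

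First, the paper does not expand $F_{\sss T}(\bar x + y)-F_{\sss T}(\bar x)$ to order $\ell+1$ in $y$. It keeps a \emph{first-order} expansion in $y=D_k$ with error $o(|x-\bar x|^{1+\alpha/2})$ (the paper's adapted condition below \eqref{eq:AlphaDefinition}), and separately expands $f_{\sss T}(\Sigma_{k-1})$ to order $\ell$ \emph{around zero} in the variable $\Sigma_{k-1}$. It is only this second, global expansion that sees the vanishing of $f^{(1)}_{\sss T}(0),\dots,f^{(\ell-1)}_{\sss T}(0)$ and produces the $c\,t^{\ell+1}$ drift. Your $(\ell+1)$-th order expansion of $F_{\sss T}$ around $\Sigma_{k-1}$ is both unnecessary — the $j\ge 1$ terms $f^{(j)}_{\sss T}(\Sigma_{k-1})D_k^{j+1}$ are of order $n^{-(j+1)}$ and vanish under the scaling long before they could matter — and insufficient, because the vanishing condition is about derivatives at $0$, not at $\Sigma_{k-1}$; you would still need to expand each $f^{(j)}_{\sss T}(\Sigma_{k-1})$ around $0$ afterwards, recovering the paper's decomposition through a detour. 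Consequently, the smoothness demand $f_{\sss T}\in\mathcal C^{\ell+1}$ is stronger than what is needed; $F_{\sss T}\in\mathcal C^2$ (for the first-order expansion) plus $\ell$ derivatives of $f_{\sss T}$ at $0$ suffice, and the paper explicitly points out that neither moments of $S$ nor smoothness of $F_{\sss T}$ need to be strengthened as $\ell$ grows, contrary to your intuition.

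Second, and more seriously, the uniformity range $\bar x\le Cy^{\alpha/\ell}$ is too narrow to cover the regime that actually occurs. With $y=D_k\sim n^{-1}$ and $\bar x=\Sigma_{k-1}\sim n^{\alpha-1}$ for $k\sim n^\alpha$, the correct constraint must be $\bar x\le Cy^{1-\alpha}$, since $n^{\alpha-1}=(n^{-1})^{1-\alpha}$. One checks from \eqref{eq:AlphaDefinition} that $\alpha/\ell=2(1-\alpha)$, so $y^{\alpha/\ell}\ll y^{1-\alpha}$; your supremum excludes the values of $\bar x$ that the proof must handle. Already at $\ell=1$ your condition would demand $\bar x\le Cy^{2/3}$, whereas the paper's \eqref{eq:ArrivalDistrUnifError} has $\bar x\le Cy^{1/3}=y^{1-\alpha}$, so your formula does not even reduce to the known case. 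Once you replace your proposed lemma by the paper's two-step expansion with the $o(|x-\bar x|^{1+\alpha/2})$ control and the generalization of Lemma~\ref{lem:ApproxSWithExpectation} to $\ell$-th powers (Lemma~\ref{lem:ApproxSEllWithExpectation}), the rest of your outline goes through as you describe.
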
%
Note that $\ell = 1$, corresponding to the case considered in Theorem \ref{MainTheorem}, returns the same scaling as in Theorem \ref{MainTheorem}. Moreover, the case $\ell = 2$ has already been known in the literature for quite some time, at least at a heuristic level. Newell derived the correct exponents ($\alpha = 4/5$) through an argument using the Fokker-Planck equation associated with the queue length process (see \cite[part III]{newell1968queues}).
%\footnote{Note that the scaling parameter there (unluckily named $\beta$) is related to our setting by $\beta = 1/n^2$.}).
Notice also that $\lim_{\ell\to\infty}\frac{\alpha}{2}=\lim_{\ell\to\infty}\frac{\ell}{2\ell + 1} = \frac{1}{2}$, suggesting that the right scaling for the uniform arrivals case ($\infty$-order contact) is the diffusive one.

An analogous heuristic to the one below \ref{MainTheorem_exponential} motivates the expressions for $\alpha$  and the limit process. We first consider a first-order approximation of the queue. Let  $\Sigma_i := \sum_{l=1}^iD_l/n$, then
\begin{align}%
Q_n(tn^{\alpha}) &\approx \sum_{i=1}^{tn^{\alpha}}\big ( {\red \sum_{j\nin\nu_{i}}} \mathds 1_{\{\sum^{i-1}_{l=1}D_l\leq {\red T_{j}}\leq \sum_{l=1}^i D_l\}} -1\big)\nnl
&\approx \sum_{i=1}^{tn^{\alpha}} \big(n(F_{\sss T}(\Sigma_i) - F_{\sss T}(\Sigma_{i-1}))-1\big)\nnl
&\approx \sum_{i=1}^{tn^{\alpha}}  (f_{\sss T}(\Sigma_{i-1})\cdot\mathbb E[S]-1)\approx \sum_{i=1}^{tn^{\alpha}} \Big (\sum^{i-1}_{l=1} S_l/n\Big)^{\ell} f_{\sss T}^{(\ell)}(0)\E[S],
\end{align}%
where as before the last approximation comes from the criticality assumption. Thus the leading order term (up to a multiplicative constant) of the queue length process is
\begin{align}\label{eq:kOrderContactQFirstOrderApprox}%
Q_n(tn^{\alpha}) \approx \sum_{i=1}^{tn^{\alpha}} \frac{i^{\ell}}{n^{\ell}}\approx t^{\ell+1} n^{(\ell+1)\alpha - \ell}.
\end{align}%
For both a deterministic drift and a Brownian contribution to appear in the limit, the order of magnitude of the first order approximation \eqref{eq:kOrderContactQFirstOrderApprox} should equate $n^{\alpha/2}$. This gives
\begin{align}%
(\ell+1)\alpha - \ell = \alpha /2 ~\Rightarrow~ \alpha = \frac{\ell}{\ell + 1/2}.
\end{align}%
The formal proof of Theorem \ref{th:MainTheoremKthOrderContact} mimics what has been done for the case $\ell = 1$. One might expect that assumptions on the existence of higher moments of $S$ are needed, but this is not the case. In Section \ref{sec:ell_order_contact_proof}, we perform the key steps in the analysis in order to show how to proceed in this general case.

{\gr When the arrival clocks distribution function has an $\ell$-th order contact in 0, a result analogue to Theorem \ref{cor:busy_period_critical_delta_queue_general_arrivals} holds for the $\Delta_{(i)}/G/1$ queue. However, the parabolic drift is replaced by a higher order polynomial drift, and no closed form formulas are available for computing statistics of the first busy period. We leave this issue for future research. In Table \ref{tab:convergence_expectation_busy_period_halfnormal_arrivals} we compute the mean first busy period of a $\Delta_{(i)}/G/1$ queue with half-normally distributed arrival clocks. Recall that, if $Z$ is normally distributed with mean $0$ and variance $\sigma^2$, then $\vert Z \vert$ is half-normally distributed with scale parameter $\sigma$. In particular, $\fT'(0) = 0$ and $\fT''(0)<0$.
\begin{table}[!htbp]
\centering
\begin{tabular}{ c  c  c}
& \multicolumn{1}{ c }{$q=1$, $\beta=1$} & \multicolumn{1}{ c }{$q=2$, $\beta=1$}\\
\multicolumn{1}{c|}{$n$}  & $n^{1/5}\E[\mathrm{BP}_n]$ &  $n^{1/5}\E[\mathrm{BP}_n]$\\
\hline
\multicolumn{1}{c|}{$10$} &		3.8340	& 	 5.3984			\\
\multicolumn{1}{c|}{$100$}&		3.0997	& 	 4.1232			\\
\multicolumn{1}{c|}{$1000$} &	2.8378	& 	 3.8772			\\
\multicolumn{1}{c|}{$10000$} &	2.7801	& 	 3.7721			\\
\multicolumn{1}{c|}{$100000$} &	2.7942	& 	 3.7548			\\
\end{tabular}
\caption{Mean busy period for the pre-limit queue with general (half-normal) arrivals and different population sizes. The half-normal distribution has scale parameter $\sigma=\sqrt{\pi}/\sqrt{2}$, and $\fT'(0)=0$, $\fT''(0) < 0$. Each value for the pre-limit queue is the average of $10000$ simulations.}\label{tab:convergence_expectation_busy_period_halfnormal_arrivals}
\end{table}
}
\subsection{Preliminaries}
Here we present some auxiliary results used in the proof of Theorem \ref{MainTheorem}. All random variables defined from now on are defined on some complete probability space $(\Omega,\mathcal F, \mathbb P)$. When needed, elements of $\Omega$ will always be denoted by $\omega$. Given two real-valued random variables $X,Y$ we say that $X$ \emph{stochastically dominates} $Y$, and we denote it by $Y \preceq X$, if 
\begin{align*}%
\mathbb P (X\leq x)\leq \mathbb P (Y\leq x),\qquad\forall x\in\mathbb R,
\end{align*}%
so that for every nondecreasing function $f:\mathbb R\rightarrow\mathbb R$
\begin{align}%
\mathbb E [f(Y)] \leq \mathbb E[f(X)].\label{StochDomLemma}
\end{align}%
If $X$ and $Y$ are defined on the same probability space $\Omega$, and $X(\omega) \leq Y(\omega)$ for almost every $\omega\in\Omega$, then we write $X\sr{\mathrm{a.s.}}{\leq} Y$. We write $f(n) = O(g(n))$ for functions $f, g\geq0$ and $n\rightarrow \infty$ if there exists a constant $C>0$ such that $\lim_{n\rightarrow\infty}f(n)/g(n)\leq C$, and $f(n) = o(g(n))$ if $\lim_{n\rightarrow\infty}f(n)/g(n) = 0$. Furthermore, we write $O_{\mathbb P}(a_n)$ for a sequence of random variables $X_n$ for which $\vert X_n\vert/a_n$ is tight as $n\rightarrow\infty$. Moreover, we write $o_{\mathbb P}(a_n)$ for a sequence of random variables $X_n$ for which $\vert X_n\vert/a_n\stackrel{\mathbb P}{\rightarrow}0$ as $n\rightarrow\infty$. We say that a sequence of events $(E_n)_{n\geq1}$ holds with high probability, or w.h.p. for short, if $\mathbb P(E_n) \rightarrow1$ as $n\rightarrow\infty$.

All the processes we deal with are elements of the space $\mathcal D([0,\infty),\mathbb R)$ (or $\mathcal D$, for short) of c\`adl\`ag functions,  defined as the space of all functions $f:\mathbb [0,\infty)\rightarrow\mathbb R$ such that for all $\bar x\in[0,\infty)$
\begin{itemize}
\item[-] $\lim_{x\rightarrow \bar x^+}f(x)=f(\bar x)$;
\item[-] $\lim_{x\rightarrow \bar x^-}f(x)<\infty$.
\end{itemize}
Hence, $N_n(\cdot)$ is a $\mathcal D$-valued random variable. Following \cite{billingsley2009convergence}, we say that $X_n$ \emph{converges in distribution} (or \emph{weakly converges}) to $X$ (and denote it by $X_n\stackrel{\mathrm{d}}{\rightarrow} X$) if $\mathbb E[f(X_n)]\rightarrow \mathbb E[f(X)]$ as $n$ tends to infinity for every $f(\cdot)$ that is real-valued, bounded and continuous. In particular, if $X$ is $\mathcal D$-valued, $f(\cdot)$ can be any (almost everywhere) continuous function from $\mathcal D$ to $\mathbb R$. Thus, to formally establish convergence in distribution in the space of $\mathcal D$-valued random variables, a metric, or at least a topology, on $\mathcal D$ is needed (in order to define continuity of the functions $f(\cdot)$). Several topologies on the space $\mathcal D$ have been defined (all by Skorokhod, in his famous paper \cite{skorokhod1956limit}). For our purposes we will consider the $J_1$ topology, which can be described as being generated by some metric $d_{\infty}$ on $\mathcal D([0,\infty),\mathbb R)$ defined as an extension of some metric $d_{t}$ on $\mathcal D([0,t],\mathbb R)$. The latter is defined as follows. Let $\Vert\cdot\Vert$ indicate the supremum norm, id$(\cdot)$ the identity function on $[0,t]$ and $\Lambda_t$ the space of nondecreasing homeomorphisms on $[0,t]$. Define, for any $x_1,x_2\in \mathcal D$,
\begin{align}
d_{t}(x_1,x_2):=\inf_{\lambda\in\Lambda_t}\{\max\{\Vert\lambda(\cdot)-\textrm{id}(\cdot)\Vert, \Vert x_1(\cdot)-x_2(\lambda(\cdot))\Vert\}\}
\end{align} 
and
\begin{align}
d_{\infty}(x_1,x_2):=\int_0^{\infty}\e^{-t}[d_{t}(x_1,x_2)\wedge 1]\mathrm{d}t.
\end{align}
It is possible to show (see, e.g., \cite{whitt1980some}) that this is the correct way of extending the metric, and thus the topology, from $\mathcal D([0,t],\mathbb R)$ to $\mathcal D([0,\infty),\mathbb R)$ in the sense that convergence with respect to $d_{\infty}$ is equivalent to convergence with respect to $d_{t}$ on any compact subset of the form $[0,t]$.

The general idea to prove Theorem \ref{MainTheorem} is to first show the weak convergence of the (rescaled) process $(N_n(k))_{k\geq0}$ defined in \eqref{QueueLength} and then to deduce the convergence of the reflected process $(\phi(N_n)(k))_{k\geq0}$ exploiting the following continuous-mapping theorem. In fact, this is a special case of a general technique known as the \emph{continuous-mapping approach} (in \cite{whitt1980some} and \cite{StochasticProcess} a detailed description of this technique is given). Through the continuous-mapping approach one reduces the problem of establishing convergence of random objects to one of continuity of suitable functions:
\begin{theorem}[Continuous Mapping Theorem]
If $X_n\stackrel{\mathrm d}{\rightarrow} X$ and $f$ is continuous almost surely with respect to the distribution of $X$, then $f(X_n)\stackrel{\mathrm d}{\rightarrow} f(X)$.
\end{theorem}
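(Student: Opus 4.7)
The plan is to invoke the Portmanteau characterization of weak convergence, which reduces the claim to verifying $\limsup_n \mathbb P(f(X_n) \in F) \leq \mathbb P(f(X) \in F)$ for every closed set $F$ in the target space. First I would rewrite this probability as $\mathbb P(X_n \in f^{-1}(F))$ and observe that the preimage $f^{-1}(F)$ need not be closed, since $f$ is only assumed almost-surely continuous with respect to the law of $X$. To circumvent this, I would pass to the closure $\overline{f^{-1}(F)}$, which is closed, so that Portmanteau applies directly to it.

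The key technical step is the set-theoretic inclusion $\overline{f^{-1}(F)} \subseteq f^{-1}(F) \cup D_f$, where $D_f$ denotes the set of discontinuity points of $f$. This follows by a short argument: any point $x \in \overline{f^{-1}(F)}$ at which $f$ is continuous is the limit of a sequence $x_k \in f^{-1}(F)$, and by continuity $f(x_k) \to f(x)$; since $F$ is closed, $f(x) \in F$, so $x \in f^{-1}(F)$. Combining this inclusion with the Portmanteau bound yields
$$\limsup_n \mathbb P(X_n \in f^{-1}(F)) \leq \limsup_n \mathbb P(X_n \in \overline{f^{-1}(F)}) \leq \mathbb P(X \in \overline{f^{-1}(F)}) \leq \mathbb P(f(X) \in F) + \mathbb P(X \in D_f),$$
and the almost-sure continuity hypothesis gives $\mathbb P(X \in D_f) = 0$, which closes the argument. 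Applying the Portmanteau direction again (now to open sets) one recovers full weak convergence of $f(X_n)$ to $f(X)$.

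The main obstacle is measurability bookkeeping rather than any probabilistic subtlety: one needs $D_f$ to be Borel-measurable (so that $\mathbb P(X \in D_f)$ is well-defined) and the hypothesis ``$f$ continuous almost surely with respect to the distribution of $X$'' to translate precisely to $\mathbb P(X \in D_f)=0$. In the Polish setting of $\mathcal D([0,\infty),\mathbb R)$ equipped with the Skorohod $J_1$ topology used throughout this paper, both points follow from standard facts, see for instance \cite{billingsley2009convergence}, so the topological framework introduces no genuine extra difficulty beyond this routine verification.
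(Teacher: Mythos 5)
The paper does not prove this theorem; it is stated in the preliminaries as a standard reference result (the continuous-mapping approach), so there is no argument in the paper against which to compare yours. Your Portmanteau proof is the classical one, essentially Billingsley's argument, and it is correct: the inclusion $\overline{f^{-1}(F)} \subseteq f^{-1}(F) \cup D_f$ is exactly the right device, and in a metric space $D_f$ is an $F_\sigma$ set and hence Borel, so the measurability bookkeeping you flag is indeed routine. One small imprecision: the final sentence about ``applying the Portmanteau direction again to open sets'' is unnecessary and slightly misleading --- the closed-set condition $\limsup_n \mathbb P(f(X_n)\in F)\leq\mathbb P(f(X)\in F)$ for all closed $F$ is by itself one of the equivalent Portmanteau characterizations of weak convergence, so once you have it you are done; the open-set version follows automatically by complementation rather than needing a separate verification. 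You should also make explicit, as part of the hypotheses, that $f$ is Borel measurable (so that $f(X_n)$ and $f(X)$ are random elements at all); almost-sure continuity alone does not give this, though in practice the functionals used in the paper (reflection map, first-passage time, supremum) are all measurable.
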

Suppose we have shown that $n^{-1/3}N_n(\cdot n^{2/3})\stackrel{\mathrm d}{\rightarrow} W(\cdot)$. To prove Theorem \ref{MainTheorem} we are left to prove that the function $\phi:\mathcal{D}\rightarrow\mathcal{D}$ given by
\begin{equation}%
\phi: f(\cdot)\mapsto f(\cdot)-\inf_{y\leq\cdot}f^-(y)
\end{equation}%
is continuous almost surely with respect to the distribution of $W(\cdot)$. For this note that $\mathbb P (W(\cdot)\in \mathcal C)=1$, where $\mathcal C=\mathcal C([0,\infty),\mathbb R)\subset \mathcal D$ denotes the space of continuous functions from $[0,\infty)$ to $\mathbb R$. Then by \cite[Theorem $4.1$]{whitt1980some} and \cite[Theorem $6.1$]{whitt1980some}, $\phi$ is continuous almost surely with respect to the distribution of $W(\cdot)$.

To prove the convergence of the rescaled  process
%queue length 
 $(N_n(k))_{k\geq1}$ we make use of a well-known martingale functional central limit theorem (see \cite[Section 7]{MarkovProcesses}) in the special case in which the limit process is a standard Brownian motion. For a thorough overview of this important case, see \cite{whitt2007proofs}. For completeness we include the theorem that we shall apply, which is  the one-dimensional version of the one contained in \cite{whitt2007proofs}. Recall that if $M(t)$ is a square-integrable martingale with respect to a filtration $\{\mathcal F_{t}\}_{t\geq0}$, the \emph{predictable quadratic variation process} associated with $M(t)$ is the unique nondecreasing, nonnegative, predictable, integrable process $V(t)$ such that $M^2(t)-V(t)$ is a martingale with respect to $\{\mathcal F_{t}\}_{t\geq0}$.

\begin{theorem}[MFCLT]\label{FCLT}
Let $ \{\mathcal F_n\}_{n\in\mathbb N}$ be an increasing filtration and $\{\bar M_n\}_{n\in\mathbb N}$ be a sequence of continuous-time, real-valued, square-integrable martingales, each with respect to $\mathcal F _n$, such that $\bar M_n(0)=0$. Assume that $(\bar V_n(t))_{t\geq0}$, the predictable quadratic variation process associated with $(\bar M_n(t))_{t\geq0}$, and $\bar M_n(t)$ satisfy the following conditions:
\begin{itemize}[label=(alph*)]%
\item[\emph{(a)}]$\bar V_n(t)\stackrel{\mathbb P}{\longrightarrow}\sigma^2t,\qquad\forall t\in[0,\infty]$\emph{;}
\item[\emph{(b)}]$\lim_{n\rightarrow\infty}\mathbb E [\sup_{t\leq \bar t}\vert\bar V_n(t)-\bar V_n(t^-)\vert]=0,\qquad\forall\bar t\in\mathbb R^+$\emph{;}
\item[\emph{(c)}]$\lim_{n\rightarrow\infty}\mathbb E [\sup_{t\leq \bar t}\vert\bar M_n(t)-\bar M_n(t^-)\vert^2]=0,\qquad\forall\bar t\in\mathbb R^+$.
\end{itemize}%
Then, as $n\rightarrow\infty$,  $\bar M_n$ converges in distribution in $\mathcal D([0,\infty))$ to a centered Brownian motion with variance $\sigma^2 t$.
\end{theorem}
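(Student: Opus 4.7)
The plan is to follow the standard two-step martingale method for functional central limit theorems: first establish tightness of $\{\bar M_n\}$ in $\mathcal D([0,\infty),\mathbb R)$ endowed with the $J_1$ topology, and then identify any subsequential limit as a centered Brownian motion with variance parameter $\sigma^2$ via L\'evy's martingale characterization.

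For tightness, I would invoke the Aldous-Rebolledo criterion, which is tailored to sequences of martingales. The key input is that, by the martingale property,

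\begin{equation}
\mathbb E\bigl[(\bar M_n(\tau+\delta) - \bar M_n(\tau))^2 \,\big|\, \mathcal F_n(\tau)\bigr] = \mathbb E\bigl[\bar V_n(\tau+\delta) - \bar V_n(\tau) \,\big|\, \mathcal F_n(\tau)\bigr]
\end{equation}

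for every bounded stopping time $\tau$ and $\delta>0$. Condition (a) implies that the right-hand side is approximately $\sigma^2\delta$ uniformly in $n$, which supplies the modulus-of-continuity control required by Aldous, while condition (b) absorbs the single-jump contributions of $\bar V_n$. Pointwise tightness of $\bar M_n(t)$ itself follows from $\mathbb E[\bar M_n(t)^2] = \mathbb E[\bar V_n(t)] \to \sigma^2 t$ together with a short uniform integrability argument based on condition (c).

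Having extracted a subsequential limit $M$ via Prohorov's theorem, the next step is to show that $M$ has continuous paths. Because the maximal jump of $\bar M_n$ on every compact interval $[0,\bar t]$ converges to zero in $L^2$ (and hence in probability) by condition (c), any weak limit inherits path continuity almost surely. Passing the martingale property itself to the limit requires uniform integrability of $\{\bar M_n(t)\}_n$, again delivered by condition (a) via $\mathbb E[\bar M_n(t)^2] = \mathbb E[\bar V_n(t)]$. These ingredients together identify $M$ as a continuous martingale.

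The final step is to check that the predictable quadratic variation of $M$ equals $\sigma^2 t$, so that L\'evy's theorem identifies $M$ as the claimed Brownian motion. For this one passes to the limit in the martingale relation satisfied by $\bar M_n(t)^2 - \bar V_n(t)$, using (a) to replace $\bar V_n(t)$ by its deterministic limit $\sigma^2 t$ and (b), (c) to control the jumps in the square. Uniqueness of the characterization then upgrades subsequential convergence to full convergence of $\bar M_n$. I expect the most delicate step to be precisely this simultaneous passage to the limit in $\bar M_n^2$ and $\bar V_n$ while preserving the martingale relation: the jump conditions (b) and (c) are present in the hypotheses exactly to provide the uniform integrability of $\bar M_n(t)^2$ and the asymptotic path continuity that make the interchange of limit and conditional expectation legitimate.
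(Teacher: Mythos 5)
The paper does not prove this theorem; it states it as a known martingale functional central limit theorem, with references to Ethier and Kurtz (Section~7) and to Whitt (2007), where the one-dimensional version you state is established. There is therefore no paper-internal proof to compare against. Your sketch is a correct high-level outline of the standard argument found in those references: tightness via the Aldous--Rebolledo criterion (reducing tightness of $\bar M_n$ to tightness of $\bar V_n$), path continuity of any subsequential limit from the vanishing-jump conditions (b) and (c), and identification of the limit via L\'evy's martingale characterization after passing the martingale relation for $\bar M_n^2 - \bar V_n$ to the limit.

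Two small remarks on the details. First, pointwise tightness of $\bar M_n(t)$ follows directly from Markov's inequality together with the $L^2$ bound $\mathbb E[\bar M_n(t)^2] = \mathbb E[\bar V_n(t)]$; you do not need a separate uniform integrability argument there (uniform integrability of $\bar M_n(t)^2$, via condition~(c), is needed only later when passing the martingale property of $\bar M_n^2 - \bar V_n$ to the limit). Second, the statement that ``condition (a) implies that the right-hand side is approximately $\sigma^2\delta$ uniformly in $n$'' compresses a real argument: condition (a) gives pointwise convergence at deterministic $t$, whereas the Aldous criterion requires uniform control over bounded stopping times $\tau$. The way this is usually closed is by noting that $\bar V_n$ is nondecreasing, its limit $t\mapsto\sigma^2 t$ is continuous, and its jumps vanish by (b), which together upgrade the pointwise limit to a functional limit of $\bar V_n$ in $\mathcal D$ and hence to the required stopping-time estimate. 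Neither point is a gap; your route is the standard one.
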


In order to apply the MFCLT, we apply a Doob-type decomposition to the process $N_n(k)$ to write it as the sum of a martingale---which will converge to the Brownian motion---and an appropriate drift term:
\begin{align}\label{QDecomposition}%
N_n(k)&=\sum_{i=1}^k\left( A_n(i)- \mathbb E[A_n(i)|\mathcal F_{i-1}]\right) +\sum_{i=1}^k( \mathbb E[A_n(i)|\mathcal F_{i-1}]-1)\notag\\
&=:M_n(k)+C_n(k),
\end{align}%
with $\{\mathcal F_{i}\}_{i\geq1}$ the filtration generated by $(A_n(k))_{k\geq1}$, i.e. $\mathcal F_i = \sigma (\{A_n(k)\}_{k=1}^i)$. Another Doob decomposition of interest is
\begin{equation}\label{MSquareDecomposition}%
M^2_n(k)=Z_n(k)+V_n(k)
\end{equation}
with $Z_n(k)$ a martingale and $V_n(k)$  the discrete-time predictable quadratic variation of the process $M_n(k)$. Note that for every fixed $n$ and $k$, $\vert M_n(k)\vert$ is bounded and thus has a finite second moment. Therefore  $V_n(k)$ exists and is given by
\begin{align}%
V_n(k)&=\sum_{i=1}^k\mathbb E[(A_n(i)-\mathbb E [A_n(i)\vert \mathcal F_{i-1}])^2\vert \mathcal F_{i-1}]\nnl
&=\sum_{i=1}^k( \mathbb E[A_n(i)^2|\mathcal F_{i-1}]- \mathbb E[A_n(i)|\mathcal F_{i-1}]^2).
\end{align}%
To see this, we rewrite
\begin{align}%
M^2_n(k)&=\sum_{i=1}^k(A_n(i)-\mathbb E[A_n(i)\vert \mathcal F_{i-1}])^2+\sum_{\substack{i,j\leq k \\ i\neq j} }(A_n(i)-\mathbb E[A_n(i)\vert \mathcal F_{i-1}])(A_n(j)-\mathbb E[A_n(j)\vert \mathcal F_{j-1}])\nnl
&=:\sum_{i=1}^k(A_n(i)-\mathbb E[A_n(i)\vert \mathcal F_{i-1}])^2+L_n(k).
\end{align}%
By developing $L_n(k)$ one can easily see that it is a martingale. The decomposition \eqref{MSquareDecomposition} then follows from:
\begin{align}%
Z_n(k)&:=\sum_{i=1}^k(A_n(i)-\mathbb E[A(i)\vert \mathcal F_{i-1}])^2-\sum_{i=1}^k\mathbb E [(A_n(i)-\mathbb E[A(i)\vert \mathcal F_{i-1}])^2\vert \mathcal F_{i-1}]+L_n(k),\nnl
V_n(k)&:=\sum_{i=1}^k\mathbb E [(A_n(i)-\mathbb E[A(i)\vert \mathcal F_{i-1}])^2\vert \mathcal F_{i-1}].
\end{align}%
Note that $Z_n(k)$ is the sum of two martingales and thus is itself a martingale.

\section{Proof of Theorem \ref{MainTheorem_exponential}}\label{sec:main_theorem_exponential}
Note that, when the arrival clocks are exponentially distributed, the conditional law of $A_n(k)$ in \eqref{eq:ArrivalsDefinitionGeneralIntro} (conditioned on $\{A_n(k-1),\sum^{k-1}_{j=1}D_k\}$) is distributed as
\begin{equation}\label{eq:ArrivalsDefinition}
\sum_{i=1}^{P_n(k)}\mathds{1}_{\{T_{i,k}\leq  D _k\}},
\end{equation}
where  $T_{i,k}\stackrel{d}{=}T_i$, which means that the clocks are re-drawn after each service, and $P_n(k):={\red \vert [n]\setminus\nu_k\vert} = n{\red -Q_n(k-1)-k}$ is the number of customers still in the population. {\red Since the dependence of $T_{i,k}$ on $k$ does not play a role in our analysis}, we will write $T_i$ instead of $T_{i,k}$. By directly defining \eqref{eq:ArrivalsDefinition} as the random variable that describes the new arrivals during the $k$-th service time, this case becomes significantly lighter on the technicalities.

Note also that the exponential distribution satisfies assumptions \eqref{eq:ArrivalDistriNearXBar} and \eqref{eq:DensityDistrNearZero}. However, Theorem \ref{MainTheorem_exponential} actually holds for all random variables $(T_i)_{i\geq1}$ satisfying 
\begin{align}%
\FT(x)=f_{\sss T}(0)\cdot x + o(x^{4/3}),\label{ArrivalDistriNearZero}
\end{align}%
which is strictly weaker than \eqref{eq:ArrivalDistriNearXBar}. 
\subsection{Supporting lemmas}\label{sec:Preparation}

We begin with an important lemma, which should be thought of as stating that, for the small-o term in the Taylor expansions we exploit, it holds that 
\begin{align*}
\mathbb E[o_{\mathbb P}(x)] = o(x).
\end{align*}%
\begin{lemma}\label{oTaylorExpansionLemma}
Let $S$, $T$ be positive random variables. Define $D:=S/n$. If \eqref{ArrivalDistriNearZero} holds for $T$ and  $\mathbb E[S^2]<\infty$, then
\begin{align}%
\mathbb E\big[\big\vert\mathbb P(T\leq D\vert D)-f_{\sss T}(0)\cdot D\big\vert\big] &= o(n^{-4/3})\label{DistributionTaylor},\\
\mathbb E\big[\big\vert D(\mathbb P(T\leq D\vert D)-f_{\sss T}(0)\cdot D)\big\vert\big] &= o(n^{-2}),\label{DistributionTaylorHalfSquare}\\
\mathbb E\big[\big\vert\mathbb P(T\leq D\vert D)-f_{\sss T}(0)\cdot D\big\vert^2\big] &= o(n^{-2}).\label{DistributionTaylorSquare}
\end{align}%
\end{lemma}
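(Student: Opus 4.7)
The plan is to reduce the three inequalities to deterministic moment bounds on $\FT(D)-\fT(0)D$. Conditioning on $D$ gives $\prob(T\leq D\mid D)=\FT(D)$, and by the hypothesis \eqref{ArrivalDistriNearZero} we have $|\FT(x)-\fT(0)x|=o(x^{4/3})$ as $x\downarrow 0$. The argument rests on two ingredients: a localization near the origin, where the Taylor-type bound is used, and a tail estimate, where $\E[S^2]<\infty$ supplies the uniform-integrability identity $\E[S^2\indicwo{S>n\delta}]\to 0$ (dominated convergence).

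Concretely, fix $\eta>0$ and choose $\delta=\delta(\eta)>0$ such that $|\FT(x)-\fT(0)x|\leq \eta x^{4/3}$ for every $x\in[0,\delta]$; for $x>\delta$ use the crude bound $|\FT(x)-\fT(0)x|\leq 1+\fT(0)x$. Splitting the expectation in \eqref{DistributionTaylor} on $\{D\leq\delta\}$ versus $\{D>\delta\}$,
\[
\E\bigl[|\FT(D)-\fT(0)D|\bigr]\leq \eta\,\E[D^{4/3}\indicwo{D\leq\delta}]+\E[(1+\fT(0)D)\indicwo{D>\delta}].
\]
Jensen's inequality applied to the concave map $x\mapsto x^{2/3}$ gives $\E[D^{4/3}]\leq(\E[D^2])^{2/3}=n^{-4/3}\E[S^2]^{2/3}$, so the first term is $\eta\cdot O(n^{-4/3})$. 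For the tail pieces, $\prob(D>\delta)\leq \E[S^2\indicwo{S>n\delta}]/(n\delta)^2$ and $\E[D\indicwo{D>\delta}]\leq \E[S^2\indicwo{S>n\delta}]/(n^2\delta)$ are both $o(n^{-2})$, hence $o(n^{-4/3})$. Letting $\eta\downarrow 0$ yields \eqref{DistributionTaylor}.

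For \eqref{DistributionTaylorHalfSquare} and \eqref{DistributionTaylorSquare} the same splitting is used, but the integrand near the origin is multiplied by $D$ or by $|\FT(D)-\fT(0)D|$. On $\{D\leq\delta\}$ the resulting powers $D^{7/3}$ and $D^{8/3}$ are dominated by $\delta^{1/3}D^2$ and $\delta^{2/3}D^2$, respectively, so the near-zero contributions are at most $\eta\,\delta^{1/3}n^{-2}\E[S^2]$ and $\eta^2\delta^{2/3}n^{-2}\E[S^2]$. The tail contributions reduce to $\E[D\indicwo{D>\delta}]$, $\E[D^2\indicwo{D>\delta}]=n^{-2}\E[S^2\indicwo{S>n\delta}]$, which are again $o(n^{-2})$. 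Sending $\eta\downarrow 0$ (with the corresponding $\delta$ fixed at each step) gives both estimates.

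The main subtlety, rather than a genuine obstacle, is that the tail terms must be $o(n^{-2})$ and not merely $O(n^{-2})$, so a bare Markov inequality at the $S^2$ scale is insufficient; one must invoke the uniform-integrability statement $\E[S^2\indicwo{S>n\delta}]\to 0$. Everything else is a routine combination of Jensen's inequality and the localization afforded by \eqref{ArrivalDistriNearZero}.
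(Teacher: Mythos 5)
Your proof is correct. It takes a mildly different route from the paper's. The paper proves a.s.\ pointwise convergence of the rescaled quantity (e.g., $n^{4/3}\vert \FT(D)-\fT(0)D\vert\to 0$) and then invokes the Dominated Convergence Theorem, with the dominating random variable supplied by a \emph{globally valid} bound $\vert \FT(x)-\fT(0)x\vert\leq Cx^{4/3}$ (and its analogues $C_1 x^2$, $C_2 x^2$ for the second and third assertions), so that the dominant $CS^{4/3}$ (resp.\ $C_iS^2$) is integrable and independent of $n$. You instead split the expectation at a fixed level $\delta=\delta(\eta)$: on $\{D\leq\delta\}$ you use the \emph{local} smallness $\vert \FT(x)-\fT(0)x\vert\leq\eta x^{4/3}$ together with Jensen's inequality $\E[D^{4/3}]\leq(\E[D^2])^{2/3}$, and on $\{D>\delta\}$ you control the tail by the uniform integrability $\E[S^2\indic{S>n\delta}]\to 0$, which gives $o(n^{-2})$. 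The two arguments rest on the same two ingredients, namely the Taylor-type estimate near the origin from \eqref{ArrivalDistriNearZero} and the moment assumption $\E[S^2]<\infty$; yours in effect unrolls the dominated-convergence step into an explicit $\eta$-$\delta$ estimate. A small advantage of your version is that it makes the tail contribution's order $o(n^{-2})$ transparent and avoids invoking a global $x^{4/3}$-bound; the paper's version is a bit more compact.
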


\begin{proof}
Since
\begin{align}
\mathbb P(T\leq D\vert D)=F_{\sss T}(D)=f_{\sss T}(0)\cdot D + o( S  ^{4/3}n^{-4/3}),
\end{align}
pointwise convergence trivially holds:
\begin{equation}%
n^{4/3}\vert  F_{\sss T}(D) - f_{\sss T}(0)\cdot D\vert \stackrel{\textrm{a.s.}}{\rightarrow}  0,\quad\textrm{as } n\to\infty.
\end{equation}%
Moreover, there exists a constant $C>0$ such that
\begin{equation}%
\vert F_{\sss T}(x)-f_{\sss T}(0)\cdot x\big\vert\leq C  x^{4/3},
\end{equation}%
which implies 
\begin{equation}%
n^{4/3}\vert F_{\sss T}(D)-f_{\sss T}(0)\cdot D\big\vert\stackrel{\text{a.s.}}{\leq} n^{4/3}\cdot C  (S/n)^{4/3}.
\end{equation}%
Since $\mathbb E[S^{4/3}]<  \infty$, the random variable $n^{4/3}\vert F_{\sss T}(D) - c_T\cdot D\vert$ is bounded by an integrable random variable not depending on $n$. We can then apply the Dominated Convergence Theorem to obtain \eqref{DistributionTaylor}.
Using an analogous argument we can also prove \eqref{DistributionTaylorHalfSquare} and \eqref{DistributionTaylorSquare}. Pointwise convergence is again trivial. Dominance is obtained by noting that there exists constant $C_1, C_2>0$ such that
\begin{equation}\label{eq:oTaylorExpansionDomination}%
x\vert F_{\sss T}(x) - f_{\sss T}(0)\cdot x\vert  \leq C_1 x^2\quad\text{and}\quad\vert F_{\sss T}(x) - f_{\sss T}(0)\cdot x\vert ^2 \leq C_2 x^2.
\end{equation}%
Indeed, for $x\ll 1$, $\vert F_{\sss T}(x) - f_{\sss T}(0)\cdot x\vert ^2 \leq  C_2 x^{8/3} \leq C_2 x^2$ for some $N_2>0$, and for $x\gg1$ it is enough to notice that $F_{\sss T}(x)$ is bounded. The first bound in \eqref{eq:oTaylorExpansionDomination} is obtained in the same way. Since $\mathbb E [S^2] < \infty$ by assumption, \eqref{DistributionTaylorHalfSquare} and \eqref{DistributionTaylorSquare} again follow  by dominated convergence.
% We claim that $\mathbb E \big[\big\vert o( D  ^{4/3}n^{-4/3})\big\vert \big] = o(n^{-4/3})$. To prove so, we analyse the term $o (x^{4/3})$ in $F_{\sss T}(x)$. Since $\lim_{x\rightarrow \infty}F_{\sss T}(x)=1$, the term $\vert o(x^{4/3})\vert$ has at most linear growth for sufficiently big values of  $x$. Moreover by definition it is possible to find a constant $C$ such that $\vert o(x^{4/3})\vert\leq C x^{4/3}$ for all sufficiently small values of $x$. Thus there exists a constant $M>0$ such that $\vert o(x^{4/3})\vert\leq M x^{4/3}$ for all values of $x$. Since $\mathbb E [S^{4/3}]<\infty$, by the Dominated Convergence Theorem we conclude that $\lim_{n\rightarrow \infty}\mathbb E\big[\big\vert n^{4/3}o( D  ^{4/3})\big\vert\big]=0$.
\end{proof}
A random variable that plays a major role in the proof is the one associated with the arrivals when the queue would not deplete,
\begin{align}\label{APrimeDef}
A_n':=\sum_{i=1}^{n}\mathds{1}_{\{T_i\leq D\}}.
\end{align}
Note that 
\begin{align}
A_n(k)\preceq A_n',\qquad\forall k\geq1.
\end{align}

Before turning to the proof of Theorem \ref{MainTheorem} in Section \ref{sec:main_theorem_exponential}, we establish some lemmas that will prove to be useful both in the proof and in providing some insight into the behavior of the process $N_n(\cdot)$.

\begin{lemma} \label{oQueueLemma}
Let $N_n(\cdot)$ be the process defined in \eqref{QueueLength}. Then, for $k=O(n^{2/3})$, $n^{-2/3}N_n(k)\preceq G_n(k)$, where $G_n(k)$ is a random variable such that $G_n(k)\stackrel{\mathbb P}{\rightarrow}0$.
\end{lemma}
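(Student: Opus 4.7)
My plan is to bound $N_n(k)$ from above by an i.i.d.~partial sum and then apply elementary first and second moment bounds. The key structural input is the exponential memorylessness, which the excerpt exploits at the start of Section \ref{sec:main_theorem_exponential} to rewrite $A_n(k)$ (conditionally on $P_n(k)$ and $D_k$) as $\mathrm{Bin}(P_n(k),F_{\sss T}(D_k))$. Since $P_n(k)\leq n$, one can couple each $A_n(k)$ with an i.i.d.~copy $A_n'(k)$ of $A_n'$ (defined in \eqref{APrimeDef}) by adding $n-P_n(k)$ extra independent Bernoulli trials with success probability $F_{\sss T}(D_k)$. Under this coupling, $A_n(k)\leq A_n'(k)$ almost surely, so
\begin{equation*}
N_n(k)\leq N_n^u(k):=\sum_{i=1}^k\bigl(A_n'(i)-1\bigr)\qquad\text{a.s.},
\end{equation*}
which reduces the problem to showing that $G_n(k):=n^{-2/3}N_n^u(k)\stackrel{\mathbb P}{\to}0$.

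For the mean, by \eqref{DistributionTaylor} of Lemma \ref{oTaylorExpansionLemma},
\begin{equation*}
\mathbb E[A_n']=n\,\mathbb E[F_{\sss T}(D)]=nf_{\sss T}(0)\mathbb E[D]+o(n^{-1/3})=\rho_n+o(n^{-1/3}),
\end{equation*}
and the criticality condition \eqref{eq:CriticalityHypBeta_exponential} then gives $\mathbb E[A_n']-1=\beta n^{-1/3}+o(n^{-1/3})$. Consequently, for $k=O(n^{2/3})$, $\mathbb E[N_n^u(k)]=k(\mathbb E[A_n']-1)=O(n^{1/3})=o(n^{2/3})$. For the variance, the conditional variance formula gives $\text{Var}(A_n')=n\mathbb E[F_{\sss T}(D)(1-F_{\sss T}(D))]+n^2\text{Var}(F_{\sss T}(D))$; using $\mathbb E[F_{\sss T}(D)]=O(n^{-1})$ and the bound $\mathbb E[F_{\sss T}(D)^2]=O(n^{-2})$ obtained from \eqref{DistributionTaylorSquare} and Cauchy--Schwarz, one gets $\text{Var}(A_n')=O(1)$, and therefore $\text{Var}(N_n^u(k))=k\,\text{Var}(A_n')=O(n^{2/3})$.

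Combining these two estimates through Chebyshev's inequality yields $N_n^u(k)-\mathbb E[N_n^u(k)]=O_{\mathbb P}(n^{1/3})=o_{\mathbb P}(n^{2/3})$; together with $\mathbb E[N_n^u(k)]=o(n^{2/3})$ this gives $G_n(k)=n^{-2/3}N_n^u(k)\stackrel{\mathbb P}{\to}0$, as required.

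The main (modest) obstacle is arranging the coupling cleanly. This relies entirely on the memoryless property of the exponential clocks, which allows the $T_{i,k}$ to be treated as fresh i.i.d.~exponentials after every service completion, and thus makes the $A_n'(i)$ genuinely i.i.d.~copies of $A_n'$ when constructed through the common service times $D_k$. Without this property---for instance in the general arrival case treated in Section \ref{sec:GeneralArrivals}---a corresponding stochastic upper bound requires a substantially more delicate argument, since the residual clock distribution then depends on the full past of the queue.
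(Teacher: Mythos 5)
Your proof is correct and takes essentially the same route as the paper: you dominate $N_n(k)$ by a partial sum of i.i.d.\ copies of $A_n'$, then combine the first-moment estimate $\mathbb E[A_n']-1=O(n^{-1/3})$ with a uniform second-moment bound and Chebyshev. The paper's own proof uses the same stochastic domination $N_n(k-1)\preceq\sum_{j=1}^{k-1}(A_n'-1)$ and appeals to ``the Weak LLN for uncorrelated random variables,'' which for the triangular array here amounts to the same Chebyshev estimate you wrote out; it bounds $\mathrm{Var}(A_n')$ by expanding $\mathbb E[(A_n')^2]-\mathbb E[A_n']^2$ directly, whereas you use the law of total variance, but these are algebraically equivalent. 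One small point in your favour: you make explicit that the memoryless re-drawing of clocks is what renders the $A_n'(j)$ genuinely i.i.d.\ (rather than merely marginally identically distributed), which the paper glosses over by reusing the symbols $T_l$ in its display \eqref{QueueDominationForoSmallLemma}; your remark that the argument would not transfer verbatim to general arrivals is accurate and matches how the paper handles that case separately in Lemma~\ref{lem:QueueLengthoSmallGeneralArrivals}.
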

\begin{proof}
First note that 
\begin{align}\label{QueueDominationForoSmallLemma}%
N_{n}(k-1)\preceq \sum_{j=1}^{k-1}\Big(\sum_{l=1}^n \mathbf 1 _{\{T_l\leq D_j\}}-1\Big)=\sum_{j=1}^{k-1}\big(A'_n-1\big).
\end{align}%
%
%Thus, it suffices to prove the statement for the right-hand side term of \eqref{QueueDominationForoSmallLemma}. If we show that $n^{-2/3}\sum_{j=1}^{k-1}(\sum_{l=1}^n \mathbf 1 _{\{T_l\leq S_j/n\}})$ converges in distribution to $1$, then convergence in probability follows by standard convergence results. Define $A_n'=\sum_{l=1}^{n}\mathds{1}_{\{T_l\leq S_i/n\}}$ (note that $A'$ is the arrival process as if the pool of customers does not deplete) and 
%Let $\varphi_{\sss Y}(x) = \mathbb E[e^{ix Y}]$ denote the characteristic function of a random variable $Y$. Then,
%%
%\begin{equation*}%
%\varphi_{n^{-2/3}\sum_{j=1}^kA_n'}(x)=\prod_{j=1}^k\varphi_{A_n'}(n^{-2/3}x)=\Big(1 + i \mathbb E[A_n']xn^{-2/3}+ o (n^{-2/3})\Big)^k,
%\end{equation*}%
%%
%Since $ \mathbb E[A_n']=1 +o(n^{-1/3})$ for $k = t n^{2/3}$,
%%
%\begin{equation*}%
%\lim_{n\rightarrow\infty}\varphi_{n^{-2/3}\sum_{j=1}^kA_n'}(x)=e^{itx},
%\end{equation*}%
%%
%and this concludes the proof.
By the Weak LLN for uncorrelated random variables (see e.g. \cite{klenke2007probability}) it is enough to prove that $\sup_{n\in\mathbb N} \textrm{Var}(A_n')<\infty$. Write
\begin{align}%
\Var(A'_n)^2 & = \mathbb E[(A'_n)^2] - \mathbb E[A'_n]^2\notag\\
&= \mathbb E [A'_n] + \mathbb E[\sum_{\substack{i,j\leq n \\ i\neq j}}\mathds1_{\{T_i\leq D\}}\mathds1_{\{T_j\leq D\}}]- \mathbb E [A'_n]^2.
\end{align}%
The terms $\mathbb E [A'_n]$ and  $\mathbb E [A'_n]^2$ are uniformly bounded  since
\begin{equation}\label{eq:APrimeExpectedValue}%
\mathbb E [A'_n] = 1 + \beta n^{-1/3} + o(n^{-1/3}).
\end{equation}%
Moreover,
\begin{align}\label{eq:FTimesFExpectedValue}%
 \mathbb E[\sum_{\substack{i,j\leq n \\ i\neq j}}\mathds1_{\{T_i\leq D\}}\mathds1_{\{T_j\leq D\}}]=\sum_{\substack{i,j\leq n \\ i\neq j}} \mathbb E[F_{\sss T}(D)^2] \leq C + o(1),
\end{align}%
for some $C>1$, where we have used Lemma \ref{oTaylorExpansionLemma} to bound the terms of lower order than $D^2$ appearing when Taylor expanding $F_{\sss T}(D)^2$. Both error terms in \eqref{eq:APrimeExpectedValue} and \eqref{eq:FTimesFExpectedValue} can be bounded from above by a constant independent of $n$. Therefore  $\sup_{n\in\mathbb N} \textrm{Var}(A_n')<\infty$ and the Weak LLN applies.
\end{proof}

Intuitively Lemma \ref{oQueueLemma} states that the 
%queue length 
process $(N_n(k))_{k\geq1}$ is of  order smaller than $n^{2/3}$. Note however that the convergence we established is not uniform in $j\leq k$, with $k = O (n^{2/3})$. We now work towards this result.

We will make use of the following well-known lemma for the order statistics of exponential random variables. Recall that $ X_{(1)},X_{(2)},\ldots,X_{(n)}$ denote the order statistics of random variables $X_1,\ldots, X_n$.
\begin{lemma}\label{OrderStatistics}
Let $E_1,\ldots,E_n$ be independent exponentially distributed random variables with mean one. Then,
\begin{align*}%
(E_{(j)})_{j=1}^n\stackrel{d}{=}\big(\sum_{s=1}^j\frac{E_s}{n-s+1}\big)_{j=1}^n.
\end{align*}%
In particular we can couple $(E_{(j)})_{j=1}^n$ with $(E_{j})_{j=1}^n$ in such a way that for all $j\leq n$,  
\begin{equation}%
E_{(j)} \stackrel{\mathrm{a.s.}}{\geq} \frac{E_1+\cdots+E_j}{n}.
\end{equation}%
\end{lemma}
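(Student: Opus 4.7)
The plan is to establish the distributional identity via the standard renewal/spacings representation of exponential order statistics, and then read off the almost-sure inequality directly from this representation.

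First I would argue the distributional identity using the memoryless property. Set $E_{(0)} := 0$ and consider the spacings $\Delta_j := E_{(j)} - E_{(j-1)}$ for $j = 1, \ldots, n$. By the memoryless property of the exponential distribution, conditional on the event $\{E_{(j-1)} = t\}$, the $n-j+1$ variables among $E_1,\ldots,E_n$ that have not yet been realized as an order statistic are i.i.d. $\mathrm{Exp}(1)$ shifted by $t$; hence $\Delta_j$ is the minimum of $n-j+1$ independent $\mathrm{Exp}(1)$ random variables, so $\Delta_j \sim \mathrm{Exp}(n-j+1)$ and, moreover, $\Delta_j$ is independent of $\mathcal{F}_{j-1} := \sigma(\Delta_1,\ldots,\Delta_{j-1})$. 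Writing $\Delta_j \stackrel{d}{=} E_j/(n-j+1)$ with $E_1,\ldots,E_n$ i.i.d.\ $\mathrm{Exp}(1)$ and summing telescopically yields
\begin{equation}
(E_{(j)})_{j=1}^n = \Big(\sum_{s=1}^j \Delta_s\Big)_{j=1}^n \stackrel{d}{=} \Big(\sum_{s=1}^j \frac{E_s}{n-s+1}\Big)_{j=1}^n,
\end{equation}
which is the claim; note the joint distributional equality is preserved because the $\Delta_j$'s are jointly independent, matching the joint independence of $E_1,\ldots,E_n$.

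For the coupling statement I would just use the representation itself. Put the right-hand side on the relevant probability space by defining $\tilde{E}_{(j)} := \sum_{s=1}^j E_s/(n-s+1)$ from i.i.d.\ $\mathrm{Exp}(1)$ variables $E_1,\ldots,E_n$. By the first part of the lemma, $(\tilde E_{(j)})_{j=1}^n \stackrel{d}{=} (E_{(j)})_{j=1}^n$, so after relabelling we may simply take $E_{(j)} = \tilde E_{(j)}$. Since $n-s+1 \leq n$ for every $s \geq 1$, we have $E_s/(n-s+1) \geq E_s/n$ almost surely, and summing over $s = 1,\ldots,j$ gives
\begin{equation}
E_{(j)} = \sum_{s=1}^j \frac{E_s}{n-s+1} \;\stackrel{\mathrm{a.s.}}{\geq}\; \frac{E_1 + \cdots + E_j}{n},
\end{equation}
uniformly in $j \leq n$.

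No serious obstacle is expected: both parts are classical. The only mildly delicate point is checking that the conditional-on-$\mathcal{F}_{j-1}$ argument yields \emph{joint} independence of the spacings (not only the correct marginal for each $\Delta_j$), which is what allows the distributional identity to hold as an equality of random vectors rather than merely of each coordinate. This is routinely handled by induction on $n$, using that after removing the minimum, the remaining $n-1$ values are (by memorylessness) an independent sample from $\mathrm{Exp}(1)$ shifted by $E_{(1)}$, whose order statistics satisfy the inductive hypothesis.
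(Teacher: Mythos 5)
Your proof is correct. Note that the paper does not actually prove this lemma—it simply cites \cite[Section 2.5]{david2003order}—and the argument you give is precisely the classical R\'enyi/Sukhatme spacings representation that such references present: memorylessness gives that the $j$-th spacing $\Delta_j = E_{(j)} - E_{(j-1)}$ is $\mathrm{Exp}(n-j+1)$ and independent of the earlier spacings, and the coupling then follows immediately from $n-s+1 \leq n$ once the order statistics are \emph{defined} through the sum $\sum_{s=1}^j E_s/(n-s+1)$. Your closing remark about joint (not just marginal) independence of the spacings is the right thing to flag, and the fact you invoke—if $\Delta_j$ is independent of $\sigma(\Delta_1,\dots,\Delta_{j-1})$ for each $j$ then the $\Delta_j$ are mutually independent—closes it cleanly.
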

See for example \cite[Section 2.5]{david2003order} for a proof of Lemma \ref{OrderStatistics}.

We next investigate the random variable $A'_n$. The following lemma states that on average, in the limit, the contribution to the queue length of arrivals of order $n^{1/3}$ or greater is negligible:
\begin{lemma}\label{LemmaSecondMomentAPrime}	
Define $A_n'$ as in \eqref{APrimeDef}. Then $A_{n}'^{\,2}$ is stochastically dominated by a family of uniformly integrable \emph{(}with respect to $n$\emph{)} random variables. In particular,
\begin{align}%
\mathbb E \big[(A_n')^2\mathds{1}_{\{ (A_n')^2 > \varepsilon n^{2/3}\}}\big]\rightarrow 0,\qquad \mathrm{as }~ n\rightarrow\infty.
\end{align}%
\end{lemma}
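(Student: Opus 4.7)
The plan is to establish the stronger statement that the family $(A_n'^{\,2})_{n\geq 1}$ is itself uniformly integrable; the ``in particular'' conclusion then follows at once since for any fixed $\varepsilon>0$ the truncation level $\varepsilon n^{2/3}$ tends to infinity. To obtain uniform integrability I will invoke the following version of Vitali's theorem: if $X_n\geq 0$, $X_n\sr{\mathrm d}{\to}X$ and $\E[X_n]\to\E[X]<\infty$, then $(X_n)_n$ is uniformly integrable. Hence it suffices to exhibit a distributional limit $Y^2$ for $A_n'^{\,2}$ and check that $\E[A_n'^{\,2}]\to\E[Y^2]<\infty$.

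For the distributional limit, I would condition on $D=(S/n)(1+\beta n^{-1/3})$: since the $T_i$ are i.i.d.\ and independent of $D$, $A_n'\mid D\sim\Bin(n,\FT(D))$. The conditional characteristic function $(1+\FT(D)(\e^{it}-1))^n$ has modulus at most $1$ (it is the $n$-th power of a convex combination of $1$ and $\e^{it}$) and converges pointwise a.s.\ to $\exp(\fT(0)S(\e^{it}-1))$ by \eqref{ArrivalDistriNearZero} and the fact that $nD\to S$ a.s. Dominated convergence then yields $A_n'\sr{\mathrm d}{\to}Y$, where $Y$ is a mixed Poisson with random parameter $\fT(0)S$. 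By the continuous mapping theorem, $A_n'^{\,2}\sr{\mathrm d}{\to}Y^2$ as well.

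For the moment convergence, the conditional binomial second moment gives
\[
\E[A_n'^{\,2}] = n\,\E[\FT(D)] + n(n-1)\,\E[\FT(D)^2].
\]
By Lemma \ref{oTaylorExpansionLemma} together with the criticality identity $n\fT(0)\E[D]=1+\beta n^{-1/3}$, the first summand tends to $1$. For the second, I would decompose
\[
\FT(D)^2 = \fT(0)^2 D^2 + 2\fT(0)D\bigl(\FT(D)-\fT(0)D\bigr) + \bigl(\FT(D)-\fT(0)D\bigr)^2,
\]
and apply \eqref{DistributionTaylorHalfSquare} and \eqref{DistributionTaylorSquare} to the last two terms, while $n^2\E[D^2]\to\E[S^2]<\infty$ handles the first; this gives $n(n-1)\E[\FT(D)^2]\to\fT(0)^2\E[S^2]$. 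Hence $\E[A_n'^{\,2}]\to 1+\fT(0)^2\E[S^2]$, which agrees with $\E[Y^2]=\fT(0)\E[S]+\fT(0)^2\E[S^2]$ thanks to the criticality condition $\fT(0)\E[S]=1$.

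Combining the two ingredients through Vitali's theorem yields uniform integrability of $(A_n'^{\,2})_n$, which proves the stochastic-domination claim with the dominating family taken to be $(A_n'^{\,2})_n$ itself. I expect the main obstacle to be the precise moment-convergence step: since only $\E[S^2]<\infty$ is assumed, the easier route of bounding $\E[A_n'^{\,2+\delta}]$ uniformly in $n$ is unavailable (that would require $\E[S^{2+\delta}]$), so the matching of $\E[A_n'^{\,2}]$ with $\E[Y^2]$ must be carried out exactly, and the error terms furnished by Lemma \ref{oTaylorExpansionLemma} must be tracked carefully to ensure no $O(1)$ contribution is missed.
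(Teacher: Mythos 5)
Your proof is correct, but it takes a genuinely different route from the paper's. You identify the weak limit of $A_n'$ as a mixed Poisson with random parameter $f_{\scriptscriptstyle T}(0)S$, verify $\mathbb E[A_n'^{\,2}]\to\mathbb E[Y^2]$ by the exact conditional-binomial formula and Lemma \ref{oTaylorExpansionLemma}, and conclude uniform integrability via the converse of the dominated-convergence/Vitali theorem (nonnegativity, convergence in distribution, and convergence of first moments imply UI, e.g.\ via Skorokhod representation plus Scheff\'e). The paper instead constructs an explicit dominating family: it writes $T_{(i)}\stackrel{\mathrm d}{=}F_{\scriptscriptstyle T}^{-1}(1-\mathrm e^{-E_{(i)}})$, uses the order-statistics coupling of Lemma \ref{OrderStatistics} together with the bound $F_{\scriptscriptstyle T}(x)\leq Kx$ to dominate $A_n'$ by a unit-rate Poisson process evaluated at a time proportional to $S$, and splits off the rare event $\{KD>1-\varepsilon\}$ whose contribution to the second moment vanishes by $\mathbb E[S^2]<\infty$. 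Your approach is cleaner and more conceptual (it names the limit law and shows the moment-matching is tight under $\mathbb E[S^2]<\infty$, with the criticality identity $f_{\scriptscriptstyle T}(0)\mathbb E[S]=1$ doing exactly the right bookkeeping); its cost is that it requires the exact asymptotic of $\mathbb E[A_n'^{\,2}]$ and, being a pure softening argument, does not directly produce a quantitative dominating random variable. The paper's construction is less elegant but the Poisson-domination mechanism is reused almost verbatim in the general-arrivals analogue (Lemma \ref{LemmaSecondMomentAPrimeGeneralArrivals}), where the conditional-binomial structure you rely on is no longer available because the indicator events across services are not conditionally independent; so your argument would need nontrivial modification there, whereas the paper's transfers.
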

\begin{proof}
Observe that, if $E_i\sim\Exp(1)$, $u_i=1-\exp(-E_i)$ is a uniform random variable on $[0,1]$ so that
\begin{align}%
A_n'=\sum_{i=1}^n\mathds 1_{\{T_i\leq D\}}=\sum_{i=1}^n\mathds 1_{\{T_{(i)}\leq D\}}\stackrel{d}{=}\sum_{i=1}^n\mathds 1_{\{F^{-1}_{\sss T}(1-\exp({-E_{(i)}}))\leq D\}}.
\end{align}%
Since the function $x\mapsto F_{\sss T}^{-1}(1-\exp({-x}))$ is monotone, by using the coupling in Lemma \ref{OrderStatistics},
\begin{align}%
\sum_{i=1}^n\mathds 1_{\{F^{-1}_{\sss T}(1-\textrm{exp}(-E_{(i)}))\leq D\}}&\preceq\sum_{i=1}^n\mathds 1_{\{F^{-1}_{\sss T}(1-\textrm{exp}(-\sum_{j=1}^iE_j/n))\leq D\}}\notag\\
&\stackrel{\mathrm {a.s.}}{=}\sum_{i=1}^n\mathds 1_{\{\sum_{j=1}^iE_j\leq -n\textrm{log}(1-F_{\sss T}(D))\}}.\label{ArrivalsDomination}
\end{align}%
By \eqref{ArrivalDistriNearZero}, $F_{\sss T}(x)/x$ is bounded from above by a positive constant $K\in\mathbb R^+$, so that
\begin{align}%
\sum_{i=1}^n\mathds 1_{\{\sum_{j=1}^iE_j\leq -n\textrm{log}(1-F_{T}(D))\}}&\preceq \sum_{i=1}^n\mathds 1_{\{\sum_{j=1}^iE_j\leq -n\textrm{log}(1-K\cdot D)\}}.\label{ArrivalsDomination2}
\end{align}%
Fix $\varepsilon$ and find the respective $C$ such that $-\textrm{log}(1-x)\leq C x$ for all $0\leq x \leq 1-\varepsilon$. We do this in order to remove the dependencies from $n$. We then obtain that
\begin{align}\label{ArrivalsDominations3bis}%
A_n'&\preceq\Big(\sum_{i=1}^{\infty}\mathds 1_{\{\sum_{j=1}^iE_j\leq CK\cdot nD\}}\Big)\mathds 1_{\{K \cdot D\leq 1-\varepsilon\}}+A_n'\mathds 1_{\{K \cdot D> 1-\varepsilon\}}\notag\\
&\preceq N(CK\cdot nD) + A_n'\mathds 1_{\{K \cdot D> 1-\varepsilon\}},
\end{align}%
where
\begin{align}%
N(t,\omega):=\sum_{i=1}^{\infty}\mathds 1_{\{\sum_{j=1}^iE_j\leq t\}}(\omega)
\end{align}%
is a Poisson process with rate one. We now prove that each of the two terms in \eqref{ArrivalsDominations3bis} is a family of uniformly integrable random variables, and thus also their sum is. Since by assumption $S_k$ has a finite second moment, also $N(CK \cdot n D)$ has. Since the latter does not depend on $n$, it is uniformly integrable with respect to $n$. Moving to the second term, note that, since $A_n'\leq n$ a.s.,
\begin{align}%
\mathbb E\big[A_{n}'^{\,2}\mathds{1}_{\{K \cdot D> 1-\varepsilon\}}\big]&\leq n^2\mathbb P \Big( S_k>\frac{(1-\varepsilon) n}{K(1+\beta n^{-1/3})} \Big)\notag\\
&\leq n^2K^2(1+\beta n^{-1/3})^2\frac{\mathbb E\big[S^2_k\mathds 1 _{\{S_k>\frac{(1-\varepsilon) n}{K(1+\beta n^{-1/3})}\}}\big]}{(1-\varepsilon)^2 n^2}.\label{ArrivalsDomination4}
\end{align}%
Since $\mathbb E[ S^2]< \infty$,
\begin{align}
 \mathbb E[S^2_k\mathds 1 _{\{S_k>\frac{(1-\varepsilon) n}{K(1+\beta n^{-1/3})}\}}]\rightarrow 0,\qquad\text{as}~n\rightarrow\infty.
\end{align}
The second moments of the second term in \eqref{ArrivalsDominations3bis} converge to zero as $n$ goes to infinity, and thus $\{A_n^{'\,2}\mathbf 1_{\{K \cdot D> 1-\varepsilon\}}\}_{n\geq1}$ is a uniformly integrable family.
Therefore, $(N(CK\cdot n D)+A_n'\mathbf 1_{\{K \cdot D> 1-\varepsilon\}})^2$ is uniformly integrable. 
%Indeed, fix $\alpha>0$ and pick $N\in\mathbb N$ such that $\mathbb E\big[{A_n'}^2\mathds{1}_{\{K \cdot S_k/n> 1-\varepsilon\}}\big]\leq \alpha$ for $n\geq N$. Then, for all $n\geq N$
%\begin{align}
%\mathbb E [(N(CK\cdot S_k)+A_n'\mathds 1_{\{K \cdot S_k/n> 1-\varepsilon\}})^2]\leq %2\big(\mathbb E[N^2(CK\cdot S_k)]+\alpha\big),
%\end{align}
We have then shown that $(A_{n}'^{\,2})_{n\geq1}$ is stochastically dominated by a random variable with uniformly integrable second moments. The second claim then follows by the stochastic domination result in \eqref{StochDomLemma}.
\end{proof}

\subsection{Proof of Theorem \ref{MainTheorem_exponential}}
Recall that $N_n(k)$ can be decomposed as $N_n(k) = M_n(k) + C_n(k)$, where $M_n(k)$ is a martingale and $C_n(k)$ is the drift term. Moreover, $M_n^2(k)$ was also written as $M_n^2(k) = Z_n(k) + V_n(k)$ with $Z_n(k)$ the Doob martingale and $V_n(k)$ its drift.
The proof then consists of verifying the following conditions:

\begin{itemize}
\item[(i)] $\sup_{t\leq \bar t} \vert n^{-1/3}C_n(tn^{2/3})- \beta t + \frac{1}{2} t^2\vert\stackrel{\mathbb P}{\longrightarrow}0,\qquad \forall \bar t\in \mathbb R^+$;
\item[(ii)] $ n^{-2/3}V_n(tn^{2/3})\stackrel{\mathbb  P}{\longrightarrow}  \sigma^2  t,\qquad \forall t\in \mathbb R^+$;
\item[(iii)] $\lim_{n\rightarrow\infty}n^{-2/3} \mathbb E[\sup_{t\leq\bar t} \vert V_n(tn^{2/3})-V_n(tn^{2/3}-)\vert]=0,\qquad \forall \bar t\in \mathbb R^+$;
\item[(iv)] $\lim_{n\rightarrow\infty} n^{-2/3} \mathbb E[\sup_{t\leq\bar t} \vert M_n(tn^{2/3})-M_n(tn^{2/3}-)\vert^2]=0,\qquad \forall \bar t\in \mathbb R^+$.
\end{itemize}
Recall that $\sigma^2 := f^2_{\sss T}(0) \mathbb E[S^2_1]$.

Condition (i) implies the convergence of the drift term, while conditions (ii)-(iv) imply the convergence of the (rescaled) process $M_n(k)$ to a centered Brownian motion, by Theorem \ref{FCLT}. By standard convergence results one can then conclude that the rescaled version of the sum $C_n(k)+M_n(k)$ converges in distribution to the sum of the respective limits.

\subsubsection{Proof of (i)}

We first prove (i) and to that end, we expand the term $ \mathbb E[A_n(i)\vert \mathcal F_{i-1}]$. Recall that we have defined $\nu_i$ as the set of the customers that are no longer in the population at the beginning of the service of the $i$-th customer.  Then,  
\begin{align}%
\mathbb E[A_n(i)|\mathcal F_{i-1}]&={\red \sum_{s\notin \nu_{i}}} \mathbb E[\mathds{1}_{\{ T_s\leq  D_i\}}\vert \mathcal F_{i-1}]\notag\\
&=\sum_{s\notin \nu_{i}} \mathbb E[ \mathbb E[\mathds{1}_{\{ T_s\leq  D_i\}}\vert \mathcal F_{i-1},D_i]\vert \mathcal F_{i-1}]\notag\\
&=\sum_{s\notin \nu_{i}}\left( \mathbb E[f_{\sss T}(0) D _i\vert \mathcal F_{i-1}]+o(n^{-4/3})\right),
\end{align}%
where, in the last equality, we have used Lemma \ref{oTaylorExpansionLemma}. Since $D_i$ is independent from  $\mathcal F _{i-1}$, we obtain
\begin{align}%
\mathbb E[A_n(i)|\mathcal F_{i-1}]&={\red \sum_{s\notin \nu_{i}}}\left( \mathbb E[f_{\sss T}(0)D _i]+o(n^{-4/3})\right).
\end{align}%
In what follows, we denote by $\vert A \vert$ the cardinality of a set $A$. The summation can then be simplified to
\begin{align}%
\mathbb E[A_n(i)|\mathcal F_{i-1}]&=(n-\vert\nu_{i}\vert)\Big( f_{\sss T}(0)\mathbb E[ D _i]+o(n^{-4/3})\Big)\notag\\
&=  f_{\sss T}(0)\cdot\mathbb E[ S_{i}](1+\beta n^{-1/3})- f_{\sss T}(0)\mathbb E[ S_{i}](1+\beta n^{-1/3})\frac{|\nu_{i}|}{n}\notag\\
&\quad+(n-\vert\nu_{i}\vert)o(n^{-4/3}),\label{ArrivalsExp}
\end{align}%
where we note that $\vert\nu_i\vert$ is an integer-valued random variable. Then,
\begin{equation}\label{Arrivals}%
 \mathbb E[A_n(i)-1\vert\mathcal F_{i-1}]= f_{\sss T}(0)(1+\beta n^{-1/3})\mathbb E[S _i ]-1-f_{\sss T}(0)(1+\beta n^{-1/3})\mathbb E[S_i ]\frac{\vert\nu_{i}\vert}{n}+ o(n^{-1/3}),
\end{equation}%
By \eqref{eq:CriticalityHypBeta}, $f_{\sss T}(0)(1+\beta n^{-1/3})\mathbb E[S_i ] = 1 + \beta n^{-1/3} + o(n^{-1/3})$, so that \eqref{Arrivals} yields
\begin{align}\label{ArrivalsSimplified}%
\mathbb E[A_n(i)-1\vert\mathcal F_{i-1}]&=\beta n^{-1/3}-\frac{\vert\nu_{i}\vert}{n}\big(1+O(n^{-1/3})\big)  +o(n^{-1/3}).
\end{align}%

Note that the service times are independent from the history of the system, thus the conditioning has no effect. Since $\
\vert\nu_{i}\vert = {\red  i + } N_n(i-1)$, the drift term in the decomposition of $N_n(k)$ can now be written as
\begin{equation}\label{DriftTerm}%
C_n(k)=k\beta n^{-1/3}-\Big( \frac{k^2 {\red  + } k}{2}+\sum_{i=1}^kN_n(i-1)\Big)\Big(\frac{ 1}{n}+O(n^{-4/3})\Big)+k o\bigl(n^{-1/3}\bigr).
\end{equation}%

The term $-\sum_{i=1}^k N_n(i-1)$ in \eqref{DriftTerm} accounts for the fact that the customers already in the queue cannot rejoin it. This term converges to zero as $n$ tends to infinity (after appropriate scaling) by the following result. In fact, Lemma \ref{oQueueLemmaUnif} proves that the process $n^{-2/3}N_n(j)$ tends to zero in probability, uniformly on $j\leq an^{2/3}$, a significantly stronger statement than Lemma \ref{oQueueLemma}:
\begin{lemma}\label{oQueueLemmaUnif}
Let $(N_n(k))_{k\geq0}$ be the process defined in \eqref{QueueLength} and $a\in\mathbb R^+$. Then
\begin{align}%
n^{-2/3}\sup_{j\leq an^{2/3}}\vert N_n(j)\vert \stackrel{\mathbb P}{\longrightarrow}0.
\end{align}%
\end{lemma}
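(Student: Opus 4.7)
The plan is to use the Doob decomposition $N_n(k)=M_n(k)+C_n(k)$ from \eqref{QDecomposition} together with the explicit expression for the drift in \eqref{DriftTerm}, bound the martingale maximum via Doob's $L^2$ inequality, and close a self-consistent (Gronwall-type) estimate for the drift contribution, which is the only term where $N_n$ refers back to itself.

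First I would control the martingale. Setting $T_n=an^{2/3}$, Doob's maximal inequality gives
\begin{equation}
\mathbb P\Bigl(\sup_{j\leq T_n}|M_n(j)|>\varepsilon n^{2/3}\Bigr) \leq \frac{\mathbb E[V_n(T_n)]}{\varepsilon^2 n^{4/3}}.
\end{equation}
A step-by-step expansion of $\mathbb E[A_n(i)^2\vert\mathcal F_{i-1}]$ in the spirit of the computation leading to \eqref{ArrivalsExp}, combined with the stochastic domination $A_n(i)\preceq A_n'$ and the uniform integrability of $(A_n')^2$ provided by Lemma \ref{LemmaSecondMomentAPrime}, yields a uniform-in-$i$ bound on the per-step predictable variance, and consequently $\mathbb E[V_n(T_n)]=O(n^{2/3})$. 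The right-hand side above is then of order $n^{-2/3}$, so that $n^{-2/3}\sup_{j\leq T_n}|M_n(j)|\stackrel{\mathbb P}{\longrightarrow}0$.

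Next I would examine the drift \eqref{DriftTerm}. For $k\leq T_n$ all non-self-referential contributions---namely $k\beta n^{-1/3}$, $(k^2+k)/(2n)$, $k\,o(n^{-1/3})$, and the $O(n^{-4/3})$ correction terms---are of order at most $n^{1/3}$, and hence negligible after dividing by $n^{2/3}$. The remaining piece is $\frac{1}{n}\sum_{i=1}^{k} N_n(i-1)$, which I would crudely bound by $\frac{k}{n}\sup_{j\leq T_n}|N_n(j)| \leq a n^{-1/3}\sup_{j\leq T_n}|N_n(j)|$. Writing $Y_n:=n^{-2/3}\sup_{j\leq T_n}|N_n(j)|$ and combining both bounds,
\begin{equation}
Y_n \leq n^{-2/3}\sup_{j\leq T_n}|M_n(j)| + o(1) + an^{-1/3}(1+o(1))\,Y_n.
\end{equation}
Since $a n^{-1/3}(1+o(1))<\tfrac12$ eventually, rearranging gives $Y_n\leq 2\bigl[n^{-2/3}\sup_{j\leq T_n}|M_n(j)| + o(1)\bigr]\stackrel{\mathbb P}{\longrightarrow}0$.

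The main obstacle is the self-referential structure of the drift---the term $\sum_{i=1}^{k}N_n(i-1)$ forces any bound on $\sup|N_n|$ to involve $\sup|N_n|$ itself. This is resolved by the one-step absorption argument above, which works precisely because at the relevant scale $k\sim n^{2/3}$ the multiplier $k/n$ tends to zero. The only other technical point is the uniform-in-$i$ control of the per-step conditional variance of $A_n(i)$; this is where Lemma \ref{LemmaSecondMomentAPrime} does the heavy lifting, the rest of that computation mirroring the conditional-expectation expansion already performed in the verification of (i).
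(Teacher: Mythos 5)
Your proof is correct, and while the martingale half follows the paper's route almost verbatim (Doob's $L^2$ maximal inequality, $\mathbb E[M_n^2(k)]=\mathbb E[V_n(k)]\leq k\,\mathbb E[A_n'^{\,2}]$, and boundedness of $\mathbb E[A_n'^{\,2}]$), your treatment of the self-referential drift is genuinely different. The paper keeps the combination $i + N_n(i-1)=\vert\nu_i\vert$ intact in \eqref{DriftDoubleBound}, observes that it is nonnegative and nondecreasing in $i$, bounds $\sum_{i\leq k}(i+N_n(i-1))$ by $k\bigl(k+N_n(k-1)\bigr)$, and then invokes the pointwise Lemma \ref{oQueueLemma} to control $n^{-2/3}N_n(an^{2/3}-1)$. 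You instead bound $\frac{1}{n}\sum_{i\leq k}\vert N_n(i-1)\vert$ crudely by $\frac{k}{n}\sup_{j\leq T_n}\vert N_n(j)\vert\leq a n^{-1/3}\sup_{j\leq T_n}\vert N_n(j)\vert$ and absorb the resulting small multiple of $Y_n$ into the left-hand side. This absorption argument is self-contained: it bypasses Lemma \ref{oQueueLemma} entirely, since the smallness of the coefficient $k/n\sim n^{-1/3}$ alone closes the loop. Two small points to keep explicit: $Y_n$ is a.s.\ finite (for fixed $n$, $\vert N_n(k)\vert$ is bounded by a deterministic function of $n$), so the rearrangement $(1-an^{-1/3}(1+o(1)))Y_n\leq\cdots$ is legitimate; and the non-self-referential error terms in \eqref{DriftTerm} are deterministic and uniform in $k$ by Lemma \ref{oTaylorExpansionLemma}, so they really do contribute a deterministic $o(1)$ after rescaling. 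Given those checks, your argument is the shorter and more elementary of the two routes.
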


\begin{proof}
Recall that
$
N_n(j)=M_n(j)+C_n(j).
$
Then
\begin{align}\label{UniformQConvLemmaEstimate}%
\mathbb P \big(n^{-2/3}\sup_{j\leq an^{2/3}}\vert N_n(j)\vert\geq 2\varepsilon\big) &\leq \mathbb P \big(n^{-2/3}\sup_{j\leq an^{2/3}}\vert M_n(j)\vert\geq \varepsilon\big)\notag\\
&\quad+\mathbb P \big(n^{-2/3}\sup_{j\leq an^{2/3}}\vert C_n(j)\vert\geq \varepsilon\big).
\end{align}%
We can separately bound the first and second terms. Applying Doob's inequality to the martingale $M_n(\cdot)$ gives
\begin{equation}
\mathbb P \big(n^{-2/3}\sup_{j\leq an^{2/3}}\vert M_n(j)\vert\geq \varepsilon\big)\leq \frac{\mathbb E\big[M_n^2(an^{2/3})\big]}{(\varepsilon n^{2/3})^2}.
\end{equation}
By \eqref{MSquareDecomposition}, $\mathbb E[M_n^2(k)] = \mathbb E [V_n(k)]$. Expanding this term gives
\begin{align}%
\mathbb E[V_n(k)]&=\mathbb E \Big[\sum_{i=1}^k\big(\mathbb E[A_n(i)^2\vert \mathcal F_{i-1}]-\mathbb E[A_n(i)\vert\mathcal F_{i-1}]^2\big)\Big]\notag\\
&\leq \mathbb E\Big[\sum_{i=1}^k\mathbb E[A_n(i)^2\vert\mathcal F_{i-1}]\Big]\leq k\cdot\mathbb E [A_n'^{\,2}],
\end{align}%
where $A_n'$ is defined as in \eqref{APrimeDef}. By rescaling we get
\begin{align}\label{BEstimate}%
n^{-4/3}\mathbb E[V_n(an^{2/3})]\leq an^{-2/3}\mathbb E[A_n'^{\,2}].
\end{align}%
Since $A_n'$ has a finite second moment as was shown in \eqref{eq:FTimesFExpectedValue}, the right-most term in \eqref{BEstimate} tends to zero as $n$ tends to infinity. 

For the second term in \eqref{UniformQConvLemmaEstimate} we make use of the decomposition of the drift term in \eqref{DriftTerm}. From there we obtain
\begin{align}\label{DriftDoubleBound}%
-(n^{-1}+O(n^{-4/3}))\sum_{i=1}^k\Big({\red i +}N_n(i-1)\Big)+k o\bigl(n^{-1/3}\bigr)\leq C_n(k)\leq k\beta n^{-1/3}+k o\bigl(n^{-1/3}\bigr),
\end{align}%
and thus,   
\begin{align}\label{DriftMaxBound}%
\sup_{j\leq an^{2/3}}\vert C_n(j)\vert &\stackrel{\textrm{a.s.}}{\leq} (an^{2/3} \beta n^{-1/3} +an^{2/3}o(n^{-1/3}))\notag\\ &\quad \vee \Big((n^{-1}+O(n^{-4/3}))\sum_{i=1}^{an^{2/3}}\Big({\red i+}N_n(i-1)\Big)+an^{2/3} o\bigl(n^{-1/3}\bigr)\Big),
\end{align}%
since the two sides of the bound \eqref{DriftDoubleBound} are monotone functions of $k$. By rescaling the first term by $n^{-2/3}$ we obtain
$a\beta n^{-1/3}+a\cdot o(n^{-1/3})$, which tends to zero almost surely as $n$ goes to infinity.
The second term in \eqref{DriftMaxBound} needs more attention. Notice that the function $i\mapsto i+N_n(i)$ is nonnegative and nondecreasing. Thus, isolating the sum, we can bound it through its final term:
\begin{align}%
\sum_{i=1}^{an^{2/3}}\Big({\red i+}N_n(i-1)\Big)\stackrel{\textrm{a.s.}}{\leq} an^{2/3}(an^{2/3} + N_n(an^{2/3}-1))
\end{align}%
Rescaling by $n^{-2/3}$ we get for the second term in \eqref{DriftMaxBound} 
\begin{align}\label{DriftMaxBoundSecondTerm}%
n^{-2/3} \Big((n^{-1}+O(n^{-4/3}))\sum_{i=1}^{an^{2/3}}\Big({\red i+}N_n(i-1)\Big)+an^{2/3} o\bigl(n^{-1/3}\bigr)\Big)\notag\\
\stackrel{\textrm{a.s.}}{\leq} a\big(n^{-1}+O\big(n^{-4/3}\big)\big)(an^{2/3} + N_n(an^{2/3}-1))+a\cdot o(n^{-1/3})\qquad 
\end{align}%
The $O(n^{-4/3})$ term is of lower order than $n^{-1}$ and can be ignored. By Lemma \ref{oQueueLemma}, the right-hand side of \eqref{DriftMaxBoundSecondTerm} tends to zero in probability as $n$ tends to infinity and this concludes the proof that the second term in \eqref{UniformQConvLemmaEstimate} converges in probability to zero.
\end{proof}
Substituting $k=tn^{2/3}$ into \eqref{DriftTerm} and multiplying by $n^{-1/3}$ yields
\begin{align}\label{eq:DriftTermLastComputation}%
n^{-1/3}C_n(tn^{2/3})&=\beta t -\Big( \frac{t^2 {\red  + }tn^{-2/3}}{2}+\sum_{i=1}^kN_n(i-1)\Big)\Big(1+O(n^{-1/3})\Big)+o(1).
\end{align}%
Both the small-o and the big-O terms in \eqref{eq:DriftTermLastComputation} are independent of $t$ (or, equivalently, of $k$). Indeed, the small-o term originates from Lemma \ref{oTaylorExpansionLemma} (and is therefore independent of $k$) and the big-O term was introduced in \eqref{ArrivalsSimplified} and depends only on $n$ and $\beta$. Therefore, the convergence of $n^{-1/3}C_n(tn^{2/3})$ is uniform in $t\leq \bar t$ for fixed $\bar t$ as required, and this concludes the proof of Lemma \ref{oQueueLemmaUnif} and thus of (i).

\subsubsection{Proof of (ii)}
In order to prove (ii) we first compute 
\begin{align}%
\mathbb E[A_n(i)^2|\mathcal F_{i-1}] &=  \mathbb E\Big[{\red \sum_{j\notin\nu_{i}}}\mathds{1}_{\{T_j\leq  D_i\}}^2 + {\red \sum_{\substack{l\neq m\\l,m\notin\nu_{i} }}}\mathds{1}_{\{T_l\leq  D_i\}}\mathds{1}_{\{T_m\leq  D_i\}}\Big\vert \mathcal F_{i-1}\Big]\notag\\
&= \mathbb E[A_n(i)\vert \mathcal F_{i-1}]+ \mathbb E[\sum_{\substack{l\neq m\\l,m\notin\nu_{i} }}\mathds{1}_{\{T_l\leq  D_i\}}\mathds{1}_{\{T_m\leq  D_i\}}\vert \mathcal F_{i-1}],
\end{align}%
which yields
\begin{align}\label{ArrivalsSecondMomentBegin}%
\mathbb E[A_n(i)^2|\mathcal F_{i-1}] - \mathbb E[A_n(i)\vert \mathcal F_{i-1}]&= {\red \sum_{\substack{l\neq m\\l,m\notin\nu_{i} }}} \mathbb E[ \mathbb E[\mathds{1}_{\{T_l\leq  D_i\}}\mathds{1}_{\{T_m\leq  D_i\}}\vert \mathcal F_{i-1}, D_i]\vert \mathcal F_{i-1}]\notag\\
&=\sum_{\substack{l\neq m\\l,m\notin\nu_{i} }} \mathbb E[\mathbb P(T_l\leq  D_i\vert  D_i)\mathbb P(T_m\leq  D_i\vert  D_i)\vert \mathcal F_{i-1}]\notag\\
&= \sum_{\substack{l\neq m\\l,m\notin\nu_{i} }} \mathbb E[F_{\sss T}(D_i)^2\vert \mathcal F_{i-1}].
\end{align}%
Using \eqref{ArrivalDistriNearZero} we can rewrite the summation term as
\begin{align}\label{eq:CondExpectationDistrFunctProduct}%
\mathbb E[F_{\sss T}(D_i)^2\vert \mathcal F_{i-1}] = f_{\sss T}(0)^2 \mathbb E[D_i^2] + 2f_{\sss T}(0)\mathbb E[D_i(F_{\sss T}(D_i) - f_{\sss T}(0) D_i)] +\mathbb E[(F_{\sss T} (D_i) -f_{\sss T}(0) D_i)^2].
\end{align}%
The second and third terms are  $o(n^{-2})$ by Lemma \ref{oTaylorExpansionLemma}. Therefore, we rewrite \eqref{ArrivalsSecondMomentBegin} as %{\red $<$Gianmarco: $\mathbb E[o(D_i^{4/3})o(D_i^{4/3})] = o(n^{-8/3})?$ It does not follow from the easy bound $o(x)\leq Cx$. On the other hand, as already noticed in Lemma \ref{oTaylorExpansionLemma}, since $\lim_{x\to\infty}F(x)=1$, it must hold that the function which, for small $x$, is the small-o term in \eqref{ArrivalDistriNearZero} is linear for large values of $x$. This allows a similar reasoning as the one in Lemma \ref{oTaylorExpansionLemma}. Let's call $f(x):=o(x^{4/3})$. There exists $\bar x \gg 1, C_1 > 0$ s.t. $f(x)\leq C_1 x$. Then,
%
%\begin{align*}
%n^2f^2(Sn^{-1}) &= n^2f^2(Sn^{-1})\mathds 1_{\{S/n>\bar x\}} + n^2f^2(Sn^{-1})\mathds 1_{\{S/n\leq\bar x\}}\\
%&\stackrel{\text{a.s.}}{\leq} n^2 C_1^2S^2n^{-2} \mathds 1_{\{S/n>\bar x\}} + C_2^2\cdot S^{8/3} n^{-2/3}\mathds 1_{\{S/n\leq\bar x\}},
%\end{align*}
%% 
%where for the left term we used the linearity of $f(x)$ for large $x$ and for the right term the fact that $f(x)=o(x^{4/3})$ for small $x$. In conclusion,
%%
%\begin{align*}
%n^2f^2(Sn^{-1}) &\stackrel{\text{a.s.}}{\leq} C_1^2 S^2\mathds 1_{\{S/n>\bar x\}} + C_2^2 S^2 (S/n)^{2/3}\mathds 1_{\{S/n\leq\bar x\}}\\
%&\stackrel{\text{a.s.}}{\leq} (C_1^2+\bar x ^{2/3}C_2^2)S^2. 
%\end{align*}
%We have thus bounded $n^2f^2(Sn^{-1})$ almost surely with an integrable random variable. Since almost surely $ n^2f^2(Sn^{-1})\to0$, we can conclude by the Dominated Convergence Theorem.
%%
%$>$}
%
\begin{align}\label{ArrivalsSecondMoment}%
{\red \sum_{\substack{l\neq m\\l,m\notin\nu_{i} }}}\Big(\frac{f_{\sss T}(0)^2}{n^2}\big(1+\beta n^{-1/3}\big)^2 \mathbb E[S^2]+o(n^{-2})\Big) =  \frac{{\red \vert\Xi_{i} \vert} f_{\sss T}(0)^2}{n^2} \big(1+\beta n^{-1/3}\big)^2\mathbb E[S^2] +o (1).
\end{align}%
Note that the cardinality of the set $\Xi_{i}:=\{(l,m):~l\neq m,~l,m\notin\nu_{i}\}$ is 
\begin{align}%
\vert \Xi_{i}\vert =(n-N_{n}(i-1){\red -i})^2-(n-N_{n}(i-1){\red -i}),
\end{align}%
thus of the order $n^2$. Then, for $k=O(n^{2/3})$,
\begin{align}\label{eq:QuadraticVariationExplicit}%
C_n(k)&=\sum_{i=1}^k\left( \mathbb E[A_n(i)^2|\mathcal F_{i-1}]- \mathbb E[A_n(i)\vert \mathcal F_{i-1}]^2\right) \notag\\
&=\sum_{i=1}^k \Big( \frac{{\red \vert \Xi_{i} \vert} f_{\sss T}(0)^2}{n^2}\big(1+\beta n^{-1/3}\big)^2 \mathbb E[S^2]+\mathbb E[A_n(i)\vert \mathcal F_{i-1}]- \mathbb E[A_n(i)\vert \mathcal F_{i-1}]^2 +o(1)\Big).
\end{align}%
Using \eqref{ArrivalsSimplified}, together with the observation that $\vert\nu_{i}\vert/n = O_{\mathbb P}(n^{-1/3})$ uniformly for $i=O(n^{2/3})$,
\begin{align}%
C_n(k)&=\sum_{i=1}^k \Big(\frac{{\red \vert \Xi_{i} \vert} f_{\sss T}(0)^2}{n^2}\big(1+\beta n^{-1/3}\big)^2\mathbb E[S^2] +  O_{\mathbb P}(n^{-1/3})+ o(1) \Big)\nnl
&=\sum_{i=1}^k\frac{(n-N_{n}(i-1)-i)^2-(n-N_{n}(i-1)-i)}{n^2}f_{\sss T}(0)^2\big(1+\beta n^{-1/3}\big)^2\mathbb E[S^2]\nnl
&\quad + O_{\mathbb P}(kn^{-1/3}) + o(k).
\end{align}%
We then split the term inside the summation to isolate the contribution of the process $N_n$, and write
\begin{align}%
V_n(k)&=\sum_{i=1}^k\frac{(n{\red -i})^2-(n{\red -i})}{n^2}f_{\sss T}(0)^2\big(1+\beta n^{-1/3}\big)^2 \mathbb E[S^2]\notag\\
&\quad+\sum_{i=1}^k\frac{N_{n}(i-1)(N_n(i-1)-2(n-i)+1)}{n^2}f_{\sss T}(0)^2\big(1+\beta n^{-1/3}\big)^2\mathbb E[S^2]\notag\\
&\quad+O{_{\mathbb P}}(kn^{-1/3}) + o(k).\label{QuadraticDrift}
\end{align}%
By Lemma \ref{oQueueLemmaUnif} the second term (accounting for the process history) tends to zero in probability when rescaled appropriately.  An elementary computation shows that
\begin{align}%
\frac{f_{\sss T}(0)^2\mathbb E[S^2]}{n^2}\sum_{l=n-k}^{n-1}(l^2-l)&=\frac{\sigma^2}{n^2}(\frac{2}{3}k+k^2-\frac{1}{3}k^3-2kn-k^2n+kn^2)\notag\\
&=\sigma^2k + O(k^2n^{-1}).
\end{align}%
The remaining terms were omitted because they are of order smaller than $O(k^2n^{-1})$ when $k = sn^{2/3}$. When rescaling space and time appropriately in \eqref{QuadraticDrift} we finally obtain that 
\begin{align}%
n^{-2/3}V_n(tn^{2/3})\stackrel{\mathbb P}{\rightarrow} \sigma ^2 t,
\end{align}%
as required. This completes the proof of (ii).
\subsubsection{Proof of (iii)}
It is well known that $(V_n(k))_{k\in\mathbb N}$ in \eqref{MSquareDecomposition} is almost surely increasing. For (iii), we need to estimate the largest  possible jump
\begin{equation}\label{QuadraticVariationJump}%
n^{-2/3}\vert V_n(k+1)-V_n(k)\vert = n^{-2/3}\vert  \mathbb E[A_n(k+1)^2\vert \mathcal F_{k}]- \mathbb E[A_n(k+1)\vert  \mathcal F_{k}]^2\vert,
\end{equation}%
with $k=O(n^{2/3})$. In order to apply the Dominated Convergence Theorem, we first prove that it converges pointwise to zero, and then that it is (almost surely) bounded. The jump \eqref{QuadraticVariationJump} has already been implicitly computed as the term in the summation in \eqref{QuadraticDrift} and it takes the form
\begin{align}\label{ArrivalsEstimate}%
& n^{-2/3} \vert  \mathbb E[A_n(k+1)^2\vert \mathcal F_{k}] - \mathbb E[A_n(k+1)\vert \mathcal F_{k}]^2\vert\\
&= n^{-\frac{2}{3}}\Big\vert f_{\sss T}(0)^2 \mathbb E[S^2]\bigg(\frac{(n - k -1)^2 - (n - k - 1)}{n^2}  + \frac{N_{n}(k)(N_n(k) - 2(n - k -1) + 1)}{n^2}\bigg) \nnl
&\qquad+ O_{\mathbb P}(n^{-\frac{1}{3}})\Big\vert\notag
%
%%% The term (1+\beta n^{-1/3}) has been counted in the O(n^{-1/3}) %%%
\end{align}%
%
%The right-most term in \eqref{ArrivalsEstimate} tends to zero in probability as $n$ tends to infinity, uniformly for $k = O(n^{2/3})$. 
%By repeating the same estimate and paying more attention to the $O_{\mathbb P}(n^{-1/3})$ term in \eqref{ArrivalsEstimate} we can show that the largest jump of the quadratic variation is bounded. Indeed, 
The $O_{\mathbb P}(n^{-1/3})$ term is a byproduct of $\mathbb E[A_n(k)\vert\mathcal F_{k-1}] - \mathbb E[A_n(k)\vert\mathcal F_{k-1}]^2$ (as computed in  \eqref{eq:QuadraticVariationExplicit} and following calculations). We now compute it precisely by using the exact expression for $\mathbb E[ A_n(k)\vert\mathcal F_{k-1}]$ found in \eqref{ArrivalsSimplified}, as follows
\begin{align}\label{eq:arrivals_estimate_second}%
\big\vert\mathbb E[A_n(k)\vert\mathcal F_{k-1}] - \mathbb E[A_n(k)\vert\mathcal F_{k-1}]^2 \big\vert &\stackrel{\textrm{a.s.}}{\leq}  \beta n^{-1/3} + 2 \vert \nu_{k}\vert n^{-1} + 2\beta \vert \nu_{k}\vert n^{-4/3} + o(n^{-1/3}).
\end{align}%
The right-hand side of \eqref{eq:arrivals_estimate_second} is bounded by $3$ for all sufficiently large values of $n$, uniformly in $k\leq tn^{2/3}$ (recall that $\vert\nu_{k}\vert\leq n$). Then, by inserting this into \eqref{ArrivalsEstimate},
\begin{align}%
n^{-2/3}\vert  &\mathbb E[A_n(k+1)^2\vert \mathcal F_{k}] - \mathbb E[A_n(k+1)\vert  \mathcal F_{k}]^2\vert \notag\\
&\stackrel{\textrm{a.s.}}{\leq} n^{-2/3}\bigg\vert  \sigma ^2\bigg(\frac{(n-k-1)^2-(n-k-1)}{n^2}  +\frac{N_{n}(k)(N_n(k)-2(n-k-1)+1)}{n^2}\bigg) + 3 \bigg\vert.
\end{align}%
Since $\vert N_n(k)\vert\leq n$, we can find a constant $C$ such that, uniformly in $k\leq tn^{2/3}$, 
\begin{align}%
n^{-2/3}\vert  &\mathbb E[A_n(k+1)^2\vert \mathcal F_{k}] - \mathbb E[A_n(k+1)\vert\mathcal F_{k}]^2 \stackrel{\textrm{a.s.}}{\leq}  n^{-2/3}(3 + \sigma ^2 C).
\end{align}%
This gives both the assumptions in the Dominated Convergence Theorem (almost sure convergence and dominance with an integrable random variable) and therefore concludes the proof of (iii).
\qed
\subsubsection{Proof of (iv)}

We tackle (iv) through a coupling argument. First observe that
\begin{align}\label{MartingaleTriangleIneq}%
n^{-2/3}\mathbb  E[\sup_{t\leq\bar t} \vert M_n(tn^{2/3})&-M_n(tn^{2/3}-)\vert ^2] 
= n^{-2/3}\mathbb  E[\sup_{k\leq\bar tn^{2/3}}\vert A_n(k)-\mathbb  E[A_n(k)\vert \mathcal F_{k-1}]  \vert ^2]\notag\\
&\leq n^{-2/3}\mathbb E[\sup_{k\leq\bar tn^{2/3}}\vert A_n(k) \vert ^2]+n^{-2/3}\mathbb  E[\sup_{k\leq\bar tn^{2/3}}\vert \mathbb E[A_n(k)\vert \mathcal F_{k-1}]  \vert ^2].
\end{align}%
The second term in \eqref{MartingaleTriangleIneq} is easily estimated. Using the calculations on $ \mathbb E[A_n(k)\vert\mathcal F_{k-1}]$ done in \eqref{ArrivalsSimplified}, and that the second term there is negative, yields
\begin{equation*}%
 0\stackrel{\mathrm{a.s.}}{\leq}\mathbb E[A_n(k)\vert \mathcal F_{k-1}]\stackrel{\mathrm{a.s.}}{\leq} 1+O(n^{-1/3}).
\end{equation*}%
For the first term, we will use a coupling argument. 
For $\varepsilon>0$, we split
\begin{align}\label{ArrivalsDomination6}%
\mathbb E\Big[\sup_{k\leq\bar tn^{2/3}}\vert A_n(k) \vert ^2\Big]&=\mathbb E\Big[\sup_{k\leq\bar tn^{2/3}} A_n(k)^2\mathds{1}_{\{\sup_{k\leq\bar tn^{2/3}} A_n(k)^2\leq \varepsilon n^{2/3}\}}\Big]\notag\\
&\quad+\mathbb E\Big[\sup_{k\leq\bar tn^{2/3}} A_n(k)  ^2\mathds{1}_{\{\sup_{k\leq\bar tn^{2/3}} A(k)^2 > \varepsilon n^{2/3}\}}\Big].
\end{align}%
After multiplying \eqref{ArrivalsDomination6} by $n^{-2/3}$, the first term can  trivially be bounded by $\varepsilon$, while for the second term we estimate
\begin{align}\label{ArrivalsDomination5}%
\mathbb E\Big[\sup_{k\leq\bar tn^{2/3}} A_n(k)  ^2\mathds{1}_{\{\sup_{k\leq\bar tn^{2/3}} A_n(k)^2 > \varepsilon n^{2/3}\}}\Big] &\leq \sum_{k=1}^{\bar t n^{2/3}} \mathbb E[A_n(k)^2\mathds{1}_{\{ A_n(k)^2 > \varepsilon n^{2/3}\}}]\notag\\
&\leq\sum_{k=1}^{\bar t n^{2/3}}\mathbb E \big[{A_n'}^2\mathds{1}_{\{ {A'_n}^2 > \varepsilon n^{2/3}\}}\big]\notag\\
&= \bar t n^{2/3}\mathbb E \big[{A_n'}^2\mathds{1}_{\{ {A_n'}^2 > \varepsilon n^{2/3}\}}\big],
\end{align}%
where we have used the stochastic domination in \eqref{APrimeDef}. By Lemma \ref{LemmaSecondMomentAPrime}, $\mathbb E \big[{A_n'}^2\mathds{1}_{\{ {A_n'}^2 > \varepsilon n^{2/3}\}}\big]\rightarrow0$ and thus
\begin{align*}
n^{-2/3}\mathbb E\Big[\sup_{k\leq\bar tn^{2/3}} A_n(k)  ^2\mathds{1}_{\{\sup_{k\leq\bar tn^{2/3}} A_n(k)^2 > \varepsilon n^{2/3}\}}\Big]\rightarrow0.
\end{align*}
This concludes the proof of (iv).
\qed

\section{Proof of Theorem \ref{MainTheorem}}\label{sec:GeneralArrivals}
We now generalize Theorem \ref{MainTheorem_exponential} in the direction of allowing for non-exponential arrival times. Consider a family of arrival times $(T_i)_{i= 1}^n$ with distribution function $\FT(\cdot)$ and density function $\fT(\cdot)$. Denote their order statistics by $T_{(1)}\leq T_{(2)}\leq\ldots\leq T_{(n)}$. As before, let $(D_j)_{j\geq1}$ be the rescaled service times with $D_j := S_j(1+\beta n^{-1/3})/n$. Then, at time $k$, the number of arrivals during every service is defined as 
\begin{align}\label{eq:NewArrivalsDefinition}
A_n(k):={\red \sum_{i\nin\nu_{k}}} \mathds 1_{\{\sum_{j=1}^{k-1}D_j< {\red T_{i}}\leq\sum_{j=1}^kD_j\}}.
\end{align}%
%
%Note that, unlike in the RC model, the arrival clocks are not reset at every service time, but rather ring once and for all. Therefore, in this setting, slightly stronger assumptions are needed. 
We assume that $F_{\sss T}(\cdot)$ can be Taylor expanded in a neighborhood of every point, as in  \eqref{eq:ArrivalDistriNearXBar}, and that the density $f_{\sss T}(\cdot)$ can be Taylor expanded in a neighborhood of zero, as in  \eqref{eq:DensityDistrNearZero}.
%As before, assume $F_{\sss T}(\cdot)$ can be Taylor expanded in a neighborhood of every point in the interior of its domain up to the first order term, with small error term. Formally,
%%
%\begin{align}\label{eq:FAssumptionGeneralArrivals}%
%F_{\sss T}(x) = f_{\sss T}(\bar x)\cdot (x - \bar x) +o(\vert \bar x - x\vert^{4/3}) \qquad\forall\bar x\in(0,\infty).
%\end{align}%
%%
%Assume moreover that $f_{\sss T}(\cdot)$ can be Taylor expanded near zero at least up to first order:
%%
%\begin{align}\label{eq:fAssumptionGeneralArrivals}%
%f_{\sss T}(x)=f_{\sss T}(0)+f'_{\sss T}(0)\cdot x + o (x).
%\end{align}%
%%
%Let us briefly recall the main result which we are set to prove.
%%
%\begin{theorem}\label{MainTheoremGeneralArrivals}
%Let $N_n(t)$ be the process defined in \eqref{QueueLength}, with $A_n(k)$ defined as in \eqref{eq:NewArrivalsDefinition}. Then,
%%
%\begin{equation}%
%n^{-1/3}N_n^*( \cdot n^{2/3})\rightharpoonup W^*(\cdot),
%\end{equation}%
%%
%where $W(\cdot)$ is the diffusion process
%%
%\begin{equation}%
%W(t)= \Big(1+\beta + \frac{f'_{\sss T}(0)}{f_{\sss T}(0)^2} \Big)\cdot t -\frac{1}{2}t^2+ f_{\sss T}(0)\cdot\sigma B(t), 
%\end{equation}%
%%
%and $t\mapsto B(t)$ is a standard Brownian motion.
%\end{theorem}
%%

\subsection{Supporting lemmas}

For readability, throughout this section we will use the notation $\Sigma_j:=\sum_{i=1}^j D_i$.
In this section we present several useful lemmas that are appropriate adaptations of lemmas in Section \ref{sec:Preparation}. We provide the proof only when it is substantially different from their counterparts in Section \ref{sec:Preparation}.
{\red It is often useful to bound $A_n(k)$ from above to avoid the difficulties of dealing with the complicated set $\nu_{k}$. Mimicking \eqref{APrimeDef}, we define $A'_n(k)$ as
\begin{equation}%
A'_n(k):= \sum_{i=1}^n \mathds 1_{\{\Sigma_{k-1}\leq T_i\leq \Sigma_k\}}.
\end{equation}%
Note in particular that 
\begin{equation}%
A_n(k) \stackrel{\mathrm{a.s.}}{\leq} A'_n(k).
\end{equation}%
}
\begin{lemma}\label{lem:oTaylorExpansionDistributionGeneralArrivals}
In the present setting and for $k = O(n^{2/3})$,  the following holds:
\begin{align}%
\mathbb E \big[\vert F_{\sss T} (\Sigma_k) - F_{\sss T}(\Sigma_{k-1}) - f_{\sss T} (\Sigma_{k-1})\cdot D_k\vert  \big\vert \Sigma_{k-1}\big] &= o_{\mathbb P}(n^{-4/3}),\label{lem:DistributionTaylorGeneralArrivals}\\
\mathbb E \big[\vert D_k( F_{\sss T} (\Sigma_k) - F_{\sss T}(\Sigma_{k-1}) - f_{\sss T} (\Sigma_{k-1})\cdot D_k )\vert\big\vert \Sigma_{k-1}\big] &= o_{\mathbb P}(n^{-2}),\label{lem:DistributionTaylorHalfSquareGeneralArrivals}\\
\mathbb E \big[{\vert F_{\sss T} (\Sigma_k) - F_{\sss T}(\Sigma_{k-1}) - f_{\sss T} (\Sigma_{k-1})\cdot D_k\vert}^2 \big\vert \Sigma_{k-1}\big] &= o_{\mathbb P}(n^{-2}),\label{lem:DistributionTaylorSquareGeneralArrivals}
\end{align}%
Moreover, all the statements of convergence hold uniformly for $k = O(n^{2/3})$.
\end{lemma}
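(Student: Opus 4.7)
The plan is to mimic the proof of Lemma~\ref{oTaylorExpansionLemma}, but now with the Taylor expansion of $F_{\sss T}$ centered at the random point $\Sigma_{k-1}$ instead of at $0$, and to exploit the independence of $D_k$ and $\Sigma_{k-1}$ to reduce each of the three conditional expectations to a deterministic function of $\Sigma_{k-1}$. The key analytic input is that, by assumption, $f_{\sss T}'$ exists and is continuous in a neighborhood of zero, so that for $\bar x,\bar x+y$ in this neighborhood,
\[
\bigl\vert F_{\sss T}(\bar x+y)-F_{\sss T}(\bar x)-f_{\sss T}(\bar x)\,y\bigr\vert \le \tfrac{M}{2}\,y^2,
\]
where $M$ is a local upper bound on $\vert f_{\sss T}'\vert$.

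First I would show that $\sup_{k\le Tn^{2/3}}\Sigma_{k-1}=O_{\mathbb P}(n^{-1/3})$: since $k\mapsto\Sigma_k$ is nondecreasing and $\E[\Sigma_{Tn^{2/3}}]=Tn^{2/3}\E[S](1+\beta n^{-1/3})/n=O(n^{-1/3})$, Markov's inequality yields $\mathbb P(\sup_{k\le Tn^{2/3}}\Sigma_{k-1}>\delta_n)\to 0$ whenever $\delta_n n^{1/3}\to\infty$. I then take $\delta_n=n^{-1/4}$, which in addition satisfies $\delta_n\to 0$ and $n\delta_n\to\infty$, and work on the high-probability event $\{\sup_k\Sigma_{k-1}\le\delta_n\}$. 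On this event, for every deterministic $\bar x\le\delta_n$, I split the $D_k$-integration according to $\{D_k\le\delta_n\}$ (where $\bar x+D_k\le 2\delta_n$ lies in the chosen neighborhood of $0$, so the Taylor bound above applies) and $\{D_k>\delta_n\}$ (where I use the crude bound $\vert F_{\sss T}(\bar x+D_k)-F_{\sss T}(\bar x)-f_{\sss T}(\bar x)D_k\vert\le 1+f_{\sss T}(0)D_k$). For \eqref{lem:DistributionTaylorGeneralArrivals} the good part contributes $\tfrac{M}{2}\E[D_k^2]=O(n^{-2})$, while the bad part is bounded by $\mathbb P(D_k>\delta_n)+f_{\sss T}(0)\E[D_k\mathbf 1_{D_k>\delta_n}]\le \E[D_k^2]/\delta_n^2+f_{\sss T}(0)\E[D_k^2]/\delta_n=O(n^{-3/2})+O(n^{-7/4})$. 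All three contributions are $o(n^{-4/3})$, and, being deterministic upper bounds valid for every $\bar x\le\delta_n$, they give the uniform-in-$k$ conclusion.

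For \eqref{lem:DistributionTaylorHalfSquareGeneralArrivals} I refine the splitting to $D_k\cdot\vert\cdots\vert\le \tfrac{M}{2}D_k^3\mathbf 1_{D_k\le\delta_n}+2f_{\sss T}(0)D_k^2\mathbf 1_{D_k>\delta_n}$. The good part is bounded by $\tfrac{M}{2}\delta_n\E[D_k^2]=O(\delta_n n^{-2})=o(n^{-2})$, and the bad part by $2f_{\sss T}(0)n^{-2}(1+\beta n^{-1/3})^2\E[S^2\mathbf 1_{S>n\delta_n/(1+\beta n^{-1/3})}]$, which is $o(n^{-2})$ by dominated convergence since $\E[S^2]<\infty$ and $n\delta_n\to\infty$. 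Finally, \eqref{lem:DistributionTaylorSquareGeneralArrivals} follows from \eqref{lem:DistributionTaylorHalfSquareGeneralArrivals} via $\vert\cdots\vert^2\le 2f_{\sss T}(0)D_k\cdot\vert\cdots\vert$.

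The main obstacle is \eqref{lem:DistributionTaylorHalfSquareGeneralArrivals}: a naive use of the global estimate $\vert\cdots\vert\le 2f_{\sss T}(0)D_k$ on $\{D_k>\delta_n\}$ only yields $O(n^{-2})$, not $o(n^{-2})$. The resolution is to observe that after multiplying by the extra $D_k$ the bad-event contribution becomes $2f_{\sss T}(0)\E[D_k^2\mathbf 1_{D_k>\delta_n}]$, which is genuinely $o(n^{-2})$ precisely because $\E[S^2]<\infty$ and $n\delta_n\to\infty$. This in turn forces the joint constraints $n^{-1/3}\ll\delta_n\to 0$ on the truncation level, of which $\delta_n=n^{-1/4}$ is the simplest instance.
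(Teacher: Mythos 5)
Your proof is correct and is, in its essentials, the same argument the paper uses: exploit independence of $D_k$ from $\Sigma_{k-1}$ to reduce the conditional expectation to a deterministic function of $\Sigma_{k-1}$, observe that $\Sigma_{k-1}=O_{\mathbb P}(n^{-1/3})$ uniformly for $k=O(n^{2/3})$, use the Taylor bound $\vert F_{\sss T}(\bar x+y)-F_{\sss T}(\bar x)-f_{\sss T}(\bar x)y\vert\le\tfrac{M}{2}y^2$ on the region where both arguments lie in the fixed neighborhood of zero, and control the complementary event using $\E[S^2]<\infty$. The paper's write-up (which only treats \eqref{lem:DistributionTaylorHalfSquareGeneralArrivals}) packages the last two steps more compactly: it pushes the supremum over $\Sigma_{k-1}$ inside the expectation, $\sup_{\bar x}\E[\,\cdot\,]\le\E[\sup_{\bar x}\,\cdot\,]$, and then invokes dominated convergence against the uniform-in-$n$ dominating random variable $C\,S_k^2$, relying on the standing assumption \eqref{eq:ArrivalDistrUnifError}. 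Your version replaces the single DCT invocation by an explicit intermediate truncation at $\delta_n=n^{-1/4}$, splitting $\{D_k\le\delta_n\}$ from $\{D_k>\delta_n\}$ and tracking orders ($O(n^{-3/2})$, $O(n^{-7/4})$, etc.) by Chebyshev and the crude MVT bound $\vert\cdots\vert\le 2f_{\sss T}(0)D_k$. This is somewhat more hands-on and makes the rates visible at each step, at the cost of having to choose $\delta_n$ so that $\delta_n\to0$, $n\delta_n\to\infty$ and $n^{1/3}\delta_n\to\infty$ simultaneously; you identify these constraints correctly and $\delta_n=n^{-1/4}$ is a valid choice. Your derivation of \eqref{lem:DistributionTaylorSquareGeneralArrivals} from \eqref{lem:DistributionTaylorHalfSquareGeneralArrivals} via $\vert\cdots\vert^2\le 2f_{\sss T}(0)D_k\vert\cdots\vert$ is a small additional simplification the paper does not spell out. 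The uniformity-in-$k$ argument (deterministic bound on the conditional expectation valid for all $\bar x\le\delta_n$, combined with the single w.h.p.\ event $\{\sup_{k\le Tn^{2/3}}\Sigma_{k-1}\le\delta_n\}$) is sound.
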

\begin{proof}
We give the proof for \eqref{lem:DistributionTaylorHalfSquareGeneralArrivals}, the rest can be shown in an analogous way. Note that, by our assumptions on $F_{\sss T}$,
\begin{align}%
&\mathbb E[n^2\vert D_k\cdot( F_{\sss T} (\Sigma_k) - F_{\sss T}(\Sigma_{k-1}) - f_{\sss T} (\Sigma_{k-1}) D_k )\vert\big\vert\Sigma_{k-1}] \nnl
&\quad\leq \sup_{y\leq Cn^{-1/3}}n^2 \mathbb E[\vert F_{\sss T}(y+D_k)-F_{\sss T}(y) - f_{\sss T}(y) D_k\vert\cdot D_k]\nnl
&\quad\leq n^2\mathbb E[\sup_{y\leq Cn^{-1/3}}\vert F_{\sss T}(y+D_k)-F_{\sss T}(y) - f_{\sss T}(y) D_k\vert\cdot D_k]
\end{align}%
since with high probability $\Sigma_{k-1}\leq Cn^{-1/3}$ for some $C>0$. The right term  tends to zero by the Dominated Convergence Theorem and assumption \eqref{eq:ArrivalDistrUnifError}, and this immediately implies \eqref{lem:DistributionTaylorHalfSquareGeneralArrivals}.
\end{proof}
%
%Point-wise convergence is trivial, following directly from \eqref{eq:ArrivalDistriNearXBar}. We then dominate the random variable as
%%
%\begin{align}%
%\vert F_{\sss T} (\Sigma_k) - F_{\sss T}(\Sigma_{k-1}) - f_{\sss T} (\Sigma_{k-1})&\cdot S_kn^{-1}\vert \notag\\
%&\leq 2 + \vert f_{\sss T}(\Sigma_{k-1})\cdot S_k n^{-1} \vert \\
%& \leq 2 + (\vert f_{\sss T}(0) \cdot \Sigma_{k-1} \vert + \vert f_{\sss T}(\Sigma_{k-1}) - f_{\sss T}(0) \cdot \Sigma_{k-1}\vert)\frac{S_k}{n}\notag
%\end{align}%
%%
%The first two terms in the sum are integrable, since $\mathbb E [S_i]\mathbb E[S_j] <  \infty$. Moreover, for $k=O(n^{2/3})$, $n^{4/3}(2 + \vert f_{\sss T}(0) \cdot \Sigma_{k-1}\vert)S_k n^{-1}$ is bounded, uniformly in $n$.
%Furthermore,
%%
%\begin{equation}%
%\vert f_{\sss T}(\Sigma_{k-1}) - f_{\sss T}(0) \cdot \Sigma_{k-1}\vert S_kn^{-1} \leq  M\vert \Sigma_{k-1}\vert\cdot S_k n^{-1},
%\end{equation}%
%%
%for some $M>0$. Again, since $\mathbb E [S_k^2] <  \infty$, the right-most term is integrable, and since it can be uniformly bounded in $n$, it is uniformly integrable.
%The second claim can be shown by proceeding in the same way.
%
%\begin{lemma}\label{lem:oTaylorExpansionGeneralArrivals}%
%Let $(S)_{i\geq1}$, $T$ be positive random variables. Define $D_i:=S_i(1+\beta n^{-1/3})/n$. If \eqref{eq:ArrivalDistriNearXBar} holds for $T$ and  $\mathbb E[S^2]<\infty$, then
%%
%\begin{equation}%
%\mathbb E\big[\big\vert\mathbb P(T\leq \Sigma_k \vert \Sigma_k ) - \mathbb P(T\leq \Sigma_{k-1} \vert \Sigma_{k-1} )-f_{\sss T}(\Sigma_{k-1})\cdot D_k\big\vert\big]=o(n^{-4/3}).
%\end{equation}%
%%
%\end{lemma}%
As has already been seen for the exponential arrivals case, the 
%queue length 
 process $N_n(k)_{k\geq1}$ is roughly of the order $n^{1/3}$ around time $tn^{2/3}$. The following lemma is the counterpart for general arrivals of Lemma \ref{oQueueLemma} and we prove it in a similar fashion:
\begin{lemma}\label{lem:QueueLengthoSmallGeneralArrivals}%
Let $N_n(k)$ be the 
%queue length 
process defined in \eqref{QueueLength}. Then $n^{-2/3}N_n(k)\preceq G_n(k)$, where $G_n(k)$ is a random variable such that $G_n(k)\stackrel{\mathbb P}{\rightarrow}0$ for $k= O(n^{2/3})$.
\end{lemma}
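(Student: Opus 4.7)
The plan is to adapt the strategy used in Lemma \ref{oQueueLemma}, deriving an almost-sure upper bound on $N_n(k)$ that depends on a simpler statistic, and then applying Chebyshev's inequality. Because the arrival clocks are no longer memoryless, the i.i.d.\ structure of the $A'_n$'s exploited in the exponential case is lost; but a telescoping identity provides an equally clean substitute, and this is the core idea of the argument.

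First, I would use $A_n(j) \stackrel{\mathrm{a.s.}}{\leq} A'_n(j)$ together with \eqref{QueueLength} to obtain
\begin{equation}
N_n(k) \stackrel{\mathrm{a.s.}}{\leq} \sum_{j=1}^k (A'_n(j) - 1).
\end{equation}
The key observation is the telescoping identity
\begin{equation}
\sum_{j=1}^k A'_n(j) = \sum_{j=1}^k \sum_{i=1}^n \mathds 1_{\{\Sigma_{j-1} \leq T_i \leq \Sigma_j\}} = \sum_{i=1}^n \mathds 1_{\{T_i \leq \Sigma_k\}} =: B_n(k),
\end{equation}
which holds because the intervals $[\Sigma_{j-1},\Sigma_j]$ partition $[0,\Sigma_k]$ up to a null set. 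Thus it suffices to prove that $(B_n(k) - k)/n^{2/3} \stackrel{\mathbb P}{\to} 0$ uniformly in $k = O(n^{2/3})$.

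Second, I would control $B_n(k)$ by conditioning on $\Sigma_k$: given $\Sigma_k$, $B_n(k) \sim \mathrm{Bin}(n,F_{\sss T}(\Sigma_k))$. A Chebyshev bound applied to $\Sigma_k = \sum_{j=1}^k D_j$, using $\mathbb E[S^2]<\infty$, shows that $\Sigma_k = k\mathbb E[D] + O_{\mathbb P}(\sqrt{k}/n) = O_{\mathbb P}(n^{-1/3})$ uniformly for $k\leq \bar t n^{2/3}$, so the Taylor expansion \eqref{eq:ArrivalDistriNearXBar} around zero applies. Combining with \eqref{eq:DensityDistrNearZero} and the criticality condition \eqref{eq:CriticalityHypBeta}, one obtains
\begin{equation}
\mathbb E[B_n(k)] = n f_{\sss T}(0) \mathbb E[\Sigma_k] + O(n \mathbb E[\Sigma_k^2]) = k(1 + \beta n^{-1/3}) + O(k^2/n),
\end{equation}
so that $(\mathbb E[B_n(k)] - k)/n^{2/3} = O(n^{-1/3}) \to 0$ uniformly in $k \leq \bar t n^{2/3}$. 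For the variance, the law of total variance gives
\begin{equation}
\mathrm{Var}(B_n(k)) \leq n \mathbb E[F_{\sss T}(\Sigma_k)] + n^2 f_{\sss T}(0)^2 \mathrm{Var}(\Sigma_k) + (\text{lower order}),
\end{equation}
and both leading summands are $O(n^{2/3})$, the first by the mean computation above and the second since $\mathrm{Var}(\Sigma_k) = k \mathrm{Var}(D) = O(k/n^2)$ by $\mathbb E[S^2]<\infty$. Chebyshev's inequality then delivers $(B_n(k) - \mathbb E[B_n(k)])/n^{2/3} \stackrel{\mathbb P}{\to} 0$, and setting $G_n(k) := \max\{0, (B_n(k) - k)/n^{2/3}\}$ yields the desired stochastic domination.

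The main technical obstacle will be the uniform justification of the Taylor expansion of $F_{\sss T}(\Sigma_k)$ with the random argument $\Sigma_k$, for $k = O(n^{2/3})$. This is precisely what Lemma \ref{lem:oTaylorExpansionDistributionGeneralArrivals}, together with assumption \eqref{eq:ArrivalDistrUnifError}, is designed to handle; particular care is required for the contribution of $f'_{\sss T}(0)\Sigma_k^2/2$, which produces the $O(k^2/n)$ term in the mean and must be verified to be $o(n^{2/3})$ after rescaling. An alternative path closer in spirit to the exponential case would be to invoke a Weak LLN directly on the conditionally negatively correlated sequence $(A'_n(j))_{j=1}^k$, but the telescoping reduction sidesteps the need for any such correlation analysis.
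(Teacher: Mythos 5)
Your proposal follows essentially the same route as the paper: dominate $A_n(j)$ by $A'_n(j)$, collapse the sum via the telescoping identity $\sum_{j=1}^k A'_n(j)=\sum_{i=1}^n\mathds 1_{\{T_i\leq\Sigma_k\}}$, and apply a Chebyshev-type bound. The difference lies in how the Chebyshev step is executed. The paper restricts the second moment to the ``bad'' event $\mathcal E_n$ and then argues $\mathbb E[S^2\mathds 1_{\mathcal E_n}]\to0$ by invoking $\mathbb P(\mathcal E_n)\to0$ and dominated convergence --- a wording that, read literally, assumes the conclusion (though the intended bootstrap can be repaired). You instead compute $\mathbb E[B_n(k)]$ and $\mathrm{Var}(B_n(k))$ directly via the conditional binomial structure and the law of total variance, showing both $\mathbb E[B_n(k)]-k=O(n^{1/3})$ and $\mathrm{Var}(B_n(k))=O(n^{2/3})$ for $k=O(n^{2/3})$, then apply plain Chebyshev. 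This is self-contained and sidesteps any circularity; the price is that you need the Lipschitz bound $\vert F_{\sss T}(x)-F_{\sss T}(y)\vert\leq f_{\sss T}(0)\vert x-y\vert$ (available here since $f_{\sss T}(0)=\sup f_{\sss T}$) when controlling $\mathrm{Var}(F_{\sss T}(\Sigma_k))$, and you must check that the $f'_{\sss T}(0)$ contribution in the mean, of order $k^2/n$, is $O(n^{1/3})$ under $k=O(n^{2/3})$ --- which you correctly flag. Both proofs rely on the same Taylor-expansion machinery (the analogue of Lemma \ref{lem:oTaylorExpansionDistributionGeneralArrivals}) to justify expanding $F_{\sss T}(\Sigma_k)$ at a random point, so neither is more general than the other; yours is simply the more transparent derivation of the same variance estimate.
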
%
\begin{proof}
For simplicity we only consider $k = tn^{2/3}$. Fix an arbitrary $\varepsilon>0$. Then,
\begin{align}\label{eq:FirstStepQueueLengthoSmallGeneralArrivalsProof}%
&\mathbb P\Big(n^{-2/3}\sum_{j=1}^{tn^{2/3}}{\red A_n(j)} - n^{-2/3}t\geq \varepsilon\Big)\notag\\
&\quad\leq \mathbb P\Big(n^{-2/3}\sum_{j=1}^{tn^{2/3}}{\red A'_n(j)} - n^{-2/3}t\geq \varepsilon\Big)\notag\\
&\quad\leq\mathbb P\Big(n^{-2/3}\big\vert\sum_{j=1}^{tn^{2/3}}{\red A'_n(j)} - n^{-2/3}t\big\vert\geq \varepsilon\Big)\notag\\
&\quad = \mathbb P\Big(n^{-2/3}\big\vert\sum_{l=1}^n \mathds 1_{\{T_{l}\leq \Sigma_{tn^{2/3}}\}}-n^{-2/3}t\big\vert\geq \varepsilon\Big)\notag\\
&\quad \leq \frac{\mathbb E[(\sum_{l=1}^n \mathds 1_{\{T_l \leq \Sigma_{tn^{2/3}}\}})^2\mathds 1_{\{\vert\sum_{l=1}^n \mathds 1_{\{T_{l}\leq \Sigma_{tn^{2/3}}\}}-n^{-2/3}t\vert\geq \varepsilon n^{2/3}\}}]}{n^{4/3}\varepsilon ^2}.
\end{align}%
We are left to bound the expected value in \eqref{eq:FirstStepQueueLengthoSmallGeneralArrivalsProof}. To do so, we define the event $\mathcal E_n:=\{\vert\sum_{l=1}^n \mathds 1_{\{T_{l}\leq \Sigma_{tn^{2/3}}\}}-n^{-2/3}t\vert\geq \varepsilon n^{2/3}\}$ and write
\begin{align}\label{eq:SecondStepQueueLengthoSmallGeneralArrivalsProof}%
\mathbb E[(\sum_{l=1}^n \mathds 1_{\{T_l \leq \Sigma_{tn^{2/3}}\}})^2\mathds 1_{\mathcal E_n}] &\leq n + \mathbb E[ \sum_{h\neq k}\mathds 1_{\{T_h\leq \Sigma_{tn^{2/3}}\}}\mathds 1_{\{T_k\leq \Sigma_{tn^{2/3}}\}}\mathds 1_{\mathcal E_n}]\notag\\
&\leq n + n^2 \mathbb E[ F_{\sss T}(\Sigma_{tn^{2/3}})^2\mathds 1_{\mathcal E_n}]\leq n + Cn^2\mathbb E[(\Sigma_{tn^{2/3}})^2\mathds 1_{\mathcal E_n}]\notag\\
&\leq n+Cn^{4/3}\mathbb E[S^2\mathds 1_{\mathcal E_n}],
\end{align}%
for a large constant $C$. The last inequality in \eqref{eq:SecondStepQueueLengthoSmallGeneralArrivalsProof} was obtained by an application of the Cauchy-Schwarz inequality. Since $\mathbb P(\mathcal E_n)\rightarrow 0$ and $\mathbb E[S^2] < \infty$, by inserting \eqref{eq:SecondStepQueueLengthoSmallGeneralArrivalsProof} into \eqref{eq:FirstStepQueueLengthoSmallGeneralArrivalsProof} and the Dominated Convergence Theorem, we get the desired convergence.
%Now applying Markov inequality shows that the right-hand side of \eqref{eq:FirstStepQueueLengthoSmallGeneralArrivalsProof} can be bounded from above by
%%
%\begin{align}\label{eq:SecondStepQueueLengthoSmallGeneralArrivalsProof}%
% \sum_{j=1}^{tn^{2/3}}\frac{1}{\varepsilon}\mathbb E \big[\sum_{l=1}^n \mathds 1_{ \{ \Sigma_{j-1} \leq T_{(l)} \leq \Sigma_j\}}-1\big]&\leq \sum_{j=1}^{tn^{2/3}} \frac{1}{\varepsilon}(\mathbb P (T_{(l)}\leq \Sigma_j) - \mathbb P ( T_{(l)}\leq\Sigma_{j-1}))\notag\\
%&= \sum_{j=1}^{tn^{2/3}}\frac{1}{\varepsilon}(\mathbb E[f_{\sss T}(\Sigma_{j-1})]\cdot \mathbb E[S_j/n] + o(n^{-4/3})),
%\end{align}%
%%
%where the third equality is obtained through a Taylor expansion and Lemma \ref{lem:oTaylorExpansionDistributionGeneralArrivals}. By Taylor expanding $f_{\sss T}(\cdot)$ we rewrite the final term in \eqref{eq:SecondStepQueueLengthoSmallGeneralArrivalsProof} as
%%
%\begin{align}\label{eq:LemmaQueueLengthoSmallGeneralArrivalsFinalExpression}%
%\frac{1}{\varepsilon}\sum_{j=1}^{n^{2/3}} \Big(f_{\sss T}(0)\cdot\mathbb E [S_j/n] + f_{\sss T}'(0) \mathbb E \big[\sum_{i=1}^{j-1} S_i/n] \cdot\mathbb E [S_j/n] +  o(n^{-4/3})\Big).
%\end{align}%
%%
%Now it is easily seen that \eqref{eq:LemmaQueueLengthoSmallGeneralArrivalsFinalExpression} tends to zero as $n$ goes to infinity.
\end{proof}

The following lemma reduces the task to estimating quantities involving order statistics of exponentials, which are possibly complicated objects, to the one of dealing with a Poisson process, which is much simpler. It will be used later for proving Lemma \ref{LemmaSecondMomentAPrimeGeneralArrivals}, which is the equivalent in this setting of Lemma \ref{LemmaSecondMomentAPrime}.
\begin{lemma}\label{OrderStatisticsExponentialDominatedByPoissonStatement}
Consider the order statistics $(E_{(i)})_{i=1}^{n-\upsilon}$ $(\upsilon\leq n)$ of $n$ exponential unit mean random variables. Define $\vert \Upsilon^{(n-\upsilon)}_{(0,c)} \vert $ as the cardinality of the set $\Upsilon^{(n-\upsilon)}_{(0,c)}:=\{j\in[n-\upsilon]:E_{(j)}\in (0,c)/n\}$. Then,
\begin{align}\label{eq:OrderStatisticsExponentialDominatedByPoissonStatement}%
\vert\Upsilon^{(n-\upsilon)}_{(0,c)}\vert\preceq N\Big(\frac{n-\upsilon}{n} c\Big),
\end{align}%
where $N(t)$ is a Poisson process with unit rate.
\end{lemma}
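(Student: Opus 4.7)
My plan is to realize the order statistics via the representation in Lemma~\ref{OrderStatistics}, which makes visible a partial-sum process that can be directly compared with a unit-rate Poisson process. Setting $m:=n-\upsilon$ and applying Lemma~\ref{OrderStatistics} (with $m$ in place of $n$), I can realize the $(E_{(j)})_{j=1}^{m}$ on a probability space carrying i.i.d.\ unit-mean exponentials $E_1',\ldots,E_m'$ such that
\begin{equation*}
E_{(j)}=\sum_{s=1}^{j}\frac{E_s'}{m-s+1},\qquad j=1,\ldots,m.
\end{equation*}
Since $m-s+1\leq m$ for every $s\geq 1$, this yields the almost-sure comparison $E_{(j)}\geq \tfrac{1}{m}\sum_{s=1}^{j}E_s'$ for $j=1,\ldots,m$.

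Next I translate the event of interest: $\{E_{(j)}\leq c/n\}$ implies $\sum_{s=1}^{j}E_s'\leq mc/n = (n-\upsilon)c/n$. Because the partial sums $(\sum_{s=1}^{j}E_s')_{j\geq 1}$ are precisely the jump times of a unit-rate Poisson process $N(\cdot)$, the number of indices $j\geq 1$ for which the $j$-th partial sum lies below $(n-\upsilon)c/n$ is exactly $N((n-\upsilon)c/n)$. Therefore, on this probability space,
\begin{equation*}
\big|\Upsilon^{(n-\upsilon)}_{(0,c)}\big|=\sum_{j=1}^{m}\mathds{1}_{\{E_{(j)}\leq c/n\}}\leq\sum_{j=1}^{\infty}\mathds{1}_{\{\sum_{s=1}^{j}E_s'\leq\frac{n-\upsilon}{n}c\}}=N\Big(\frac{n-\upsilon}{n}c\Big)
\end{equation*}
almost surely, which readily upgrades to the stochastic dominance claimed in~\eqref{eq:OrderStatisticsExponentialDominatedByPoissonStatement}.

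The whole argument therefore reduces to a single application of Lemma~\ref{OrderStatistics} followed by the elementary weight comparison $1/(m-s+1)\geq 1/m$, so no serious obstacle is to be expected. What is worth stressing is the direction of this weight comparison: it is exactly what converts the partial sums into a stochastic \emph{lower} bound for $E_{(j)}$ and, consequently, into a stochastic \emph{upper} bound for the counting functional, matching the direction of dominance required by the lemma.
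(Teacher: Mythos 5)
Your argument is correct and follows essentially the same path as the paper: apply Lemma~\ref{OrderStatistics} to $m=n-\upsilon$ exponentials, lower-bound each $E_{(j)}$ by replacing the weight $1/(m-s+1)$ with $1/m$, and identify the resulting partial sums with the jump times of a unit-rate Poisson process. The only nuance is that you carry the coupling from Lemma~\ref{OrderStatistics} through to the end and obtain the almost-sure bound $\vert\Upsilon^{(n-\upsilon)}_{(0,c)}\vert\leq N((n-\upsilon)c/n)$, making the stochastic-dominance conclusion immediate, whereas the paper stops at the marginal comparison $\mathbb P(E_{(j)}\leq c/n)\leq \mathbb P(\Pi_j\leq (n-\upsilon)c/n)$ and leaves implicit the passage from termwise dominance of a nondecreasing sequence to dominance of the associated level-crossing count.
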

\begin{proof} The statement is a consequence of Lemma \ref{OrderStatistics}. Fix $j\in\{1,\ldots,n-\upsilon\}$. By definition of stochastic domination
\begin{align}\label{eq:OrderStatisticsExponentialDominatedByPoissonProof}%
\mathbb P (E_{(j)} \leq c/n)\leq \mathbb P \Big( \frac{\sum_{i=1} ^jE_i}{n-\upsilon}\leq \frac{c}{n}\Big)\leq \mathbb P\Big(\Pi_j\leq \frac{n-\upsilon}{n}c\Big), 
\end{align}%
where $\Pi_j$ is the $j$-th point of a Poisson process with rate one. The computation in \eqref{eq:OrderStatisticsExponentialDominatedByPoissonProof} intuitively means that there are more Poisson points in an interval of length $(n-\upsilon)\frac{c}{n}$ than order statistics in an interval of length $\frac{c}{n}$. This implies \eqref{eq:OrderStatisticsExponentialDominatedByPoissonStatement}.
%
%\begin{align}%
%\vert\Upsilon^{(n-\upsilon)}_{(0,c)}\vert\preceq N\Big(\frac{n-\upsilon}{n}\cdot c\Big).
%\end{align}%
%
\end{proof}
Since
\begin{align*}%
N\big(\frac{n-\upsilon}{n}\cdot c\big)\preceq N(c),\qquad\forall \upsilon\leq n,
\end{align*}%
it follows from \eqref{eq:OrderStatisticsExponentialDominatedByPoissonStatement} that
\begin{align}%
\vert\Upsilon^{(n-\upsilon)}_{(0,c)}\vert\preceq N(c),\qquad\forall \upsilon\leq n.
\end{align}%
\begin{corollary}\label{cor:OrderStatisticsExponentialDominatedByPoissonStatementCorollary}
Under the same assumptions as in \emph{Lemma \ref{OrderStatisticsExponentialDominatedByPoissonStatement},}
\begin{align}\label{eq:OrderStatisticsExponentialDominatedByPoissonStatementCorollary}%
\Upsilon^{(n)}_{(a,b)}\preceq N(b-a).
\end{align}%
\end{corollary}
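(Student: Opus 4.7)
The plan is to reduce the corollary to Lemma \ref{OrderStatisticsExponentialDominatedByPoissonStatement} by exploiting the memoryless property of the exponential distribution. Concretely, the corollary asks about order statistics of $n$ unit exponentials falling in the interval $(a/n, b/n)$, while the lemma controls how many of $n-\upsilon$ unit exponentials fall in $(0, c/n)$. The bridge between the two will be to condition on how many of the $E_i$ lie below $a/n$.

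First I would introduce the (random) count
\begin{equation}
\upsilon := \#\{i\in[n] : E_i \leq a/n\},
\end{equation}
and observe that almost surely the order statistics are distinct, so $\Upsilon^{(n)}_{(a,b)}$ may equivalently be written as the number of indices $i\in[n]$ with $E_i \in (a/n, b/n)$. Next, conditionally on $\upsilon$ and on the identities of the indices falling below $a/n$, the memoryless property implies that the family of shifted variables
\begin{equation}
\{E_i - a/n : E_i > a/n\}
\end{equation}
consists of $n-\upsilon$ i.i.d.\ unit exponentials. The count $\Upsilon^{(n)}_{(a,b)}$ is then exactly the number of these shifted variables that fall in $(0,(b-a)/n)$, which by definition equals $\vert\Upsilon^{(n-\upsilon)}_{(0,b-a)}\vert$ for this new family.

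Applying Lemma \ref{OrderStatisticsExponentialDominatedByPoissonStatement} with $c = b-a$ (to the $n-\upsilon$ shifted exponentials) yields the conditional bound
\begin{equation}
\vert\Upsilon^{(n-\upsilon)}_{(0,b-a)}\vert \preceq N\Big(\tfrac{n-\upsilon}{n}(b-a)\Big) \preceq N(b-a),
\end{equation}
where the second inequality uses monotonicity of the Poisson process. Since the right-hand side does not depend on the conditioning, marginalizing over $\upsilon$ and the identities of the sub-$a/n$ indices preserves the stochastic domination, giving $\Upsilon^{(n)}_{(a,b)} \preceq N(b-a)$ as required.

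The only subtle point—and what I expect is the main thing to justify carefully—is that conditioning on $\upsilon$ together with which specific $E_i$ fall below $a/n$ really does produce $n-\upsilon$ i.i.d.\ unit exponentials after the shift by $-a/n$. This follows from memorylessness applied coordinate-wise, combined with the fact that the indices below $a/n$ are independent of the shifted tail values. Once that is spelled out, the rest of the argument is a direct invocation of Lemma \ref{OrderStatisticsExponentialDominatedByPoissonStatement} and the elementary monotonicity of the unit-rate Poisson process.
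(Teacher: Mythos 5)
Your argument is correct and is essentially the paper's own proof: both condition on the number $\upsilon$ of exponentials landing below $a/n$, invoke memorylessness to reduce the count in $(a/n,b/n)$ to a count of $n-\upsilon$ fresh unit exponentials in $(0,(b-a)/n)$, apply Lemma \ref{OrderStatisticsExponentialDominatedByPoissonStatement}, and then marginalize over $\upsilon$ because the Poisson bound $N(b-a)$ does not depend on it. The only cosmetic difference is that you explicitly record the intermediate bound $N\big(\tfrac{n-\upsilon}{n}(b-a)\big)\preceq N(b-a)$, which the paper states just before the corollary rather than inside its proof.
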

\begin{proof}
By Lemma \ref{OrderStatisticsExponentialDominatedByPoissonStatement},
\begin{align}\label{eq:OrderStatisticsExponentialDominatedPoissonCorollaryTwo}%
\mathbb P (N(b-a)\leq x)\leq \mathbb P\big(\vert\Upsilon^{(n-\upsilon)}_{(0,b-a)}\vert\leq x\big). 
\end{align}%
Note that, by the memoryless property,
\begin{align}\label{eq:OrderStatisticsExponentialDominatedPoissonCorollaryOne}%
\mathbb P\big(\vert\Upsilon^{(n-\upsilon)}_{(0,b-a)}\vert\leq x\big)\stackrel{\mathrm{a.s.}}{=}\mathbb P \big(\vert\Upsilon^{(n)}_{(a,b)}\vert\leq x ~\big\vert~ \vert\Upsilon^{(n)}_{(0,a)}\vert = \upsilon\big)  .
\end{align}%
Since the left side of \eqref{eq:OrderStatisticsExponentialDominatedPoissonCorollaryTwo} does not depend on $\upsilon$, by combining  \eqref{eq:OrderStatisticsExponentialDominatedPoissonCorollaryTwo} and  \eqref{eq:OrderStatisticsExponentialDominatedPoissonCorollaryOne} and taking expectations on both sides in order to remove the conditioning, we get
\begin{align}%
\mathbb P (N(b-a)\leq x)\leq\mathbb P \big(\vert\Upsilon^{(n)}_{(a,b)}\vert\leq x \big),
\end{align}%
which is \eqref{eq:OrderStatisticsExponentialDominatedByPoissonStatementCorollary}.
\end{proof}

One of the cornerstones of the analysis in Section \ref{sec:main_theorem_exponential} was the uniform integrability of $(A_n'^2)_{n\geq1}$ in Lemma \ref{LemmaSecondMomentAPrime}. An analogous version holds in this general setting:
\begin{lemma}\label{LemmaSecondMomentAPrimeGeneralArrivals}
$A_n(k)$ is stochastically bounded by a random variable with uniformly integrable $($with respect to $n)$ second moment, uniformly in $k\leq tn^{2/3}$.
\end{lemma}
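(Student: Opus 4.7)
\emph{Overview.} The plan is to dominate $A_n(k)$ by the sum of two simpler random variables: one that handles the generic case where the $k$-th service time $S_k$ is not too large, and a crude bound that handles the tail event $\{S_k>cn\}$. The key feature of this split is that the tail event depends only on $S_k$ and not on the running sum $\sum_{i\le k}S_i$, which reduces the required second-moment estimate to essentially the exponential-arrivals computation in Lemma \ref{LemmaSecondMomentAPrime}.

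\emph{Step 1: binomial representation.} I would first observe that $A_n(k)\le A'_n(k):=\sum_{i=1}^n\mathds 1_{\{\Sigma_{k-1}<T_i\le\Sigma_k\}}$ and that, conditionally on $(S_i)_{i\le k}$, the variable $A'_n(k)$ is binomial with $n$ trials and success probability $p_k=\FT(\Sigma_k)-\FT(\Sigma_{k-1})$. Assumption \eqref{eq:sup_density_is_in_zero} gives $\fT(x)\le\fT(0)$ for all $x\ge 0$, hence $p_k\le\fT(0)D_k$, so that, writing $q_k:=\fT(0)S_k(1+\beta n^{-1/3})/n$, one has $A'_n(k)\mid(S_j)_{j\le k}\preceq\Bin(n,q_k)$.

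\emph{Step 2: Poisson bound on the good event.} Next I would pick $c>0$ so that $\fT(0)c(1+\beta n^{-1/3})<1/2$ for all $n$ large, and split on $B_{n,k}:=\{S_k\le cn\}$. On $B_{n,k}$ we have $q_k\le 1/2$, and combining the standard couplings $\Be(q)\preceq\Poi(-\log(1-q))$ and $-\log(1-q)\le 2q$ for $q\le 1/2$ upgrades the binomial bound to $\Bin(n,q_k)\preceq\Poi(2\fT(0)S_k(1+\beta n^{-1/3}))$. A monotone coupling of Poissons then produces, for $n$ large enough, an $n$-independent dominating variable $Z_k:=\Poi(4\fT(0)S_k)$, for which $\E[Z_k^2]=4\fT(0)\E[S]+16\fT(0)^2\E[S^2]<\infty$ and whose distribution depends on neither $n$ nor $k$, so that the family $\{Z_k^2\}$ is trivially uniformly integrable.

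\emph{Step 3: tail event and conclusion.} On $B_{n,k}^c$ I would use the crude $A_n(k)\le n$, giving the summand $n\mathds 1_{\{S_k>cn\}}$ whose squared $L^1$-norm $n^2\prob(S>cn)\le c^{-2}\E[S^2\mathds 1_{\{S>cn\}}]$ tends to $0$ as $n\to\infty$ by $\E[S^2]<\infty$; a family of nonnegative random variables with vanishing $L^1$-norms is \emph{a fortiori} UI. Combining the two, $A_n(k)\preceq Z_k+n\mathds 1_{\{S_k>cn\}}$, whose square is bounded above by $2Z_k^2+2n^2\mathds 1_{\{S_k>cn\}}$, a sum of two UI families, as required. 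The main obstacle I foresee is arranging the quantile, Bernoulli--Poisson, and monotone-Poisson couplings on a common probability space so that the additive stochastic domination holds pointwise; note also that applying Corollary \ref{cor:OrderStatisticsExponentialDominatedByPoissonStatementCorollary} to $A'_n(k)$ \emph{before} passing to the envelope $p_k\le\fT(0)D_k$ would make the tail event involve the full $\Sigma_k$, and then $\E[S^2]<\infty$ alone would no longer suffice to control $n^2\prob(\Sigma_k>C)$.
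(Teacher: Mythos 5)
Your proof is correct and takes a genuinely different route from the paper's. The paper first converts to exponential order statistics via $T_{(i)}\stackrel{\mathrm d}{=}F_{\sss T}^{-1}(1-\exp(-E_{(i)}))$ and invokes Corollary \ref{cor:OrderStatisticsExponentialDominatedByPoissonStatementCorollary} to obtain $A'_n(k)\preceq N\bigl(n\log\tfrac{1-F_{\sss T}(\Sigma_{k-1})}{1-F_{\sss T}(\Sigma_k)}\bigr)$, then splits on $\{\Sigma_k\leq\bar x\}$; since that tail event involves the cumulative sum $\Sigma_k$, it must be split again into $\{D_k\geq\bar x/2\}$ and $\{\Sigma_{k-1}\geq\bar x/2, D_k<\bar x/2\}$, the latter closed via the finer bound $\E[A_n(k)^2\mathds1_{\{\cdots\}}]\leq nM\E[S]\,\mathbb P(\Sigma_{k-1}\geq\bar x/2)$ and a Chebyshev estimate giving $\mathbb P(\Sigma_{k-1}\geq\bar x/2)=o(n^{-1})$, which uses $k=O(n^{2/3})$. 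Your route reaches a Poisson envelope directly from the conditional $\Bin(n,p_k)$ law together with the mean-value bound $p_k\leq\fT(0)D_k$ from \eqref{eq:sup_density_is_in_zero} and the per-trial Bernoulli--Poisson coupling, so the order-statistics lemmas are not needed at all; and splitting on $\{S_k\leq cn\}$ rather than on $\Sigma_k$ makes the tail event depend on a single i.i.d.\ service time, so $\E[S^2]<\infty$ controls it directly, no Chebyshev step is required, and the $k$-uniformity is automatic because $Z_k+n\mathds1_{\{S_k>cn\}}$ is equidistributed in $k$. Your closing observation is accurate: a crude $n^2\mathbb P(\Sigma_k>C)$ bound would indeed blow up like $n^{2/3}$, and the paper escapes this only by the additional factorization, whereas your decomposition sidesteps it. The coupling concern you flag is benign: the quantile, Bernoulli--Poisson, and Poisson-thinning dominations can all be realized on one space conditionally on $(S_j)_{j\leq k}$, and conditional stochastic domination with a common conditioning variable integrates to the unconditional $A_n(k)\preceq Z_k+n\mathds1_{\{S_k>cn\}}$.
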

\begin{proof}

Note that $T_{(i)}\stackrel{d}{=} F_{\sss T}^{-1}(1-\exp(-E_{(i)}))$, where $(E_{(i)})_{i=1}^n$ are the order statistics of unit mean exponential random variables. Then,
\begin{align}\label{eq:FirstPassageEstimateArrivalGeneralArrivals}%
A_n(k)~{\red \stackrel{\mathrm{a.s.}}{\leq} A'_n(k)}&\stackrel{d}{=}\sum_{i=1}^n \mathds 1_{\{\Sigma_{k-1}\leq F_{\sss T}^{-1}(1-\exp(-E_{(i)}))\leq\Sigma_k\}}\notag\\
&\stackrel{d}{=}\sum_{i=1}^n \mathds 1_{\{ F_{\sss T}(\Sigma_{k-1})-1\leq -\exp(-E_{(i)})\leq F_{\sss T}(\Sigma_k)-1\}}\notag\\
&\stackrel{d}{=}\sum_{i=1}^n \mathds 1_{\{ -\log(1-F_{\sss T}(\Sigma_{k-1}))\leq E_{(i)}\leq -\log(1-F_{\sss T}(\Sigma_k))\}}.
\end{align}%
By Corollary \ref{cor:OrderStatisticsExponentialDominatedByPoissonStatementCorollary},
\begin{align}\label{eq:ApplicationOfCorollaryToArrivals}%
A_n(k)\preceq N\Big(n\log\Big(\frac{1-F_{\sss T}(\Sigma_{k-1})}{1-F_{\sss T}(\Sigma_k)}\Big)\Big).
\end{align}%
By splitting the event space $\Omega$ we can write \eqref{eq:ApplicationOfCorollaryToArrivals} as
\begin{align}\label{eq:ArrivalStochBoundedByPoissonPlusTerm}%
A_n(k)\preceq N\Big(n\log\Big(\frac{1-F_{\sss T}(\Sigma_{k-1})}{1-F_{\sss T}(\Sigma_k)}\Big)\Big)\mathds 1_{\{\Sigma_k \leq \bar x\}} + A_n(k) \mathds 1_{\{\Sigma_k> \bar x\}}, 
\end{align}%
where $\bar x$ is independent of $n$ and will be determined later on. The plan now is to show that the first term in \eqref{eq:ArrivalStochBoundedByPoissonPlusTerm} is bounded by a random variable independent of $n$ and with finite second moment, and that the second term has second moments converging to zero, as $n$ tends to infinity. These two facts together imply that the right-hand side of \eqref{eq:ArrivalStochBoundedByPoissonPlusTerm} has uniformly integrable second moments. 

The first term is easily bounded. We now choose $\bar x$ in such a way that $1-F_{\sss T}(\bar x) >0$. By Taylor expanding the function $x \mapsto \log\left(\frac{1-F_{\sss T}(\Sigma_{k-1})}{1-F_{\sss T}(\Sigma_{k-1}+x)}\right)$, we get, for some $\Sigma_{k-1}^*$ in between $\Sigma_{k-1}$ and $\Sigma_k$
\begin{align}\label{eq:ArrivalsHaveUnifIntegrableSecondMomentFirstSplitTerm}%
N\Big(n\log\Big(\frac{1-F_{\sss T}(\Sigma_{k-1})}{1-F_{\sss T}(\Sigma_k)}\Big)\Big)\mathds 1_{\{\Sigma_k \leq \bar x\}} &= N\Big(n\frac{f_{\sss T}(\Sigma^*_{k-1})}{1-F_{\sss T}(\Sigma^*_{k-1})}\frac{S_k}{n}\Big)\mathds 1_{\{\Sigma_k \leq \bar x\}} \nnl
&\preceq N\Big(\frac{\fT(0)}{1-F_{\sss T}(\bar x)}S_k \Big),%\stackrel{\textrm{a.s.}}{\leq} N(C S_k),
\end{align}%
where we have used that the density $f_{\sss T}(\cdot)$ has finite maximum value $\fT(0)$. The right-most term in \eqref{eq:ArrivalsHaveUnifIntegrableSecondMomentFirstSplitTerm} has finite second moment, since $\mathbb E[S^2]<\infty$.
For the second term we proceed by noting that
\begin{align}%
A_n(k)^2 \mathds 1 _{\{\Sigma_k\geq\bar x\}} \stackrel{\mathrm{a.s.}}{\leq}  A_n(k)^2 \mathds 1_{\{D_k\geq \bar x/2\}} + A_n(k)^2\mathds 1_{\{\Sigma_{k}\geq\bar x,D_k<\bar x/2\}},
\end{align}%
The mean of the first term can be bounded by
\begin{align}%
\mathbb E [ A_n(k)^2 \mathds 1_{\{D_k\geq \bar x/2\}}]\leq n^2 \mathbb P (S_k\geq n\bar x/2)\leq n^2 \frac{\mathbb E[S_k^2\mathds 1_{\{S_k\geq n\bar x/2\}}]}{(n\bar x/2)^2},
\end{align}%
and the right-hand side tends to zero as $n$ tends to infinity since $\mathbb E[S^2_k] < \infty$. For the second term, some more work is needed. First observe that $\mathds 1_{\{\Sigma_k \geq \bar x, D_k < \bar x/2\}} \leq \mathds 1_{\{\Sigma_{k-1} \geq \bar x/2\}}$. 
After dominating as usual $A_n(k)^2$ by  $n^2$, we compute
\begin{align}\label{eq:ArrivalsStochBoundedSecondTerm}%
\mathbb E[ A_n(k)^2\mathds 1_{\{\Sigma_{k}\geq\bar x,D_k<\bar x/2\}} ] &\leq n^{2}\mathbb E[\mathbb E[\mathds 1_{\{\Sigma_{k-1}\leq T_i\leq \Sigma_k \}} \mathds 1 _{\{\Sigma_{k-1}\geq \bar x/2\}}\vert\Sigma_k]]\notag\\
& =  n^{2}\mathbb E[\mathds 1_{\{\Sigma_{k-1}\geq \bar x/2\}} \mathbb E[\mathds 1_{\{\Sigma_{k-1}\leq T_i\leq \Sigma_k \}}\vert \Sigma_k]]  \notag\\
&= n^{2}\mathbb E[\mathds 1_{\{\Sigma_{k-1}\geq \bar x/2\}}(F_{\sss T}(\Sigma_k) - F_{\sss T} (\Sigma_{k-1}))].
\end{align}%
By applying the Mean Value Theorem to $F_{\sss T}(\cdot)$, we obtain
\begin{equation}
 \vert F_{\sss T}(\Sigma_k)-F_{\sss T}(\Sigma_{k-1}) \vert \stackrel{\textrm{a.s.}}{\leq} f_{\sss T}(0) D_k,
\end{equation}%
where we recall that $f_{\sss T}(0) = \max_{t\in\mathbb [0,\infty)}f_{\sss T}(t)$. Inserting this into \eqref{eq:ArrivalsStochBoundedSecondTerm}, 
\begin{align}%
\mathbb E[ A_n(k)^2\mathds 1_{\{\Sigma_{k}\geq\bar x,D_k<\bar x/2\}} ] &\leq nM\mathbb E [ S_k \mathds 1_{\{\Sigma_{k-1}\geq \bar x/2\}}]=nM\mathbb E [ S_k]\mathbb P( \Sigma_{k-1}\geq \bar x/2),
\end{align}%
the equality following from independence of $S_k$ and $\Sigma_{k-1}$.

It is easy to see that the right-hand side converges to zero by using Chebyshev's inequality. 
Indeed, taking $n$ so large that $n^{2/3} \mathbb E[S] \leq \frac{n \bar x}{4}$,
\begin{align}%
\mathbb P (\Sigma_{k-1} \geq \bar x/2) &\leq  \mathbb P \big(\vert\sum_{i=1}^{tn^{2/3}}S_i - n^{2/3} \mathbb E[S_i]\vert\geq   n\bar x /4 \big)\notag\\
&\leq16\frac{tn^{2/3} \textrm{Var}(S_i)}{ n^2\bar x } = o(n^{-1}).
\end{align}%
This concludes the proof that the second moment of the second term in \eqref{eq:ArrivalStochBoundedByPoissonPlusTerm} tends to zero as $n$ tends to infinity.
\end{proof}

We conclude with a useful application of Doob's inequality:
\begin{lemma}\label{lem:ApproxSWithExpectation}%
Assume $(S_i)_{i\geq0}$ is a sequence of \emph{i.i.d.} random variables with finite second moment. Then, for any $\alpha, \beta >0$ such that $\alpha < 2\beta$,
\begin{align}
\frac{\sup_{k\leq tn^{\alpha}}\vert\sum_{i=1}^kS_i-k\cdot\mathbb E[S]\vert}{n^{\beta}}\stackrel{\mathbb P}{\longrightarrow}0.
\end{align}
\end{lemma}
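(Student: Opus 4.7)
The plan is to recognize the centered sum $M_k := \sum_{i=1}^k (S_i - \E[S])$ as a square-integrable martingale with respect to the natural filtration $\mathcal G_k := \sigma(S_1, \ldots, S_k)$, and then apply Doob's $L^2$ maximal inequality combined with Markov's inequality. This is exactly the type of argument already employed in the proof of Lemma \ref{oQueueLemmaUnif} when bounding $\sup_{j\leq an^{2/3}}|M_n(j)|$, so the same toolkit applies here.

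More concretely, the first step is to observe that since $(S_i)_{i\geq 1}$ are i.i.d.\ with finite second moment, the process $(M_k)_{k\geq 0}$ is a square-integrable martingale with independent increments, so $\E[M_m^2] = m \cdot \Var(S)$. Doob's inequality then yields
\begin{equation}
\E\Big[\sup_{k\leq tn^{\alpha}} M_k^2\Big] \leq 4\, \E[M_{\lfloor tn^{\alpha}\rfloor}^2] \leq 4 t n^{\alpha}\, \Var(S).
\end{equation}

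The second step is to apply Markov's inequality: for any $\varepsilon > 0$,
\begin{equation}
\mathbb P\Big(\frac{\sup_{k\leq tn^{\alpha}}|M_k|}{n^{\beta}} \geq \varepsilon\Big) \leq \frac{\E[\sup_{k\leq tn^{\alpha}} M_k^2]}{\varepsilon^2 n^{2\beta}} \leq \frac{4 t\, \Var(S)}{\varepsilon^2}\, n^{\alpha - 2\beta},
\end{equation}
which tends to zero as $n\to\infty$ because $\alpha < 2\beta$ by assumption. Since $\sup_{k\leq tn^{\alpha}}|\sum_{i=1}^k S_i - k\E[S]| = \sup_{k\leq tn^{\alpha}}|M_k|$, the desired convergence in probability follows.

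There is no real obstacle here: finiteness of the second moment of $S$ is exactly what is needed to invoke Doob, and the strict inequality $\alpha < 2\beta$ is precisely what guarantees the resulting bound vanishes. The lemma is a standard maximal-inequality statement tailored to the scaling regimes $\alpha = 2/3$, $\beta = 1/3$ (and also $\beta = 2/3$) that arise throughout the paper when comparing partial sums of service times against their linear mean.
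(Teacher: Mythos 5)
Your proof is correct and takes essentially the same route as the paper's: both recognize $M_k=\sum_{i\leq k}(S_i-\E[S])$ as a square-integrable martingale and control the running supremum via Doob's inequality, exploiting $\alpha<2\beta$. The only cosmetic difference is that you invoke Doob's $L^2$ maximal inequality followed by Markov (incurring a harmless factor of $4$), whereas the paper applies the weak-type form of Doob's inequality to the submartingale $|M_k|$ directly.
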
%
\begin{proof}
Define $M_k := \sum_{j=1}^k (S_j - \mathbb E[S])$. Then $(M_k)_{k\geq1}$ is a martingale. Therefore, by Doob's inequality applied to the sub-martingale $ (\vert M_k\vert )_{k\geq1}$, we have
\begin{align}
\mathbb P \Big(\frac{\sup_{k\leq t n^{\alpha}}\vert M_k\vert}{n^{\beta}} >\varepsilon\Big) \leq \frac{\mathbb E[M_{tn^{\alpha}}^2]}{\varepsilon ^2 n^{2\beta}} = \frac{tn^{\alpha}\mathbb E[(S-\mathbb E[S])^2]}{\varepsilon^2n^{2\beta}}.
\end{align}
This converges to zero since $\alpha < 2\beta$. Note that $\varepsilon$ can depend on $n$, for example by defining $\varepsilon := n^{-\delta}$ and choosing $\delta$ such that $ \delta < \beta - \alpha/2$.
\end{proof}

\subsection{Proof of Theorem \ref{MainTheorem}}\label{ref:ProofMainTheorem}
The proof consists, again, in verifying four conditions in order to apply Theorem \ref{FCLT}. The first is the convergence of the drift
\begin{itemize}
\item[(i)] $\sup_{t\leq \bar t} \vert n^{-1/3}C_n(tn^{2/3})- \beta t - f'_{\sss T}(0)/f_{\sss T}(0)^2\cdot\frac{t^2}{2} \vert\stackrel{\mathbb P}{\longrightarrow}0,\qquad \forall \bar t\in \mathbb R^+$;
\end{itemize}
while (ii), (iii) and (iv) are technical conditions and analogous to the ones given in Section \ref{sec:main_theorem_exponential}. The filtration we consider henceforth is defined as 
\begin{align}%
\mathcal F_{i}:= \sigma(\{A_n(j),D_j\}_{j\leq i}).
\end{align}%

\subsubsection{Proof of (i)}
Computing the asymptotic drift essentially amounts to computing the discrete drift, which in turn depends heavily on 
\begin{align}%
\mathbb E [A_n(k)\vert \mathcal F_{k-1}] &={\red \sum_{i\nin\nu_{k}}}\mathbb E \left[\mathds 1 _{\{\Sigma_{k-1}\leq{\red  T_{i}}\leq \Sigma_k\}}\Big\vert \mathcal F_{k-1}\right]\nnl
&={\red \sum_{i\nin\nu_{k}}}\mathbb E \left[\mathds 1 _{\{ {\red T_{i}}\leq \Sigma_k\}}\Big\vert \Sigma_{k-1},\big\{T_{i}\geq\Sigma_{k-1}\big\}\right],
\end{align}%
where, as above, $\nu_i$ denotes the set of the customers that no longer remain in the population at the beginning of the service of the $i$-th customer.
%The second (almost sure) equality is due to the fact that the $\sigma$-algebra $\mathcal F_{k-1}$ contains information on how many arrivals have occured up to time $k-1$ (formally, $\nu_{k-1}$ is $\mathcal F_{k-1}$-measurable). 
Adding the conditioning on $\left\{T_{i}\geq\Sigma_{k-1}\right\}$ does not influence the conditional expectation, since $T_{i}$ is such that $i \nin\nu_{k-1}$. Indeed, note that $i\nin\nu_{k}$ implies $T_{i}\geq\Sigma_{k-1}$. Then, defining for simplicity $\mathcal{E}_{k-1}:=\{\Sigma_{k-1},\big\{T_i\geq\Sigma_{k-1}\big\}\}$,
\begin{align}\label{ArrivalsConditionedGeneralArrivals}
\mathbb E [A_n(k)\vert \mathcal F_{k-1}] &= {\red \sum_{i\nin\nu_{k}}} \mathbb E \Big[\mathbb E[\mathds 1 _{\{\Sigma_{k-1}\leq {\red T_{i}}\leq \Sigma_{k}\}} \vert D_k,\mathcal E_{k-1} ]\Big\vert \mathcal{E}_{k-1}\Big]\notag\\
&=(n-{\red \vert\nu_{k}\vert})\mathbb E \Big[  \frac{F_{\sss T}(\Sigma_{k})-F_{\sss T}(\Sigma_{k-1})}{1-F_{\sss T}(\Sigma_{k-1})}\Big\vert \mathcal{E}_{k-1}\Big]\notag\\
&=\frac{(n-\vert\nu_{k}\vert)}{1-F_{\sss T}(\Sigma_{k-1})}\mathbb E \big[  F_{\sss T}(\Sigma_{k})-F_{\sss T}(\Sigma_{k-1})\big\vert \mathcal{E}_{k-1}\big].
\end{align}
We now rearrange the terms in order to distinguish between the ones contributing to the limit and those vanishing, as follows
\begin{align}\label{eq:ArrivalsRewritingTwoTerms}%
\mathbb E [A_n(k)\vert \mathcal F_{k-1}] -1 &= \frac{(n-{\red \vert\nu_{k}\vert})}{1-F_{\sss T}(\Sigma_{k-1})}\mathbb E \big[  F_{\sss T}(\Sigma_{k})-F_{\sss T}(\Sigma_{k-1})\big\vert \mathcal{E}_{k-1}\big] -\frac{1-F_{\sss T}(\Sigma_{k-1})}{1-F_{\sss T}(\Sigma_{k-1})}\notag\\
&= \frac{n}{1-F_{\sss T}(\Sigma_{k-1})}\mathbb E \big[  F_{\sss T}(\Sigma_{k})-F_{\sss T}(\Sigma_{k-1}) - 1/n\big\vert \mathcal{E}_{k-1}\big]\notag\\
&\quad-\frac{1}{1-F_{\sss T}(\Sigma_{k-1})}\mathbb E \big[\vert\nu_{k}\vert(  F_{\sss T}(\Sigma_{k})-F_{\sss T}(\Sigma_{k-1}))-F_{\sss T}(\Sigma_{k-1})\big\vert \mathcal{E}_{k-1}\big]\notag\\
&=: A_n^{\sss (1)}(k) - A_n ^{\sss (2)}(k).
\end{align}%
$A_n^{\sss (1)}(k)$ groups all the terms appearing in the limit, while $A_n^{\sss (2)}(k)$ groups all the terms of lower order, vanishing in the limit. We treat them separately, starting with $A_n^{\sss (1)}(k)$.
The term $F_{\sss T}(\Sigma_k)-F_{\sss T}(\Sigma_{k-1})$ can be simplified by our assumptions. By \eqref{eq:ArrivalDistriNearXBar} and Lemma \ref{lem:oTaylorExpansionDistributionGeneralArrivals},
\begin{align}\label{eq:ArrivalsConditionedGeneralArrivals2}%
A_n^{\sss (1)}(k) &= \frac{n}{1-F_{\sss T}(\Sigma_{k-1})}\mathbb E \big[  f_{\sss T}(\Sigma_{k-1}) D_k - n^{-1} +o_{\mathbb P}(n^{-4/3})\big\vert \mathcal{E}_{k-1}\big],
\end{align}%
and by \eqref{eq:DensityDistrNearZero},
\begin{align}\label{eq:TaylorExpansionAppliedOnF}
\mathbb E \big[f_{\sss T}(\Sigma_{k-1}) D_k\big\vert\mathcal{E}_{k-1}\big]&=\left(f_{\sss T}(0)+f'_{\sss T}(0)\Sigma_{k-1}+o\left(\Sigma_{k-1}\right)\right)\mathbb E [D]\notag\\
&=f_{\sss T}(0) \mathbb E[D_k] + f'_{\sss T}(0)\Sigma_{k-1}\mathbb E [D_k] +o\left(\Sigma_{k-1}\right)\mathbb E [D_k],
\end{align}%
where, with a slight abuse of notation, we denoted  $\vert f_{\sss T} (\Sigma_{k-1})  - f_{\sss T}(0) - f_{\sss T}'(0) \Sigma_{k-1}\vert$ by $o(\Sigma_{k-1})$. Since, for $k = O(n^{2/3})$,
\begin{equation}\label{eq:StochSmalloArrivalsDensityGeneralArrivals}%
n^{1/3}\vert f_{\sss T} (\Sigma_{k-1})  - f_{\sss T}(0) - f_{\sss T}'(0) \Sigma_{k-1}\vert\stackrel{\mathrm{a.s.}}{\rightarrow} 0,
\end{equation}%
also convergence in probability holds, that is, $o(\Sigma_{k-1}) = o_{\mathbb P} (n^{-1/3})$. In particular $o(\Sigma_{k-1})\cdot \mathbb E[D_k] = o_{\mathbb P} (n^{-4/3})$.
%Note that, for $k=O(n^{2/3})$, $o(\Sigma_{k-1})\mathbb E [D_k] = \mathbb E[S_k]n^{-4/3}  o(\sum_{j=1}^{k-1}S_j/n^{2/3}) = o_{\mathbb P}(n^{-4/3})$. Substituting \eqref{eq:TaylorExpansionAppliedOnF} in \eqref{eq:ArrivalsConditionedGeneralArrivals2} yields
%
Inserting \eqref{eq:TaylorExpansionAppliedOnF} into \eqref{eq:ArrivalsConditionedGeneralArrivals2} yields
\begin{align}%
A_n^{\sss (1)}(k)&=\frac{n}{1-F_{\sss T}(\Sigma_{k-1})} \Big(f_{\sss T}(0) \mathbb E[D_k]  -1/n + f'_{\sss T}(0)\mathbb E [D_k]\Sigma_{k-1} + o_{\mathbb P}(n^{-4/3})\Big)\notag\\
&=\frac{1}{1-F_{\sss T}(\Sigma_{k-1})} \Big(f_{\sss T}(0) \mathbb E[S_k]  -1 +\frac{f'_{\sss T}(0)}{n}\sum_{j=1}^{k-1}S_j\mathbb E [S_k] + o_{\mathbb P}(n^{-1/3})\Big).
\end{align}%
The criticality assumption $f_{\sss T}(0)\mathbb E[S_k]=1+\beta n^{-1/3}+o(n^{-1/3})$ leads to
\begin{align}\label{eq:ArrivalsConditionedFinalExpression}
A_n^{\sss (1)}(k)&=\frac{1}{1-F_{\sss T}(\Sigma_{k-1})} \Big(\beta  + f'_{\sss T}(0)\mathbb E [S_k]\frac{\sum_{j=1}^{k-1}S_j}{n^{2/3}} + o_{\mathbb P}(1)\Big)\cdot n^{-1/3}.
\end{align}
Recall that the drift term is defined as
\begin{align}\label{eq:DriftTermDuringProof}%
C_n(s)=\sum_{k=1}^s\big(\mathbb E[A_n(k)\vert\mathcal F_{k-1}]-1\big)=\sum_{k=1}^s\big(A_n^{\sss (1)}(k)-A_n^{\sss (2)}(k)\big)=: C_n^{\sss (1)}(s) + C_n^{\sss (2)}(s).
\end{align}%
%
%\textbf{Linear drift.} The first term in \eqref{eq:ArrivalsConditionedFinalExpression} yields the linear part of the drift \eqref{eq:DriftTermDuringProof}.
%Then,
We now sum \eqref{eq:ArrivalsConditionedFinalExpression} over $k$, obtaining
\begin{align}%
C_n^{\sss (1)}(s) = \sum_{k=1}^s\frac{\beta n^{-1/3}}{1-F_{\sss T}(\Sigma_{k-1})} &+ \frac{f'_{\sss T}(0)\mathbb E [S_1]}{n}\sum_{k=1}^s\frac{\sum_{j=1}^{k-1}S_j}{1- F_{\sss T} (\Sigma_{k-1})} + \sum_{k=1}^s\frac{1}{1- F_{\sss T} (\Sigma_{k-1})}o_{\mathbb P}(n^{-1/3}).
\end{align}%
By scaling time as $s=t n^{2/3}$ and space as $n^{-1/3}$, we obtain
\begin{align}\label{eq:LinearDriftGeneralArrivalsRescaled}%
n^{-1/3}C_n^{\sss (1)}(tn^{2/3}) = \sum_{k=1}^{tn^{2/3}}&\frac{\beta n^{-2/3}}{1-F_{\sss T}(\Sigma_{k-1})} + \frac{f'_{\sss T}(0)\mathbb E [S_1]}{n^{4/3}}\sum_{k=1}^{tn^{2/3}}\frac{\sum_{j=1}^{k-1}S_j}{1- F_{\sss T} (\Sigma_{k-1})} + \sum_{k=1}^{tn^{2/3}}\frac{n^{-1/3}o_{\mathbb P}(n^{-1/3})}{1- F_{\sss T} (\Sigma_{k-1})}.
\end{align}
Before continuing, we give two easy but useful results:
\begin{lemma}\label{lem:LLNForDoubleSums}%
Let $(S_i)_{i\geq1}$ be a sequence of \emph{i.i.d.} random variables such that $\mathbb E[S_1^2] <  \infty$. Then
\begin{equation}%
\Big\vert\frac{\sum_{n=1}^N\sum_{i=1}^n S_i}{N^2} - \frac{\mathbb E[S_1]}{2}\Big\vert\stackrel{\mathbb P}{\longrightarrow} 0,\qquad \mathrm{as~} N\rightarrow \infty.
\end{equation}%
Moreover,
\begin{equation}%
\Big\vert\frac{\sum_{n=1}^N\big(\sum_{i=1}^n S_i\big)^2}{N^3} - \frac{\mathbb E[S_1]^2}{3}\Big\vert\stackrel{\mathbb P}{\longrightarrow} 0,\qquad \mathrm{as~} N\rightarrow \infty.
\end{equation}%
\end{lemma}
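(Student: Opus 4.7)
My approach is to treat both displays as Riemann sum approximations of integrals, using the strong law of large numbers (SLLN) for $T_n := \sum_{i=1}^n S_i$. Since $\mathbb{E}[S_1^2]<\infty$ in particular implies $\mathbb{E}[|S_1|]<\infty$, the SLLN yields $T_n/n \to \mathbb{E}[S_1]$ almost surely, and consequently $T_n^2/n^2 \to \mathbb{E}[S_1]^2$ almost surely. The identity
\begin{equation}
\frac{\sum_{n=1}^N T_n^k}{N^{k+1}} = \frac{1}{N}\sum_{n=1}^N \Big(\frac{n}{N}\Big)^k \Big(\frac{T_n}{n}\Big)^k
\end{equation}
rewrites the quantities of interest for $k=1$ and $k=2$ as (random) Riemann sums on $[0,1]$ against the deterministic density $x^k$, whose integrals produce $\mathbb{E}[S_1]^k/(k+1)$.

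To turn this heuristic into a proof, I would fix $\varepsilon>0$ and use the SLLN to find, almost surely, an integer $n_0=n_0(\omega,\varepsilon)$ such that $|T_n/n - \mathbb{E}[S_1]|<\varepsilon$ for all $n\geq n_0$. I would then split the sum $\sum_{n=1}^N$ into the head $\sum_{n=1}^{n_0}$ and the tail $\sum_{n=n_0+1}^N$. The head contributes at most $O(n_0 \cdot \max_{n\leq n_0} |T_n|^k)/N^{k+1}$, which vanishes as $N\to\infty$ for each fixed $\omega$. On the tail, substituting $T_n = n\mathbb{E}[S_1] + n\, r_n$ with $|r_n|<\varepsilon$ and using the elementary identities
\begin{equation}
\frac{1}{N^2}\sum_{n=1}^N n = \frac{1}{2}+O(N^{-1}), \qquad \frac{1}{N^3}\sum_{n=1}^N n^2 = \frac{1}{3}+O(N^{-1}),
\end{equation}
shows that $\sum_{n=1}^N T_n^k/N^{k+1}$ lies within $O(\varepsilon)$ of $\mathbb{E}[S_1]^k/(k+1)$ for all $N$ large, almost surely. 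Letting $\varepsilon\downarrow 0$ through a countable sequence yields almost-sure convergence, which in particular implies convergence in probability.

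The only mild subtlety is that for $k=2$ one must expand $T_n^k = (n\mathbb{E}[S_1]+n r_n)^2 = n^2\mathbb{E}[S_1]^2 + 2n^2 \mathbb{E}[S_1] r_n + n^2 r_n^2$ and verify that the two correction terms contribute $O(\varepsilon)$ after summation; this is immediate since $|r_n|\leq \varepsilon$ on the tail and the cross term is bounded in absolute value by $2\varepsilon|\mathbb{E}[S_1]|\cdot N^{-3}\sum_n n^2$, while the quadratic correction is bounded by $\varepsilon^2\cdot N^{-3}\sum_n n^2$. The head again yields a negligible deterministic error. I do not anticipate any serious obstacle: the whole argument is a textbook Cesàro/Riemann-sum calculation built on SLLN, and the assumption $\mathbb{E}[S_1^2]<\infty$ is only used to invoke the $L^1$-SLLN (it is actually stronger than necessary here, but is already available from the standing hypotheses of the paper).

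As an alternative, one could bypass the Riemann-sum perspective by swapping summation order to obtain $\sum_{n=1}^N T_n = (N+1)T_N - \sum_{i=1}^N i\,S_i$ and, similarly for the squared sum, a triple sum that reorders to $\sum_{i\leq j\leq N}(N-j+1)S_iS_j$ plus a term in $\sum_{n} \sum_{i\leq n} S_i^2$ that is of lower order. One then computes expectations and variances directly (using $\mathbb{E}[S_1^2]<\infty$) and applies Chebyshev's inequality; this route produces convergence in probability without invoking SLLN but is computationally heavier, so I prefer the Riemann-sum route above.
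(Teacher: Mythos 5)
Your proof is correct, and it takes a genuinely different route from the paper's. The paper's (one-line) proof appeals to Lemma \ref{lem:ApproxSWithExpectation}, which is a Doob-inequality / martingale maximal-inequality bound controlling $\sup_{k\leq N}\vert T_k - k\E[S]\vert$ at scale $N^{\beta}$; plugging the decomposition $T_n = n\E[S] + (T_n-n\E[S])$ into the sums and using that uniform $o_{\mathbb P}(N)$ bound gives convergence in probability. Your argument instead invokes the strong law of large numbers pointwise, replaces $T_n$ by $n\E[S] + n r_n$ with $\vert r_n\vert<\varepsilon$ past a random index, and runs a head/tail split; this yields almost-sure convergence (strictly stronger than the lemma's claim) and needs only $\E[\vert S_1\vert]<\infty$, so it is both more elementary and slightly more general. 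The paper's route, by contrast, is economical in context because Lemma \ref{lem:ApproxSWithExpectation} is already proved and reused at several other places; it also yields quantitative rate information (the numerator is in fact $O_{\mathbb P}(N^{1/2})$) which the SLLN does not provide directly. One tiny stylistic remark: since the whole estimate is carried out pointwise on the SLLN event, you do not actually need the final step of letting $\varepsilon\downarrow 0$ through a countable sequence — on each such $\omega$ the bound already shows the sequence converges; but the version you wrote is certainly valid.
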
%
\begin{proof}
Both claims can be proven through Lemma \ref{lem:ApproxSWithExpectation}. We omit the details.
\end{proof}

Another useful ingredient is the following Taylor expansion:
\begin{align}\label{eq:TaylorExpansionOneOverOneMinusF}%
\frac{1}{1-F_{\sss T}(\Sigma_{k-1})} &= 1 + F_{\sss T}(\Sigma_{k-1})+\Big(\frac{1}{1-F_{\sss T}(\Sigma_{k-1})} - 1 - F_{\sss T}(\Sigma_{k-1}) \Big)\\
&= 1 + O_{\mathbb P}\big(\Sigma_{k-1}\big) \notag.
\end{align}%
In what follows, we compute the limits (in probability) for each term in \eqref{eq:LinearDriftGeneralArrivalsRescaled}.

\medskip
\noindent
\textbf{First term in \eqref{eq:LinearDriftGeneralArrivalsRescaled}.} By \eqref{eq:TaylorExpansionOneOverOneMinusF} and Lemma \ref{lem:LLNForDoubleSums}, we have for the first term
\begin{align*}%
&\sup_{t\leq\bar t}\Big\vert n^{-2/3}\sum_{k=1}^{tn^{2/3}}\frac{1}{1-F_{\sss T}(\Sigma_{k-1})} -  t\Big\vert  \notag\\
&=\sup_{t\leq\bar t}\Big\vert n^{-2/3}\sum_{k=1}^{tn^{2/3}}\Big(F_{\sss T}(\Sigma_{k-1})+\big(\frac{1}{1-F_{\sss T}(\Sigma_{k-1})} - 1 - F_{\sss T}(\Sigma_{k-1}) \big) \Big)\Big\vert\notag\\
&\leq   n^{-2/3}\sum_{k=1}^{\bar tn^{2/3}}2 F_{\sss T}(\Sigma_{k-1}),
\end{align*}%
where we have dominated the error term in the Taylor expansion \eqref{eq:TaylorExpansionOneOverOneMinusF} by $F_{\sss T}(\Sigma_{k-1})$ and used the fact that, as a function of $t$, the summation is an increasing function. We can uniformly dominate $F_{\sss T}$ as follows
\begin{align}\label{eq:FirstTermDriftGeneralArrivals}%
 n^{-2/3}\sum_{k=1}^{\bar tn^{2/3}}2 F_{\sss T}(\Sigma_{k-1}) & \leq   n^{-2/3}\sum_{k=1}^{\bar tn^{2/3}}2 \sup_{k\leq \bar t n^{2/3}} F_{\sss T}(\Sigma_{k-1}) \notag\\
 &= 2\bar t F_{\sss T}(\Sigma_{\bar t n^{2/3}}).
\end{align}%
The right-hand side of \eqref{eq:FirstTermDriftGeneralArrivals} tends to zero almost surely, and thus also in probability.

\medskip
\noindent
\textbf{Second term in \eqref{eq:LinearDriftGeneralArrivalsRescaled}.} Again, by \eqref{eq:TaylorExpansionOneOverOneMinusF}, the second term simplifies to
\begin{align*}%
\frac{f'_{\sss T}(0)\mathbb E [S]}{n^{4/3}}&\sum_{k=1}^{tn^{2/3}}\frac{\sum_{j=1}^{k-1}S_j}{1- F_{\sss T} (\Sigma_{k-1})}=\frac{f_{\sss T}'(0)\mathbb E[S]}{n^{4/3}}\sum_{k=1}^{tn^{2/3}}\sum_{j=1}^{k-1}S_j + \frac{f_{\sss T}'(0)\mathbb E[S] f_{\sss T}(0)}{n^{4/3}}\sum_{k=1}^{tn^{2/3}}\sum_{j=1}^{k-1}S_jO_{\mathbb P}\big(\Sigma_{k-1}\big).
\end{align*}%
By the first statement of Lemma \ref{lem:LLNForDoubleSums} the first term converges to $\frac{t^2}{2}\cdot f'_{\sss T}(0) \mathbb E [S_1]^2$ uniformly in t. Indeed,

\begin{align}%
\sup_{t\leq\bar t}\vert n^{-4/3}\sum_{k=1}^{tn^{2/3}}\sum_{j=1}^{k-1}S_j - \frac{t^2}{2}\mathbb E[S]\vert &= \sup_{t\leq\bar t}\Big\vert \frac{\sum_{n=1}^{tn^{2/3}}n\mathbb E[S]}{n^{4/3}}- \frac{t^2}{2}\mathbb E[S] + \frac{\sum_{k=1}^{tn^{2/3}}(\sum_{i=1}^k S_i -k\mathbb E[S])}{n^{4/3}} \Big\vert \nnl
&=\sup_{t\leq\bar t}\Big\vert\frac{t\mathbb E[S]}{2n^{2/3}} + \frac{\sum_{k=1}^{tn^{2/3}}(\sum_{i=1}^k S_i -k\mathbb E[S])}{n^{4/3}} \Big\vert \nnl
&\stackrel{\textrm{a.s.}}{\leq} \bar t \cdot \frac{\mathbb E[S]}{2n^{2/3}}  + \bar t \cdot\frac{ \sup_{k\leq tn^{2/3}}\vert(\sum_{i=1}^k S_i - k\mathbb E[S])\vert}{n^{2/3}}.
\end{align}%
The second term converges to zero in probability by Lemma \ref{lem:LLNForDoubleSums}. Indeed,
\begin{align}\label{eq:SecondTermDriftGeneralArrivals}%
\sup_{t\leq\bar t}\Big\vert \frac{f_{\sss T}'(0)\mathbb E[S] f_{\sss T}(0)}{n^{4/3}}\sum_{k=1}^{tn^{2/3}}\Big(\sum_{j=1}^{k-1}S_j\Big)O_{\mathbb P}\big(\Sigma_{k-1}\big) \Big\vert &\stackrel{\textrm{a.s.}}{\leq} \frac{Cf_{\sss T}'(0)\mathbb E[S] f_{\sss T}(0)}{n^{7/3}}\sum_{k=1}^{\bar tn^{2/3}}\Big\vert\Big(\sum_{j=1}^{k-1}S_j\Big)^2 \Big\vert,
\end{align}%
where we used the domination $\OP(\Sigma_{k-1}) \leq C \Sigma_{k-1}$. The right-most term in \eqref{eq:SecondTermDriftGeneralArrivals} then converges  to zero in probability by Lemma \ref{lem:LLNForDoubleSums}.

\medskip
\noindent
\textbf{Third term in \eqref{eq:LinearDriftGeneralArrivalsRescaled}.} The error term originates  from the Taylor expansion of $n(F_{\sss T}(\Sigma_k) - F_{\sss T}(\Sigma_{k-1}))$ done in \eqref{eq:ArrivalsConditionedGeneralArrivals2}. To see that it is uniform in $k\leq \bar k$, we write $F_{\sss T}(\Sigma_k) - F_{\sss T}(\Sigma_{k-1}) - f_{\sss T}(\Sigma_{k-1})D_k = \epsilon_k$ and, since by assumption $\fT'(\cdot)$ exists and is continuous in a neighborhood of $0$, we bound it as
\begin{align}\label{eq:UniformDominationErrorTerms}%
\vert n\epsilon_k \vert\leq  n\sup_{x\leq C n^{-1/3}}\vert f'_{\sss T}(x)\vert \frac{D_k^2}{2},
\end{align}%
similarly as in Lemma \ref{lem:oTaylorExpansionDistributionGeneralArrivals}, when $k=O(n^{2/3})$. In particular,
\begin{align}%
\sup_{k\leq t n^{2/3}} \mathbb E[n\epsilon_k\vert\mathcal E_{k-1}] \leq \sup_{x\leq C n^{-1/3}}\vert f'_{\sss T}(x)\vert \frac{\mathbb E[S^2]}{2n} = o(n^{-1/3}),
\end{align}%
and the right-hand side is independent of $k$. This concludes the bound on the third term in \eqref{eq:LinearDriftGeneralArrivalsRescaled} and thus the proof that 
\begin{equation}%
\sup_{t\leq \bar t}\vert n^{-1/3} C_n^{\sss (1)}(tn^{2/3}) - \beta t - t^2 \fT'(0) \E[S]^2/2 \vert \sr{\mathbb P}{\rightarrow} 0.
\end{equation}%
\qed

To conclude, it remains to be proven that $\sup_{t\leq \bar t}n^{-1/3}\vert C_n^{\sss (2)}(tn^{2/3})\vert$ vanishes in the limit. To do so, we develop the terms of $A_n^{\sss (2)}(k)$ similarly as before, obtaining
\begin{align}\label{eq:ArrivalsConditionedErrorFinalExpression}%
A_n^{\sss (2)}(k) &= \frac{1}{1-F_{\sss T}(\Sigma_{k-1})}\mathbb E \big[{\red \vert\nu_{k}\vert}(  f_{\sss T}(\Sigma_{k-1})D_k + o_{\mathbb P}(n^{-1}))-f_{\sss T}(0)\Sigma_{k-1} + o_{\mathbb P}(n^{-1/3})\big\vert \mathcal{E}_{k-1}\big]\nnl
&= \frac{1}{1-F_{\sss T}(\Sigma_{k-1})}\mathbb E \big[ \vert\nu_{k}\vert f_{\sss T}(\Sigma_{k-1})D_k -f_{\sss T}(0)\Sigma_{k-1} + \vert\nu_{k}\vert o_{\mathbb P}(n^{-1})+ o_{\mathbb P}(n^{-1/3})\big\vert \mathcal{E}_{k-1}\big]\nnl
&= \frac{\mathbb E \big[  \vert\nu_{k}\vert f_{\sss T}(0)D_k  -f_{\sss T}(0)\Sigma_{k-1} + f_{\sss T}'(0)\vert\nu_{k}\vert\Sigma_{k-1}D_k + \vert\nu_{k}\vert o_{\mathbb P}(n^{-1})+ o_{\mathbb P}(n^{-1/3})\big\vert \mathcal{E}_{k-1}\big]}{1-F_{\sss T}(\Sigma_{k-1})},
\end{align}%
where $o_{\mathbb P}(n^{-1})$ is a convenient notation for $ F_{\sss T}(\Sigma_k)- F_{\sss T}(\Sigma_{k-1}) - f_{\sss T}(\Sigma_{k-1})D_k$ and $o_{\mathbb P}(n^{-1/3}) $ for $ F_{\sss T}(\Sigma_{k-1}) - f_{\sss T}(0)\Sigma_{k-1} $. We now sum \eqref{eq:ArrivalsConditionedErrorFinalExpression} over $k$ to obtain
\begin{align}\label{eq:DriftErrorTermGeneralArrivals}%
C_n^{\sss (2)}(s) &= \sum_{k=1}^s \frac{1}{1-F_{\sss T}(\Sigma_{k-1})}\mathbb E \big[ {\red \vert\nu_{k}\vert} f_{\sss T}(0)D_k  -f_{\sss T}(0)\Sigma_{k-1} + f'(0)\vert\nu_{k-1}\vert\Sigma_{k-1}D_k \nnl
&\quad+ \vert\nu_{k}\vert o_{\mathbb P}(n^{-1})+ o_{\mathbb P}(n^{-1/3})\big\vert \mathcal{E}_{k-1}\big].
\end{align}%
Recall that, by definition, $\vert \nu_k\vert =  k + N_n(k-1)$. Intuitively, $k$ is of a much larger order than $N_n(k-1)$, therefore at  first approximation we ignore $N_n(k-1)$ and we will later prove convergence of the terms containing it. We rescale, and split $C_n^{(2)}(k)$ into
\begin{align}\label{eq:ArrivalsErrorsGlobal}%
&n^{-1/3}C_n^{\sss (2)}(tn^{2/3}) = n^{-1/3}\sum_{k=1}^{tn^{2/3}}\frac{1}{1-F_{\sss T}(\Sigma_{k-1})} \mathbb E \big[ {\red k} f_{\sss T}(0)D_k  -f_{\sss T}(0)\Sigma_{k-1}\big\vert \mathcal{E}_{k-1}\big]\\
&+ n^{-1/3}\sum_{k=1}^{tn^{2/3}} \frac{1}{1-F_{\sss T}(\Sigma_{k-1})}\mathbb E \big[f_{\sss T}'(0)k\Sigma_{k-1}D_k + k o_{\mathbb P}(n^{-1})+ o_{\mathbb P}(n^{-1/3})\big\vert \mathcal{E}_{k-1}\big]+\varepsilon_n\notag,
\end{align}%
where $\varepsilon_n$ represents the terms containing $N_n(k-1)$. Again we rescale and perform a term-by-term analysis.

\medskip
\noindent
\textbf{First term in \eqref{eq:ArrivalsErrorsGlobal}.} Expanding $(1-F_{\sss T}(\Sigma_{k-1}))^{-1}$ gives
\begin{align}\label{eq:ArrivalsErrorsFirst}%
&\frac{f_{\sss T}(0)}{n^{4/3}}\sum_{k=1}^{tn^{2/3}} \mathbb E \big[ {\red  k}S_k  -\sum_{j=1}^{k-1}S_j\big\vert \mathcal{E}_{k-1}\big] + \frac{f^2_{\sss T}(0)}{n^{1/3}}\sum_{k=1}^{tn^{2/3}}O_{\mathbb P}(\Sigma_{k-1})\cdot\mathbb E \big[ {\red k} D_k  -\Sigma_{k-1}\big\vert \mathcal{E}_{k-1}\big],
\end{align}%
The second term is almost surely dominated by the first for $n$ sufficiently large, so that it is enough to show (uniform) convergence of the first term. By Lemma \ref{lem:LLNForDoubleSums},
\begin{align}%
-\frac{1}{n^{4/3}}\sum_{k=1}^{tn^{2/3}} \sum_{j=1}^{k-1}S_j&\stackrel{\mathbb P}{\longrightarrow}-\frac{t^2}{2}\mathbb E[S],\qquad
\frac{1}{n^{4/3}}\sum_{k=1}^{tn^{2/3}}{\red k}\mathbb E[S_k]\stackrel{\mathbb P}{\longrightarrow}\frac{t^2}{2}\mathbb E[S].
\end{align}%
Therefore, \eqref{eq:ArrivalsErrorsFirst} converges to zero in probability. Moreover, the convergence is uniform in $t\leq \bar t$ by Lemma \ref{lem:ApproxSWithExpectation}.

\medskip
\noindent
\textbf{Second term in \eqref{eq:ArrivalsErrorsGlobal}.} Expanding $(1-F_{\sss T}(\Sigma_{k-1}))^{-1}$ and ignoring all but the highest order term, which can be almost surely dominated, we get for the second term
\begin{align}%
f'_{\sss T}(0)\mathbb E[S]n^{-7/3}\sum_{k=1}^{tn^{2/3}}{\red k}\sum^{k-1}_{j=1}S_j + n^{-1/3}\sum_{k=1}^{tn^{2/3}} k o_{\mathbb P}(n^{-1})+tn^{1/3} o_{\mathbb P}(n^{-1/3}).
\end{align}%
One can check, similarly as in Lemma \ref{lem:LLNForDoubleSums}, that $N^{-3}\sum_{k=1}^Nk\sum_{j=1}^kS_j$ converges in probability to a non-trivial limit. Since $7/3 > (2/3)\cdot3$, the first term converges to zero in probability. In addition, it converges uniformly in $t\leq \bar t$ because of the monotonicity of the sum. The small-o terms can be uniformly dominated as has already been done in \eqref{eq:UniformDominationErrorTerms}.

\medskip
\noindent
\textbf{Third term in \eqref{eq:ArrivalsErrorsGlobal}.} The remaining term  is
\begin{align*}%
\varepsilon_n=\sum_{k=1}^{tn^{2/3}}\frac{n^{-1/3}}{1-F_{\sss T}(\Sigma_{k-1})}\mathbb E\big[N_n(k-1)(f_{\sss T}(0)D_k + f_{\sss T}'(0)\Sigma_{k-1}D_k +o_{\mathbb P}(n^{-1}))\big\vert\mathcal E_{k-1} \big].
\end{align*}%
Again it is sufficient to prove (uniform) convergence for the first term in the Taylor expansion of $(1-F_{\sss T}(\Sigma_{k-1}))^{-1}$. This simplifies the previous expression to
\begin{align}%
\frac{f_{\sss T}(0)\mathbb E[S]}{n^{4/3}}\sum_{k=1}^{tn^{2/3}}N_n(k-1) &+ \frac{f_{\sss T}'(0)\mathbb E[S]}{n^{4/3}}\sum_{k=1}^{tn^{2/3}}N_n(k-1)\Sigma_{k-1}+ n^{-4/3}\sum_{k=1}^{tn^{2/3}} N_n(k-1) o_{\mathbb P}(n^{-1}).
\end{align}%
The second and third terms are again almost surely dominated by the first for $n$ large. Moreover, the first converges to zero uniformly in probability by the following lemma:
\begin{lemma}\label{lem:QueueUnifConvGeneralArrivals}%
Let $(N_n(k))_{k\geq0}$ be the process \eqref{QueueLength} for the General Arrivals Model and $a\in\mathbb R^+$. Then,
\begin{align}%
n^{-2/3}\sup_{j\leq an^{2/3}}\vert N_n(j)\vert \stackrel{\mathbb P}{\longrightarrow}0.
\end{align}%
\end{lemma}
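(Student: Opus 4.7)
The proof will follow the same blueprint as Lemma \ref{oQueueLemmaUnif}. I decompose $N_n(j)=M_n(j)+C_n(j)$ via the Doob decomposition \eqref{QDecomposition} and split
\begin{equation*}
\mathbb P\bigl(n^{-2/3}\sup_{j\leq an^{2/3}}\vert N_n(j)\vert\geq 2\varepsilon\bigr)\leq \mathbb P\bigl(n^{-2/3}\sup_{j\leq an^{2/3}}\vert M_n(j)\vert\geq \varepsilon\bigr)+\mathbb P\bigl(n^{-2/3}\sup_{j\leq an^{2/3}}\vert C_n(j)\vert\geq \varepsilon\bigr),
\end{equation*}
handling the martingale and drift contributions separately. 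The machinery developed in the general-arrivals setting---most importantly Lemmas \ref{lem:QueueLengthoSmallGeneralArrivals} and \ref{LemmaSecondMomentAPrimeGeneralArrivals}---does the heavy lifting.

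\textbf{Martingale term.} I would apply Doob's maximal inequality to $M_n$ to reduce to bounding $\mathbb E[V_n(an^{2/3})]$. Using $\mathbb E[V_n(k)]\leq \sum_{i=1}^k \mathbb E[A_n(i)^2]$ together with Lemma \ref{LemmaSecondMomentAPrimeGeneralArrivals} (which provides a stochastic domination of $A_n(i)$ by a single random variable with uniformly integrable, hence bounded, second moment), I obtain $\mathbb E[V_n(an^{2/3})]=O(n^{2/3})$, so the martingale contribution, rescaled, vanishes in probability at rate $n^{-2/3}$.

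\textbf{Drift term.} This is the main difficulty, as the compensator of the general-arrivals case is more intricate than in the exponential one. The key observation is that the starting expression
\begin{equation*}
\mathbb E[A_n(i)\mid\mathcal F_{i-1}]=\frac{n-\vert\nu_i\vert}{1-F_{\sss T}(\Sigma_{i-1})}\mathbb E\bigl[F_{\sss T}(\Sigma_i)-F_{\sss T}(\Sigma_{i-1})\bigm\vert\mathcal E_{i-1}\bigr]
\end{equation*}
(already used in \eqref{ArrivalsConditionedGeneralArrivals}) admits, via the mean value theorem with $f_{\sss T}(0)=\sup f_{\sss T}$, the monotone upper bound
\begin{equation*}
\mathbb E[A_n(i)-1\mid\mathcal F_{i-1}]\leq \frac{(1+\beta n^{-1/3})}{1-F_{\sss T}(\Sigma_{i-1})}-1=O_{\mathbb P}(n^{-1/3}),
\end{equation*}
uniformly for $i\leq an^{2/3}$, since $\Sigma_{i-1}=O_{\mathbb P}(n^{-1/3})$ uniformly by Lemma \ref{lem:ApproxSWithExpectation} and the criticality condition \eqref{eq:CriticalityHypBeta}. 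Summing, $\sup_{j\leq an^{2/3}}C_n(j)^+=O_{\mathbb P}(n^{1/3})=o_{\mathbb P}(n^{2/3})$.

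\textbf{The lower bound on the drift} is the subtle part. One obtains
\begin{equation*}
\mathbb E[A_n(i)-1\mid\mathcal F_{i-1}]\geq -\frac{\vert\nu_i\vert}{n}+O_{\mathbb P}(n^{-1/3}),
\end{equation*}
with $\vert\nu_i\vert=i+Q_n(i-1)$, and the cumulative lower bound is monotone because $i\mapsto i+Q_n(i-1)$ (hence its partial sums) is non-decreasing. The sum $\sum_{i=1}^j(i+Q_n(i-1))$ is thus controlled by $j(j+Q_n(j-1))$, and rescaling by $n^{-2/3}$ one gets
\begin{equation*}
n^{-2/3}\sup_{j\leq an^{2/3}}(-C_n(j))\leq a^2 n^{-1/3}+aQ_n(an^{2/3}-1)/n+o_{\mathbb P}(1).
\end{equation*}
The first and third terms vanish, and the middle term vanishes by the pointwise statement of Lemma \ref{lem:QueueLengthoSmallGeneralArrivals} applied at the single time $j=an^{2/3}$, since $Q_n\leq \vert N_n\vert + (N_n)^-$ and both are $o_{\mathbb P}(n^{2/3})$ pointwise. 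Putting the martingale and drift bounds together concludes the proof. The principal obstacle, in contrast to the exponential case, is extracting a useable bound on $\mathbb E[A_n(i)-1\mid \mathcal F_{i-1}]$ that is simultaneously \emph{monotone} in $i$ (so that the supremum reduces to a pointwise value where Lemma \ref{lem:QueueLengthoSmallGeneralArrivals} can be invoked) and \emph{sharp enough} to yield $o_{\mathbb P}(n^{2/3})$ after summation; this is achieved by exploiting $f_{\sss T}(0)=\sup f_{\sss T}$ and the uniform smallness of $\Sigma_{i-1}$.
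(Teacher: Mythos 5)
Your proposal follows the same blueprint as the paper's (one-paragraph) proof: Doob decomposition $N_n=M_n+C_n$, Doob's maximal inequality for $M_n$ reduced via Lemma \ref{LemmaSecondMomentAPrimeGeneralArrivals} to a uniformly bounded second moment, and monotone upper/lower sandwiching of $C_n$ so the supremum moves to the endpoint where the pointwise Lemma \ref{lem:QueueLengthoSmallGeneralArrivals} applies. The martingale half is identical to the paper's. For the drift, your mean-value-theorem upper bound via $f_{\sss T}(0)=\sup f_{\sss T}$ is a clean way to make explicit what the paper only asserts (``difference of two increasing processes, easily bounded''), and your decomposition of the lower bound through $\vert\nu_i\vert$ is essentially what the paper does implicitly.

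There is one genuine, though easily repairable, gap in the last step of your drift lower bound. You write ``$Q_n\leq\vert N_n\vert+(N_n)^-$ and both are $o_{\mathbb P}(n^{2/3})$ pointwise,'' but the reflection map gives $Q_n(k)=N_n(k)-\inf_{j\leq k}\bigl(N_n(j)\wedge 0\bigr)$, so the second contribution is $\sup_{j\leq k}\bigl(N_n(j)\bigr)^-$, a running supremum — precisely the type of quantity the lemma is trying to control, so one cannot invoke the pointwise bound for it. The fix is to observe that each step decreases $N_n$ by at most one, so $N_n(j)\geq -j$ for all $j$, hence $\sup_{j\leq k}\bigl(N_n(j)\bigr)^-\leq k$ deterministically and $Q_n(k)\leq\vert N_n(k)\vert+k$. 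At $k=an^{2/3}$ this gives $Q_n(an^{2/3}-1)=O_{\mathbb P}(n^{2/3})$ from the \emph{pointwise} Lemma \ref{lem:QueueLengthoSmallGeneralArrivals}, which is enough: $aQ_n(an^{2/3}-1)/n=O_{\mathbb P}(n^{-1/3})\to 0$ as required. (Alternatively, write $\vert\nu_i\vert=i+N_n(i-1)$, as the paper does just before \eqref{DriftTerm}; then the endpoint quantity is $N_n(an^{2/3}-1)$ itself and the pointwise lemma applies directly with no detour through $Q_n$.)
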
%
\begin{proof}
The proof proceeds as in Lemma \ref{oQueueLemmaUnif}. We split $N_n(j)$ as the sum of a martingale and a predictable process, $N_n(j) = M_n(j) + C_n(j)$, and bound each one separately. The term containing the martingale $M_n(j)$ can be bounded through Doob's inequality, leaving us to bound $\mathbb E[M_n^2(an^{2/3})]/(\varepsilon n^{2/3})^2$. As was noticed in Lemma \ref{oQueueLemmaUnif}, $\mathbb E[M_n(k)^2] = \mathbb E[V_n(k)]$, where $V_n(k)$ is the predictable quadratic variation of $M_n(k)$, and its expectation is given by
\begin{align}%
\mathbb E [V_n(k)] &= \mathbb E\Big[\sum_{i=1}^k (\mathbb E[A_n(i)^2\vert\mathcal F_{i-1}] - \mathbb E^2[A_n(i)\vert \mathcal F_{i-1}])\Big]\leq \sum_{i=1}^k \mathbb E[A_n(i)^2].
\end{align}%
This term can be easily bounded exploiting Lemma \ref{LemmaSecondMomentAPrimeGeneralArrivals}. We have
\begin{equation}%
\frac{1}{(\varepsilon n^{2/3})^2}\mathbb E [V_n(an^{2/3})] \leq \frac{1}{(\varepsilon n^{2/3})^2}\sum_{i=1}^{an^{2/3}} \mathbb E[(A_n'(i))^2],
\end{equation}%
which tends to zero because $\E[A'^2_n(i)]<\infty$ uniformly in $i = O(n^{2/3})$ by Lemma \ref{LemmaSecondMomentAPrimeGeneralArrivals}.
The $C_n(j)$ term (that was computed in \eqref{eq:ArrivalsRewritingTwoTerms} and \eqref{eq:DriftTermDuringProof}) is the difference of two increasing processes, and thus can be easily bounded from above and below as was done in Lemma \ref{LemmaSecondMomentAPrime}.
\end{proof}
This concludes the proof that 
\begin{align*}%
\sup_{t\leq\bar t}\vert n^{-1/3}C_n^{\sss (2)}(tn^{2/3})\vert\stackrel{\mathbb P}{\longrightarrow} 0.
\end{align*}%
and thus we have proven that
\begin{align*}%
\sup_{t\leq\bar t}\vert n^{-1/3}C_n(n^{2/3}t) - \beta t - t^2f'_{\sss T}(0)\mathbb E[S]^2/2\vert\stackrel{\mathbb P}{\longrightarrow} 0. 
\end{align*}%
This completes the proof of (i).
\qed

\subsubsection{Proof of (ii)}

First we compute $\mathbb E [A_n(k)^2\vert \mathcal F_{k-1}]$. By proceeding as in \eqref{ArrivalsSecondMomentBegin} we obtain
\begin{align}%
\mathbb E [A_n(k)^2\vert \mathcal F_{k-1}] - \mathbb E[A_n(k)\vert\mathcal F _{k-1}] &=  \mathbb E \big[{\red \sum_{\substack{l\neq m \\ l,m\nin\nu_{k}}}} \mathds 1_{\{\Sigma_{k-1}\leq {\red T_{m}}\leq\Sigma_k\}}\cdot\mathds 1_{\{\Sigma_{k-1}\leq {\red T_{l}}\leq\Sigma_k\}}\big\vert\mathcal F_{k-1}\big]\notag\\
&= \sum_{\substack{l\neq m \\ l,m\nin\nu_{k}}} \mathbb E \Big[ \frac{F_{\sss T}(\Sigma_k)-F_{\sss T} (\Sigma_{k-1})}{1-F_{\sss T}(\Sigma_{k-1})}\cdot \frac{F_{\sss T}(\Sigma_k)-F_{\sss T} (\Sigma_{k-1})}{1-F_{\sss T}(\Sigma_{k-1})}\Big\vert\mathcal F_{k-1}\Big]\notag\\
&= \frac{\sum_{l\neq m} \mathbb E [ (f_{\sss T}(\Sigma_{k-1})  D_k +o_{\mathbb P}(D_k^{-4/3}))^2\vert \mathcal F_{k-1}]}{(1-F_{\sss T}(\Sigma_{k-1}))^2}, 
\end{align}%
where the sum is over the set $\{l,m:l\neq m, l,m\nin {\red \nu_{k}}\}$ wherever not specified. We also denoted, for convenience,  $F_{\sss T}(\Sigma_k)-F_{\sss T} (\Sigma_{k-1})-f_{\sss T}(\Sigma_{k-1})  D_k$ as $o_{\mathbb P}(D_k^{4/3})$. We proceed as in \eqref{eq:CondExpectationDistrFunctProduct} and \eqref{ArrivalsSecondMoment}. By Lemma \ref{lem:oTaylorExpansionDistributionGeneralArrivals},
\begin{align}%
&\mathbb E [A_n(k)^2\vert \mathcal F_{k-1}] - \mathbb E[A_n(k)\vert\mathcal F _{k-1}] \notag\\
&=  \frac{1}{(1-F_{\sss T}(\Sigma_{k-1}))^2} \sum_{l\neq m}\mathbb E [f_{\sss T}^2(\Sigma_{k-1}) D^2_k +2f_{\sss T}(\Sigma_{k-1})D_k\cdot o_{\mathbb P}(D_k^{4/3}) + o_{\mathbb P}(D_k ^{2}) \vert\mathcal F_{k-1}]\notag\\
&= \frac{1}{(1-F_{\sss T}(\Sigma_{k-1}))^2} \sum_{l\neq m} \Big((f_{\sss T}(0)+ f'_{\sss T}(0)\Sigma_{k-1} + o_{\mathbb P}(\Sigma_{k-1}))^2\mathbb E [D_k^2]+o_{\mathbb P}(n^{-2})\Big).
\end{align}%
%
%In the last equality assumption \eqref{eq:DensityDistrNearZero} was used to develop $f_{\sss T}^2(\Sigma_{k-1})$. Moreover, 
%%%
%%\begin{equation}%
%%f_{\sss T}(\Sigma_{k-1})D_k\cdot(F_{\sss T}(\Sigma_k)-F_{\sss T} (\Sigma_{k-1})-f_{\sss T}(\Sigma_{k-1})  D_k) = o_{\mathbb P} (n^{-7/3})
%%\end{equation}%
%%
%%
%\begin{equation}%
%n^{7/3}\mathbb E[D_k(F_{\sss T}(\Sigma_k)-F_{\sss T}(\Sigma_{k-1})-f_{\sss T}(\Sigma_{k-1})D_k)\vert \mathcal F_{k-1}]\stackrel{\mathbb P}{\rightarrow} 0,
%\end{equation}%
%%
As usual, here $o_{\mathbb P}(\Sigma_{k-1})$ is a shorthand notation for $f_{\sss T}(\Sigma_{k-1}) - f_{\sss T}(0) - f_{\sss T}'(0)\Sigma_{k-1}$. Developing the coefficient of $\mathbb E[D_k^2]$ reveals that it has the form $f_{\sss T}(0)^2 + \alpha_k n^{-1/3} + \beta_k o(n^{-1/3})$, with $\alpha_k$ and $\beta_k$ converging in probability to a constant, for $k= O (n^{2/3})$. We can ignore all the terms except the one with the leading order, $f^2_{\sss T}(0)$. From this point onwards the computations are identical to \eqref{eq:QuadraticVariationExplicit}.

\subsubsection{Proof of (iii) and (iv)} 

The proof of (iii) in the exponentials arrivals case  can be carried over to the general arrivals case without any significant changes, since it relies only on (ii) and Lemma \ref{lem:QueueUnifConvGeneralArrivals}.

For (iv), we split the quantity according to
\begin{align}%
n^{-2/3}\mathbb E\Big[\sup_{t\leq \bar t}\vert M_n(tn^{2/3})-M(tn^{2/3}-)\vert^2\Big]&\leq n^{-2/3}\mathbb E\Big[\sup_{k\leq \bar t n^{2/3}}\vert A_n(k)\vert^2\Big]\notag\\
&\quad+n^{-2/3}\mathbb E \Big[\sup_{k\leq \bar t n^{2/3}} \vert \mathbb E [A_n(k)\vert \mathcal F_{k-1}]\vert ^2\Big].
\end{align}%
The second term is straightforward. Indeed, \eqref{eq:ArrivalsRewritingTwoTerms} and \eqref{eq:ArrivalsConditionedFinalExpression} give the crude bound
\begin{align*}
\mathbb E [A_n(k)\vert\mathcal F_{k-1}]\stackrel{\textrm{a.s.}}{\leq} A_n^{\sss (1)}(k)\stackrel{\textrm{a.s.}}{\leq} C + o_{\mathbb P}(1),
\end{align*}%
for $k= O(n^{2/3})$ and some positive constant $C>1$, uniform over $k\leq \bar t n^{2/3}$.  The first term can also be estimated imitating \eqref{ArrivalsDomination6}. Indeed, fix $\varepsilon>0$ and split it as
\begin{align}%
\mathbb E\Big[\sup_{k\leq\bar tn^{2/3}}\vert A_n(k) \vert ^2\Big]&=\mathbb E\Big[\sup_{k\leq\bar tn^{2/3}} A_n(k)^2\mathds{1}_{\{\sup_{k\leq\bar tn^{2/3}} A_n(k)^2\leq \varepsilon n^{2/3}\}}\Big]\notag\\
&\quad+\mathbb E\Big[\sup_{k\leq\bar tn^{2/3}} A_n(k)  ^2\mathds{1}_{\{\sup_{k\leq\bar tn^{2/3}} A(k)^2 > \varepsilon n^{2/3}\}}\Big].
\end{align}%
The first term is trivially bounded. The bounding of the second term proceeds as in \eqref{ArrivalsDomination5} and is concluded through Lemma \ref{LemmaSecondMomentAPrimeGeneralArrivals}. 
\qed

\section{Proof of Theorem \ref{th:MainTheoremKthOrderContact}}\label{sec:ell_order_contact_proof}
 Note that $\alpha$ in \eqref{eq:AlphaDefinition} is such that $\alpha < 1$. Some simple relations hold between $\alpha$ and $\ell$ and we will use these throughout this section:
\begin{align}%
\ell -\frac{\alpha}{2} &= \alpha  \ell,\qquad \ell + \frac{\alpha}{2}= \alpha  (\ell+1).
\end{align}%
We need to adapt the model assumptions to the $\ell$-th order contact case. Assume that the distribution function of the arrival times satisfies
\begin{align}%
F_{\sss T}(x) - F_{\sss T}(\bar x) = f_{\sss T}(\bar x)\cdot (x - \bar x) + o(\vert x - \bar x\vert ^{1 + \alpha/2}). 
\end{align}%
This is, for example, the case when $F_{\sss T}\in \mathcal C^2([0,\infty))$. The service times are given by
\begin{align}%
D_i := \frac{S_i}{n}(1 + \beta n^{-\alpha /2}),\qquad i\geq1.
\end{align}%
We assume that the maximum of the density $f_{\sss T}(\cdot)$ is obtained in zero, and the heavy-traffic condition is then defined as
\begin{align}%
nf_{\sss T}(0)\cdot \mathbb E[D] = 1 + \beta n^{-\alpha/2}.
\end{align}%
\subsection{Proof of Theorem \ref{th:MainTheoremKthOrderContact}}
We proceed by proving conditions (i)-(iv). We will treat condition (i) in great detail as this changes profoundly, since the limiting drift is significantly different. We will then discuss how (ii)-(iv) follow from the calculations in Section \ref{sec:GeneralArrivals}. 

\noindent
\textbf{Proof of (i).}
The starting point is again  \eqref{eq:ArrivalsRewritingTwoTerms}, 
\begin{align}%
\mathbb E [A_n(k)\vert \mathcal F_{k-1}] -1 &= \frac{n}{1-F_{\sss T}(\Sigma_{k-1})}\mathbb E \big[ ( F_{\sss T}(\Sigma_{k})-F_{\sss T}(\Sigma_{k-1}) - 1/n)\big\vert \mathcal{E}_{k-1}\big]\notag\\
&\quad-\frac{1}{1-F_{\sss T}(\Sigma_{k-1})}\mathbb E \big[({\red \vert\nu_{k}\vert}(  F_{\sss T}(\Sigma_{k})-F_{\sss T}(\Sigma_{k-1}))-F_{\sss T}(\Sigma_{k-1}))\big\vert \mathcal{E}_{k-1}\big]\notag\\
&=: A_n^{\sss (1)}(k) - A_n ^{\sss (2)}(k),
\end{align}%
and the corresponding drift decomposition
\begin{align}%
C_n(s)=\sum_{k=1}^s\left(A_n^{\sss (1)}(k)-A_n^{\sss (2)}(k)\right)=: C_n^{\sss (1)}(s) + C_n^{\sss (2)}(s).
\end{align}%
Recall that $A_n^{\sss (1)}(k)$ contains the terms appearing in the limit, while $A_n^{\sss (2)}(k)$ the ones that vanish. Expanding gives us the equivalent of \eqref{eq:ArrivalsConditionedFinalExpression}, in the form
\begin{align}%
A_n^{\sss (1)}(k)&=\frac{1}{1-F_{\sss T}(\Sigma_{k-1})}\cdot \Big(\beta  + \frac{f^{(\ell)}_{\sss T}(0)}{\ell!}\mathbb E [S_1]\frac{\big(\sum_{j=1}^{k-1}S_j\big)^{\ell}}{n^{\ell\cdot\alpha}} + o_{\mathbb P}(1)\Big)\cdot n^{-\alpha/2}
\end{align}%
It is easy to check that both the linear part of the drift and the error term converge uniformly by proceeding as in \eqref{eq:FirstTermDriftGeneralArrivals} and the following computations. Therefore, we focus on the second term of $A_n^{\sss (1)}(k)$, that is, on
\begin{align}%
\Big(\mathbb E [S]\frac{f^{(\ell)}_{\sss T}(0)}{\ell!}\Big)n^{-\alpha/2}\sum_{i=1}^{tn^{\alpha}}\frac{1}{1-F_{\sss T}(\Sigma_{i-1})}\Big(\frac{\sum_{j=1}^{i-1}S_j}{n}\Big)^{\ell},
\end{align}%
for which we prove (uniform) convergence in probability. We begin by computing
\begin{align}\label{eq:DriftTermUpperBoundKthOrderContactPre}%
\Big\vert\frac{1}{n^{\alpha(\ell+1)}}\sum_{i=1}^{tn^{\alpha}}\big(\sum_{j=1}^{i-1}S_j\big)^{\ell} - \frac{t^{\ell+1}}{\ell+1}\mathbb E[S]^{\ell}\Big\vert &= 
\frac{1}{n^{\alpha(\ell+1)}}\Big\vert\sum_{i=1}^{tn^{\alpha}}\big(\sum_{j=1}^{i-1}S_j\big)^{\ell} - \sum_{i=1}^{tn^{\alpha}}\mathbb ((i-1)\mathbb E[S])^{\ell} + o(n^{\alpha(\ell+1)})\Big\vert\notag\\
&\leq \frac{1}{n^{\alpha(\ell+1)}}\sum_{i=1}^{tn^{\alpha}}\Big\vert\big(\sum_{j=1}^{i-1}S_j\big)^{\ell}-\mathbb (i-1)^{\ell}\mathbb E[S]^{\ell} \Big\vert + o(1)
\end{align}%
We now make use of the following generalization of Lemma \ref{lem:ApproxSWithExpectation}:
\begin{lemma}\label{lem:ApproxSEllWithExpectation}
Assume $(S_i)_{i\geq0}$ is a sequence of \emph{i.i.d.} random variables with finite second moment. Then for any $\alpha>0, \beta \in\mathbb R$ such that $-\alpha < 2\beta$,
\begin{align}
\frac{\sup_{k\leq tn^{\alpha}}\Big\vert\big(\sum_{i=1}^kS_i\big)^{\ell}-k^{\ell}\mathbb E[S]^{\ell}\Big\vert}{n^{\alpha\ell+\beta}}\stackrel{\mathbb P}{\longrightarrow}0.
\end{align}
\end{lemma}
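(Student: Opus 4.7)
The plan is to reduce the $\ell$-th power case to the $\ell=1$ case (Lemma \ref{lem:ApproxSWithExpectation}) via the elementary factorization
\begin{equation}
a^{\ell} - b^{\ell} = (a-b)\sum_{j=0}^{\ell-1} a^{\ell-1-j} b^{j},
\end{equation}
applied with $a = \sum_{i=1}^{k} S_i$ and $b = k\,\mathbb{E}[S]$. Writing $M_k := \sum_{i=1}^{k}(S_i-\mathbb{E}[S])$ and using that $a,b\geq 0$ (service times are nonnegative), this yields the pointwise bound
\begin{equation}
\sup_{k\leq tn^{\alpha}}\Big|\Big(\sum_{i=1}^{k}S_i\Big)^{\ell} - k^{\ell}\mathbb{E}[S]^{\ell}\Big| \leq \Big(\sup_{k\leq tn^{\alpha}}|M_k|\Big)\cdot\sup_{k\leq tn^{\alpha}}\sum_{j=0}^{\ell-1}\Big(\sum_{i=1}^{k}S_i\Big)^{\ell-1-j}(k\mathbb{E}[S])^{j},
\end{equation}
so it suffices to control each factor separately.

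For the first factor, Doob's maximal inequality applied to the square-integrable martingale $(M_k)_{k\geq 1}$ gives
\begin{equation}
\mathbb{P}\Big(\sup_{k\leq tn^{\alpha}}|M_k| > \varepsilon n^{\alpha/2}\Big) \leq \frac{\mathbb{E}[M_{tn^{\alpha}}^{2}]}{\varepsilon^{2} n^{\alpha}} = \frac{t\,\mathrm{Var}(S)}{\varepsilon^{2}},
\end{equation}
so $\sup_{k\leq tn^{\alpha}}|M_k| = O_{\mathbb{P}}(n^{\alpha/2})$ (and in fact $o_{\mathbb{P}}(n^{\alpha/2+\eta})$ for any $\eta>0$, by the same argument with a slower normalization). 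For the second factor, the nonnegativity of the $S_i$ makes $k\mapsto \sum_{i=1}^{k} S_i$ nondecreasing, so each of the $\ell$ summands is maximized at $k = tn^{\alpha}$; combined with $\sum_{i=1}^{tn^{\alpha}} S_i \leq tn^{\alpha}\mathbb{E}[S] + |M_{tn^{\alpha}}| = O_{\mathbb{P}}(n^{\alpha})$, every term is $O_{\mathbb{P}}(n^{\alpha(\ell-1-j)})\cdot O(n^{\alpha j}) = O_{\mathbb{P}}(n^{\alpha(\ell-1)})$, so their sum is also $O_{\mathbb{P}}(n^{\alpha(\ell-1)})$.

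Multiplying the two bounds yields
\begin{equation}
\sup_{k\leq tn^{\alpha}}\Big|\Big(\sum_{i=1}^{k}S_i\Big)^{\ell} - k^{\ell}\mathbb{E}[S]^{\ell}\Big| = O_{\mathbb{P}}\bigl(n^{\alpha/2}\cdot n^{\alpha(\ell-1)}\bigr) = O_{\mathbb{P}}(n^{\alpha\ell-\alpha/2}),
\end{equation}
so dividing by $n^{\alpha\ell+\beta}$ gives a sequence of order $O_{\mathbb{P}}(n^{-\alpha/2-\beta})$, which vanishes in probability precisely when $-\alpha/2 - \beta < 0$, i.e.\ $-\alpha < 2\beta$, matching the hypothesis. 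The point requiring some care is the second factor: raising $\sum_{i=1}^{k} S_i$ to a power $\ell-1-j$ might naively seem to require higher moments of $S$, but this is avoided because the growth of the partial sum is controlled deterministically by $k\mathbb{E}[S]$ plus the fluctuation $|M_k|$, and the latter is already handled by $\mathbb{E}[S^{2}]<\infty$ through Doob. The $\ell=1$ case collapses the second factor to $1$ and recovers Lemma \ref{lem:ApproxSWithExpectation} after relabelling $\alpha+\beta$ as the exponent appearing in the denominator there.
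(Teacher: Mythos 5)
Your proof is correct. It reaches the same target as the paper---bounding the difference of $\ell$-th powers by $\sup_{k}\vert M_k\vert$ (handled by Doob) times a second factor of order $O_{\mathbb P}(n^{\alpha(\ell-1)})$---but via a different decomposition. The paper sandwiches $\sup_k\vert a_k^\ell - b_k^\ell\vert$ between $\sup_k\vert(b_k\pm\sup_j\vert M_j\vert)^\ell - b_k^\ell\vert$ (with $a_k=\sum_{i\le k}S_i$, $b_k=k\E[S]$), then expands by the binomial theorem and declares it ``enough to study the leading order term'' $\ell b_k^{\ell-1}\sup_j\vert M_j\vert$; you instead use the exact identity $a^\ell-b^\ell=(a-b)\sum_{j=0}^{\ell-1}a^{\ell-1-j}b^j$, which by nonnegativity of $a,b$ gives the factorization of the supremum directly and avoids any hand-waving about which binomial terms dominate. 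Your route is slightly tighter: it is an equality followed by Doob, with no dropped lower-order terms, and it makes the ``no higher moments of $S$ are needed'' observation transparent, since the growth of $\big(\sum_{i\le k}S_i\big)^{\ell-1-j}$ is controlled deterministically by $k\E[S]$ plus the already-bounded fluctuation $\vert M_k\vert$. The paper's version is looser but reaches the same exponent $n^{\alpha\ell-\alpha/2}$, and both rely on the same Doob estimate $\sup_{k\le tn^\alpha}\vert M_k\vert=O_{\mathbb P}(n^{\alpha/2})$ that underlies Lemma \ref{lem:ApproxSWithExpectation}.
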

\begin{proof}
The proof is an application of Lemma \ref{lem:ApproxSWithExpectation}, hence we only sketch it. We have
\begin{align}%
\sup_{k\leq tn^{\alpha}}\Big\vert\Big(\sum_{i=1}^kS_i\Big)^{\ell}-k^{\ell}\mathbb E[S]^{\ell}\Big\vert &\leq \sup_{k\leq tn^{\alpha}}\Big\vert \Big( k\mathbb E[S] + \sup_{k\leq tn^{\alpha}}\vert\sum_{i=1}^k S_i - k \mathbb E[S]\vert \Big)^{\ell} -k^{\ell}\mathbb E[S]^{\ell}\Big\vert\notag\\
&\qquad \vee \sup_{k\leq tn^{\alpha}}\Big\vert \Big( k\mathbb E[S] - \sup_{k\leq tn^{\alpha}}\vert\sum_{i=1}^k S_i - k \mathbb E[S]\vert \Big)^{\ell} -k^{\ell}\mathbb E[S]^{\ell}\Big\vert.
\end{align}%
Both terms on the right can be shown to converge to zero when appropriately rescaled. To do so, it is enough to study the leading order term, which is 
\begin{align}%
\sup_{k\leq tn^{\alpha}}  k^{\ell  -1}\mathbb E[S]^{\ell -1} \sup_{k\leq tn^{\alpha}}\vert\sum_{i=1}^k S_i - k \mathbb E[S]\vert  = (tn)^{\alpha(\ell -1)}\mathbb E[S]^{\ell -1} \sup_{k\leq tn^{\alpha}}\vert\sum_{i=1}^k S_i - k \mathbb E[S]\vert,
\end{align}%
and this converges to zero when divided by $n^{\alpha\ell + \beta}$, with $\beta >-\alpha/2$.
\end{proof}
Note that when $\ell = 1$ we do, in fact, recover Lemma \ref{lem:ApproxSWithExpectation}, since in that case $\alpha + \beta > \alpha -\alpha/2 = \alpha/2$.

Then, by Lemma \ref{lem:ApproxSEllWithExpectation} the right side of \eqref{eq:DriftTermUpperBoundKthOrderContactPre} converges to zero. The convergence is uniform in $t\leq \bar t$ by monotonicity in $t$.
We can similarly analyse $A_n^{\sss (2)}(k)$ and $C_n^{\sss (2)}(k)$. Equation \eqref{eq:DriftErrorTermGeneralArrivals} then becomes
\begin{align}%
C_n^{\sss (2)}(s) &= \sum_{k=1}^s \frac{1}{1-F_{\sss T}(\Sigma_{k-1})}\times\\
&\times\mathbb E \big[ {\red \vert\nu_{k}\vert} f_{\sss T}(0)D_k  - f_{\sss T}(0)\Sigma_{k-1} + \frac{f^{(\ell)}(0)}{\ell!}\vert\nu_{k}\vert(\Sigma_{k-1})^{\ell}D_k + \vert\nu_{k}\vert o_{\mathbb P}(n^{-1})+ o_{\mathbb P}(n^{-\alpha/2})\big\vert \mathcal{E}_{k-1}\big],\notag
\end{align}%
with the usual convention that $o_{\mathbb P}(n^{-1}) =:\vert F_{\sss T}(\Sigma_k)- F_{\sss T}(\Sigma_{k-1}) - f_{\sss T}(\Sigma_{k-1})D_k\vert$ and $o_{\mathbb P}(n^{-\alpha/2}) =:\vert F_{\sss T}(\Sigma_{k-1}) - f_{\sss T}(0)\Sigma_{k-1} \vert$. This gives a decomposition of the drift $C_n^{\sss (2)}(k)$ similar to \eqref{eq:ArrivalsErrorsGlobal} as
\begin{align}\label{eq: KthOrderContactDriftErrorDecomposition}%
&n^{-\alpha/2}C_n^{\sss (2)}(tn^{\alpha}) = n^{-\alpha/2}\sum_{k=1}^{tn^{\alpha}}\frac{1}{1-F_{\sss T}(\Sigma_{k-1})} \mathbb E \big[ {\red k} f_{\sss T}(0)D_k  -f_{\sss T}(0)\Sigma_{k-1}\big\vert \mathcal{E}_{k-1}\big]\\
&+ \sum_{k=1}^{tn^{\alpha}} \frac{n^{-\alpha/2}}{1-F_{\sss T}(\Sigma_{k-1})}\mathbb E \big[f_{\sss T}^{(\ell)}(0){\red k}(\Sigma_{k-1})^{\ell} D_k  + {\red k}o_{\mathbb P}(n^{-1})+ o_{\mathbb P}(n^{-\alpha/2})\big\vert \mathcal{E}_{k-1}\big]+\varepsilon_n,\notag
\end{align}%
where $\varepsilon_n$ groups all the terms containing $N_n(k)$. The first term is less straightforward now, because the first order terms in its expansion do not cancel out immediately. Consider (again ignoring the higher order terms in the expansion of $(1-F_{\sss T}(\Sigma_{k-1}))^{-1}$)
\begin{align}\label{eq:KthOrderContactDriftErrorFirstTerm}%
n^{-\alpha/2}\Big\vert\sum_{k=1}^{tn^{\alpha}}  \big( {\red k} f_{\sss T}(0)\mathbb E[D_1]  -f_{\sss T}(0)\Sigma_{k-1}\big)\Big\vert &= f_{\sss T}(0)n^{-1-\alpha/2}\Big\vert\sum_{k=1}^{tn^{\alpha}}   \big( \sum_{j=1}^{k-1}S_j - {\red k} \mathbb E[S] \big)\Big\vert\notag\\
&\leq  f_{\sss T}(0)n^{-1-\alpha/2}\sum_{k=1}^{tn^{\alpha}}\Big\vert \sum_{j=1}^{k-1}S_j - k \mathbb E[S]\Big\vert
\end{align}%
It is now sufficient to apply Lemma \ref{lem:ApproxSWithExpectation} with $\beta = 1-\alpha/2$ (note that $2\beta < \alpha$). Therefore, the first term in \eqref{eq: KthOrderContactDriftErrorDecomposition} tends to zero in probability, uniformly in $t\leq \bar t$. The remaining terms are treated, without additional complications, in a similar manner as in Section \ref{sec:main_theorem_exponential}.

\noindent
\textbf{Proof of (ii).}
This was proved through an analysis of the leading-order term, in which the $\ell$-th derivative of the density played no role.

\noindent
\textbf{Proof of (iii).}
The proof of (iii) relies heavily on Lemma \ref{lem:QueueUnifConvGeneralArrivals}  which, in turn, relies on the analysis of the order statistics done in Lemma \ref{LemmaSecondMomentAPrimeGeneralArrivals}. Since the latter does not depend on the derivatives of the density (but rather on its continuity), the proof carries over.

\noindent
\textbf{Proof of (iv).}
Again, the proof relies on the analysis carried out in Lemma \ref{LemmaSecondMomentAPrimeGeneralArrivals}.

Having proved conditions (i)-(iv), this concludes the proof of Theorem \ref{th:MainTheoremKthOrderContact}.
\qed

\section{Proof of Theorem \ref{MainTheorem_delta_G_1}}\label{sec:proof_main_theorem_delta}
{\gr The process   $Q_n(\cdot)$ approximates the real queueing process $Q_n^{\Delta}(\cdot)$, because $Q_n(\cdot)$ never idles and is embedded at service completions. Therefore, establishing Theorem \ref{MainTheorem_delta_G_1} from Theorem \ref{MainTheorem_exponential} requires to prove that these differences between $Q_n(\cdot)$ and $Q_n^{\Delta}(\cdot)$ are asymptotically negligible as $n\rightarrow\infty$. We assume without loss of generality that the exponential random variables $(T_i)_{i=1}^n$ have unit mean (equivalently $\E[S]=1$).}
\subsection{Idle times}
Recall that, whenever the queue $Q_n(\cdot)$ is empty at the end of a service (say, the $k$-th service), the customer with the smallest arrival time is drawn from the pool and placed into service. Denote this customer by $c(k)$.
%On the contrary, idle periods \emph{do} occur in the $\Delta_{(i)}/G/1$ model and, conditioned on the process up to time $t$, the idle period starting in $t$ is distributed as
Since the minimum of $n$ rate one exponential random variables is again an exponential random variable with rate $n$, conditioned on $\vert \nu_k\vert$, $I_k:=T_{c(k)}$ is distributed as
\begin{equation}\label{eq:IdlePeriodDistr}%
I_k \sr{\mathrm d}{=} \frac{\Exp(1)}{n - \vert \nu_k \vert},
\end{equation}%
where $\Exp(1)$ is an exponential random variable with rate one.
The random variable $I_k$ represents \emph{the time the server would have idled if customer $c(k)$ was not immediately placed into service.} Alternatively, $I_k$ is the idle period after the $k$-th service in the coupled $\Delta_{(i)}/G/1$ queue. Thus we name $I_k$ a \emph{virtual} idle time.
%However, the queue is empty in $k$ by definition, so that $\vert \nu_k \vert = k$. 
In particular $\vert \nu_k \vert = O(n^{2/3})$ for $k= O(n^{2/3})$, and therefore its contribution in \eqref{eq:IdlePeriodDistr} is negligible and we can think of the virtual idle periods up to time $k= O(n^{2/3})$ as being independent exponential random variables with rate $n$. 

Let $\beta_n(k)$ be the number of customer who have been taken from the population for immediate service before the completion of the $(k+1)$-th service. {\bl Equivalently, $\beta_n(k)$ is the number of idle periods in the coupled $\Delta_{(i)}/G/1$ queue before the completion of the $(k+1)$-th service.} The \emph{cumulative} virtual idle time up to the $k$-th service completion takes the form
\begin{equation}\label{eq:TimeShift}%
\mathcal I(k) = \sum_{i=1}^{\beta_n(k)}I_i.
\end{equation}%
For exponential arrivals, an explicit expression for the virtual idle periods is available in \eqref{eq:IdlePeriodDistr}, thus we can estimate the average cumulative virtual idle time at step $k = O(n^{2/3})$ to be
\begin{equation}\label{eq:IdleTimesApprox}%
\sum_{i=1}^{\beta_n(k)}I_i \approx \beta_n(k)\cdot\frac{1}{n}
\end{equation}%
{\gr We now aim at making \eqref{eq:IdleTimesApprox} rigorous by proving that $\mathcal I(k)$ is asymptotically negligible, uniformly in $k=O(n^{2/3})$. First we show that $\beta_n(k) = \OP(n^{1/3})$ and to this end we prove the following representation:
}
\begin{lemma} \label{lem:representation_number_busy_periods} Under the same assumptions as in \emph{Theorem \ref{MainTheorem}},
\begin{equation}\label{eq:representation_number_busy_periods}%
\beta_n(k) = -\inf_{j\leq k}(N_n(j)\wedge 0),\qquad k\geq0,~\textup{a.s.}
\end{equation}%
\end{lemma}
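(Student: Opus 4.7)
The plan is to rewrite both sides of \eqref{eq:representation_number_busy_periods} as a Skorokhod reflection count. First, I recognize the recursion $Q_n(k)=(Q_n(k-1)+A_n(k)-1)^+$ with $Q_n(0)=0$ as a Lindley recursion driven by the increments $A_n(k)-1$ of the free random walk $N_n(k)=\sum_{j=1}^{k}(A_n(j)-1)$ of \eqref{QueueLength}. A one-step induction, using the elementary identity $\max(x,0)=x+\max(-x,0)$, gives the classical reflection representation
\[
Q_n(k)\;=\;N_n(k)\;-\;\inf_{j\leq k}\bigl(N_n(j)\wedge 0\bigr),\qquad k\geq 0,
\]
that is, $Q_n=\phi(N_n)$ with $\phi$ the one-dimensional reflection map of \eqref{OneDimRefl}. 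Setting $L_n(k):=-\inf_{j\leq k}(N_n(j)\wedge 0)$, this reads $Q_n(k)=N_n(k)+L_n(k)$, with $L_n(\cdot)$ nondecreasing and taking values in $\mathbb N_0$.

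Second, I will identify $L_n(k)$ with the pull count $\beta_n(k)$. Since $Q_n,A_n\in\mathbb N_0$, the jumps of $L_n$ satisfy
\[
L_n(k)-L_n(k-1)\;=\;\mathds 1\bigl\{Q_n(k-1)+A_n(k)-1<0\bigr\}\;=\;\mathds 1\bigl\{Q_n(k-1)=0,\;A_n(k)=0\bigr\}.
\]
The event on the right is precisely the configuration in which, by the construction of the approximating model, a customer has to be drawn from the remaining pool for immediate service: the waiting queue at the end of the $k$-th service is empty and no clock rang during the $k$-th service window, so without the pulling rule the server would be forced to idle. Summing these indicators for $j=1,\dots,k$ therefore counts exactly the number of pulls performed before the completion of the $(k+1)$-th service, which is $\beta_n(k)$ by definition.

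Combining the two steps yields $\beta_n(k)=L_n(k)=-\inf_{j\leq k}(N_n(j)\wedge 0)$ almost surely, which is \eqref{eq:representation_number_busy_periods}. The only point requiring care is the bookkeeping in the second step: one must verify that a pull in the definition of $\beta_n(\cdot)$ occurs if and only if both $Q_n(k-1)$ and $A_n(k)$ vanish, rather than as soon as $Q_n(k-1)=0$. This is forced by the way the recursion \eqref{TrueQueueLengthIntro} implements the ``never idle'' rule---it reflects $Q_n$ at zero precisely when the naive update would produce a negative value, and not otherwise---and no further technical input beyond this definitional check is needed.
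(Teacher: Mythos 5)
Your proof is correct and arrives at the same core observation as the paper, but packages it through the Skorokhod regulator rather than an explicit induction. The paper proceeds inductively: assuming (WLOG) that $N_n(k)$ attains the running infimum, it locates the smallest $\bar k>k$ with $N_n(\bar k-1)=N_n(k)$ and $A_n(\bar k)=0$, argues that $\beta_n$ is constant on $(k,\bar k)$ and increments by one at $\bar k$, and checks the formula there. Your route instead writes $Q_n=N_n+L_n$ with $L_n(k)=-\inf_{j\le k}(N_n(j)\wedge 0)$ and computes the one-step increment
\[
L_n(k)-L_n(k-1)=\mathds 1\{Q_n(k-1)+A_n(k)-1<0\}=\mathds 1\{Q_n(k-1)=0,\,A_n(k)=0\},
\]
then identifies each unit jump with one pull. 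This is the same fact that drives the paper's induction—the set of times at which $N_n$ makes a new strict record minimum is exactly the set of times at which a customer is pulled—but stating it as a jump formula for the regulator is cleaner and dispenses with the ``WLOG'' and the auxiliary index $\bar k$.

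Two small remarks. First, the step you flag yourself (that a pull increments $\beta_n$ precisely when $Q_n(k-1)=0$ \emph{and} $A_n(k)=0$, not as soon as $Q_n(k-1)=0$) is the crux; your stated justification is phrased in terms of the reflection of $Q_n$, but the cleaner justification is definitional: with the paper's convention, $Q_n(k-1)$ counts customers waiting \emph{after} service $k$ has begun, so the waiting room at the end of service $k$ holds $Q_n(k-1)+A_n(k)$ customers and a pull for service $k+1$ is needed precisely when this is zero. The paper is equally terse here, appealing only to ``by the definition of $\beta_n(\cdot)$''. Second, a harmless imprecision in your prose: ``the waiting queue at the end of the $k$-th service is empty'' reads as $Q_n(k)=0$, which is the strictly weaker event $\{Q_n(k-1)\le 1,\,A_n(k)=0\}$; the displayed indicator is nonetheless exactly right, so the argument is unaffected.
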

\begin{proof}
{\red 
\eqref{eq:representation_number_busy_periods} holds for $k = 0$ because in this case both $\beta_n(0) = 0$ and $N_n(0) = 0$. Assume \eqref{eq:representation_number_busy_periods} holds for $k\geq1$. Without loss of generality we can also assume that 
\begin{equation}%
\beta_n(k) = -\inf_{j\leq k}(N_n(j)\wedge 0) = -N_n(k).
\end{equation}%
Let $\bar k$ be the minimum index such that $\bar k > k$, $N_n(\bar k -1) = N_n(k)$ and $A_n(\bar k ) = 0$. Equivalently, at the end of the $\bar k$-th service time there are no customers in the queue (and it is the first time after the $k$-th service that this happens). By the definition of $\beta_n(\cdot)$, we have that $\beta_n(\bar k - 1) = \beta _n(k)$ and $\beta_n(\bar k) = \beta_n(k) +1$. On the other hand we have that $N_n(\bar k)= N_n(\bar k -1) + A_n(\bar k) -1 = N_n(k) -1$. This gives
\begin{equation}%
\beta_n(\bar k) = \beta_n(k) + 1 = - N_n(k) + 1 = - N_n(\bar k) = - \inf_{j\leq \bar k}( N_n(j) \wedge 0).
\end{equation}%
Moreover, for every $q$ such that $k < q < \bar k$,
\begin{equation}%
-\inf_{j\leq q}(N_n(j)\wedge0) = -N_n(k),
\end{equation}%
and $\beta_n(q) = \beta_n(k)$, by definition of $\bar k$.
}
\end{proof}
The following lemma shows that $n^{-1/3}\beta_n(tn^{2/3})$ converges in distribution to a non-trivial random variable, hence \eqref{eq:IdleTimesApprox} is negligible in the limit when $k =O(n^{2/3})$. Recall that $\beta_n(k)$ denotes the number of {\red customers that have been removed from the population and directly put into service before the end of the $(k+1)$-st service}.

\begin{lemma}[Convergence of the number of idle periods]\label{cor:NumbBusyPeriodsConv_first}
Fix $t\in(0,\infty)$. Under the same assumptions as in \emph{Theorem \ref{MainTheorem}},
\begin{equation}\label{eq:BusyPeriodsConvergence_first}%
n^{-1/3}\beta_n(tn^{2/3}) \stackrel{\mathrm d}{\rightarrow} -\inf_{s\leq t} (W(s) \wedge 0),
\end{equation}%
where $W(\cdot)$ is given in \eqref{eq:main_theorem_diffusion_definition}.
\end{lemma}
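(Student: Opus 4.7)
The plan is to combine the deterministic identity from Lemma \ref{lem:representation_number_busy_periods} with the already established process-level convergence in Theorem \ref{MainTheorem_exponential} (or Theorem \ref{MainTheorem}), and then apply the Continuous Mapping Theorem to the running-infimum functional.

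First, by Lemma \ref{lem:representation_number_busy_periods} we have the pathwise identity
\begin{equation}
n^{-1/3}\beta_n(tn^{2/3}) = -\inf_{j\leq tn^{2/3}}\big(n^{-1/3}N_n(j)\wedge 0\big) = -\inf_{s\leq t}\big(n^{-1/3}N_n(sn^{2/3})\wedge 0\big),
\end{equation}
so that $n^{-1/3}\beta_n(tn^{2/3})=\Psi_t(n^{-1/3}N_n(\cdot n^{2/3}))$, where $\Psi_t:\mathcal D([0,\infty),\mathbb R)\to\mathbb R$ is defined by $\Psi_t(f):=-\inf_{s\leq t}(f(s)\wedge 0)$. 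The task is thus reduced to showing that $\Psi_t$ is continuous (in the Skorokhod $J_1$ topology) at almost every trajectory of $W(\cdot)$.

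Second, I would invoke Theorem \ref{MainTheorem_exponential}, which gives $n^{-1/3}N_n(\cdot n^{2/3})\stackrel{\mathrm d}{\rightarrow}W(\cdot)$ in $\mathcal D([0,\infty),\mathbb R)$. Since $W(\cdot)$ is a Brownian motion with parabolic drift, its sample paths lie in $\mathcal C([0,\infty),\mathbb R)$ with probability one. Restricted to continuous paths, the map $f\mapsto \inf_{s\leq t}f(s)$ is continuous with respect to the Skorokhod topology (the $J_1$ topology, restricted to $\mathcal C$, coincides with the local-uniform topology, and the infimum over $[0,t]$ is $1$-Lipschitz with respect to the uniform norm). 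The map $x\mapsto x\wedge 0$ is globally $1$-Lipschitz on $\mathbb R$. Composing these, $\Psi_t$ is continuous at every $f\in\mathcal C([0,\infty),\mathbb R)$, and in particular $\mathbb P(W\in\mathrm{Cont}(\Psi_t))=1$.

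Third, by the Continuous Mapping Theorem,
\begin{equation}
n^{-1/3}\beta_n(tn^{2/3}) = \Psi_t\big(n^{-1/3}N_n(\cdot n^{2/3})\big) \stackrel{\mathrm d}{\longrightarrow}\Psi_t(W) = -\inf_{s\leq t}\big(W(s)\wedge 0\big),
\end{equation}
which is exactly \eqref{eq:BusyPeriodsConvergence_first}. The only mildly delicate point is checking continuity of $\Psi_t$ at continuous paths under the $J_1$ topology; this is standard, so I expect no real obstacle. Note that one could equivalently reason with the reflected process via \eqref{OneDimRefl}, since $\beta_n(k)$ equals $\phi(N_n)(k)-N_n(k)$ evaluated suitably, and this would give another route through the already-invoked continuity of $\phi$.
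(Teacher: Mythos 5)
Your proof is correct and follows essentially the same route as the paper: it combines the pathwise identity from Lemma~\ref{lem:representation_number_busy_periods} with the process convergence $n^{-1/3}N_n(\cdot n^{2/3})\stackrel{\mathrm d}{\rightarrow}W(\cdot)$, and applies the Continuous Mapping Theorem to the running-infimum functional $f\mapsto -\inf_{s\leq t}(f(s)\wedge 0)$. The only cosmetic difference is that the paper cites \cite[Theorem 6.1]{whitt1980some} for Lipschitz continuity of the map on all of $\mathcal D$, while you argue continuity directly at continuous trajectories (which suffices since $W$ lives in $\mathcal C$ a.s.); both justifications are valid.
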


%\begin{lemma}\label{cor:NumbBusyPeriodsConv}
%Consider the \emph{RC} or the \emph{GA} model. Under the same assumptions as in \emph{Theorem \ref{MainTheorem}},
%%
%\begin{equation}\label{eq:BusyPeriodsConvergence}%
%n^{-1/3}\beta_n(tn^{2/3}) \stackrel{\mathrm d}{\rightarrow} -\inf_{s\leq t} (W(s) \wedge 0).
%\end{equation}%
%%
%\end{lemma}
\begin{proof}
The operator $\psi: f\mapsto \psi(f)(t) = -\inf_{s\leq t} (f(s) \wedge 0)$ acting from $\mathcal D$ to itself is  Lipschitz continuous with respect to the Skorohod topology by \cite[Theorem $6.1$]{whitt1980some}. Note that
\begin{equation}%
n^{-1/3}\beta_n(tn^{2/3}) = \psi (n^{-1/3}N_n( \cdot n^{2/3}))(t).
\end{equation}%
Then, since $n^{-1/3}N_n( \cdot n^{2/3})\stackrel{\mathrm d}{\rightarrow} W$, by the Continuous Mapping Theorem the image of $n^{-1/3}N_n(\cdot n^{2/3})$ through $\psi$ converge,  i.e.,
\begin{equation}
\psi (n^{-1/3}N_n(\ \cdot\ n^{2/3}))\stackrel{\mathrm d}{\rightarrow} \psi (W),
\end{equation}%
and this is \eqref{eq:BusyPeriodsConvergence_first}.
\end{proof}

As a direct consequence, the following lemma shows that the cumulative virtual idle time is asymptotically negligible:
\begin{lemma}[Convergence of the cumulative idle time]\label{lem:CumulativeIdleTimesConvergence}%
Assume that the arrival clocks $(T_i)_{i=1}^n$ are exponentially distributed. Fix $t\in(0,\infty)$. Then, under the same assumptions as in Theorem \ref{MainTheorem_exponential} and conditioned on $\{Q_n(s), s\in(0,tn^{2/3})\}$,
\begin{equation}%
\frac{n}{\beta_n(tn^{2/3})}\sum_{i=1}^{\beta_n(tn^{2/3})}I_i\stackrel{\mathbb P}{\rightarrow} 1.
\end{equation}%
\end{lemma}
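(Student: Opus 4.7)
The plan is to establish the limit by a conditional weak law of large numbers that exploits the memoryless property of the exponential arrival clocks. Conditioning on $\{Q_n(s): s \in (0, tn^{2/3})\}$ makes $M_n := \beta_n(tn^{2/3})$, the indices $k_1 < \cdots < k_{M_n}$ at which virtual idle periods arise, and the pool sizes $r_i := n - \vert\nu_{k_i}\vert$ all measurable with respect to the conditioning $\sigma$-algebra. Indeed, Lemma \ref{lem:representation_number_busy_periods} expresses $\beta_n$ as a functional of $N_n$, while $N_n = Q_n - \beta_n$, so that $\vert\nu_{k_i}\vert = k_i + N_n(k_i - 1)$ is also determined. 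By \eqref{eq:IdlePeriodDistr} together with memorylessness, conditional on this $\sigma$-algebra the virtual idle times $I_i$ are independent with $I_i \sim \Exp(r_i)$, so that $E_i := r_i I_i$ form an i.i.d.\ sequence of standard $\Exp(1)$ random variables.

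The next step is to control the rates uniformly. Writing $\vert\nu_{k_i}\vert \leq tn^{2/3} + \sup_{k \leq tn^{2/3}}\vert N_n(k)\vert$ and invoking Lemma \ref{oQueueLemmaUnif} yields
\begin{equation*}
\Delta_n := \max_{i \leq M_n}\Bigl\lvert \frac{n}{r_i} - 1\Bigr\rvert = O_{\mathbb P}(n^{-1/3}) \stackrel{\mathbb P}{\to} 0,
\end{equation*}
while Lemma \ref{cor:NumbBusyPeriodsConv_first} gives $n^{-1/3}M_n \stackrel{\mathrm d}{\to} -\inf_{s\leq t}(W(s)\wedge 0)$, which is a strictly positive random variable, so in particular $M_n \stackrel{\mathbb P}{\to} \infty$ and the ratio in the statement is well defined on an event of probability tending to one.

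The argument would then be concluded by decomposing
\begin{equation*}
\frac{n}{M_n}\sum_{i=1}^{M_n} I_i = \frac{1}{M_n}\sum_{i=1}^{M_n}\frac{n}{r_i}\, E_i,
\end{equation*}
and bounding, via the triangle inequality,
\begin{equation*}
\Bigl\lvert \frac{n}{M_n}\sum_{i=1}^{M_n} I_i - 1\Bigr\rvert \leq \Delta_n \cdot \frac{1}{M_n}\sum_{i=1}^{M_n} E_i + \Bigl\lvert \frac{1}{M_n}\sum_{i=1}^{M_n} E_i - 1\Bigr\rvert.
\end{equation*}
Conditional on $Q_n(\cdot)$, Chebyshev's inequality applied to the centered i.i.d.\ sum of unit exponentials gives $\mathbb P(\vert M_n^{-1}\sum_i E_i - 1\vert > \varepsilon \mid Q_n(\cdot)) \leq 1/(\varepsilon^2 M_n)$, which tends to zero in probability since $M_n \to \infty$. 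Combined with $\Delta_n \stackrel{\mathbb P}{\to} 0$, this yields the claim.

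The main obstacle is the clean handling of the conditioning: one must verify that the $\sigma$-algebra generated by the queue trajectory is rich enough to determine the rates $r_i$ and the indices $k_i$, while still leaving the virtual clocks $I_i$ conditionally independent exponentials. This ultimately reduces to the memoryless property of the arrival clocks applied at each of the $M_n$ service completions at which $Q_n$ is empty, but requires some care because the same pool of clocks feeds both the arrival process $A_n(\cdot)$ (which determines $Q_n$) and the virtual idle times $I_i$.
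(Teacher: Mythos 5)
Your proof is correct and takes essentially the same route as the paper's. Both arguments write $I_i = E_i/(n - \vert\nu_{k_i}\vert)$ with $(E_i)$ i.i.d.\ standard exponentials, both appeal to Lemma \ref{oQueueLemmaUnif} to establish that $\vert\nu_{k_i}\vert/n$ is uniformly small (of order $O_{\mathbb P}(n^{-1/3})$), both use Lemma \ref{cor:NumbBusyPeriodsConv_first} to guarantee $\beta_n(tn^{2/3})\to\infty$, and both finish with a law-of-large-numbers argument on the $E_i$'s. The only cosmetic difference is that the paper expands $\bigl(1-\vert\nu_{k_i}\vert/n\bigr)^{-1}$ as a geometric series and treats the resulting terms separately, while you extract a uniform multiplicative bound $\Delta_n = \max_i\vert n/r_i - 1\vert$ and apply the triangle inequality; your version is if anything a bit cleaner. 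You are also slightly more explicit than the paper about the fact that the Chebyshev/LLN step must be applied conditionally on the trajectory, and about why the memoryless property guarantees that $(E_i)$ remain i.i.d.\ under that conditioning — a point the paper handles implicitly.
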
%
\begin{proof}
As was noted in \eqref{eq:IdlePeriodDistr}, $I_i$ is distributed as an exponential random variable with rate $n-\vert \nu_{k_i}\vert$, where $k_i$ is the time step corresponding to the $i$-th customer being placed directly into service. We rewrite the sum as
\begin{align}\label{eq:CumulativeIdleTimesConvergence}%
\frac{n}{\beta_n(tn^{2/3})}\sum_{i=1}^{\beta_n(tn^{2/3})}\frac{E_i}{n-\vert\nu_{k_i}\vert} &= \frac{1}{\beta_n(tn^{2/3})}\sum_{i=1}^{\beta_n(tn^{2/3})}\frac{E_i}{1-\frac{\vert\nu_{k_i}\vert}{n}}\notag\\
&= \frac{1}{\beta_n(tn^{2/3})}\sum_{i=1}^{\beta_n(tn^{2/3})}E_i + \frac{1}{\beta_n(tn^{2/3})}\sum_{i=1}^{\beta_n(tn^{2/3})}E_i\frac{\vert\nu_{k_i}\vert}{n}+\varepsilon_n,
\end{align}%
where $\varepsilon_n = o_{\mathbb P}(1)$ and $(E_i)_{i\geq1}$ are i.i.d. exponential random variables with rate $1$.
%By \cite[Theorem 5.2]{billingsley2009convergence}, $(\beta(tn^{2/3})/n^{1/3})_{n=1}^{\infty}$ is tight, or equivalently $\beta(tn^{2/3}) = O_{\mathbb P}(n^{1/3})$.
By Lemma \ref{cor:NumbBusyPeriodsConv_first}, $\beta_n(tn^{2/3})\geq M n^{\alpha}$ w.h.p. for a fixed $M>0$ and $\alpha\in(0,1/3)$. By the LLN, the first term in \eqref{eq:CumulativeIdleTimesConvergence} converges in probability to $1$, and by Lemma  \ref{oQueueLemmaUnif} the second term (and consequently the error term) converges to zero.
\end{proof}
Lemma \ref{lem:CumulativeIdleTimesConvergence} intuitively says that the total virtual idle time up to time $tn^{2/3}$ is of the same order of magnitude of the number of virtual idle periods up to time $tn^{2/3}$ times the average interarrival time. In particular, as we prove below, $\mathcal I (tn^{2/3}) = o_{\mathbb P}(1)$, that is, the cumulative virtual idle time up to times of the order $n^{2/3}$ is negligible:
\begin{corollary}[Cumulative idle time is negligible]\label{cor:cumul_idle_time_is_negligible}%
Fix $T>0$. Then,
\begin{equation}%
\sup_{t\leq T}\mathcal I(tn^{2/3})  \sr{\mathbb P}{\rightarrow} 0.
\end{equation}%
\end{corollary}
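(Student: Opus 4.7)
The plan is to exploit two structural facts: first, that both $k\mapsto \mathcal I(k)$ and $k\mapsto \beta_n(k)$ are almost surely non-decreasing in $k$, and second, that Lemmas \ref{cor:NumbBusyPeriodsConv_first} and \ref{lem:CumulativeIdleTimesConvergence} together pin down the asymptotic size of $\mathcal I(Tn^{2/3})$. The monotonicity reduces the supremum to a single endpoint:
\begin{equation}
\sup_{t\leq T}\mathcal I(tn^{2/3}) = \mathcal I(Tn^{2/3}),
\end{equation}
so it suffices to show $\mathcal I(Tn^{2/3}) \sr{\mathbb P}{\rightarrow} 0$.

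Next I would decompose
\begin{equation}
\mathcal I(Tn^{2/3}) = \frac{\beta_n(Tn^{2/3})}{n}\cdot \frac{n}{\beta_n(Tn^{2/3})}\sum_{i=1}^{\beta_n(Tn^{2/3})} I_i,
\end{equation}
and analyse each factor separately. By Lemma \ref{cor:NumbBusyPeriodsConv_first}, $n^{-1/3}\beta_n(Tn^{2/3})$ converges in distribution to the non-trivial random variable $-\inf_{s\leq T}(W(s)\wedge 0)$, and in particular the sequence is tight. Hence $\beta_n(Tn^{2/3})/n = \OP(n^{-2/3})$, which tends to zero. By Lemma \ref{lem:CumulativeIdleTimesConvergence}, the second factor converges in probability to $1$. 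Combining these via Slutsky's theorem yields $\mathcal I(Tn^{2/3}) = \OP(n^{-2/3}) \sr{\mathbb P}{\rightarrow} 0$, which is the claim.

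The only subtlety to address is that Lemma \ref{lem:CumulativeIdleTimesConvergence} is stated at a fixed time $t$; but since we already reduced to $t=T$ via monotonicity, this is not really an obstacle. A minor point to handle cleanly is that the ratio $n/\beta_n(Tn^{2/3})$ is only well-defined on the event $\{\beta_n(Tn^{2/3})\geq 1\}$; however, on the complementary event $\mathcal I(Tn^{2/3})=0$ trivially, so one can split the probability accordingly and the argument goes through. No additional delicate estimate is required, since all the heavy lifting has already been done in the two cited lemmas.
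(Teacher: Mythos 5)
Your proof is correct and follows essentially the same route as the paper: reduce to the endpoint $T$ by monotonicity, write $\mathcal I(Tn^{2/3})$ as the product of $\beta_n(Tn^{2/3})/n$ and the averaged idle-time ratio, and invoke Lemmas \ref{cor:NumbBusyPeriodsConv_first} and \ref{lem:CumulativeIdleTimesConvergence} together with Slutsky. Your version is slightly more explicit (spelling out the $\OP(n^{-2/3})$ bound and the degenerate case $\beta_n(Tn^{2/3})=0$), but the ingredients and the argument are the same.
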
%
\begin{proof}%
By monotonicity, $\sup_{t\leq T}\mathcal I(tn^{2/3}) = \mathcal I(T)$. Fix $\varepsilon >0$. Then, by Lemma \ref{lem:CumulativeIdleTimesConvergence}
\begin{equation}%
\mathcal I(Tn^{2/3}) \leq (1+\varepsilon) \beta_n(tn^{2/3})/n
\end{equation}%
with high probability. By Lemma \ref{cor:NumbBusyPeriodsConv_first}, $\beta_n(tn^{2/3})/n\sr{\mathbb P}{\rightarrow}0$, so that 
\begin{equation}%
\mathcal I(Tn^{2/3}) \sr{\mathbb P}{\rightarrow}0,
\end{equation}%
as desired.
\end{proof}%
%
%{\bl This concludes the proof of Theorem \ref{thm:critical_delta_M}.} 
%{\bl 
%\begin{proof}[Proof of Theorem \ref{thm:critical_delta_M}]
%By the coupling of our model and the $\Delta_{(i)}/G/1$ queue with exponential arrivals,
%%
%\begin{equation}%
%Q_n(k + \mathcal I (k)) \sr{\mathrm d}{=} Q_n^{\Delta}(k)
%\end{equation}%
%%
%
%previous considerations, , where $\mathcal I(t)$ is the cumulative idle time up to $t$. In order to prove Theorem \ref{thm:critical_delta_M} it suffices to show that $\mathcal I(tn^{2/3})$ converges to zero for any fixed $t>0$. Indeed, since $t\rightarrow\mathcal I(t)$ is an increasing function, the convergence is also uniform over compact sets. A random time change theorem (e.g. \cite[Lemma, p.151]{billingsley2009convergence}) then gives the desired conclusion.
%\end{proof}}

\subsection{Proof of Theorem \ref{MainTheorem_delta_G_1}}

We start by defining the time change $\varphi_n(\cdot)$. Fix a realization of the arrival and service times $(S_i)_{i=1}^n = (S_i(\omega))_{i=1}^n$ and $(T_i)_{i=1}^n = (T_i(\omega))_{i=1}^n$, where $\omega\in\Omega$, the sample space. We assume throughout that $\E[S]=1$. We define $\varphi_n(t)$ piece-wise, depending whether a customer is in service at time $t$ or the server is idling at time $t$. %In what follows, denote 
%%
%\begin{equation}%
%\mathcal I (k):=\sum_{i=1}^{\beta(k)}I_i
%\end{equation}%
%%
%the cumulative idle time up to the $k$-th service. Here $I_i$ is the $i$-th idle period and $\beta(k)$ the number of idle periods before the completion of the $(k+1)$-th service.

Because the time scaling in $ Q^{\Delta}_n(\cdot n^{-1/3})$ is $n^{-1/3}$, for the two processes $ Q^{\Delta}_n(\cdot n^{-1/3})$ and $Q_n(\varphi_n(\cdot)n^{2/3})$ to be on a comparable time scale, the time change $\varphi_n(\cdot)$ should be such that 
\begin{equation}\label{eq:time_change_expression}%
\varphi_n(\cdot) = n^{-2/3}\phi_n(\cdot n^{-1/3}),
\end{equation}%
for a suitable $\phi_n(\cdot):\mathbb R^+\mapsto\mathbb R^+$. We now provide a precise expression of $\phi_n(\cdot)$. We remark that many other choices for $\phi_n(\cdot)$ would work for proving Lemma \ref{lem:claim_1} and Lemma \ref{lem:claim_2}, and the one we present here is only the simplest one.

To increase readability, define
\begin{equation}%
\vartheta_{k_2}(k_1) := \frac{\sum_{i=1}^{k_1}S_i}{n} + \mathcal I (k_2).
\end{equation}%
First assume that at time $t$ a service (say, the $k$-th service) is taking place, then
\begin{equation}%
\phi_n(t)=(k-1)+\frac{1}{S_k/n}(t - \nu_k(k-1))  \qquad \textrm{for}~t\in \Big[\vartheta_k(k-1),\vartheta_k(k)\Big]. 
\end{equation}%
Note that, since the queue is serving at time $t$, $\mathcal I(k-1) = \mathcal I(k)$. In other words $\phi_n(t)$ is the line joining the points $(\vartheta_k(k-1), k-1)$ and $(\vartheta_k(k), k)$, where the term $\mathcal I (k)$ in $\vartheta_k(k)$ takes into account  previous idle periods which might have occurred before $t$.

Assume now that an idle period (say, the $(\beta(k)+1)$-th idle period) is under way at time $t$. Then $\phi_n(t)$ takes the form 
\begin{equation}%
\phi_n(t)= k  \qquad\textrm{for}~t\in\big[\vartheta_k(k), \vartheta_k(k) + I_{\beta(k)+1}\big].
\end{equation}%
In other words, $\phi_n(t)$ is constant during an idle period. This is because $\phi_n(t)$ (being the input of $Q_n(\cdot)$) represents the \emph{number of completed services}. Again, here $\sum^k_{i=1} S_i/n$ takes previous services that have occurred before $t$ into account.
Summarizing, $\phi_n(\cdot)$ takes the form 
\begin{equation}%
\phi_n(t)= \left\{\begin{array}{ll}
\Big(k-1+\frac{n}{S_{k}}(t - \vartheta_k(k-1))\Big) & \textrm{for}~t\in \big[\vartheta_k(k-1),\vartheta_k(k)\big]~\text{for some~}k;\\
k & \textrm{for}~t\in\big[\vartheta_k(k),\vartheta_k(k) + I_{\beta(k)+1}\big]~\text{for some~}k.
\end{array}\right.
\end{equation}%
In particular $\phi_n(\cdot)$ (and therefore also $\varphi_n(\cdot)$) is a piecewise linear continuous function. See Figure \ref{fig:time_change} for one possible sample path of $\phi_n(\cdot)$.
\begin{figure}
\centering
\includegraphics{}
\caption{An example of the time change $\phi_n(\cdot)$. The $\bullet$'s indicate arrival times of customers.}\label{fig:time_change}
\end{figure}
We now focus on $\varphi_n(\cdot)$ in \eqref{eq:time_change_expression} and show that it converges to the identity function. 
The time change $\varphi_n(t)$ takes the form
\begin{equation}%
\varphi_n(t)= \left\{\begin{array}{ll}
n^{-2/3}\Big(k-1+\frac{n}{S_{k}}(\frac{t}{n^{1/3}} - \vartheta_k(k-1))\Big) & \textrm{for}~(\frac{t}{n^{1/3}})\in \big[\vartheta_k(k-1),\vartheta_k(k)\big];\\
n^{-2/3}k & \textrm{for}~(\frac{t}{n^{1/3}})\in\big[\vartheta_k(k),\vartheta_k(k) + I_{\beta(k)+1}\big].
\end{array}\right.
\end{equation}%
Note that the only values of $k$ for which $\phi_n(t)$ has a meaningful limit as $n\rightarrow\infty$ are $k=O(n^{2/3})$. This observation is consistent with the fact that the original scaling for $Q_n(\cdot)$ is $Q_n(\cdot n^{2/3})$, that is, in order to obtain a meaningful limit we observe the queue length at times when $O(n^{2/3})$ services have been completed. As a consequence, we assume that $k=sn^{2/3}$ for some $s\in\mathbb R^+$. Summarizing, the final form of $\varphi_n(t)$ is
\begin{equation}\label{eq:definition_time_change_final}%
\varphi_n(t)= \left\{\begin{array}{ll}
s-\frac{1}{n^{2/3}}+\frac{1}{S_{sn^{2/3}}}(t - \vartheta_{sn^{2/3}}(sn^{2/3}-1)) & \textrm{for}~t\in n^{1/3}\big[\vartheta_{sn^{2/3}}(sn^{2/3}-1),\vartheta_{sn^{2/3}}(sn^{2/3})\big];\\
s & \textrm{for}~t\in n^{1/3}\big[\vartheta_{sn^{2/3}}(sn^{\frac{2}{3}}),\vartheta_{sn^{ 2/3}}(sn^{2/3})+I_{\beta (sn^{2/3})}\big].
\end{array}\right.
\end{equation}%
We now turn to proving the first claim, as formulated in Lemma \ref{lem:claim_1}.

\begin{proof}[Proof of Lemma \ref{lem:claim_1}]%
First note that
\begin{equation}%
\sup_{t\leq T}\Big\vert -\frac{1}{n^{2/3}}+\frac{1}{S_{sn^{2/3}}}(t - \vartheta_{sn^{2/3}}(sn^{2/3}-1))\Big\vert\leq \frac{2}{n^{2/3}}\rightarrow0,\qquad n\rightarrow\infty,
\end{equation}%
implying that we can treat $\varphi_n(t)$ as piece-wise constant, $\varphi_n(t) \equiv s$ on intervals of the form $[l(s), u(s)]$. We now prove \eqref{eq:claim_time_change_converges_identity}. Since the function $t\mapsto t-\varphi_n(t)= t - s$ (for some fixed $s$) defined on an interval $[l,u]= [l(s),u(s)]$ is linear in $t$, it obtains its maximum  either in $l$ or $u$. This implies
\begin{align}\label{eq:time_change_converges_identity_middle}%
\lim_{n\rightarrow\infty}\|t-\varphi_n(t)\|_T \leq \lim_{n\rightarrow\infty}\Big(\sup_{s\leq S}\Big\vert \vartheta_{sn^{2/3}}(sn^{2/3}-1) - s\Big\vert \vee  \sup_{s\leq S}&\Big\vert  \vartheta_{sn^{ 2/3}}(sn^{2/3})+I_{\beta (sn^{2/3})} - s\Big\vert  \nnl
& \vee\sup_{s\leq S}\Big\vert \vartheta_{sn^{2/3}}(sn^{2/3}) - s\Big\vert\Big),
\end{align}%
where $x\vee y:=\max\{x,y\}$ and  $S=S(T)= \frac{T}{\E[S]}(1+\varepsilon)>0$ for some $\varepsilon>0$. The inequality comes from the fact that the three supremums are taken over a larger set, by the definition of $S(T)$. We prove convergence to zero of one of the three terms on the right in \eqref{eq:time_change_converges_identity_middle}, the others being analogous. Applying the triangle inequality to the last term yields
\begin{equation}%
\sup_{s\leq S}\Big\vert \frac{\sum_{i=1}^{sn^{2/3}}S_i}{n^{2/3}} + n^{1/3}\mathcal I (sn^{2/3}) - s\Big\vert \leq \sup_{s\leq S}\Big\vert   \frac{\sum_{i=1}^{sn^{2/3}}S_i}{n^{2/3}} -s\Big\vert + \sup_{s\leq S}\Big\vert n^{1/3}\mathcal I (sn^{2/3}) \Big\vert.
\end{equation}%
The first term converges to zero in probability by the functional Law of Large Numbers. The second term converges to zero in probability by Corollary \ref{cor:cumul_idle_time_is_negligible}. Indeed, the proof of Lemma \ref{lem:CumulativeIdleTimesConvergence} and Corollary \ref{cor:cumul_idle_time_is_negligible} show that $\mathcal I (sn^{2/3}) = \OP(n^{-2/3})$. This concludes the proof of \eqref{eq:claim_time_change_converges_identity}.

We now turn to proving \eqref{eq:claim_distance_between_queue_and_time_scaled_queue_converges_to_zero}.
%%%%%%%% RANDOM TIME CHANGE THEOREM %%%%%%%%%
%To do so we apply a random time change theorem.
%%
%\begin{theorem}[Random time change \cite{billingsley2009convergence}]
%Let $X_n$ be a sequence of random elements in the Skorokhod space $\mathcal D([0,T])$. Let $\varphi_n$ be a sequence of bounded, nondecreasing elements of $\mathcal D([0,T])$. Assume that 
%%
%\begin{equation}%
%(X_n,\varphi_n) \sr{\mathrm{d}}{\rightarrow} (X,\varphi),
%\end{equation}%
%%
%where both $X$ and $\varphi$ are elements of $\mathcal D([0,T])$. Assume moreover that $\mathbb P(X \in \mathcal C) = 1$, i.e. $X$ has almost surely continuous sample paths. Then,
%%
%\begin{equation}%
%X_n \circ \varphi_n \sr{\mathrm d}{\rightarrow} X \circ \varphi.
%\end{equation}%
%%
%\end{theorem}%
%%
%Note that $W(t) = \beta t - 1/2 t^2 + B(t)$ is such that $W\in\mathcal C$ almost surely. 
Note that $\varphi_n(\cdot)\sr{\mathbb P}{\rightarrow}\mathrm{id}$, the identity map on $[0,T]$. In particular, $\mathrm{id}$ is deterministic, so that by \cite[Theorem 11.4.5]{StochasticProcess},
\begin{equation}\label{eq:joint_convergence_queue_and_time_change}%
(n^{-1/3}Q_n(\cdot n^{2/3}), \varphi_n) \sr{\mathrm{d}}{\rightarrow} (W, \mathrm{id}).
\end{equation}%
This allows to prove claim (i), as follows. 
%%%%%%%% %%%%%%%%%%%%%%%%%%%%%%% %%%%%%%%%
By Skorokhod's representation theorem and \eqref{eq:joint_convergence_queue_and_time_change} there exist $\bar Q_n(\cdot)$, $ \bar\varphi_n(\cdot)$ and $\bar W(\cdot)$ defined on the same probability space $\bar \Omega$ such that 
\begin{equation}
(\bar Q_n(\cdot n^{2/3}), \bar\varphi_n)\sr{\mathrm d}{=}( Q_n(\cdot n^{2/3}), \varphi_n)
\end{equation}
and 
\begin{equation}%
(n^{-1/3}\bar Q_n(\cdot n^{2/3}), \bar\varphi_n)\sr{\mathrm{a.s.}}{\rightarrow} (\bar W(\cdot), \mathrm{id}).
\end{equation}%
We now dominate \eqref{eq:claim_distance_between_queue_and_time_scaled_queue_converges_to_zero}
using the random variables provided by the representation theorem, as follows:
\begin{align}\label{eq:middle_distance_between_queue_and_time_scaled_queue_converges_to_zero}%
n^{-1/3}\| \bar Q_n(\cdot n^{2/3}) - \bar Q_n&(\varphi_n(\cdot) n^{2/3})\|_T \nnl
&\leq n^{-1/3}\big( \| \bar Q_n(\cdot n^{2/3}) - \bar W(\cdot) \|_T + \| \bar W(\cdot) - \bar Q_n(\varphi_n(\cdot) n^{2/3})\|_T\big).
\end{align}%
Since the limiting process $\bar W(\cdot)$ is almost surely continuous, by standard arguments both the convergence $n^{-1/3}\bar Q_n(\cdot n^{2/3})\sr{\mathrm{a.s.}}{\rightarrow}\bar W(\cdot)$ and $n^{-1/3}\bar Q_n(\varphi_n(\cdot) n^{2/3})\sr{\mathrm{a.s.}}{\rightarrow}\bar W(\cdot)$ hold with respect to the uniform topology. In particular, both terms on the right in \eqref{eq:middle_distance_between_queue_and_time_scaled_queue_converges_to_zero}
converge to zero in probability. Moreover, since $(\bar Q(\cdot n^{2/3}), \bar \varphi_n(\cdot))\sr{\mathrm{d}}{=}( Q_n(\cdot n^{2/3}),  \varphi_n(\cdot))$,
\begin{equation}%
n^{-1/3}\| \bar Q_n(\cdot n^{2/3}) - \bar Q_n(\varphi_n(\cdot) n^{2/3})\|_T \sr{\mathrm{d}}{=}n^{-1/3}\|  Q_n(\cdot n^{2/3}) -  Q_n(\varphi_n(\cdot) n^{2/3})\|_T,
\end{equation}%
so that
\begin{equation}%
n^{-1/3}\|  Q_n(\cdot n^{2/3}) -  Q_n(\varphi_n(\cdot) n^{2/3})\|_T \sr{\mathbb P}{\rightarrow} 0,
\end{equation}%
as desired. This concludes the proof of claim (i) and thus of Lemma \ref{lem:claim_1}.
\end{proof}
%Recall that $A_n(k) = \sum_{i\nin\nu_{k-1}}\mathds 1_{\{T_i \leq S_k/n\}}$ is the number of arrivals during the $k$-th service time, where $\nu_{k-1}$ the number of customers \emph{not} in the population at the beginning of the $k$-th service. 
The fact that during one service the number of arrivals is asymptotically small is crucial in proving that $Q_n(\cdot)$ and $Q_n^{\Delta}(\cdot)$ are close in the supremum norm. This is the subject of Lemma \ref{lem:claim_2}.

\begin{proof}[Proof of Lemma \ref{lem:claim_2}]%
Note that if $t/n^{1/3} = \sum_{i=1}^{k}S_i/n +\mathcal I (k)$ for $k\in\mathbb N$ (i.e. if a service is completed in $t$, or an idle period is undergoing in $t$), then 
\begin{equation}%
Q_n(\phi_n(t) ) = Q_n^{\Delta}(t).
\end{equation}%
This follows from the definition of the time change $\phi_n(t)$, as discussed above. This implies that during any service time $Q_n(\phi_n(t))$ and $Q_n^{\Delta}(t)$ differ by the number of arrivals that have occurred \emph{during that} service time. Moreover, during any idle time $Q_n(\phi_n(t))$ and $Q_n^{\Delta}(t)$ are both equal to zero. These considerations imply
\begin{equation}%
\sup_{t\leq T}\big\vert Q_n(\varphi_n(\cdot) n^{2/3}) - {Q}^{\Delta}_n(\cdot n^{-1/3})\big\vert = \max_{k\leq Tn^{2/3}}A_n(k).
\end{equation}%
Hence \eqref{eq:number_arrivals_o_small_n_one_third} implies \eqref{eq:claim_two_in_lemma}.
Let now $\varepsilon>0$ be arbitrary. Then
\begin{equation}%
\mathbb P \big(n^{-1/3} \max_{k\leq T n^{2/3}} A_n(k)\geq\varepsilon\big)= \mathbb P\big(n^{-1/3} \max_{k\leq T n^{2/3}} A_n(k)\mathds 1_{\{ A_n(k)>\varepsilon n^{1/3}\}}\geq\varepsilon\big).
\end{equation}%
In other words, only the very large values of $A_n$ contribute to the probability being computed. By Chebyshev's inequality,
\begin{align}%
\mathbb P (n^{-1/3} \max_{k\leq T n^{2/3}} A_n(k)\geq\varepsilon) &\leq \frac{\E[\max_{k\leq T n^{2/3}} A^2_n(k)\mathds 1_{\{ A_n(k)>\varepsilon n^{1/3}\}}]}{(n^{1/3}\varepsilon)^2}\nnl
&\leq \sum_{k=1}^{Tn^{2/3}}\frac{\E[A^2_n(k)\mathds 1_{\{ A_n(k)>\varepsilon n^{1/3}\}}]}{n^{2/3}\varepsilon^2}.
\end{align}%
The almost sure domination $A_n(k) \sr{\mathrm{a.s.}}{\leq} A_n'(k):= \sum_{i=1}^n\mathds 1_{\{T_i \leq S_k/n\}}$ (note that this domination holds for all $k\leq T n ^{2/3}$ simultaneously) gives
\begin{align}\label{eq:bound_maximum_number_arrivals_final}%
\mathbb P (n^{-1/3} \max_{t\leq T} A_n(tn^{2/3})\geq\varepsilon) &\leq \sum_{k=1}^{Tn^{2/3}}\frac{\E[A_n'^2(k)\mathds 1_{\{ A'_n(k)>\varepsilon n^{1/3}\}}]}{n^{2/3}\varepsilon^2}\nnl
&= \frac{T}{\varepsilon^2} \E[A_n'^2(1)\mathds 1_{\{ A'_n(1)>\varepsilon n^{1/3}\}}].
\end{align}%
The right-most term in \eqref{eq:bound_maximum_number_arrivals_final} tends to zero because $A_n'^2$ is stochastically dominated by a uniformly integrable random variable, as was proven in Lemma \ref{LemmaSecondMomentAPrime}.
\end{proof}

\section{Extended discussion}\label{sec:ExtendedDiscussion}

\subsection{On other regimes of criticality}\label{sec:OtherRegimesOfCriticality}
Throughout this paper, we have focused on the critical regime, when $\nu:=\sup_{t\geq0}f_{\sss T}(t)$ is such that $\nu = \mu$, where $\mu=1/\E[S]$. This is by far the least studied case in the literature, but other cases may be of interest too. In this section, we discuss some known results also in relation to our own results. We denote by $t_{\mathrm{max}}$ the argmax of $f_{\sss T}(t)$. 

The case $\nu > \mu$ has been studied extensively in \Ward. When $\nu > \mu$, the queue length grows linearly in a neighborhood of $t_{\mathrm{max}}$ as has been discussed in Section \ref{sec:comparison_known_results}. In particular, it is possible to prove that there exists a constant $m>0$ such that the maximum of the fluid-scaled queue length process converges to $m$, meaning that the queue during `peak hour' grows linearly.
%
%\begin{lemma}
%Assume $\nu > \mu$. Then, there exists a non-negative constant $m>0$ such that, almost surely, 
%%
%\begin{align*}%
%\max_{t\in[0,\infty)}\frac{ Q_n(t)}{n}\longrightarrow m, \qquad n\rightarrow\infty.
%\end{align*}%
%%
%\end{lemma}

When $\nu = \mu$, $t_{\mathrm{max}}$ can be either zero, or greater than zero. If it is zero, and if $f_{\sss T}'(0) < 0$, then our results apply, and thus the correct scaling is $n^{-1/3}$ for the space, $n^{-1/3}$ for the time around $t_{\mathrm{max}}$. If the first derivative (or possibly all the derivatives up to a finite order) of $f_{\sss T}$ is zero in zero, by appropriately modifying the scalings our approach still yields the correct limit, as was shown in Section \ref{sec:HigherOrderContact}. Here `correct' means that both a deterministic drift indicating a depletion-of-points effect and a Brownian contribution appear. If $t_{\mathrm{max}}$ is greater than zero, and $f_{\sss T}(t)$ is sufficiently smooth, then we must have that $f_{\sss T}'(t_{\mathrm{max}}) = 0$, suggesting that one has to investigate higher order derivatives as was done in Section \ref{sec:HigherOrderContact} (note that the first non-zero derivative will necessarily be even, provided $f'_{\sss T}(t)$ is smooth). If $f_{\sss T}(t)$ is not smooth, i.e., $f_{\sss T}'(t)$ is discontinuous in $t_{\mathrm{max}}$, an even wider range of behaviors will possibly be displayed, due to the fact that $f_{\sss T}(t)-\mu $ might have different contact orders in $t_{\mathrm{max}}^-$ and $t_{\mathrm{max}}^+$.

The uniform arrivals case (uniform in $[0,1]$ say) is  special, since all the derivatives of $f_{\sss T}(t)$ are identically zero. In some sense, it interpolates between our result and \Ward. The queue is  critical during its entire lifetime, and thus it never grows linearly, similarly to our model. On the other hand, the uniform arrivals model does not lead to a downward drift. Intuitively, this is because there is no `tipping point' after which the intensity of the arrival process diminishes and therefore the depletion-of-points effect does not occur. By observing the queue at a time scale {\bl  of order $O(1)$}, it displays diffusion-type fluctuations (of the order $n^{1/2}$), which end abruptly in $1$, when customers stop joining and thus the queue behaves deterministically, with the server flushing out the customers still in the queue. The correct (optimal) scaling in this case has been first identified in \cite{iglehart1970multipleII} (Example 3, p.~365), and can be treated using the general theory developed in \Ward. Indeed, \cite[Theorem 1]{honnappa2012delta}  implies that $Q_n(t)/n\stackrel{\text{d}}{\rightarrow} 0$, while \cite[Theorem 2]{honnappa2012delta} implies that $Q_n(t)/\sqrt{n}$ converges to $ X^*(t)$, the reflected version of 
\begin{align*}%
X(t) = \left\{\begin{array}{ll}
B^0(t)-\sigma B(t),& t\in[0,1], \\
0,& t>1,
\end{array}\right.
\end{align*}%
where $B^0(t)$ is a standard Brownian Bridge.
%Analogously to the supercritical case, at a first approximation the queue length grows linearly, and at the diffusion approximation level  a Brownian Bridge is obtained (exploiting a FCLT-type result), as could be expected. The intuitive motivation for the lack of drift is that, since the density is everywhere equal to its maximum, the queue is always critical, there is no `tipping point' after which the intensity of the stream of customer diminishes and therefore it is not possible to experience the depletion of points effect.

The case $\nu < \mu$ is, from the scaling perspective, the least interesting one. The service rate is so large, compared to the instantaneous arrival rate $f_{\sss T}(t)$, that the queue is empty most of the time and in no instant it builds up as a power of $n$. One is then tempted to conjecture that, for fixed $t$, the queue length $Q_n(t)$ converges in distribution to a proper random variable $Q(t)$. Moreover, for $s<t$, $Q_n(\cdot)$ will be zero w.h.p. at some time $\zeta \in (s,t)$, suggesting that $Q(s)$ is close to independent from $Q(t)$. Since locally the arrival process is roughly a Poisson process with rate $f_{\sss T}(t)$, it is reasonable to also conjecture that $Q(t)$ is the stationary distribution of an $M/G/1$ queue with arrival rate $f_{\sss T}(t)$. In fact, this has already been proven for the $M_t/M_t/1$ queue in \cite{mandelbaum1995strong}  and, using analogous arguments, it is also possible to prove it for the $\Delta_{(i)}/G/1$ model with exponential arrivals.

We conclude by another connection between the present work and \cite{honnappa2012delta}. In \cite{honnappa2012delta}, the queue length process $Q_n$ is such that $Q_n = \phi ( X_n)$, where
\begin{equation}%
 X_n (t) = \left(A_n(t)-nF_{\sss T}(t)\right)-\left(S^n(B_n(t))-\mu nB^n(t)\right) + n(F_{\sss T}(t) - \mu t).
\end{equation}%
Here $A_n(t) := \sum_{i = 1}^n \mathds 1_{\{T_i\leq t\}}$, $F_{\sss T}$ is the distribution function of $T$, $S^n$ is the  renewal process associated with the (rescaled) service times $S_1/n,S_2/n,\ldots$ and $B_n:= \int_0^t \mathds 1_{\{ Q_n(s) > 0\}}\mathrm{d}s$ is the busy time process. Then, for example, \cite[Theorem 1]{honnappa2012delta} is proved by applying the \emph{continuous-mapping} approach to the fluid-scaled process $X_n/n$. Let us now scale the process $X_n$ as 
\begin{align}\label{eq:WardRescaledOurWay}%
n^{-1/3}X^n(tn^{-1/3}) &= n^{2/3}\Big( \frac{A_n(tn^{-1/3})}{n}- F_{\sss T}(tn^{-1/3}) \Big) - n^{2/3} \Big(  \frac{S^n(B_n(tn^{-1/3}))}{n}-\mu B_n (tn^{-1/3})\Big)\notag\\ 
&\quad+ n^{2/3}(F_{\sss T}(tn^{-1/3}) - \mu tn^{-1/3}).
\end{align}%
It is immediate that, if $F_{\sss T}(x) \approx f_{\sss T}(0) x + \frac{f'_{\sss T}(0)}{2} x^2$, the third term in \eqref{eq:WardRescaledOurWay} yields the quadratic drift. Moreover, it seems plausible that the first and second term converge to (independent) Brownian motions, respectively through a martingale FCLT (or generalized Donsker's Theorem) and through a FCLT for renewal processes. Since the sum of two independent Brownian motions is a Brownian motion itself, the limit of $n^{-1/3}X^n(tn^{-1/3})$ coincides with our result.

\subsection{On the relationship between the queue and random graphs}
\label{sec-queues-graph}
The queueing system in this paper can be used to generate a directed random graph by saying that customer $i$ is connected to customer $j$ when $i$ arrives during the service period of $j$. Assume the customers have an exponential clock. Conditionally on customer $i$ not having joined the queue yet and conditionally on the service time $D_j$ of the presently served customer $j$,  customer $i$ will be connected to customer $j$ with probability 
	\eqn{
	\label{pij-def}
	p_{ij}=1-\e^{-D_j}=1-\e^{-S_j/n}.
	}
In particular, when the service times are deterministic, i.e., $S_j\equiv \lambda$, these probabilities are precisely equal to the edge probabilities of the Erd\H{o}s-R\'enyi random graph (see \cite{Boll01,RandomGraphs,JanLucRuc00} for monographs on random graphs) with parameter $1-\e^{-\lambda/n}\approx \lambda/n$. In this case, Aldous \cite{Aldo97} has shown that the scaling limit in Theorem \ref{MainTheorem} appears precisely when $\lambda=1$, and with the constant $f'_{\sss T}(0)/f_{\sss T}(0)^2$ replaced by $-1$. This follows, intuitively, by noting that the depletion-of-points effects in the two problems is the same. {\bl  Note however that} the process described by \eqref{TrueQueueLengthIntro} is not the graph construction itself, but is instead equivalent to the exploration of the connected components of the graph. Indeed, edges in the graph that form cycles are not explored {\bl (i.e. a customer in the queue is not allowed to rejoin the queue)}, and thus also not observed, so that we obtain a tree rather than a graph.

When $S_j$ is not a.s.\ constant, then the edge probabilities in \eqref{pij-def} are not symmetric. We can think of this as the cluster exploration of a {\em directed} random graph, where, due to the depletion-of-points effect, at most one of the two possible edges between two vertices will ever be found {\bl (i.e. for each pair of customers $i$ and $j$, either $i$ joins the queue while $j$ is being served, or vice versa, or neither of the two)}. The service time random variables $(S_j)_{j=1}^n$ create {\em inhomogeneity} in the graph, of a form that has not been investigated in the random graph community. 

If, instead of our assumptions, we assume that the clock of customer $i$ has a rate proportional to $D_i$ (i.e., when the job size is larger, then it will join the queue more quickly, inversely proportional to the job size), then the edge probabilities become
	\eqn{
	\label{pij-def-als}
	p_{ij}=1-\e^{-\lambda D_iD_j}=1-\e^{-\lambda S_i S_j/n^2}.
	}
Taking $\lambda=an$, this becomes $p_{ij}=1-\e^{-a S_i S_j/n}$. This is closely related to the so-called {\em generalized random graph} with i.i.d.\ vertex weights, as studied in \cite{BhaHofLee09b,bhamidi2010scaling,Hofs09a,Jose10}. See also \cite{BolJanRio07} for the most general setting, and \cite{BriDeiMar-Lof05,ChuLu06c, NorRei06} for related models. Interestingly, a {\em size-biasing} takes place in the form that the vertices that are found by the exploration process have weights with the size-biased distribution $S_j^\star$ given by
	\eqn{
	\label{S-sb}
	\prob(S_j^\star\leq x)=\frac{\expec[S_j\indic{S_j\leq x}]}{\expec[S]}.
	}
{\bl  In the queueing setting, this means that, if $c(i)$ is the $i$-th customer joining the queue, the service time of $c(i)$, $S_{c(i)}$ is not distributed as $S$.} In \cite{BolJanRio07} it is shown that the cluster exploration obeys the scaling in  Theorem \ref{MainTheorem} precisely when $\expec[S^3]<\infty$, while different scalings can occur when $\expec[S^3]=\infty$ and $S$ has a power-law distribution. {\bl  Since the size-biasing is absent in our work, our assumption $\E[S^2]<\infty$ corresponds to the assumption $\E[S^3]<\infty$ in \cite{BolJanRio07}. This suggests that similar scaling limits as the ones in \cite{BhaHofLee09b,Jose10} (in which the exploration process for inhomogeneous random graphs with $\E[S^3]=\infty$ was shown to converge) might appear for queues with vanishing populations of customers when $S$ has a power-law distribution with $\expec[S^2]=\infty$.} This is an interesting problem for future research. 

\subsection{On the relationship between the queue and the Grenander estimator} \label{sec:RelationshipWithGrenander}
The arrival process in \eqref{APrimeDef} can be seen as ($n$ times) the empirical distribution function for $n$ trials of the random variable $T$. This seemingly trivial observation reveals a connection between our result Theorem \ref{MainTheorem} and some results in the statistics literature. Suppose we are given a random variable $T$ whose law is supported on $[0,1]$ and admits a positive density $f_{\sss T}(t)$ such that $\inf_{t\in[0,1]} \vert f_{\sss T}(t)\vert >0$ and $t\mapsto f_{\sss T}(t)$ is non-increasing. Given that the distribution of $T$ is sampled $n$ times as $T_1,\ldots, T_n$, our task is to estimate $f_{\sss T}$. In \cite{grenander1956theory} the so-called Grenander estimator is introduced as an estimator of $f_{\sss T}$. It is constructed as follows. Let $F^{\sss (n)}_{\sss T}(t):= (1/n)\sum_{i=1}^n \mathds 1_{\{T_i\leq t\}}$ be the empirical distribution function of the given sample and let $\hat F^{\sss (n)}_{\sss T}(t)$ be the concave conjugate of $F^{\sss (n)}_{\sss T}(t)$. Recall that the concave conjugate is the (pointwise) smallest concave function such that
\begin{equation}%
\hat F^{\sss (n)}_{\sss T}(t) \geq F_{\sss T}^{\sss (n)}(t),\qquad\forall t\in[0,1].
\end{equation}%
Then, the Grenander estimator $\hat f^{\sss (n)}_{\sss T}$ is defined as the left derivative of $\hat F^{\sss (n)}_{\sss T}(t)$. It turns out that studying the \emph{inverse process} $U_n(a) = \sup\{t\in[0,1]:~F_{\sss T}^{\sss (n)}(t) - at \textrm{~is~maximal}\}$ is equivalent to studying $\hat f^{\sss (n)}_{\sss T}(t)$, but considerably easier. In particular, in \cite{groeneboom1999asymptotic} the task of estimating the (rescaled) $L^1$ error of the estimator, $n^{1/3}\|\hat f^{\sss (n)}_{\sss T} - f_{\sss T}\|_{L^1([0,1])}$, is proven to be equivalent to studying $\int \vert V_n^E(a)\vert \mathrm {d}a$, with $V_n^E(a) := n^{1/3}(U_n(a) - g(a))$ and $g(a)$ the inverse of $f_{\sss T}$. It is readily seen that $V_n^E$ can be rewritten as 
\begin{equation}%
V_n^{\sss E}(a) = \sup\{t\in\mathcal T: D_n^{\sss E}(a,t) - n^{1/3} a t\textrm{~is~maximal}\},
\end{equation}%
for some appropriate set $\mathcal T = \mathcal T_n(a)$. Here $D_n^{\sss E}$ is the process
\begin{equation}\label{eq:GrenandierDDef}%
D_n^{\sss E}(a,t) := n^{1/6}\big(E_n(g(a) + n^{-1/3}t) - E_n(g(a))\big) + n^{2/3}\big(F_{\sss T}(g(a) + n^{-1/3}t) - F_{\sss T}(g(a))\big),
\end{equation}%
where $E_n (x) = \sqrt n(F^{(n)}_{\sss T}(x) - F_{\sss T}(x))$. The problem in \cite{groeneboom1999asymptotic} of computing the error of the Grenander estimator then reduces to studying a certain complicated functional of the process 
\begin{equation}%
t\mapsto D_n^{\sss E}(a,t) - n^{1/3} at.
\end{equation}%
Notice that this can be interpreted as the queue length process associated with a system in which the service rate is deterministic, $1/S\equiv a$ and the arrival process is given by $t\mapsto F^{(n)}_{\sss T}(g(a) + t) - F^{(n)}_{\sss T}(g(a))$. Moreover, note that the derivative of $F_{\sss T}$ in $g(a)$ is
\begin{equation}\label{eq:GrenanderCriticality}%
F_{\sss T}'(g(a)) = f_{\sss T}(g(a)) = a = 1/S,
\end{equation}%
since $g$ is the inverse of $f_{\sss T}$. Remarkably, \eqref{eq:GrenanderCriticality} as seen from our perspective is nothing but the criticality assumption in the point $g(a)$. In \cite[Theorem 3.2]{groeneboom1999asymptotic} the authors prove a convergence result for $V_n^{\sss E}$ of the type of Theorem \ref{MainTheorem}, the most important difference being that the functional of interest in our case is the reflection mapping, while they are interested in the (supremum among all of the) argmax. Their technique is similar to ours, in the sense that in order to prove convergence of the functional they first prove convergence of the underlying process. This is a two-sided Brownian motion in $\mathbb R$, $(B(t))_{t\in\mathbb R}$ originating from zero, i.e. $B(0) = 0$. Whether there is an equivalent, in this setting, of our $\ell$-th order contact is an interesting question, and one which remains open for future research.

% Acknowledgments here
\section*{Acknowledgments}
% Enter the text of acknowledgments here

This work is supported by the NWO Gravitation {\sc Networks} grant 024.002.003. The work of RvdH is further supported by the NWO VICI grant 639.033.806. The work of JvL is further supported by an NWO TOP-GO grant and by an ERC Starting Grant.
\bibliographystyle{plain}
\bibliography{mybib}

\end{document}